\numberwithin{equation}{section}
\theoremstyle{plain}
\newtheorem{Th}{Theorem}[section]
\newtheorem{Lemma}[Th]{Lemma}
\newtheorem{Cor}[Th]{Corollary}
\newtheorem{Prop}[Th]{Proposition}
\theoremstyle{definition}
\newtheorem{?}[Th]{Problem}
\newcommand{\C}{\mathbb C}
\newcommand{\R}{\mathbb R}
\newcommand{\Z}{\mathbb Z}
\newcommand{\N}{\mathbb N}
\newcommand{\E}{\mathbb{E}}
\newcommand{\Pro}{\mathbb{P}}
\newcommand{\eps}{\varepsilon}
\newcommand{\abs}[1]{\left\lvert#1\right\rvert}
\newcommand{\norme}[1]{\left \| #1 \right \|}
\begin{document}

\title[Liouville metric of star-scale invariant fields: tails and Weyl scaling]{Liouville Metric of star-scale invariant fields: \\ tails and Weyl scaling}

\author[Julien Dub\'{e}dat]{Julien Dub\'{e}dat}
\thanks{Partially supported by NSF DMS 1308476 and NSF DMS 1512853 (JD)}
\author[Hugo Falconet]{Hugo Falconet}

\address{Department of Mathematics \\ Columbia University}

\address{ 2990 Broadway \\  New York \\ 10027 \\ USA}

 \subjclass[2010]{Primary: 60K35. Secondary: 60G60}

 \keywords{Liouville metric, log-correlated Gaussian fields} 

\begin{abstract}We study the Liouville metric associated to an approximation of a log-correlated Gaussian field with short range correlation. We show that below a parameter $\gamma_c >0$, the left-right length of rectangles for the Riemannian metric $e^{\gamma \phi_{0,n}} ds^2$ with various aspect ratio is concentrated with quasi-lognormal tails, that the renormalized metric is tight when $\gamma <  \min ( \gamma_c, 0.4)$ and that subsequential limits are consistent with the Weyl scaling.\end{abstract}

\maketitle

\tableofcontents

\section{Introduction}  

Gaussian multiplicative chaos (GMC) is the study of random measures of the form $e^{\gamma \phi } \sigma (dx)$ where $\gamma \in (0, \sqrt{2d})$ is a parameter, $\phi$ is a $\log$-correlated Gaussian field on a domain $D$ in $\R^d$ and $\sigma(dx)$ is an independent measure on $D$. Since the field $\phi$ just exists in a Schwartz sense, a regularization procedure and a renormalization have to be done to show the existence of $e^{\gamma \phi} \sigma(dx)$. One classical regularization of the field is the martingale approximation done by Kahane \cite{Ka}, another one is by taking a convolution with a mollifier, done by Robert and Vargas \cite{RoVa}. Shamov \cite{Shamov} then proved that in a rather large setting of regularization, the convergence holds in probability, the limit does not dependent on the regularization procedure and is measurable with respect to the field (see also Berestycki \cite{BerGMC} for an elementary approach). A particular case of the theory, initiated  by Duplantier and Sheffield \cite{DuSh}, is when $d =2$ (which we will always assume from now on) and when the field is the Gaussian free field: this random measure is called Liouville Quantum Gravity (LQG).

\smallskip

One may try to follow the same lines to define the metric whose Riemannian metric tensor is $e^{\gamma \phi} ds^2$:  approximate $\phi$ by a smooth field to obtain a well-defined random Riemannian metric, show that the appropriately renormalized metric converges to a limiting metric which is independent of the limiting procedure and which is measurable with respect to the field. This problem seems to be so far more involved than the measure one where more tools are currently available. In a series of recent papers \cite{MSmet1, MSmet2, MSmet3, MSQLE}, Miller and Sheffield considered the case $\gamma = \sqrt{8/3}$, $d = 2$ and $\phi$ is a Gaussian free field. In particular, they made sense of the limiting object directly in the continuum and established some connections with the Brownian map, universal scaling limit of a large class of random planar maps (see Le Gall \cite{LeGall1, LeGall2} and Miermont \cite{Mie}).

\smallskip

In a discrete setting, Ding and Dunlap \cite{DiDu} studied the first passage percolation associated to the discrete Gaussian free field in the bulk (see \cite{ADH} for an overview on first passage percolation). They showed that the renormalized metric is tight, when $\gamma$ is small enough. A major part of their work was to obtain Russo-Seymour-Welsh (RSW) estimates of the length of left-right crossing of rectangles with various aspect ratio and their approach strongly relies on Tassion's method \cite{Ta}. We mention here that Ding et al. \cite{DiDu, DiGo, DiGw, DiZeZh2, DiZeZh1, DiZh1, DiZh}  studied related topics.

\smallskip

Recently, Ding and Gwynne \cite{DiGw} discussed the fractal dimension of LQG. In their paper, the Liouville first passage percolation is described as follows. Let $\phi$ be a Gaussian free field on a domain $D \subset \R^2$ and fix $\xi > 0$. Denote by $\phi_{\delta}(x)$ the circle average of $\phi$ over $\partial B(x,\delta)$ and consider the distance  $D_{\phi,\mathrm{LFPP}}^{\xi,\delta}$, defined for $x,y \in D$ by $D_{\phi,\mathrm{LFPP}}^{\xi,\delta}(x,y) := \inf \int_0^1 e^{\xi \phi_{\delta}( \pi(t))} \abs{ \pi'(t) }  dt$, where the infimum is taken over all piecewise continuously differentiable paths $\pi : [0,1] \to D$ such that $\pi(0) = x$ and $\pi(1) = y$. They explained that the parameter $\xi$ should be taken as $\frac{\gamma}{d_{\gamma}}$, if $d_{\gamma}$ is the Hausdorff dimension of the $\gamma$-LQG metric, obtained by scaling limits of graph distance on random planar maps, see Section 2.3 in \cite{DiGw} for a discussion.

\smallskip

In this article, the field $\phi_{0,\infty}$ is a log-correlated field with short-range correlations and is approximated by a martingale $\phi_{0,n}$ where each $\phi_{0,n}$ is a smooth field. More precisely, we consider a $\star$-scale invariant field whose covariance kernel is translation invariant and is given by $C_{0,\infty}(x) = \int_{1}^{\infty} \frac{c(u x)}{u}du$, where $c = k * k$, for a nonnegative,  compactly supported and radially symmetric bump function $k$. We decompose the field $\phi_{0,\infty}$ in a sum of self-similar fields i.e. $\phi_{0,\infty} = \sum_{n \geq 0} \phi_n$, where the $\phi_n$'s are smooth independent Gaussian fields, such that $\phi_0$ has a finite range of dependence and $\left( \phi_n(x) \right)_{x \in \R^2}$ has the law of $\left( \phi_0(x 2^n) \right)_{x \in \R^2}$. We then denote by $\phi_{0,n}$ the truncated summation i.e. $\phi_{0,n} = \sum_{0 \leq k \leq n} \phi_k$. This gives rise to a well-defined random Riemannian metric $e^{\gamma \phi_{0,n}} ds^2$, restricted for technical convenience to $[0,1]^2$, which is the main object studied in this paper. Let us point out that the parameter $\xi$ in \cite{DiGw} corresponds to the parameter $\frac{\gamma}{2}$ here, since the length element is given by $e^{\frac{\gamma}{2} \phi_{0,n}} ds$. 

\smallskip

In the recent preprint \cite{DecStarScale}, the authors proved that any log-correlated field $\phi$ whose covariance kernel  is given by $C(x,y) =  - \log \abs{x-y} + g(x,y)$, assuming some regularity on $g$, can be decomposed as $\phi = \phi_{\star} + \psi$ where $\phi_{\star}$ is a $\star$-scale invariant Gaussian field and  $\psi$ is a Gaussian field with H\"older regularity. A similar decomposition where the fields are independent can be obtained modulo a weaker property on $\phi_{\star}$. Using this decomposition, they generalize some results present in the literature only for $\star$-scale invariant fields. Let us also mention that $\star$-scale invariant log-correlated fields are natural since they appear in the following characterization (see \cite{Allez}): if $M$  is a random measure on $\R^d$ such that $\E (M([0,1]^d)^{1+\delta}) < \infty$ for $\delta >0$  and satisfying the cascading rule, for every $\eps \in (0,1)$:
\begin{equation}
\label{eq:DecompoMeasure}
(M(A))_{A \in \mathcal{B}(\R^d)} \overset{(d)}{=} \left(\int_A e^{\omega_{\eps}(x)} M_{\eps}(dx) \right)_{A \in \mathcal{B}(\R^d)},
\end{equation}
where $(M_{\eps}(\eps A))_{A \in \mathcal{B}(\R^d)} \overset{(d)}{=}  \eps^d (M(A))_{A \in \mathcal{B}(\R^d)}$ and where $\omega_{\eps}$ is a stationary Gaussian field, independent of $M_{\eps}$, with continuous sample paths, continuous and differentiable covariance kernel on $\R^d \setminus \lbrace 0 \rbrace$, then, up to some additional technical assumptions, $M$ is the product of a nonnegative random variable $X \in L^{1+\delta}$ and an independent Gaussian multiplicative chaos $e^{\phi} dx$ i.e. $\forall A \in \mathcal{B}(\R^d), ~ M(A) = X \int_{A} e^{\phi(x) - \frac{1}{2} \E (\phi(x)^2)} dx$. Moreover, the covariance kernel of $\phi$ is given by $C(x) = \int_1^{\infty} \frac{c(u x)}{u}du$ for some continuous covariance function $c$ such that $c(0) \leq \frac{2d}{1+\delta}$ and notice that we have $C(x) \underset{x \to 0}{\sim} - c(0) \log \norme{x}$. Again, one can try to follow the same lines for the metric instead of the measure to construct and characterize metrics on $\R^2$ satisfying a property analogous to \eqref{eq:DecompoMeasure} involving the Weyl scaling (see Section \ref{sec:WeylScaling}).

\smallskip

In our approach, we introduce a parameter $\gamma_c > 0$  associated to some observable of the metric and we study the phase where $\gamma < \gamma_c$. More precisely, if $L_{1,1}^{(n)}$ denotes the left-right length of the square $[0,1]^2$ for the random Riemannian metric $e^{\gamma \phi_{0,n}} ds^2$ and $\mu_n$ is its median, we then define $\gamma_c := \inf \lbrace \gamma :  ( \log L_{1,1}^{(n)} - \log \mu_n )  \mathrm{ ~ is ~ not ~ tight }\rbrace$. We expect that the set of $\gamma$ such that $(\log L_{1,1}^{(n)} - \log \mu_n)_{n \geq 0}$ is tight is $(0,\gamma_c)$ . We prove that as soon as $\gamma < \gamma_c$, we have the following concentration result: for $s$ large, uniformly in $n$,
\begin{align*}
c e^{-Cs^2} \leq \Pro \left( \log L_{1,1}^{(n)} - \log \mu_n \leq -s \right) \leq C e^{-cs^2}, \\
c e^{-Cs^2} \leq \Pro \left( \log L_{1,1}^{(n)} - \log \mu_n \geq s \right) \leq C e^{-c\frac{s^2}{\log s}}.
\end{align*}
When $\gamma < \min (\gamma_c, 0.4)$, we obtain the tightness of the metric spaces $([0,1]^2,d_{0,n})_{n \geq 0}$, where $d_{0,n}$ is the geodesic distance associated to the Riemannian metric tensor $e^{\gamma \phi_{0,n}} ds^2$, renormalized by $\mu_n$. The main difference with the proof of Ding and Dunlap  is that the RSW estimates do not rely on the method developped by Tassion \cite{Ta} but follow from an approximate conformal invariance of $\phi_{0,n}$, obtained through a white noise coupling.

\smallskip

We also investigate the Weyl scaling: if $d_{0,\infty}$ is a metric obtained through a subsequential limit associated to the field $\phi_{0,\infty}$ and $f$ is in the Schwartz class, then we prove that the metric associated to the field $\phi_{0,\infty}+f$ is $e^{ \frac{\gamma}{2} f} \cdot d_{0,\infty}$, that the couplings $(\phi_{0,\infty} + f, e^{\frac{\gamma}{2} f} \cdot d_{0,\infty})$ and $(\phi_{0,\infty}, d_{0,\infty})$ are mutually absolutely continuous with respect to each other and that their Radon-Nikod\'ym derivative is given by the one of the first marginal. Notice that if the metric $d_{0,\infty}$ is a measurable function of the field $\phi_{0,\infty}$, this property is expected. Here, this property tells us that the metric is not independent of the field $\phi_{0,\infty}$ and is in particular non-deterministic. In fact, this property is fundamental in the work of Shamov \cite{Shamov} on Gaussian multiplicative chaos, where the metric is replaced by the measure. It is used to prove that subsequential limits are measurable with respect to the field, which then implies its uniqueness and that the convergence in law holds in probability.

\smallskip

Shamov \cite{Shamov} takes the following definition of GMC. If $\phi$ is  a Gaussian field on a domain $D$ and $M$ is a random measure on $D$, measurable with respect to $\phi$ and hence denoted by $M(\phi,dx)$, which satisfies, for $f$ in the Cameron-Martin space of $\phi$, almost surely,
\begin{equation}
\label{eq:ShamovGMC}
M(\phi + f,dx) = e^{f(x)} M(\phi,dx),
\end{equation}
then $M$ is called a Gaussian multiplicative chaos. Furthermore, $M$ is said to to be subcritical if $\E M$ is a $\sigma$-finite measure. Note that the left-hand side is well-defined since $M$ is $\phi$ measurable. It is easy to check that  the condition \eqref{eq:ShamovGMC} implies uniqueness among $\phi$-measurable subcritical random measures and we insist that the measurability of $M$ with respect to $\phi$ is built in the definition. A natural question is thus the following: replace the measure $M$ by the metric $d_{0,\infty}$, assume in a similar way the measurability with respect to $\phi$ and suppose that in \eqref{eq:ShamovGMC}, the operation is the Weyl scaling defined in Section \ref{sec:WeylScaling}, then is there uniqueness?

\smallskip

The article is organized as follows. In Section 2, we introduce the fields $\phi_{0,n}$ as well as the definitions and notations that will be used throughout the subsequent sections. Section 3 contains our main theorems. In Section 4, we derive the approximate conformal invariance of $\phi_{0,n}$ together with the RSW estimates. Section 5 is concerned with lognormal tail estimates for crossing lengths, upper and lower bounds. Under the assumption $\gamma < \min (\gamma_c, 0.4)$, we derive the tightness of the metric in Section 6. The Weyl scaling is discussed in Section 7. Section 8 is concerned with $\gamma_c > 0$. Lastly, in Section 9 we prove some independence of $\gamma_c$ with respect to the bump function $k$ used to define $\phi_{0,n}$. The appendix gathers estimates for the supremum of the field $\phi_{0,n}$ as well as an estimate for a summation which appears when deriving diameter estimates.

\section*{Acknowledgments}

We would like to thank an anonymous referee for many useful comments.

\section{Definitions}

\subsection{Log-correlated Gaussian fields with short-range correlations}

\label{Sec:DefField}

A  \textit{white noise} on $\R^d$ is a random Schwartz distribution such that for every test function $f$, $\langle \zeta, f \rangle$ is a centered Gaussian variable with variance $\norme{f}_{L^{2}(\R^d)}^2$. If $(\Omega,\mathcal{F},\Pro)$ denotes a probability space on which it is defined, we have a natural isometric embedding $ L^2(\R^d)  \hookrightarrow L^2(\Omega, \mathcal{F},\Pro)$. By extension, for $f \in L^2(\R^d)$, the pairing $\langle \zeta, f \rangle$ is also a centered Gaussian variable with variance $\norme{f}_{L^2(\R^d)}^2$. 

\smallskip

Let $k$ be a smooth, radially symmetric and nonnegative bump function supported in $B(0,r_0) \subset \R^2$ and normalized in $L^2(\R^2)$ ($\int_{\R^2} k^2 dx = 1$), where $r_0$ is a fixed small positive real number. If $\zeta$ denotes a standard white noise on $\R^2$, then the convolution $k * \zeta$ is a smooth Gaussian field with covariance kernel $c := k * k$ whose compact support is included in $B(0,2r_0)$. This can be taken as a starting point to define more general Gaussian fields. Let $\xi (dx, dt)$ be a white noise on $\R^2 \times [0,\infty)$. Then one can define a distributional Gaussian field on $\R^2$ by setting
$$
\phi_{0,\infty}(x) := \int_{\R^2} \int_0^1 k \left( \frac{y-x}{t} \right) t^{-3/2} \xi (dy, dt) 
$$
with covariance kernel given by 
\begin{align*}
\E \left( \phi_{0,\infty}(x) \phi_{0,\infty}(x') \right) & = \int_{\R^2} \int_0^1 k \left( \frac{x-y}{t} \right) k\left(\frac{y-x'}{t} \right)  t^{-3} dy dt = \int_0^1 k * k \left( \frac{x-x'}{t} \right) \frac{dt}{t} \\
& = \int_0^1 c \left( \frac{x-x'}{t} \right) \frac{dt}{t}.
\end{align*}
Remark that for $x \neq x'$, the integrand vanishes near $0$ since $c$ has compact support, and that if $ \abs{x-x'}  > 2r_0$, $\E (\phi_{0,\infty}(x) \phi_{0,\infty}(x'))  = 0$.  Denote $C(r) := \int_0^1 c(r/t) \frac{dt}{t}$. Then 
$$
C'(r) = \int_0^1 c'(r/t) \frac{dt}{t^2} = \int_0^{\infty} c'(r/t) \frac{dt}{t^2} - \int_1^{\infty} c'(r/t) \frac{dt}{t^2} = \frac{\alpha}{r} + f(r)
$$
where $\alpha = \int_0^{\infty} c'(t^{-1}) \frac{dt}{t^2} = - c(0)$ and $f$ is a smooth function. Consequently, 
$$
C(r) = \alpha \log r + F(r)
$$
where $F$ is smooth. By normalizing $k$ in $L^2(\R^2)$, we have $c(0) = k * k (0) = \int_{\R^2} k^2 dx = 1$ and 
$$
C(r) = - \log r + F(r).
$$

\subsection{Decomposition of $\phi_{0,\infty}$ in a sum of self-similar fields }
One can decompose $\phi_{0,\infty}$ as a sum of independent self-similar fields. Indeed, for $0 \leq m \leq n$, set 
\begin{equation}
\label{Rep}
\phi_{m,n}(x) := \int_{\R^2} \int_{2^{-n-1}}^{2^{-m}} k \left( \frac{y-x}{t} \right) t^{-3/2} \xi (dy,dt)
\end{equation}
as well as $\phi_n := \phi_{n,n}$ so that $\phi_{0,n} = \sum_{0 \leq k \leq n} \phi_k$ and $\phi_{0,\infty} = \sum_{n \geq 0} \phi_n$ where the $\phi_n$'s are independent. Notice also that for $1 \leq m \leq n$, $\phi_{0,n} = \phi_{0,m-1} + \phi_{m,n}$. The covariance kernel of $\phi_n$ is 
$$
\E \left( \phi_n(x) \phi_n(x') \right) = \int_{2^{-n-1}}^{2^{-n}} c\left( \frac{x-x'}{t} \right) \frac{dt}{t} =: C_n(\norme{x-x'})
$$
so that $C_n(r) = C_0(r2^n)$. We will also denote by $C_{0,n}$ the covariance kernel of $\phi_{0,n}$. 
The following properties are clear from the construction.
\begin{Prop} For every $n \geq 0$, 
\begin{enumerate}
\item
$\phi_n$ is smooth,

\item
the law of $\phi_n$ is invariant under Euclidean isometries,

\item
$\phi_n$ has finite range dependence with range of dependence $2^{-n} \cdot 2 r_0$,

\item
and $\left( \phi_n(x) \right)_{x \in \R^2}$ has the law of $\left( \phi_0(x 2^n) \right)_{x \in \R^2}$ (scaling invariance).

\item
The $\phi_n$'s are independent Gaussian fields.
\end{enumerate}
\end{Prop}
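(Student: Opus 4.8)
The plan is to read off all five properties directly from the white-noise representation \eqref{Rep}, writing $\phi_n(x)=\langle\xi,g_{n,x}\rangle$ with $g_{n,x}(y,t):=k\!\big(\tfrac{y-x}{t}\big)\,t^{-3/2}\,\mathbf{1}_{[2^{-n-1},2^{-n}]}(t)\in L^2(\R^2\times[0,\infty))$, and then invoking two elementary facts about a white noise $\xi$: its restrictions to disjoint Borel subsets of the index space are independent white noises, and precomposition of test functions with a measure-preserving bijection of the index space leaves its law unchanged. Since $(\phi_n(x))_x$ is a linear functional of $\xi$, it is a centered Gaussian process, so any identity in law between two such processes reduces to an identity of covariance functions; I will use this several times.

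First I would dispose of (ii) and (iv). For (ii), given a Euclidean isometry $T(z)=Rz+b$ with $R\in O(2)$, the map $(y,t)\mapsto(T(y),t)$ preserves $dy\,dt$, and radiality of $k$ gives $k\!\big(\tfrac{T(y)-T(x)}{t}\big)=k\!\big(\tfrac{R(y-x)}{t}\big)=k\!\big(\tfrac{y-x}{t}\big)$, whence $g_{n,T(x)}\circ(T\times\mathrm{id})=g_{n,x}$ and $(\phi_n(T(x)))_x\overset{(d)}{=}(\phi_n(x))_x$. For (iv), I would either substitute $t=2^{-n}s$, $y=2^ny'$ in the defining integrals (equivalently, apply the measure-preserving-map principle to $(y,t)\mapsto(2^ny,2^nt)$, bookkeeping the $t^{-3/2}$ weight) or, more quickly, simply observe that $\operatorname{Cov}(\phi_n(x),\phi_n(x'))=C_n(\abs{x-x'})=C_0(2^n\abs{x-x'})=\operatorname{Cov}(\phi_0(2^nx),\phi_0(2^nx'))$ and that both processes are centered Gaussian. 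Next, for (iii) and (v) I would note that $g_{n,x}$ is supported in $\bar B(x,2^{-n}r_0)\times[2^{-n-1},2^{-n}]$, since $k$ vanishes outside $B(0,r_0)$ and $t\le 2^{-n}$; hence $\phi_n(x)$ is measurable with respect to $\xi$ on that set. When $\operatorname{dist}(A,B)>2\cdot 2^{-n}r_0$ the index sets over $x\in A$ and $x\in B$ are disjoint, giving $(\phi_n(x))_{x\in A}\indep(\phi_n(x))_{x\in B}$, i.e.\ (iii); and since the $t$-strips $[2^{-n-1},2^{-n}]$ are pairwise disjoint up to Lebesgue-null sets, the restrictions of $\xi$ used to build the various $\phi_n$ are independent, which yields (v), Gaussianity of each $\phi_n$ being automatic from linearity in $\xi$.

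The one assertion needing slightly more than bookkeeping is (i), since it really asks for a smooth modification. Here the plan is to observe that the covariance $(x,x')\mapsto\int_{2^{-n-1}}^{2^{-n}}c\!\big(\tfrac{x-x'}{t}\big)\tfrac{dt}{t}$ is jointly $C^\infty$ on $\R^2\times\R^2$ — $c$ is smooth and the $t$-range is a fixed compact interval bounded away from $0$, so one may differentiate under the integral sign freely by dominated convergence — and then to quote that a centered Gaussian field on $\R^2$ with a $C^\infty$ covariance admits a $C^\infty$ modification, via the Kolmogorov--Chentsov criterion applied simultaneously to the field and to each of its partial derivatives, whose increments are Gaussian with variances controlled, to every order, by the smoothness of the covariance. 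Equivalently, one checks directly that $x\mapsto g_{n,x}\in L^2$ is infinitely Fréchet-differentiable, with derivatives obtained by differentiating $k$, which upgrades to almost sure smoothness of $\phi_n$ by the same argument. The main (and only, mild) obstacle is thus this passage to a smooth version; everything else is unwinding the definitions, and we work throughout with the smooth modification.
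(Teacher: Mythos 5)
Your proof is correct, and parts (ii)--(v) are exactly the unwinding of the white-noise representation that the paper treats as ``clear from the construction,'' so there is nothing to compare there. The one genuine divergence is in (i), where you argue probabilistically: the covariance kernel $(x,x')\mapsto\int_{2^{-n-1}}^{2^{-n}}c\bigl(\tfrac{x-x'}{t}\bigr)\tfrac{dt}{t}$ is $C^\infty$ because the $t$-range is a compact interval bounded away from $0$, and Kolmogorov--Chentsov applied to the field and to each candidate partial derivative produces a $C^\infty$ \emph{modification}. The paper instead takes a deterministic, pathwise route and says so explicitly (``This is a deterministic statement''): a white noise on $\R^2\times[0,\infty)$ is almost surely a tempered distribution, and for each such realization $\phi_n(x)=\langle\xi,g_{n,x}\rangle$ is the pairing of a fixed distribution against the family $g_{n,x}(y,t)=k\bigl(\tfrac{y-x}{t}\bigr)t^{-3/2}1_{[2^{-n-1},2^{-n}]}(t)$, a smooth, compactly supported test function depending smoothly on the parameter $x$; smoothness of $x\mapsto\phi_n(x)$ then follows from continuity of the pairing in the test-function topology, with no probabilistic input at all. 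That version is both shorter and strictly stronger: it gives that the naturally defined $\phi_n$ is literally smooth on a full-measure event, not merely that some modification is smooth, which is the convenient statement to have in hand when one immediately starts integrating $e^{\gamma\phi_{0,n}/2}$ along paths in the next subsection. Your closing remark about infinite Fréchet-differentiability of $x\mapsto g_{n,x}$ is the right idea, but you place $g_{n,x}$ in $L^2(\R^2\times[0,\infty))$, which only gives $L^2(\Omega)$-smoothness of $\phi_n$ and hence still needs a Kolmogorov step; the paper's version places the realization of $\xi$ in the dual of the test functions and thereby bypasses that step entirely.
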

Let us precise that one can see that $\phi_n$ is smooth from the representation \eqref{Rep} since $k$ has compact support and $\xi$ is a distribution (in the sense of Schwartz). This is a deterministic statement.

We will use repeatedly these properties throughout the paper in particular the independence and scaling ones. Furthermore, one can decompose the field at scale $n$ in spatial blocks. Specifically, we denote by $\mathcal{P}_n$ the set of dyadic blocks at scale $n$, viz. $$
\mathcal{P}_n := \left\lbrace 2^{-n}  \left( [i,i+1] \times [j,j+1] \right) ~ : ~ i,j \in \Z^2 \right\rbrace.
$$
For $P \in \mathcal{P}_n$ we set  $$
\phi_{n,P}(x) := \int_P \int_{2^{-n-1}}^{2^{-n}} k \left( \frac{y-x}{t} \right) t^{-3/2} \xi (dy,dt).
$$
The following properties are immediate.
\begin{Prop}   ~
\begin{enumerate}
\item
The $\phi_{n,P}$'s are independent Gaussian fields.
\item
For every $n \geq 0$ and $P \in \mathcal{P}_n$, $\phi_{n,P}$ is smooth and compactly supported in $P + B(0,2^{-n} \cdot 2 r_0)$. 
\item
If $P \in \mathcal{P}_n$, $Q \in \mathcal{P}_m$ and $l : P \to Q$ is an affine bijection, then $\phi_{m,Q} \circ l $ has the same law as $\phi_{n,P}$. 
\end{enumerate}
\end{Prop}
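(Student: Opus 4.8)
The plan is to read all three assertions off the stochastic-integral representation of $\phi_{n,P}$: parts (i) and (ii) are structural consequences of the support of $k$ and of the geometry of the integration domains, while part (iii) is a change of variables in which the exponent $-3/2$ plays the decisive role. I expect no genuine difficulty here; the only points needing a little care are the null-set bookkeeping behind the disjointness used in (i) and the tracking of Jacobian exponents in (iii).

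For (i), I would first note that the relevant regions of $\R^2\times[0,\infty)$ are essentially disjoint: for a fixed scale $n$, two distinct blocks $P,P'\in\mathcal{P}_n$ meet only along their boundaries, which are Lebesgue-null, so $P\times[2^{-n-1},2^{-n}]$ and $P'\times[2^{-n-1},2^{-n}]$ are disjoint up to a null set; and for $n\neq m$ the time intervals $[2^{-n-1},2^{-n}]$ and $[2^{-m-1},2^{-m}]$ are disjoint up to a single endpoint. Since $\phi_{n,P}$ is a measurable function of the restriction of the white noise $\xi$ to $P\times[2^{-n-1},2^{-n}]$, and pairings of white noise against $L^2$ functions with essentially disjoint supports are independent, the family $(\phi_{n,P})$ is jointly Gaussian with pairwise independent entries, hence mutually independent.

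For (ii), smoothness is the deterministic fact already recorded for the $\phi_n$'s: because $k$ is smooth and compactly supported and $t$ stays in the compact interval $[2^{-n-1},2^{-n}]$, which is bounded away from $0$, the integrand depends smoothly on $x$ with all $x$-derivatives again of the same type, so one may differentiate under the pairing with $\xi$ as many times as desired. For the support, I would observe that $\phi_{n,P}(x)\neq 0$ requires some $(y,t)$ with $y\in P$ and $t\le 2^{-n}$ for which $k(\tfrac{y-x}{t})\neq 0$, hence $\abs{x-y}<r_0 t\le r_0 2^{-n}$; therefore $\phi_{n,P}$ vanishes outside $P+B(0,2^{-n}\cdot 2r_0)$.

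For (iii), I would write an affine bijection $l:P\to Q$ as $l(y)=\lambda R y+b$ with $\lambda>0$, $R$ orthogonal and $b\in\R^2$, and note that matching the side lengths of $P$ and $Q$ forces $\lambda=2^{n-m}$. As $\phi_{n,P}$ and $\phi_{m,Q}\circ l$ are both centered Gaussian fields, it suffices to check that they share a covariance kernel. In the double integral defining $\E(\phi_{m,Q}(l(x))\,\phi_{m,Q}(l(x')))$ I would substitute $z=l(y)$ and $s=2^{n-m}t$: then $(z-l(x))/s=R(y-x)/t$, the radial symmetry of $k$ removes $R$, the ranges of integration become $y\in P$ and $t\in[2^{-n-1},2^{-n}]$, and the Jacobian together with $s^{-3}=2^{-3(n-m)}t^{-3}$ contributes an overall factor $2^{(-3+2+1)(n-m)}=1$, leaving exactly the covariance kernel of $\phi_{n,P}$. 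This completes the verification, the cancellation of this last factor being precisely where the normalization $t^{-3/2}$ (rather than another power of $t$) is used.
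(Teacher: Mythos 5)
Your proof is correct, and since the paper states these properties as immediate consequences of the construction (with no written proof), your argument is exactly the natural verification one would supply: disjoint supports in the white-noise representation for (i), differentiation under the pairing and the support of $k$ for (ii), and a change of variables using radial symmetry of $k$ and the cancellation of the $\lambda^3$ Jacobian against the $t^{-3}$ from the two $t^{-3/2}$ factors for (iii). One small remark: in (ii) your argument actually yields the sharper support $P + B(0, 2^{-n} r_0)$; the paper's $2r_0$ is a harmless overestimate, presumably chosen to match the range of dependence of the covariance kernel $c = k*k$.
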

\noindent Finally, we have the decomposition 
$$
\phi_{0,\infty} = \sum_{n \geq 0} \sum_{P \in \mathcal{P}_n} \phi_{n,P}
$$
in which all the summands are independent smooth Gaussian fields, all identically distributed up to composition by an affine map and $\phi_{n,P}$ is supported in a neighborhood of $P$. In the following sections, we will work with the smooth fields $\phi_{0,n}$, approximations of the field $\phi_{0,\infty}$, and we denote by $\mathcal{F}_{0,n}$ the $\sigma$-algebra generated by the $\phi_k$'s for $0 \leq k \leq n$. 

\subsection{Rectangle lengths and definition of $\gamma_c$}

\label{DefLengths}

For $a,b > 0$ and $0 \leq m \leq n$, we denote by $L_{a,b}^{(m,n)}$ the left-right length of the rectangle $[0,a] \times [0,b]$ for the Riemannian metric $e^{\gamma \phi_{m,n}} ds^2$, where the metric tensor is restricted to $[0,a] \times [0,b]$. When $m =0$ we simply write $L_{a,b}^{(n)}$.  To avoid confusion, let us point out that this is not the Riemannian metric on the full space restricted to the rectangle. In particular, all admissible paths are included in $[0,a] \times [0,b]$. It is clear that the spaces $([0,1]^2, e^{\gamma \phi_{0,n}} ds^2)$ and $([0,1]^2,ds^2)$ are bi-Lipschitz. Consequently, $([0,1]^2, e^{\gamma \phi_{0,n}} ds^2)$ is a complete metric space and it has the same topology as the unit square with the Euclidean metric. We will denote by $\pi_{m,n}$ a minimizing path associated to $L_{a,b}^{(m,n)}$ and it will be clear depending on the context which $a,b$ are involved. Notice that such a path exists by the Hopf-Rinow theorem and a compactness argument. We will say that a rectangle $R$  is visited by a path $\pi$ if $\pi \cap R \neq \emptyset$ and crossed by $\pi$ if a subpath of $\pi$ connects two opposite sides of $R$ by staying in $R$. 

\medskip

We recall the \textit{positive association} property and refer the reader to \cite{FKG} for a proof.
\begin{Th}
\label{PosAssoc}
If $f$ and $g$ are increasing functions of a continuous  Gaussian field $\phi$ with pointwise nonnegative covariance, depending only on a finite-dimensional marginal of $\phi$, then $\E\left( f(\phi) g(\phi) \right)  \geq \E \left( f(\phi) \right) \E \left(g(\phi) \right)$.
\end{Th}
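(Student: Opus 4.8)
\emph{Proof proposal.} The statement is the FKG (or Pitt) inequality for Gaussian vectors, and the plan is to prove it by Gaussian interpolation. First I would pass to a finite-dimensional problem: by hypothesis there are points $x_1,\dots,x_d\in\R^2$ such that $f$ and $g$ depend on $\phi$ only through the vector $X:=(\phi(x_1),\dots,\phi(x_d))$. Writing $f(\phi)=\tilde f(X)$ and $g(\phi)=\tilde g(X)$, the functions $\tilde f,\tilde g\colon\R^d\to\R$ are nondecreasing in each coordinate (this is what it means for $f,g$ to be increasing functions of the field, restricted to this marginal), and $X$ is a centered Gaussian vector whose covariance matrix $\Sigma=(\sigma_{ij})$, $\sigma_{ij}=\E(\phi(x_i)\phi(x_j))$, has all entries nonnegative. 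The claim to be proved is then $\E(\tilde f(X)\tilde g(X))\geq\E(\tilde f(X))\E(\tilde g(X))$.

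Second, I would reduce to the case where $\tilde f$ and $\tilde g$ are smooth, bounded, and have bounded gradients. Truncating a coordinatewise nondecreasing function to a bounded one, and then convolving with a nonnegative mollifier, preserves coordinatewise monotonicity and yields such a function; since the statement only makes sense when $\tilde f(X),\tilde g(X)\in L^2$, the general case is recovered at the end by dominated convergence. (If one wishes to avoid a possibly degenerate $\Sigma$ at this stage, one may first replace $X$ by $X+\delta Z$ with $Z\sim\mathcal N(0,I_d)$ independent, which keeps all covariance entries nonnegative and tends to $X$ as $\delta\to 0$.)

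The core is the interpolation. Let $A,B$ be two independent copies of $X$ and, for $t\in[0,1]$, put
$$U_t:=\sqrt{\tfrac{1+t}{2}}\,A+\sqrt{\tfrac{1-t}{2}}\,B,\qquad V_t:=\sqrt{\tfrac{1+t}{2}}\,A-\sqrt{\tfrac{1-t}{2}}\,B.$$
Then $(U_t,V_t)$ is a centered Gaussian vector in $\R^{2d}$ with $\mathrm{Cov}(U_t)=\mathrm{Cov}(V_t)=\Sigma$ and $\mathrm{Cov}(U_t^i,V_t^j)=t\,\sigma_{ij}$, so that $\Phi(t):=\E(\tilde f(U_t)\tilde g(V_t))$ satisfies $\Phi(0)=\E(\tilde f(X))\E(\tilde g(X))$ by independence at $t=0$, and $\Phi(1)=\E(\tilde f(X)\tilde g(X))$ since $U_1=V_1=A$. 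Differentiating under the expectation and applying the Gaussian interpolation formula $\frac{d}{dt}\E(H(W_t))=\tfrac12\sum_{a,b}\dot\Sigma_{ab}(t)\,\E(\partial_a\partial_b H(W_t))$ to $H(u,v)=\tilde f(u)\tilde g(v)$, only the mixed $u$--$v$ second derivatives contribute (the blocks $\mathrm{Cov}(U_t)$ and $\mathrm{Cov}(V_t)$ do not depend on $t$), and one gets
$$\Phi'(t)=\sum_{i,j=1}^d\sigma_{ij}\,\E\!\left((\partial_i\tilde f)(U_t)\,(\partial_j\tilde g)(V_t)\right)\geq 0,$$
because $\sigma_{ij}\geq 0$ while $\partial_i\tilde f\geq 0$ and $\partial_j\tilde g\geq 0$ by monotonicity. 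Hence $\Phi(1)\geq\Phi(0)$, which is the asserted inequality; undoing the two approximations completes the proof.

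The inequality $\Phi'\geq 0$ is immediate once the coupling $(U_t,V_t)$ is in place, so the main work — and the only part that needs real care — is the surrounding analysis: justifying the differentiation under the integral sign and the interpolation formula (this is exactly why one first reduces to smooth functions with bounded gradients), and verifying that the truncation and mollification can be passed to the limit using only the $L^2$ integrability of $f(\phi)$ and $g(\phi)$.
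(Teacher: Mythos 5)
The paper does not supply a proof of Theorem~\ref{PosAssoc}; it states the result and refers the reader to \cite{FKG}, so there is no internal argument to compare yours against. Your proposal is a correct and complete proof of the finite-dimensional Gaussian FKG/Pitt inequality by the standard Gaussian-interpolation route: you correctly reduce to a coordinatewise-nondecreasing smooth bounded situation, build the coupling $(U_t,V_t)$ with block covariance $\bigl(\begin{smallmatrix}\Sigma & t\Sigma\\ t\Sigma & \Sigma\end{smallmatrix}\bigr)$, observe that $\dot\Sigma(t)$ is supported off-diagonal so only the mixed partials of $H(u,v)=\tilde f(u)\tilde g(v)$ appear, and conclude $\Phi'(t)=\sum_{i,j}\sigma_{ij}\,\E\bigl((\partial_i\tilde f)(U_t)(\partial_j\tilde g)(V_t)\bigr)\geq 0$ from $\sigma_{ij}\geq 0$ and monotonicity. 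The endpoint identifications $\Phi(0)=\E\tilde f(X)\,\E\tilde g(X)$ (since $U_0,V_0$ are independent copies of $X$) and $\Phi(1)=\E(\tilde f(X)\tilde g(X))$ are right, and the truncation/mollification/nondegeneracy ($\Sigma+\delta I$) steps you flag are exactly the technical points that need care. This is essentially Pitt's original argument, and it is the proof one would expect the cited reference to contain; your write-up is sound.
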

We will use this inequality several times in situations where the field considered is $\phi_{0,n}$ (since $k \geq 0$) and the functions $f$ and $g$ are lengths associated to different rectangles, without being restricted to a finite-dimensional marginal of $\phi_{0,n}$. If $R$ is a rectangle, denote by $L^{(n)}(R,k)$ the left-right distance of $R$ for the field $\phi_{0,n}^k$, piecewise constant on each dyadic block of size $2^{-k}$ where it is equal to the value of $\phi_{0,n}$ at the center of this block. We also denote by $L^{(n)}(R)$ the left-right distance of $R$ for the field $\phi_{0,n}$. We have the following comparison,
$$
e^{- O(2^{-k}) \sup_{P \in \mathcal{P}_k, P \subset R} \norme{\nabla \phi_{0,n}}_{P}} L^{(n)}(R) \leq  L^{(n)}(R,k) \leq L^{(n)}(R) e^{O(2^{-k}) \sup_{P \in \mathcal{P}_k,  P \subset R} \norme{\nabla \phi_{0,n}}_{P}}
$$
which gives a.s. $\lim_{k \to \infty} L^{(n)}(R,k) = L^{(n)}(R)$. 

If $R_1, \dots, R_p$ denote $p \geq 2$ fixed rectangles, by Portmanteau theorem and since $(L^{(n)}(R_1), \dots, L^{(n)}(R_p) )$ has a positive density with respect to the Lebesgue measure on $(0,\infty)^d$ (by the argument used in the proof of Proposition \ref{LBounds}), if $l > 0$, we have, using Theorem \ref{PosAssoc},
\begin{align*}
\Pro \left( L^{(n)}(R_1) > l, \dots, L^{(n)}(R_p) > l \right) & = \lim_{k \to \infty} \Pro \left( L^{(n)}(R_1,k) > l, \dots, L^{(n)}(R_p,k) > l \right) \\
&  \geq \lim_{k \to \infty } \Pro \left( L^{(n)}(R_1,k) > l \right) \dots \Pro \left( L^{(n)}(R_p,k) > l \right)  \\
& =  \Pro \left( L^{(n)}(R_1) > l \right) \dots \Pro \left( L^{(n)}(R_p) > l \right).
\end{align*}
Furthermore, if $F,G : (0,\infty)^{[0,1]^2} \to (0,\infty)$ are increasing functions such that
\begin{enumerate}
\item
a.s. $\lim_{k \to \infty} F(\phi_{0,n}^k) = F(\phi_{0,n})$ and $\lim_{k \to \infty} G(\phi_{0,n}^k) = G(\phi_{0,n})$ 

\item
$\E \left( F(\sup_{[0,1]^2} \phi_{0,n}) G(\inf_{[0,1]^2} \phi_{0,n})^{-1} \right) < \infty$, $\E \left( F(\sup_{[0,1]^2} \phi_{0,n}) \right) < \infty$\\ and $\E \left( G(\inf_{[0,1]^2} \phi_{0,n})^{-1} \right) < \infty$,
\end{enumerate}
then, by dominated convergence theorem and the negative association we have
\begin{multline*}
\E  ( \frac{F(\phi_{0,n})}{G(\phi_{0,n})} ) = \lim_{k \to \infty } \E ( \frac{F(\phi_{0,n}^k)}{G(\phi_{0,n}^k)} )  \leq \lim_{k \to \infty } \E ( F(\phi_{0,n}^k) ) \E ( \frac{1}{G(\phi_{0,n}^k)} ) = \E ( F(\phi_{0,n}) ) \E ( \frac{1}{G(\phi_{0,n})} ).
\end{multline*}

\medskip

We introduce the notations $l_{a,b}^{(n)}(p) := \inf \lbrace l \geq 0 ~ | ~ \mathbb{P} ( L_{a,b}^{(n)} \leq l ) > p \rbrace$ for the $p$-th quantile associated to $L_{a,b}^{(n)}$ and $\bar{l}_{a,b}^{(n)}(p) := l_{a,b}^{(n)}(1-p)$.  Since we will use repetitively $l_{1,3}^{(n)}(\eps)$ and $\bar{l}_{3,1}^{(n)}(\eps)$ for a small fixed $\eps$, we introduce the notation $l_n$ for the first one and $\bar{l}_n$ for the second one. Also, we will be interested by the ratio between these quantiles hence we introduce the notation $\delta_n := \max_{0 \leq k \leq n} l_{k}^{-1} \bar{l}_k $ for $n \geq 0$. Finally, we introduce $\mu_n$ for the median of $L_{1,1}^{(n)}$ (note that $L_{1,1}^{(n)}$ has a positive density on $(0,\infty)$ with respect to the Lebesgue measure by the argument used in the proof of Proposition \ref{LBounds}). We then define the critical parameter $\gamma_c$ as
$$
\gamma_c := \inf \left\lbrace \gamma :  \left( \log L_{1,1}^{(n)} - \log \mu_n \right)  \mathrm{ ~ is ~ not ~ tight } \right\rbrace
$$
and we call \emph{subcriticality} the regime $\gamma < \gamma_c$. Note that anytime we use the assumption $\gamma < \gamma_c$, we use only the tightness of $ \log L_{1,1}^{(n)} - \log \mu_n$. However, we expect that the set of $\gamma$ such that $(\log L_{1,1}^{(n)} - \log \mu_n)_{n \geq 0}$ is tight is the interval $(0,\gamma_c)$.

\subsection{Compact metric spaces: uniform and Gromov-Hausdorff topologies}

We recall first the notion of uniform convergence. A sequence $(d_n)_{n \geq 0}$ of real-valued functions on $[0,1]^2 \times [0,1]^2$ \emph{converges uniformly} to a function $d$ if 
$$
\underset{ x,x' \in [0,1]^2}{\sup} \abs{d_n (x,x') - d(x,x')} \underset{n\to \infty}{\longrightarrow} 0.
$$
If $d_n$ are moreover distances on $[0,1]^2$, then $d$ is a priori only a pseudo-distance i.e. $d(x,y) = 0$ with $x \neq y$ may occur.

\medskip

Moreover, we recall the definition of the Hausdorff distance. If $K_1$, $K_2$ are two compact subsets of a metric space $(E, d)$, the Hausdorff distance $d_H$ between $K_1$ and $K_2$ is defined by
$$
d_H(K_1, K_2) := \inf \left\lbrace \eps > 0 : K_1 \subset U_\eps(K_2) ~ \mathrm{ and } ~ K_2 \subset U_{\eps}(K_1) \right\rbrace
$$
where for $i = 1,2$, $U_{\eps}(K_i) := \lbrace x \in E : d(x, K_i) <\eps\rbrace $ is the $\eps$-enlargement of $K_i$.

\medskip

We recall now the definition of the Gromov-Hausdorff distance. Let $(E_1,d_1)$ and $(E_2,d_2)$ be two compact metric spaces. The Gromov-Hausdorff distance $d_{GH}$ between $E_1$ and $E_2$ is defined as
$$
d_{GH}(E_1, E_2) := \inf \left\lbrace d_H (\phi_1(E_1), \phi_2(E_2)) \right\rbrace
$$
where the infimum is over all isometric embeddings $\phi_1 : E_1 \to E$ and $\phi_2 : E_2 \to E$ of $E_1$ and $E_2$ into the same metric space $(E, d)$. Here, $d_H$ is the Hausdorff distance associated to the space $(E,d)$. Denote by $\mathbb{M}$ the set of all isometry classes of compact metric spaces (see \cite{Gromov} Section 3.11). The Gromov-Hausdorff distance $d_{GH}$ is a metric on $\mathbb{M}$ and $(\mathbb{M}, d_{GH})$ is a Polish space. We refer the reader to the textbook \cite{BBI}, Section 7 for more details on these topologies. 

\medskip

In our framework, we introduce the sequence of compact metric spaces $( M_n )_{n \geq 0}$ where  $M_n := ([0,1]^2, d_{0,n})$ and where $d_{0,n}$ is the geodesic distance induced by the Riemannian metric tensor $\mu_n^{-2} e^{\gamma \phi_{0,n}} ds^2$ restricted to $[0,1]^2$ and we aim to study the convergence in law of $M_n$ to a random metric space $M_{\infty}$ with respect to the Gromov-Hausdorff topology.

\subsection{Notation}
\label{sec:Notation}

We will denote by $c$ and $C$ constants whether they should be thought as small or large. They may vary from line to line and depend on the parameters (e.g. the bump function $k$) or geometry when these are fixed. At the only place of the paper when we take $\gamma$ small, but fixed, $\gamma$ is taken small compared to a constant which does not depend on $\gamma$ (as soon as we assume that $\gamma$ is less than an absolute constant, upper bounds like $e^{\gamma \sqrt{k}}$ may be replaced by $e^{C \sqrt{k}}$).

\smallskip

If $F : E \to \C$ is a complex-valued function, we denote by $\norme{F}_{\infty} := \sup_{x \in E} \abs{F(x)}$ and by $\norme{F}_{C^{\alpha}(E)} := \norme{F}_{\infty}  + \sup_{x \neq y \in E} \frac{\abs{F(x)-(y)}}{\abs{x-y}^{\alpha}} $. For $d \geq 1$, $\mathcal{S}(\R^d)$ denotes the space of Schwartz functions and $\mathcal{S}'(\R^d)$ denotes the space of tempered distributions. Our convention for the Fourier transform of a function $ \varphi \in \mathcal{S}(\R^d)$ is $\hat{\varphi}(\xi) := \int_{\R^d} \varphi(x) e^{- i x \cdot \xi} dx$. If $x$ is a real number we will denote by $x_{+}$ the maximum of $x$ and $0$. For two real numbers $a$ and $b$ we denote by $a \vee b := \max (a,b) $ as well as $a \wedge b := \min (a,b)$. Finally, if $X$ is a random variable, $\mathcal{L}(X)$ denotes its law and for $x \in \R$ we set $F_X(x) := \Pro(X \leq x)$.

\section{Statement of main results}

Our first main result concerns the relation between lengths of rectangles with different aspect ratio. We want to compare the tails of $L_{a,b}^{(n)}$ for various choices of $(a, b)$. Notice that if $a' \leq a$, $b' \leq b$, a.s.
$$
L_{a',b}^{(n)} \leq L_{a,b}^{(n)}  \leq L_{a,b'}^{(n)}.
$$
In particular, this gives $l_{a',b}^{(n)}(p) \leq l_{a,b}^{(n)}(p) \leq l_{a,b'}^{(n)}(p)$ for every $p$ in $(0,1)$. The following Russo-Seymour-Welsh estimates give upper bounds of left-right crossing lengths of long rectangles in terms of left-right crossing lengths of short rectangles.
\begin{Th}
\label{th:RSW}
\label{RSWQuantiles}
If $[A,B] \subset (0,\infty)$ there exists $C > 0$  such that for every $(a,b), (a',b') \in [A,B]$ with $a/b < 1 < a'/b'$ and for every $n \geq 0$, $\eps < 1/2$  we have 
\begin{gather}
\label{eq:RSW1}
l_{a',b'}^{(n)} (\eps/C) \leq l_{a,b}^{(n)}(\eps)C e^{C \sqrt{\abs{\log \eps /C}}},  \\
\label{eq:RSW2}
 \bar{l}_{a',b'}^{(n)} (3 \eps^{1/C}) \leq \bar{l}_{a,b}^{(n)}(\eps) C e^{C \sqrt{\abs{\log \eps/C}}}.
\end{gather}
\end{Th}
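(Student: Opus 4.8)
The plan is to prove the RSW-type estimates by combining two ingredients: (i) a gluing/concatenation argument that builds a crossing of a long rectangle out of finitely many crossings of short rectangles of the appropriate shape, and (ii) an approximate conformal invariance of $\phi_{0,n}$ (established in Section 4, obtainable through the white noise coupling) that lets us transfer a crossing estimate for one aspect ratio to another. More precisely, first I would reduce the general statement to a single pair of reference shapes: by the monotonicity $L_{a',b}^{(n)} \leq L_{a,b}^{(n)} \leq L_{a,b'}^{(n)}$ already recorded in the excerpt, together with scaling (Proposition 2.x(iv)) and the fact that $[A,B]$ is compact, it suffices to show that a hard crossing of some fixed long rectangle, say $[0,3]\times[0,1]$, has length controlled by the length of an easy crossing of a fixed square or short rectangle. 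The losses $C$ and $Ce^{C\sqrt{|\log\eps/C|}}$ will accumulate from a bounded number of such comparison steps, so constants depending only on $[A,B]$ are harmless.

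The core step is the concatenation. To cross $[0,3]\times[0,1]$ from left to right, I would tile it by overlapping copies of a shorter rectangle $[0,1]\times[0,1/3]$ (or similar), arranged so that a left-right crossing of each tile, stacked vertically and chained horizontally, forces a left-right crossing of the big rectangle. If $N$ tiles suffice and each has crossing length $\leq \ell$ with probability $\geq 1-\eps'$, then by a union bound the big rectangle has crossing length $\leq N\ell$ with probability $\geq 1-N\eps'$; to turn this into the stated form one uses the FKG/positive-association inequality proved above (Theorem \ref{PosAssoc}, in the $L^{(n)}(R)\geq$ form derived in Section \ref{DefLengths}) to get the \emph{lower}-tail version $\Pro(\bigcap \{L(R_i)>\ell\}) \geq \prod \Pro(L(R_i)>\ell)$, which is what one needs for the $\bar l$ estimate \eqref{eq:RSW2}. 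The quantitative form of the loss — the $e^{C\sqrt{|\log\eps/C|}}$ factor rather than a clean constant — comes from the fact that crossing a short rectangle the "hard way" (connecting the two close sides) is not directly an easy crossing of a square; one must route a path through a bounded number of squares and pay, for each, a factor controlled by the fluctuation of $\phi_{0,n}$ over that square, whose upper tail is Gaussian, giving the $\exp(C\sqrt{\cdot})$ shape after optimizing over a threshold.

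The approximate conformal invariance enters to swap an easy crossing for a hard one of a rectangle of reciprocal aspect ratio: a conformal map (e.g.\ a map of an annulus or a suitable Möbius/rectangular map) sends a "hard" crossing problem to an "easy" one, and the white-noise coupling shows that $\phi_{0,n}$ on the two configurations agrees up to a field whose exponential moments and Hölder norm are controlled, so crossing lengths are comparable up to the multiplicative errors quoted. I would invoke the Section 4 statement as a black box here. The main obstacle I anticipate is bookkeeping the error terms through the chain of comparisons so that the final loss is genuinely of the form $C e^{C\sqrt{|\log\eps/C|}}$ uniformly in $n$ and in $(a,b),(a',b')\in[A,B]$: one must be careful that the number of gluing steps stays bounded (using compactness of $[A,B]$ and dyadic scaling), that the fluctuation terms from the conformal coupling do not depend on $n$, and that the FKG step is applied in the regularized ($\phi_{0,n}^k$, piecewise constant) setting and then passed to the limit $k\to\infty$ exactly as in the displayed computation in Section \ref{DefLengths}. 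A secondary technical point is ensuring the thresholds in the quantile manipulations match up, i.e.\ that $\eps\mapsto \eps/C$ and $\eps\mapsto 3\eps^{1/C}$ are the right bookkeeping for "union bound over $N$ tiles" versus "product of $N$ independent near-sure events"; this is elementary but must be done with the right inequalities ($1-N\eps \geq 1 - \eps^{1/C}$ type bounds) and is where the asymmetry between \eqref{eq:RSW1} and \eqref{eq:RSW2} originates.
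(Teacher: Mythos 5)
Your proposal identifies the right high-level ingredients --- approximate conformal invariance via the white-noise coupling, FKG/positive association, the Gaussian-tail origin of the $e^{C\sqrt{|\log\eps|}}$ loss --- but the core mechanism you propose, a gluing/concatenation argument that builds a long crossing out of stacked short crossings, is not the one the paper uses, and I don't see how to make it work, especially for the left-tail estimate \eqref{eq:RSW1}.

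The difficulty is direction. Estimate \eqref{eq:RSW1} asserts that a \emph{low-probability} cheap crossing of the thin rectangle forces a comparably low-probability cheap crossing of the fat rectangle. Concatenation cannot transfer a single rare event: if each tile has a cheap crossing with probability $\eps$, FKG gives at best $\Pro(\bigcap\{L(R_i)\le\ell\})\geq\eps^N$, a geometric loss that is incompatible with the stated $\eps\to\eps/C$. The paper instead goes in the opposite geometric direction: Lemma \ref{LemCro} shows that any left-right crossing of $[0,a]\times[0,b]$ already contains a thin-direction crossing of one of $j$ half-scale rectangles, so a union bound \emph{pushes the rare cheap event down to a smaller scale} ($\eps\to\eps/j^p$). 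Once the crossing lives at a very small scale, it is a crossing between two tiny arcs of an ellipse $E$; one then chooses a conformal map $F:E\to E'$ (with $|F'|\geq 1$ after taking $p$ large) that sends those arcs into the marked arcs of a large ellipse $E'$ containing $[0,a']\times[0,b']$, and the white-noise coupling (Propositions \ref{LeftTail}/\ref{RightTail}) controls the distortion of lengths. This shrink-then-conformally-blow-up step is the key idea, and it is what makes the argument work for \emph{both} tails with the same machinery; your proposal does not have it. For the right tail the paper also passes through Lemma \ref{LemCro}, but now via the square-root trick (FKG gives $\Pro(L_{a/2,b/2}^{(n)}\leq l)\geq 1-\eps^{1/j}$ from $\Pro(L_{a,b}^{(n)}\leq l)\geq 1-\eps$), again reducing scale rather than gluing, before applying the conformal transfer; this is where the $\eps^{1/C}$ in \eqref{eq:RSW2} comes from, not from a union bound over $N$ tiles. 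Finally, the $e^{C\sqrt{|\log\eps|}}$ factor in the paper comes specifically from the Gaussian tail of the low-frequency coupling error $\delta\phi_1+\delta\phi_2$ in Proposition \ref{UniformEstimate}, not from fluctuations of $\phi_{0,n}$ over the squares a path is routed through.

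There is also a practical obstruction to the naive gluing you describe even for the right tail: left-right crossings of overlapping sub-rectangles need not intersect, so one cannot simply add their lengths to bound the crossing of the large rectangle; classical RSW gluing requires additional transverse crossings to join the pieces, and the paper deliberately avoids this by reducing scale and using conformal invariance instead.
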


In the article \cite{DiDu}, Ding and Dunlap obtained a difficult result (see Theorem 5.1 in \cite{DiDu}), inspired by \cite{Ta}. Their result applies to a rather general setting whereas here we rely on some approximate conformal invariance of the field considered. However the result in \cite{DiDu} holds for $\gamma$ small and this is a comparison for low quantiles only. Here we obtain comparisons for low, as well as high, quantiles, and there is no assumption on $\gamma$. Furthermore, the RSW estimates obtained here are also quantitative: this is instrumental for instance in the proof of left tail estimates.
\begin{Th}
\label{th:Tails}
If $\gamma < \gamma_c$, the left-right length for various aspect ratio renormalized by $\mu_n$ is tight and its tails are quasi-lognormal i.e. if $[A,B] \subset (0,\infty)$ there exist constants $c > 0$, $C > 0$  such that for every $(a,b) \in [A,B]$, $n \geq 0$, $s > 1$: 
\begin{gather}
\label{QuasiGaussianUp}
\Pro \left( L_{a,b}^{(n)} \geq \mu_n e^{ s \sqrt{\log s}} \right) \leq  C e^{- c s^2},\\
\label{QuasiGaussianLow}
\Pro \left( L_{a,b}^{(n)} \leq \mu_n e^{- s} \right) \leq  C e^{- c s^2}.
\end{gather}
\end{Th}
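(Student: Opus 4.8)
\emph{Reduction to $L_{1,1}^{(n)}$.} The first step is to reduce both bounds, uniformly in $(a,b)\in[A,B]$, to tail bounds for $L_{1,1}^{(n)}$ centred at $\mu_n$. By the elementary monotonicity $L_{a',b}^{(n)}\le L_{a,b}^{(n)}\le L_{a,b'}^{(n)}$ one has $L_{A,B}^{(n)}\le L_{a,b}^{(n)}\le L_{B,A}^{(n)}$ pointwise for all $(a,b)\in[A,B]^2$, so it suffices to bound the upper tail of $L_{B,A}^{(n)}$ and the lower tail of $L_{A,B}^{(n)}$. Chaining the RSW estimates \eqref{eq:RSW1}--\eqref{eq:RSW2} through a bounded number of intermediate aspect ratios shows that, for any $p$ bounded away from $0$ and $1$, the quantiles $l^{(n)}_{B,A}(p)$ and $l^{(n)}_{A,B}(p)$ lie within a fixed multiple (depending only on $[A,B]$ and the reference $\eps$) of the corresponding quantiles of $L_{1,1}^{(n)}$. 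Under $\gamma<\gamma_c$, tightness of $\log L_{1,1}^{(n)}-\log\mu_n$ forces $l^{(n)}_{1,1}(p)\asymp\mu_n$ uniformly in $n$; hence $\mu_n\asymp l_n\asymp\bar l_n$ and all the rectangle quantiles above are $\asymp\mu_n$. It therefore suffices to prove $\Pro(L_{1,1}^{(n)}\ge\mu_n e^{s\sqrt{\log s}})\le Ce^{-cs^2}$ and $\Pro(L_{1,1}^{(n)}\le\mu_n e^{-s})\le Ce^{-cs^2}$; and since the range $1<s\le s_0$ is trivial by taking $C$ large, we may assume $s$ large. A byproduct of the two-scale decomposition below, together with the same RSW/tightness input, is the comparison $\mu_n\asymp\mu_{n-m}$ with constants depending only on the fixed integer $m$ to be chosen.

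\emph{Two-scale decomposition and the lower tail.} Fix $m\ge1$, put $N=2^m$, and write $\phi_{0,n}=\phi_{0,m-1}+\phi_{m,n}$ with the two summands independent. On each $S\in\mathcal P_m$, after the affine rescaling of $S$ onto $[0,1]^2$, the field $\phi_{m,n}|_S$ is distributed as $\phi_{0,n-m}$, and the restrictions of $\phi_{m,n}$ to distinct squares of $\mathcal P_m$ are independent up to a boundary strip of width $O(2^{-m})$ (genuinely independent if one keeps only squares from a fixed sublattice). Conditionally on $\mathcal F_{0,m-1}$, the left--right crossing lengths $L(S)$ of the squares for $e^{\gamma\phi_{m,n}}$ are thus, up to factors $e^{\pm\gamma\,\mathrm{osc}_S\phi_{0,m-1}}$ with Gaussian tails (Appendix), of order $N^{-1}$ times nearly-i.i.d.\ copies of $L_{1,1}^{(n-m)}$. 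Now a left--right crossing of $[0,1]^2$ must, for each of the $N$ columns of $\mathcal P_m$-squares, contain a left--right crossing of that column, so $L_{1,1}^{(n)}\ge\min_{\mathrm{cols}}e^{-\gamma\,\mathrm{osc}\,\phi_{0,m-1}}\sum_{S}L(S)$. A sum of $N$ nearly-i.i.d.\ positive variables is below a small fraction of $N$ times its median only if a fixed fraction of the summands are individually below a fixed fraction of theirs; hence, using the independence of well-separated squares (and positive association, Theorem~\ref{PosAssoc}, to absorb the boundary correlations) and a Chernoff bound, $\Pro(L_{1,1}^{(n)}\le\mu_n e^{-s})$ is controlled by $N\binom{N}{\kappa N}q_{n-m}(s/2)^{\kappa N}$ plus a Gaussian remainder from the oscillation terms, where $q_k(u):=\Pro(L_{1,1}^{(k)}\le\mu_k e^{-u})$. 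This gives a recursion of the form $q_n(s)\le C^N q_{n-m}(s/2)^{cN}+(\text{Gaussian remainder})$. Iterating it $\lceil\log_2(s/s_1)\rceil$ times down to a fixed level $s_1$, choosing first $s_1$ large enough (by tightness) that $q_k(s_1)\le e^{-\Lambda}$ for all $k$ with $\Lambda$ large, then $m$ (hence $N$) so the gain per halving beats the accumulated prefactors, yields $q_n(s)\le C\exp(-c(s/s_1)^{m-1})$; with $m=3$ this is the desired $Ce^{-cs^2}$. Small $n$, where no iteration is possible, is handled directly since $L_{1,1}^{(n)}$ is then comparable to $L_{1,1}^{(0)}$, whose tails are Gaussian by Proposition~\ref{LBounds}.

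\emph{The upper tail.} Dually, one builds a left--right crossing of $[0,1]^2$ by concatenating left--right crossings of the $N$ squares of a single row of $\mathcal P_m$ together with $O(1)$ short vertical connectors inside neighbouring squares, the connectors controlled by RSW; this gives $L_{1,1}^{(n)}\le Ce^{\gamma\sup_{[0,1]^2}\phi_{0,m-1}}N^{-1}\sum_S(L(S)+R(S))$. For the fixed integer $m$, $\sup_{[0,1]^2}\phi_{0,m-1}$ is a smooth Gaussian functional with a genuine Gaussian tail, so $\{L_{1,1}^{(n)}\ge\mu_n e^{\lambda}\}$ forces either $\gamma\sup\phi_{0,m-1}$ to exceed a constant fraction of $\lambda$, of probability $\le Ce^{-c\lambda^2}$, or the average over a row of the $L(S)+R(S)$ to exceed $c\mu_{n-m}e^{\lambda-C}$, a large deviation for a sum of $N$ nearly-i.i.d.\ variables with upper-tail function $\bar q_{n-m}$. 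Estimating the latter by the usual union bound over how many summands are large, feeding in the RSW comparisons (each costing a factor $e^{C\sqrt{|\log\eps|}}$), and iterating, one keeps the Gaussian shape $Ce^{-c\lambda^2}$ but accumulates, over the $\asymp\log\lambda$ relevant scales, multiplicative RSW errors and coarse-field maxima of total size $e^{O(\sqrt{\lambda\log\lambda})}$; absorbing these into a level shift $\lambda\mapsto\lambda-O(\sqrt{\lambda\log\lambda})$ is exactly why one must pass to $\lambda=s\sqrt{\log s}$ in \eqref{QuasiGaussianUp}. Calibrating the iteration as in the lower-tail case then gives $\Pro(L_{1,1}^{(n)}\ge\mu_n e^{s\sqrt{\log s}})\le Ce^{-cs^2}$.

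\emph{Main obstacle.} The delicate points are twofold. First, upgrading the ``nearly independent, up to an $O(2^{-m})$ boundary strip'' structure of the $\mathcal P_m$-square crossings to a form genuinely usable in the large-deviation estimates: this is where a careful choice of sublattice of squares and the positive association of Theorem~\ref{PosAssoc} enter, and where one must verify that the truncations $L^{(n)}(R,k)$ and the associated limiting arguments behave well. Second, the bookkeeping of the accumulation, along the $\asymp\log s$ scales of the cascade, of the RSW error factors $e^{C\sqrt{|\log\eps|}}$ and of the coarse-field suprema $\sup\phi_{0,m-1}$: these must be kept strictly subdominant for the lower tail, and they are precisely what degrades the Gaussian rate to $e^{-cs^2/\log s}$ for the upper tail. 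Throughout, subcriticality $\gamma<\gamma_c$ is used only to supply the uniform-in-$n$ a priori bounds $q_n(s_1),\bar q_n(\lambda_1)\le e^{-\Lambda}$ that seed the iteration and the comparisons $\mu_n\asymp\mu_{n-m}\asymp l_n\asymp\bar l_n$.
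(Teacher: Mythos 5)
Your proposal takes a genuinely different route from the paper's. The paper proves the lower-tail estimate (Proposition \ref{PropLower}, Corollary \ref{LowerTailThin}) by an iteration performed entirely at the \emph{fixed} scale $n$: it combines the quantitative RSW bound \eqref{ReUse} with the squaring inequality $\Pro(L_{3,3}^{(n)}\le l)\le\Pro(L_{1,3}^{(n)}\le l)^2$, which comes from the independence of the field on the two side strips of $[0,3]^2$. The iteration index is the number of RSW$+$squaring steps, not the field scale, so there is no boundary case and in fact no use of $\gamma<\gamma_c$ at that stage. The upper tail (Proposition \ref{LemmeRecu}) uses RSW to reduce to squares, a comparison to highly supercritical $1$-dependent \emph{oriented percolation} to get $\Pro(L_{k,k}^{(n)}\le 4k\bar l_n)\ge 1-C\alpha^k$, then a two-scale decoupling and moment method, with a separate moment-method argument covering $s\gg 2^{n/2}$. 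Theorem~\ref{th:Tails} is then a short deduction: tightness under $\gamma<\gamma_c$ plus RSW gives $\mu_n\asymp l_n\asymp\bar l_n$, and the two tail propositions translate into tails centred at $\mu_n$.

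Your cascade in the spatial scale has, as written, several gaps. The recursion $q_n(s)\le C^N q_{n-m}(s/2)^{cN}$ halves $s$ at each step, but the block argument you sketch does not produce that halving: if $\sum_j X_j\le\mu_n e^{-s}$ with each $X_j\ge0$ of typical size $N^{-1}\mu_{n-m}$, what one gets is that many $X_j$ are below $cN^{-1}\mu_{n-m}e^{-s+C}$, i.e.\ $s$ decreases only by an additive constant. Your calibration ``$m=3$ gives exponent $m-1=2$'' is built on the halving and needs to be redone. Moreover, the cascade can only descend $O(n/m)$ scales, so for $s$ large compared with $n$ you need a boundary argument; your cited Proposition~\ref{LBounds} gives \emph{lower} bounds $\Pro(\cdot)\ge ce^{-Cx^2}$ on the tails, which is the wrong direction. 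Finally, for the upper tail, concatenating a \emph{single} row of $\mathcal P_m$-squares with $O(1)$ vertical connectors is fragile: one abnormally long square crossing in that row ruins the bound, and one actually needs a minimum over a supercritical family of paths through the block array, which is precisely what the paper's oriented-percolation comparison provides. The cascade philosophy is not unreasonable in itself, but these three points have to be fixed before it yields the claimed quasi-lognormal rates.
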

These estimates are fundamental ingredients to get:
\begin{Th}
\label{th:Metric} Assume that $\gamma < \min(\gamma_c, 0.4)$. Then:
\begin{enumerate} 
\item 
The sequence of compact metric spaces $\left( M_n \right)_{n \geq 0}$ where  $M_n := \left([0,1]^2, d_{0,n} \right)$ and where $d_{0,n}$ is the geodesic distance induced by the Riemannian metric $\mu_n^{-2} e^{\gamma \phi_{0,n}} ds^2$ is tight with respect to the uniform and Gromov-Hausdorff topologies. 

\item
If $(n_k)$ is a subsequence along which $(d_{n_k})_{k \geq 0}$ converges in law to some $d_{0,\infty}$, then for $f \in \mathcal{S}(\R^2)$, $(d_{n_k},e^{\frac{\gamma}{2} f} \cdot d_{n_k})_{k \geq 0}$ converges in law to $(d_{0,\infty}, e^{\frac{\gamma}{2} f} \cdot d_{0,\infty})$ (see Section \ref{sec:WeylScaling} for a definition of the Weyl scaling).

\item
Moreover, $(\phi_{0,\infty} + f, e^{\frac{\gamma}{2} f} \cdot d_{0,\infty})$ is absolutely continuous with respect to $(\phi_{0,\infty}, d_{0,\infty})$ and the associated Radon-Nikod\'ym derivative is the one associated to the first marginal i.e. $\frac{d \mathcal{L}(\phi_{0,\infty} + f)}{d \mathcal{L}(\phi_{0,\infty})}$.
\end{enumerate}
\end{Th}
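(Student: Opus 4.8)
We sketch the strategy; the tail bounds of Theorem~\ref{th:Tails} and the RSW estimates of Theorem~\ref{th:RSW} are the main inputs throughout.

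\emph{First assertion (tightness).} The plan is to reduce tightness in both topologies to two quantitative estimates, uniform in $n$: a \emph{diameter bound}, namely that $\mathrm{diam}(M_n)$ is tight, and an \emph{equicontinuity bound}, namely that for every $\eps>0$ there is $j=j(\eps)$ such that, with probability close to one uniformly in $n$, every $P\in\mathcal P_j$ has $d_{0,n}$-diameter at most $\eps$. Granting these, tightness in the uniform topology follows from Arzel\`a--Ascoli applied to $(d_{0,n})$ on the compact set $[0,1]^2\times[0,1]^2$ (uniform boundedness from the diameter bound, equicontinuity from the second estimate), and tightness in $(\mathbb M,d_{GH})$ follows from the precompactness criterion there, using the diameter bound together with the uniform covering bound $N_\eps(M_n)\le|\mathcal P_{j(\eps)}|=2^{2j(\eps)}$ coming from the same estimate. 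To connect two points of a box $P\in\mathcal P_j$ one crosses $O(1)$ boxes of $\mathcal P_j$ (and its neighbors), so both estimates reduce to controlling $\max_{P\in\mathcal P_j}$ of the renormalized left-right length of $P$ at level $n$.

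\emph{The multi-scale estimate (main obstacle).} On $P\in\mathcal P_j$ write $\phi_{0,n}=\phi_{0,j}+\phi_{j,n}$; the coarse part $\phi_{0,j}$ varies little across $P$ but its supremum over the \emph{worst} box of $\mathcal P_j$ is of order $2j\log 2$ (the supremum estimates of the appendix), while, after the affine rescaling $P\to[0,1]^2$, the restriction of $\phi_{j,n}$ to $P$ has the law of (a field comparable by RSW to) $\phi_{0,n-j}$. Using the scaling property of the fields $\phi_{n,P}$, the RSW comparison of aspect ratios (Theorem~\ref{th:RSW}, with its quantitative correction $e^{C\sqrt{|\log\eps|}}$), and a union bound over the $2^{2j}$ boxes of $\mathcal P_j$ controlled by the Gaussian upper tail of Theorem~\ref{th:Tails}, the renormalized length of the worst box of $\mathcal P_j$ is at most of order $2^{(\gamma-1)j}\,(\mu_{n-j}/\mu_n)\,e^{C\sqrt{j\log j}}$ times a copy of $\mathrm{diam}(M_{n-j})$. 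Summing a telescoping version of this over $j$ (and using the diameter bound inductively at scales $n-j$) gives the diameter bound, while the individual estimate for large $j$ gives equicontinuity. The crux, and where the hypothesis $\gamma<0.4$ is used, is to make the resulting series summable uniformly in $n$: this requires controlling the ratio $\mu_{n-j}/\mu_n$ of medians across scales, which is done with the help of the supremum estimates (bounding $L^{(m)}_{1,1}$ below via $e^{-(\gamma/2)\sup_{[0,1]^2}\phi_{0,m}}$), the RSW estimates, and the tightness of $\log L^{(n)}_{1,1}-\log\mu_n$ (i.e.\ $\gamma<\gamma_c$, which also bounds $\delta_n$); the condition for convergence is the one isolated by the summation estimate in the appendix, which holds for $\gamma<0.4$. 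I expect this summation, together with the control of $\mu_{n-j}/\mu_n$, to be the hard part; by contrast with \cite{DiDu}, the point is that RSW is now available with explicit constants and at all quantiles, so the bound closes with no smallness assumption beyond $\gamma<0.4$.

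\emph{Identification of the limit and the second assertion.} Along a subsequence, $d_{n_k}\to d_{0,\infty}$ uniformly and $M_{n_k}\to M_\infty$ for $d_{GH}$; one checks these agree, i.e.\ that $d_{0,\infty}$ is a genuine metric inducing the Euclidean topology on $[0,1]^2$. Continuity of the identity $([0,1]^2,|\cdot|)\to([0,1]^2,d_{0,\infty})$ is the equicontinuity estimate; that distinct points are not identified follows from a uniform-in-$n$ lower bound on the renormalized length of an annulus crossing separating two fixed points, itself a consequence of Theorem~\ref{th:RSW} and the lower tail \eqref{QuasiGaussianLow} (which gives that renormalized macroscopic crossing lengths are $\gtrsim 1$ with probability bounded below). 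For the second assertion, note that by definition of the Weyl scaling (Section~\ref{sec:WeylScaling}) the metric $e^{\frac{\gamma}{2}f}\cdot d_{0,n}$ is exactly the geodesic distance of $\mu_n^{-2}e^{\gamma(\phi_{0,n}+f)}ds^2$, and that for $f\in\mathcal S(\R^2)$ (hence bounded and uniformly continuous on $[0,1]^2$) the map $d\mapsto e^{\frac{\gamma}{2}f}\cdot d$ is continuous from length metrics on $[0,1]^2$ with the uniform topology to itself, and likewise for $d_{GH}$ — a soft lemma using only uniform continuity of $f$ together with the bi-Lipschitz bound $e^{-\frac{\gamma}{2}\|f\|_\infty}d\le e^{\frac{\gamma}{2}f}\cdot d\le e^{\frac{\gamma}{2}\|f\|_\infty}d$. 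Combining this with the first assertion and the Skorokhod representation theorem yields the joint convergence $(d_{n_k},e^{\frac{\gamma}{2}f}\cdot d_{n_k})\to(d_{0,\infty},e^{\frac{\gamma}{2}f}\cdot d_{0,\infty})$.

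\emph{Third assertion (absolute continuity).} The idea is a Cameron--Martin change of measure on the driving white noise $\xi$ on $\R^2\times[0,\infty)$. Write $\mathcal K$ for the map $g\mapsto\int_{\R^2}\int_0^1 k((y-\cdot)/t)t^{-3/2}g(y,t)\,dy\,dt$ and $\mathcal K_n$, $\mathcal K_n^{c}$ for its restrictions to $t\in[2^{-n-1},1]$ and $t\in(0,2^{-n-1}]$, so $\phi_{0,\infty}=\mathcal K\xi$ and $\phi_{0,n}=\mathcal K_n\xi$. Since $\phi_{0,\infty}$ is log-correlated, $\mathcal S(\R^2)$ lies in its Cameron--Martin space; let $g$ be the minimal-norm solution of $\mathcal K g=f$. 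One checks that $\|g(\cdot,t)\|_{L^2}\lesssim t^{1/2}$ (as $g(\cdot,t)=t^{1/2}(k_t*\tilde f)$ with $\tilde f=(\mathcal K\mathcal K^\ast)^{-1}f\in L^2$), whence $\|\mathcal K_n g-f\|_\infty=\|\mathcal K_n^{c}g\|_\infty\to0$. Under the shift $\xi\mapsto\xi+g$, whose Radon--Nikod\'ym density is $e^{\langle\xi,g\rangle-\frac12\|g\|^2}$, one has $\phi_{0,\infty}\mapsto\phi_{0,\infty}+f$ and $\phi_{0,n}\mapsto\phi_{0,n}+\mathcal K_n g=\phi_{0,n}+f-\mathcal K_n^{c}g$, so $d_{0,n}$ at the shifted noise equals $e^{-\frac{\gamma}{2}\mathcal K_n^{c}g}\cdot\big(e^{\frac{\gamma}{2}f}\cdot d_{0,n}\big)$, which differs from $e^{\frac{\gamma}{2}f}\cdot d_{0,n}$ by a Weyl factor tending uniformly to $1$ (monotonicity of the Weyl scaling in the weight). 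Writing the change-of-measure identity for a bounded continuous test function $\Psi$ of the pair $(\phi_{0,\infty},d_{0,n_k})$, identifying $e^{\langle\xi,g\rangle-\frac12\|g\|^2}$ with $\frac{d\mathcal L(\phi_{0,\infty}+f)}{d\mathcal L(\phi_{0,\infty})}(\phi_{0,\infty})$ (a function of the first marginal alone), and letting $k\to\infty$ along a subsequence on which $(\phi_{0,\infty},d_{0,n_k},e^{\frac{\gamma}{2}f}\cdot d_{0,n_k})$ converges jointly in law (possible by the first two assertions) — the passage to the limit being justified by uniform integrability, since the Radon--Nikod\'ym derivative, the exponential of a Gaussian, is bounded in every $L^p$, $p>1$ — one obtains
\[
\E\big[\Psi(\phi_{0,\infty}+f,\, e^{\frac{\gamma}{2}f}\cdot d_{0,\infty})\big]=\E\big[\Psi(\phi_{0,\infty},\, d_{0,\infty})\,\tfrac{d\mathcal{L}(\phi_{0,\infty}+f)}{d\mathcal{L}(\phi_{0,\infty})}(\phi_{0,\infty})\big].
\]
This is precisely the claimed absolute continuity with Radon--Nikod\'ym derivative equal to that of the first marginal, and mutual absolute continuity follows since this derivative is a.s.\ strictly positive. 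Here the only genuinely new point beyond classical Cameron--Martin theory is the approximation $\mathcal K_n g\to f$, which is what lets one replace the shifted metric by $e^{\frac{\gamma}{2}f}\cdot d_{0,\infty}$ in the limit.
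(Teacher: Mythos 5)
Your overall strategy matches the paper's proof in all three parts: for (i), a chaining/scaling argument yielding uniform H\"older control, a supremum estimate, and a union bound over boxes with the quasi-lognormal tails; for (ii), a continuity lemma for the Weyl scaling combined with Skorokhod representation; for (iii), a Cameron--Martin change of measure and a passage to the limit justified by uniform integrability. Your Cameron--Martin argument is phrased on the driving white noise $\xi$ rather than on the field $\phi_{0,n}$, but unwinding $\langle\xi,g\rangle=\langle\phi_{0,\infty},C_{0,\infty}^{-1}f\rangle$ and $\mathcal K_ng=C_{0,n}*(C_{0,\infty}^{-1}f)$ shows this is exactly the paper's construction ($f_n:=C_{0,n}*g$, $g:=C_{0,\infty}^{-1}f$) in a different parametrization, with the same density.

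There is one genuine gap, in the ``soft lemma'' underpinning part (ii). You assert that for $f$ uniformly continuous, the map $d\mapsto e^{\frac{\gamma}{2}f}\cdot d$ is continuous on length metrics on $[0,1]^2$ in the uniform topology, using only the two-sided bound $e^{-\frac{\gamma}{2}\|f\|_\infty}d\le e^{\frac{\gamma}{2}f}\cdot d\le e^{\frac{\gamma}{2}\|f\|_\infty}d$. That is not enough. The definition of $e^f\cdot d$ involves sums $\sum_i e^{f(\pi(t_{i-1}))}d(\pi(t_{i-1}),\pi(t_i))$ over ever-finer partitions, and to pass to the limit one must control the \emph{Euclidean} oscillation of $f$ along $d_n$-short geodesic segments. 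This needs a lower modulus $r(\|x-y\|)\le d_n(x,y)$ (so that $d_n$-small implies Euclidean-small) as well as an upper one, uniform in $n$; both appear explicitly as hypotheses in the paper's Lemma~\ref{StabMetric}, and their uniform-in-$n$ availability (via the tightness of the H\"older constants $C_\alpha^n$, $C_\beta^n$ from \eqref{UpperHolder}--\eqref{LowerHolder}) is what makes Corollary~\ref{CorConvLaw} work. Without this quantitative two-sided H\"older comparison to the Euclidean metric, the convergence $e^{f}\cdot d_n\to e^f\cdot d_\infty$ can fail even for uniformly continuous $f$: a sequence of length metrics converging uniformly but developing Euclidean ``shortcuts'' at smaller and smaller scales would break the oscillation bound along geodesics. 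In your proof you \emph{do} have the H\"older estimates from part (i), so the gap is bridgeable, but the lemma as you phrase it (``from length metrics with the uniform topology to itself'') is false and must be stated with the bi-H\"older hypothesis; the same control is needed again in part (iii) when you replace $e^{\frac{\gamma}{2}f_{n_k}}\cdot d_{0,n_k}$ by $e^{\frac{\gamma}{2}f}\cdot d_{0,\infty}$ in the limit.

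One smaller remark: you attribute the restriction $\gamma<0.4$ to a summation estimate in the appendix. In the paper, that threshold comes from requiring the H\"older exponent interval $(0,\,1-2\gamma-\tfrac{\gamma^2}{4(1-2\gamma)})$ to be nonempty, which is equivalent to $\gamma<2/5$; the appendix bound $F(s)\le Ce^{c_\eps s^2}e^{Cs^{1+\delta}}$ (Proposition~\ref{SupTails}) is the exponential diameter tail that feeds into that H\"older computation. It would strengthen your write-up to make this chain explicit rather than gesturing at ``summability.''
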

We will also check that $\gamma_c > 0$ which is the content of:
\begin{Th}
\label{th:BoundGamma}
For every choice of bump function $k$, $\gamma_c(k) > 0$.  
\end{Th}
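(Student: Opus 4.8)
The plan is to produce a constant $\gamma_0=\gamma_0(k)>0$ such that for every $\gamma<\gamma_0$ the sequence $\big(\log L_{1,1}^{(n)}-\log\mu_n\big)_{n\ge 0}$ is tight; since $\gamma_c$ is by definition the infimum of the parameters at which this tightness first fails, this gives $\gamma_c(k)\ge\gamma_0>0$. So one only needs a two-sided concentration bound for $\log L_{1,1}^{(n)}$ around a deterministic centering, uniform in $n$.

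The mechanism is a multiscale recursion comparing scale $n$ with scale $n-m$ for a suitable fixed integer $m$. Split $\phi_{0,n}=\phi_{0,m-1}+\phi_{m,n}$ into two independent fields. By the scaling and finite-range properties of the pieces $\phi_{k,P}$, on each dyadic square $P\in\mathcal P_m$ of $[0,1]^2$ the restriction of $e^{\gamma\phi_{m,n}}$ is, up to an affine map of ratio $2^{-m}$, an independent copy of $e^{\gamma\phi_{0,n-m}}$ on the unit square, so the $4^m$ crossing lengths of these squares are independent and, rescaled by $2^{-m}$, distributed like $L_{1,1}^{(n-m)}$ (and, in the vertical direction, like the top--bottom length, equal in law by symmetry). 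The coarse field $\phi_{0,m-1}$ contributes only the factors $e^{\pm\gamma M_m}$ with $M_m:=\sup_{[0,1]^2}|\phi_{0,m-1}|$; the decisive structural point is that $M_m$ is a \emph{fixed} random variable, not growing with $n$, with Gaussian tails controlled by the supremum estimates of the appendix --- in contrast with $\sup_{[0,1]^2}|\phi_{0,n}|$, which does grow with $n$.

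I would then convert this into an approximate recursion for $L_{1,1}^{(n)}$. For the upper bound one stitches together left--right and top--bottom crossings of small squares along a left-to-right path of the grid $\mathcal P_m\cap[0,1]^2$ and optimizes over such paths, producing a first-passage-percolation functional of the independent weights $\widetilde L_P\overset{d}{\sim}L_{1,1}^{(n-m)}$; for the lower bound one uses that a left--right crossing of $[0,1]^2$ must cross each of the $2^m$ disjoint vertical strips of width $2^{-m}$, and that the crossing length of such a strip is, by the RSW estimates of Theorem~\ref{th:RSW} (up to a fixed constant and the correction $Ce^{C\sqrt{|\log\eps/C|}}$), at least $2^{-m}$ times a square crossing length at scale $n-m$. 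In logarithmic form both bounds read: $\log L_{1,1}^{(n)}$ equals $\log\mu_{n-m}$, plus an average of $\sim 2^m$ nearly independent centered copies of $\log(L_{1,1}^{(n-m)}/\mu_{n-m})$, plus a bounded error of order $\gamma M_m$ together with the first-passage/RSW corrections --- and the point is that averaging over many independent copies is variance-reducing. I would close the recursion by propagating, uniformly in $n$, a well-chosen observable (e.g. a small exponential moment, or the variance, of $\log L_{1,1}^{(n)}-\log\mu_n$, together with a comparison of the mean and the median), after a preliminary bootstrap showing $\mu_n/\mu_{n-m}$ is bounded above and below so that the reconnection costs are of lower order; the averaging gain then dominates the bounded per-scale input once $\gamma$ is small.

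I expect the main difficulty to be making this recursion genuinely self-improving rather than merely self-reproducing: one has to show that the estimate at scale $n$ is strictly tighter than a naive propagation of scale $n-m$, which forces one to control simultaneously the upper tail, the lower tail and the mean-to-median ratio, and which is what makes the small-$\gamma$ hypothesis essential --- the fluctuations of the log-normal weights $e^{\gamma\phi_{0,m-1}}$ (of variance $\gamma^2\log 2$) and the detour/reconnection costs must be small enough not to cancel the variance reduction coming from the averaging over $\sim 4^m$ cells. A second, more technical, obstacle is the geometry behind the lower bound --- showing that a near-geodesic really pays a full hard-direction crossing inside every strip and cannot economize by long detours --- which is exactly where RSW and crossings of long rectangles and annuli enter, with all errors kept multiplicative and uniform. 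Once both are handled, uniform tightness of $\log L_{1,1}^{(n)}-\log\mu_n$ for small $\gamma$ follows, hence $\gamma_c(k)>0$.
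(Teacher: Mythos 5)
Your proposal takes a genuinely different route from the paper. You propose a direct one-step multiscale recursion: decompose $\phi_{0,n}=\phi_{0,m-1}+\phi_{m,n}$, exploit the independence and scaling of the $\phi_{m,n}$ block fields to view crossing lengths in $\mathcal P_m$-cells as independent copies of $L_{1,1}^{(n-m)}$, and argue that averaging over $\sim 2^m$ such copies reduces the variance of $\log L_{1,1}^{(n)}$. The paper instead applies the Efron--Stein inequality to $\log L_{1,1}^{(n)}$ (the independent coordinates being the block fields $\phi_{k,P}$ for \emph{all} scales $k\le n$), bounds the per-block contribution using the geodesic's occupation of blocks, the diameter estimates of Proposition~\ref{SupTails} and the lower-tail estimates of Corollary~\ref{LowerTailThin}, and closes an induction on the quantile ratio $\delta_n$ via the inequality $\mathrm{Var}\,\log L_{1,1}^{(n)}\le\gamma^2(C+C\delta_{n-1}^2)$. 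So the paper's variance control comes from a one-sided Efron--Stein bound summed over all scales simultaneously, whereas yours would come from a single coarse/fine split and a law-of-large-numbers effect.

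There is, however, a real gap in your sketch that would need to be repaired before it becomes a proof. The claim that ``$\log L_{1,1}^{(n)}$ equals $\log\mu_{n-m}$ plus an average of $\sim 2^m$ nearly independent centered copies of $\log(L_{1,1}^{(n-m)}/\mu_{n-m})$'' confuses the logarithm of a sum with the sum of logarithms. Your bounds are of the form $L_{1,1}^{(n)}\asymp e^{\pm\gamma M_m}\cdot 2^{-m}\sum_j X_j$ with $X_j$ roughly i.i.d.\ like $L_{1,1}^{(n-m)}$, so $\log L_{1,1}^{(n)}-\log\mu_{n-m}\asymp \pm\gamma M_m + \log\bigl(\tfrac{1}{2^m}\sum_j X_j/\mu_{n-m}\bigr)$. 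By concavity of $\log$, Jensen's inequality gives the \emph{lower} bound $\log\bigl(\tfrac{1}{2^m}\sum_j\bigr)\ge\tfrac{1}{2^m}\sum_j\log(\cdot)$, so the lower tail does reduce to an average of nearly centered logs. But the \emph{upper} tail is controlled by $\log\max_j X_j$, not by an average of logs, and for lognormal-type summands the maximum dominates; the variance-reduction argument is therefore one-sided as stated. To control the upper tail of $\tfrac{1}{2^m}\sum_j X_j/\mu_{n-m}$ you would need quantitative tail bounds on the $X_j$ (the very object the induction is trying to establish), a bootstrap you gesture at but do not resolve, and you would also need to prove concentration of the FPP functional $\min_\pi\sum_{P\in\pi}w_P$ over grid paths --- a step which, in the FPP literature, is itself proved by an Efron--Stein-type argument. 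In addition, the stitching/reconnection cost along a chosen grid path is not merely an additive error of order $\gamma M_m$: one needs the 1-dependent oriented percolation comparison of Proposition~\ref{LemmeRecu}, Step~2, or a similar argument, to ensure matching horizontal and vertical crossings actually connect, and this gives a multiplicative (not bounded additive) correction. These are exactly the issues that the paper's Efron--Stein formulation avoids by working directly on the true geodesic of $e^{\gamma\phi_{0,n}}ds^2$ and resampling one block field at a time.
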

The general proof scheme of this result is similar to the one in \cite{DiDu}. The key tool is the Efron-Stein inequality, which was introduced by Kesten in the context of {\em Euclidean} first passage percolation. It was first used by Ding and Dunlap in a multiscale analysis to study Liouville first passage percolation metrics. Let us mention a few key differences in the implementation of that concentration argument.

In \cite{DiDu}, the authors use the Efron-Stein inequality to give an upper bound of $\mathrm{Var} (L_{1,1}^{(n)})$, in order to control inductively the coefficient of variation of $L_{1,1}^{(n)}$, defined as
$$
CV^2(L_{1,1}^{(n)}) := \frac{\mathrm{Var} (L_{1,1}^{(n)})}{\E(L_{1,1}^{(n)})^2}.
$$
Here, since we expect that the logarithm of the normalized left-right distance is tight, we apply the Efron-Stein inequality to $\log L_{1,1}^{(n)}$ (the underlying product structure is provided naturally by the white noise representation of the field). We recall the notation for quantiles $\bar{l}_{1,1}^{(k)}(p)$, $l_{1,1}^{(k)}(p)$, defined such that $\Pro ( L_{1,1}^{(k)} \geq \bar{l}_{1,1}^{(k)}(p) ) = p$ and $\Pro ( L_{1,1}^{(k)} \leq l_{1,1}^{(k)}(p)) = p$, and set
$$
\delta_n(p) := \max_{k \leq n} \frac{\bar{l}_{1,1}^{(k)}}{l_{1,1}^{(k)}}(p)
$$
which is the quantity we want to bound inductively; $p$ is chosen small enough but fixed so that our tail estimates hold. The starting point of the induction is the inequality
$$
\frac{\bar{l}_{1,1}^{(n)}}{l_{1,1}^{(n)}}(p) \leq e^{C_p \sqrt{\mathrm{Var} \log L_{1,1}^{(n)}}}.
$$
Here the multiscale analysis, relying in particular on tail estimates (let us point out that instead of quasi-Gaussian bounds, super-exponential bounds would suffice) shows that, for $\gamma$ small (but which can be quantified) for some $c_{\gamma} < 1$, we have
$$
\mathrm{Var} \log L_{1,1}^{(n)} \leq  \gamma^2 \left(C + C \delta_{n-1}(p)^2 \sum_{k=1}^{\infty} c_{\gamma}^k \right)
$$
The absence of an explicit bound on $\gamma_c$ comes from the fact that we take $\gamma$ small enough in this inequality to bound inductively $\delta_{n}(p)$.

Finally, we will work out some independence of the parameter $\gamma_c$ with respect to the choice of the bump function which is the content of
\begin{Th}
\label{th:Indep}
If $k_1$ and $k_2$ are two bump functions such that $\hat{k}_1(\xi) =e^{- a \norme{\xi}^\alpha (1 + o(1) )}$ and $\hat{k}_2(\xi) = e^{- b \norme{\xi}^\alpha (1 + o(1) )}$, as $\xi$ goes to infinity, for some $\alpha \in (0,1)$ and $a,b >0$, then $\gamma_c(k_1) = \gamma_c(k_2)$.  
\end{Th}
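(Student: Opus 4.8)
Fix $\gamma>0$. Since $\gamma_c$ is defined only through whether $(\log L_{1,1}^{(n)}-\log\mu_n)_n$ is tight, it is enough — writing $L_{1,1}^{(n),k_i},\mu_n^{k_i}$ for the quantities built from $k_i$ — to show that tightness of $(\log L_{1,1}^{(n),k_1}-\log\mu_n^{k_1})_n$ implies tightness of $(\log L_{1,1}^{(n),k_2}-\log\mu_n^{k_2})_n$, and then to invoke symmetry. As a first step one can normalize the constants $a,b$: for $\lambda>0$ the field attached to $\lambda^{-1}k(\cdot/\lambda)$ has, by the scaling property, the covariance of the one attached to $k$ with all scales dilated by $\lambda$, hence agrees in law with $\phi_{0,n\mp O(1)}^{k}$ up to the fixed (i.e.\ $n$-independent), a.s.\ bounded smooth field carried by the finitely many scales near $1$; such an additive perturbation alters $\log L_{1,1}^{(n)}$ by an a.s.\ bounded amount and so preserves tightness of $\log L_{1,1}^{(n)}-\log\mu_n$. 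Since $\widehat{\lambda^{-1}k(\cdot/\lambda)}(\xi)=\lambda\hat k(\lambda\xi)=e^{-a\lambda^{\alpha}\|\xi\|^{\alpha}(1+o(1))}$, we may assume $a=b$ after dilating.

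Next, couple the two constructions scale by scale: the scale-$m$ fields are independent of all other scales, so we may couple $\phi_m^{k_1}$ with $\phi_m^{k_2}$ (e.g.\ by a rotation of their common spectral white noise, which keeps the variance of the difference bounded) and leave the other scales shared. Put $\Phi_{0,n}^{(j)}:=\sum_{m<j}\phi_m^{k_2}+\sum_{j\le m\le n}\phi_m^{k_1}$, so that $\Phi_{0,n}^{(0)}=\phi_{0,n}^{k_1}$, $\Phi_{0,n}^{(n+1)}=\phi_{0,n}^{k_2}$, and let $L^{(j)}$ be the left--right length of $[0,1]^2$ for $e^{\gamma\Phi_{0,n}^{(j)}}ds^2$. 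Then
$$
\log L_{1,1}^{(n),k_2}-\log L_{1,1}^{(n),k_1}=\sum_{j=0}^{n}\bigl(\log L^{(j+1)}-\log L^{(j)}\bigr),
$$
and $\Phi_{0,n}^{(j+1)}-\Phi_{0,n}^{(j)}=\phi_j^{k_2}-\phi_j^{k_1}$, so the $j$-th term is the effect of multiplying the metric tensor by $e^{\gamma(\phi_j^{k_2}-\phi_j^{k_1})}$, where by scaling $\phi_j^{k_2}-\phi_j^{k_1}$ has the law of $(\phi_0^{k_2}-\phi_0^{k_1})(2^j\cdot)$: bounded variance, correlation length $2^{-j}$.

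The crux is the scale-localized estimate $\log L^{(j+1)}-\log L^{(j)}=d_j+\Xi_j$, with $d_j$ non-random and $\Pro(|\Xi_j|\ge t)\le Ce^{-ct/\rho_j}$ ($t>0$, all $n$) for some $\rho_j$ with $\sum_j\rho_j<\infty$. The heuristic is that along a left--right crossing the perturbation $\phi_j^{k_2}-\phi_j^{k_1}$ averages over $\asymp 2^{j}$ roughly independent pieces: it shifts the typical value of $\log L$ — this is $d_j$, which need not be identified — but its effect on the \emph{fluctuations} of $\log L$ is damped by $\asymp 2^{-j/2}$. To make this rigorous one uses as input the subcriticality of $k_1$ (which, as noted in Section~\ref{DefLengths}, is used only through tightness of $\log L_{1,1}^{(n)}-\log\mu_n$), propagating it inductively along the interpolation $\Phi_{0,n}^{(0)},\Phi_{0,n}^{(1)},\dots$; runs on the scale-$j$ portion of the white noise the Efron--Stein/multiscale scheme behind Theorem~\ref{th:BoundGamma} — the bound $\mathrm{Var}\log L_{1,1}^{(n)}\le\gamma^2\bigl(C+C\delta_{n-1}^2\sum_m c_\gamma^m\bigr)$, now applied to the difference $\log L^{(j+1)}-\log L^{(j)}$ — and invokes the RSW estimates of Theorem~\ref{th:RSW} (to reduce a crossing to the dyadic blocks at scale $j$) together with the quasi-lognormal tails of Theorem~\ref{th:Tails} (to control the block lengths and the rare atypical blocks). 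The hypothesis that $k_1,k_2$ share the exponent $\alpha$ enters precisely here, to make the scale-$j$ perturbations small in the required sense: $C^{k_1}-C^{k_2}$ is smooth (the $-\log|x|$ singularities cancel), compactly supported, and of Gevrey type (as $\hat k_i$ are), so the spectral densities of the two fields — which agree to leading order $2\pi|\xi|^{-2}$, since $\int\hat k_i^2=(2\pi)^2$ — differ only by a Gevrey-small amount. Summing over $j$, $\log L_{1,1}^{(n),k_2}-\log L_{1,1}^{(n),k_1}=D_n+\Xi$ with $D_n:=\sum_j d_j$ deterministic and $\Pro(|\Xi|\ge t)\le Ce^{-ct}$ uniformly in $n$; comparing medians forces $D_n+\log\mu_n^{k_1}-\log\mu_n^{k_2}=O(1)$, hence $\log L_{1,1}^{(n),k_2}-\log\mu_n^{k_2}=(\log L_{1,1}^{(n),k_1}-\log\mu_n^{k_1})+O_{\Pro}(1)$ and tightness transfers. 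By symmetry $\gamma_c(k_1)=\gamma_c(k_2)$. The genuine difficulty is this scale-localized stability estimate — turning the ``averaging along a crossing'' heuristic into a quantitative, summable-in-$j$ bound on the \emph{fluctuating} part of $\log L_{1,1}^{(n)}$, in the coupled, non-independent setting — while the dilation reduction, the telescoping coupling and the median bookkeeping are routine.
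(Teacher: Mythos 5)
Your proposal takes a genuinely different route from the paper, and the pivotal step you yourself flag as ``the genuine difficulty'' is a real gap, not a routine verification. The paper does \emph{not} interpolate scale by scale and does \emph{not} invoke Efron--Stein in this proof. Instead it sets up a single spectral coupling: writing the positive part of the difference of the spectral densities $\hat C_{0,n}^2-\hat C_{0,n}^1$, it produces independent noises $\delta_n^i$ with $\phi_{0,n}^1+\delta_n^1\overset{(d)}{=}\phi_{0,n}^2+\delta_n^2$, and then shows (Steps 1--3 of Section 9) that the spectral density of $\delta_n^i$ is dominated by that of $\phi_{n,n+k}^i+\psi$ for a \emph{fixed} $k$ and a \emph{fixed} low-frequency field $\psi$. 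The point is a telescoping in the Fourier domain: since $\int u\hat k_1^2=\int u\hat k_2^2$, the cumulative spectral difference is a boundary effect supported near frequencies $1$ and $2^n$, and the shared $\alpha$-exponent is used precisely so that the high-frequency boundary piece is absorbed by finitely many extra scales of the field. Then Lemma \ref{Inequality} (one-sided quantile stability under adding an independent bounded-variance noise) and the pointwise bound \eqref{IneLowFreq} give the chain $L_{1,1}(\phi_{0,n+k}^1)\lesssim L_{1,1}(\phi_{0,n}^2+\delta_n^2)\lesssim L_{1,1}(\phi_{0,n}^1)$ \emph{with constants independent of $n$}, from which tightness transfers directly. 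No per-scale fluctuation control is needed.

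Your scheme, by contrast, applies a quantile comparison at each of the $n$ interpolation steps, and since the per-scale perturbation $\phi_j^{k_2}-\phi_j^{k_1}$ has pointwise variance $\asymp\|k_1-k_2\|_{L^2}^2$ bounded away from zero (independent of $j$), Lemma \ref{Inequality} alone loses an $O(1)$ factor at every scale, which is fatal after telescoping. You therefore posit a ``scale-localized estimate'' $\log L^{(j+1)}-\log L^{(j)}=d_j+\Xi_j$ with fluctuations summable in $j$, but this is neither proved nor a corollary of Theorem \ref{th:BoundGamma}: the Efron--Stein bound in Section 8 controls $\mathrm{Var}\log L_{1,1}^{(n)}$, not $\mathrm{Var}(\log L^{(j+1)}-\log L^{(j)})$, and the interpolated fields $\Phi^{(j)}_{0,n}$ are not $\star$-scale invariant, so neither the tightness hypothesis $\gamma<\gamma_c$ nor the RSW machinery (which uses approximate conformal invariance, hence the product structure of the white-noise construction) applies to them without further work. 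You also misattribute the role of the $\alpha$-tail hypothesis: it does not make your per-scale perturbation small --- that perturbation has $O(1)$ variance regardless --- it is what allows the \emph{cumulative} spectral difference to be dominated by finitely many scales, which is the mechanism your scheme forgoes. Finally, even granting the per-scale estimate, summing the $\Xi_j$ (which are not independent) to a uniform exponential tail is not immediate. The dilation reduction and median bookkeeping are fine in spirit (though note the dilation also perturbs the finitely many scales near $2^{-n}$, not only those near $1$), but the core estimate is the theorem, and it is missing.
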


\section{Russo-Seymour-Welsh estimates: proof of Theorem \ref{th:RSW}}

In this section we prove that our approximation $\phi_{0,n}$ of $\phi_{0,\infty}$ is approximately conformally invariant.  We will then investigate its consequences on the length of left-right crossings:  the RSW estimates, Theorem \ref{th:RSW}, which is a key result of our analysis. Let us already point out that these RSW estimates eventually lead, as a first corollary, to a lognormal decay of the left tail (inequality \eqref{QuasiGaussianLow}, without assuming $\gamma < \gamma_c$ but with a small quantile instead of the median).

\subsection{Approximate conformal invariance of $\phi_{0,n}$}

Let $F : U \rightarrow V$ be a conformal map between two Jordan domains. We wish to compare the laws of $\phi_{0,n}$ and $\phi_{0,n} \circ F$ in $U$ and look for a uniform estimate in $n$. For this we go back to the defining white noises. We write, for $\xi$ and $\tilde{\xi}$ two standard white noises
\begin{align*}
\phi_{0,n}(x) := \int_{\R^2} \int_{2^{-n-1}}^1 k\left( \frac{x-y}{t} \right) t^{-3/2} \xi(dy,dt), \\
\tilde{\phi}_{0,n}(x) := \int_{\R^2} \int_{2^{-n-1}}^1 k\left( \frac{x-y}{t} \right) t^{-3/2} \tilde{\xi}(dy,dt),
\end{align*}
and we want to couple $\phi_{0,n}$ and $\tilde{\phi}_{0,n} \circ F$ , in particular for the high-frequency modes. 
We couple the defining white noises $\xi, \tilde{\xi}$ in the following way: if $y' \in V$, $y \in U$, $y' = F(y)$, $t' = t|F'(y)|$, then
$$
\tilde{\xi}(dy',dt') = \abs{F'(y)}^{3/2} \xi(dy,dt)
$$
i.e. for a test function $\phi$ compactly supported in $V \times (0, \infty)$,
$$
\int \phi(y',t') \tilde{\xi}(dy',dt') = \int \phi(F(y),t |F'(y)|) \abs{F'(y)}^{3/2} \xi(dy,dt)
$$
and both sides have variance $\norme{\phi}_{L^2}^2$. The rest of the white noises are chosen to be independent, i.e. $\xi_{|U^c \times (0, \infty)}$, $\xi_{|U \times (0, \infty)}$ and $\xi_{|\tilde{V}^c \times (0, \infty)}$ are jointly independent. Assuming $|F'| \geq 1$ on $U$, since 
$$
\int_V \int_{2^{-n-1}}^{1} k \left( \frac{F(x)-y}{t} \right) t^{-3/2} \tilde{\xi}(dy,dt) = \int_U \int_{2^{-n-1}\abs{F'(y)}^{-1}}^{\abs{F'(y)}^{-1}} k \left( \frac{F(x)-F(y)}{t \abs{F'(y)}} \right) t^{-3/2} \xi(dy,dt),
$$
we can decompose $\phi_{0,n} (x) - \tilde{\phi} _{0,n} (F(x)) = \delta \phi_1(x) + \delta \phi_2(x) + \delta \phi_3(x)$ where
\begin{align*}
\delta \phi_1(x) = & \int_{U^c} \int_{2^{-n-1}}^1 k \left( \frac{x-y}{t} \right) t^{-3/2} \xi(dy,dt) - \int_{V^c} \int_{2^{-n-1}}^1 k \left( \frac{F(x)-y}{t} \right) t^{-3/2} \tilde{\xi}(dy,dt) \\
&  + \int_{U} \int_{\abs{F'(y)}^{-1}}^1 k \left( \frac{x-y}{t} \right) t^{-3/2} \xi(dy,dt),    \\
\delta \phi_2(x) = & \int_{U} \int_{2^{-n-1}}^{\abs{F'(y)}^{-1}} \left( k \left( \frac{x-y}{t} \right) - k \left( \frac{F(x)-F(y)}{t \abs{F'(y)}} \right) \right) t^{-3/2} \xi(dy,dt),   \\
\delta \phi_3(x) = & - \int_{U} \int_{2^{-n-1} \abs{F'(y)}^{-1}}^{2^{-n-1}} k \left( \frac{F(x)-F(y)}{t \abs{F'(y)}} \right) t^{-3/2} \xi(dy,dt).
\end{align*}
Remark also that $\delta \phi_3$ is independent of $\phi_{0,n}$, $\delta \phi_1$, and $\delta \phi_2$. We will estimate these three terms separately on a convex compact subset $K$ of an open convex set $U$ under the assumption that $\norme{F'}_{U,\infty} < \infty$ and $\norme{F''}_{U,\infty}  < \infty$ and $\abs{F'} \geq 1$ on $U$. 

\begin{Lemma}
\label{SmoothVar}
$\delta \phi_1$ restricted to $K$ is a smooth field; more precisely there exists $C > 0$ such that for every $n \geq 0$ 
$$
\mathbb{E}\left( \norme{\delta \phi_{1} }_{C^1(K)} \right) \leq C.
$$
\end{Lemma}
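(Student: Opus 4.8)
The plan is to split $\delta\phi_1 = \delta\phi_{1,a} - \delta\phi_{1,b} + \delta\phi_{1,c}$ according to its three defining integrals, regard each piece as a centered Gaussian field of the form $\Psi(x) = \int h_x(y,t)\,\xi(dy,dt)$ (with $\xi$ replaced by $\tilde{\xi}$ for the middle piece), and observe that for $x$ in the compact set $K$ the kernels $h_x$ are supported, in the variable $t$, in a region bounded away from $0$ uniformly in $n$. Since $k$ is supported in $B(0,r_0)$, the factor $k\big(\tfrac{x-y}{t}\big)$ forces $\abs{x-y} < r_0 t$. For $\delta\phi_{1,a}$ one has $y \in U^c$ and $x \in K$, hence $\abs{x-y} \ge \operatorname{dist}(K,U^c) =: d_0 > 0$ because $K$ is compact inside the open set $U$, so only $t > d_0/r_0$ contributes; for $\delta\phi_{1,b}$ the point $F(x)$ lies in the compact set $F(K) \subset V$ and $y \in V^c$, so only $t > \operatorname{dist}(F(K),V^c)/r_0 =: d_1/r_0 > 0$ contributes; for $\delta\phi_{1,c}$ the $t$-integral already starts at $\abs{F'(y)}^{-1} \ge \norme{F'}_{U,\infty}^{-1} =: t_0 > 0$. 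In particular the cutoff at $2^{-n-1}$ is immaterial once $2^{-n-1} < \min(d_0,d_1)/r_0$, and restricted to $K$ the field $\delta\phi_1$ is the pairing of the white noise with a kernel that is $C^\infty$ in $x$ (with all $x$-derivatives in $L^2(dy\,dt)$, locally uniformly), hence a smooth field; the remaining work is the quantitative $C^1$ bound.

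Next I would estimate, uniformly in $x \in K$ and in $n$, the variances $\norme{\partial_x^\alpha h_x}_{L^2}^2$ and the increments $\norme{\partial_x^\alpha h_x - \partial_x^\alpha h_{x'}}_{L^2}^2$ for multi-indices $\abs{\alpha} \le 2$. For $\delta\phi_{1,a}$ and $\delta\phi_{1,c}$ one has $\partial_x^\alpha\!\big[k\big(\tfrac{x-y}{t}\big)t^{-3/2}\big] = t^{-\abs{\alpha}-3/2}(\partial^\alpha k)\big(\tfrac{x-y}{t}\big)$, so the change of variables $z = (x-y)/t$ (at fixed $t$, $dy = t^2\,dz$) gives
\[
\norme{\partial_x^\alpha h_x^{(1,a)}}_{L^2}^2 \le \norme{\partial^\alpha k}_{L^2(\R^2)}^2 \int_{d_0/r_0}^1 t^{-2\abs{\alpha}-1}\,dt < \infty ,
\]
and likewise for $h_x^{(1,c)}$ with $d_0/r_0$ replaced by $t_0$; for $\delta\phi_{1,b}$ one differentiates $k\big(\tfrac{F(x)-y}{t}\big)$ by the chain rule, which brings out factors bounded by $\norme{F'}_{U,\infty}$ and $\norme{F''}_{U,\infty}$, and the same substitution bounds $\norme{\partial_x^\alpha h_x^{(1,b)}}_{L^2}$ by a constant depending only on $\norme{k}_{C^{\abs{\alpha}}}$, $\norme{F'}_{U,\infty}$, $\norme{F''}_{U,\infty}$ and $d_1$. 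The $L^2$-moduli of continuity are then controlled by $C\abs{x-x'}$ via the fundamental theorem of calculus along segments in the convex set $K$, using the analogous bounds with one further derivative; every constant is independent of $n$.

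Finally, these are precisely the inputs needed for a standard Gaussian regularity argument applied to the centered Gaussian fields $\delta\phi_1$ and $\partial_i\delta\phi_1$ ($i=1,2$) on the compact set $K$: for $Y \in \{\delta\phi_1, \partial_i\delta\phi_1\}$ a uniform bound on $\operatorname{Var}(Y(x))$ together with a Lipschitz bound on the canonical metric $\rho_Y(x,x')^2 = \E\big(Y(x)-Y(x')\big)^2$ yields, by Kolmogorov's continuity theorem (or Dudley's entropy bound together with the Borell--TIS inequality), that $\E\sup_K\abs{\delta\phi_1} < \infty$ and $\E\sup_K\abs{\partial_i\delta\phi_1} < \infty$ with bounds independent of $n$, i.e. $\E\norme{\delta\phi_1}_{C^1(K)} \le C$. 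I expect the only mildly delicate point to be the bookkeeping — the $y$-dependent lower cutoff $\abs{F'(y)}^{-1}$ in $\delta\phi_{1,c}$ and the chain-rule factors from differentiating $k\big(\tfrac{F(x)-y}{t}\big)$ in $\delta\phi_{1,b}$ — neither of which causes trouble since $\abs{F'} \ge 1$ and $\norme{F'}_{U,\infty}, \norme{F''}_{U,\infty} < \infty$; the real content is simply that taking $x \in K$ keeps all three $t$-integrals away from the small scales, which is what makes the estimates uniform in $n$.
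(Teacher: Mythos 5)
Your proposal is correct and follows the same strategy as the paper: you observe that for $x\in K$ (using the compact support of $k$ and the compactness of $K$ in $U$, of $F(K)$ in $V$, and the bound $\abs{F'(y)}^{-1}\geq\norme{F'}_{U,\infty}^{-1}$) the $t$-integrals defining the three pieces of $\delta\phi_1$ are supported in a fixed subinterval of $(0,1]$ bounded away from $0$, so the cutoff $2^{-n-1}$ is irrelevant for large $n$ and all estimates are uniform in $n$. The paper states the resulting smoothness and $C^1$-bound as a standard consequence; your filling-in of the $L^2$-kernel estimates and the Kolmogorov/Dudley step is a correct and more explicit version of that same argument.
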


\begin{proof}
If $x \in K$, since $k$ has compact support included in $B(0,r_0)$ we can write
$$
\int_{U^c} \int_{2^{-n-1}}^1 k \left( \frac{x-y}{t} \right) t^{-3/2} \xi(dy,dt) = \int_{U^c} \int_{(1 \wedge d(K,U^c)/r_0) \vee 2^{-n-1}}^1 k \left( \frac{x-y}{t} \right) t^{-3/2} \xi(dy,dt).
$$
The idea is the same for the second term. For the third term, $\abs{F'(y)} \leq \norme{F'}_{U,\infty}$ hence 
$$
\int_{U} \int_{\abs{F'(y)}^{-1}}^1 k \left( \frac{x-y}{t} \right) t^{-3/2} \xi(dy,dt)   = \int_{U} \int_{ \norme{F'}_{U,\infty}^{-1}}^1 1_{1 \leq t \abs{F'(y)}} k \left( \frac{x-y}{t} \right) t^{-3/2} \xi(dy,dt)
$$
which concludes the proof: the smoothness follows standard results of distribution in the sense of Schwartz.
\end{proof}

\begin{Lemma}
\label{CI}
There exists $C > 0$ such that for every $n \geq 0$ and every $x,x' \in K$,
$$
\mathbb{E}\left( (\delta \phi_2 (x) - \delta \phi_2 (x') )^2 \right)  \leq C \abs{x-x'}.
$$ 
We also have $\E \left( \delta \phi_2 (x)^2 \right) \leq C$ uniformly in $x \in K$ and $n\geq 0$.
\end{Lemma}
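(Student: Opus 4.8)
The plan is to estimate the variance of the increment $\delta\phi_2(x)-\delta\phi_2(x')$ directly from the white-noise representation, exploiting that the integrand is a difference of two copies of $k$ whose arguments are close. Recall that
\[
\delta\phi_2(x)=\int_U\int_{2^{-n-1}}^{|F'(y)|^{-1}}\Big(k\big(\tfrac{x-y}{t}\big)-k\big(\tfrac{F(x)-F(y)}{t|F'(y)|}\big)\Big)t^{-3/2}\,\xi(dy,dt),
\]
so by the isometry property of white noise,
\[
\E\big((\delta\phi_2(x)-\delta\phi_2(x'))^2\big)=\int_U\int_{2^{-n-1}}^{|F'(y)|^{-1}}\big(g_x(y,t)-g_{x'}(y,t)\big)^2\,t^{-3}\,dy\,dt,
\]
where $g_x(y,t):=k\big(\tfrac{x-y}{t}\big)-k\big(\tfrac{F(x)-F(y)}{t|F'(y)|}\big)$. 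First I would record the pointwise size of $g_x$: since $F$ is conformal with $\|F''\|_{U,\infty}<\infty$, a first-order Taylor expansion of $F$ around $y$ gives $F(x)-F(y)=F'(y)(x-y)+O(|x-y|^2)$, hence the two arguments of $k$ in $g_x$ differ by $O(|x-y|^2/t)=O(t)$ on the relevant range $|x-y|\lesssim t$ (outside that range both terms vanish). By the mean value theorem and $\|\nabla k\|_\infty<\infty$ this yields $|g_x(y,t)|\le C t\,\mathbf 1_{|x-y|\le Ct}$, and the support in $y$ has area $O(t^2)$. Plugging this into the integral with $\int_{2^{-n-1}}^{1}t^2\cdot t^2\cdot t^{-3}\,dt=\int_0^1 t\,dt<\infty$ gives the claimed uniform bound $\E(\delta\phi_2(x)^2)\le C$.

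For the increment, the key point is a Lipschitz-type estimate on $x\mapsto g_x(y,t)$ at the level of the $L^2(t^{-3}dy\,dt)$ norm. I would split into two regimes according to the value of $t$ relative to $|x-x'|$. For $t\le |x-x'|$ (the "small-scale'' part), I simply bound $|g_x-g_{x'}|\le |g_x|+|g_{x'}|\le Ct\,(\mathbf 1_{|x-y|\le Ct}+\mathbf 1_{|x'-y|\le Ct})$; integrating, the contribution is $\lesssim\int_0^{|x-x'|}t^2\cdot t^2\cdot t^{-3}\,dt=\int_0^{|x-x'|}t\,dt\lesssim |x-x'|^2\le C|x-x'|$. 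For $t\ge|x-x'|$ (the "large-scale'' part), I differentiate: $\partial_x g_x(y,t)=t^{-1}(\nabla k)\big(\tfrac{x-y}{t}\big)-t^{-1}|F'(y)|^{-1}F'(y)^{\!*}(\nabla k)\big(\tfrac{F(x)-F(y)}{t|F'(y)|}\big)$; the two terms individually are $O(t^{-1})$ but, arguing as above (the rotation $F'(y)/|F'(y)|$ is close to the identity up to $O(|x-y|)=O(t)$ via $\|F''\|_\infty$, and the two arguments of $\nabla k$ differ by $O(t)$), their difference is $O(1)$, supported on a set of $y$-area $O(t^2)$. Hence $|g_x(y,t)-g_{x'}(y,t)|\le C|x-x'|\,\mathbf 1_{|x-y|\le Ct}$ on this range, and the contribution to the integral is $\lesssim |x-x'|^2\int_{|x-x'|}^1 t^{-1}\,dt\lesssim |x-x'|^2\log(1/|x-x'|)\le C|x-x'|$ for $|x-x'|$ bounded away from whatever scale makes the log blow up — and for $|x-x'|$ of order the diameter of $K$ the bound is trivial. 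Summing the two regimes gives $\E((\delta\phi_2(x)-\delta\phi_2(x'))^2)\le C|x-x'|$.

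The main obstacle is the bookkeeping in the large-$t$ regime: one must be careful that the $O(1)$ cancellation in $\partial_x g_x$ genuinely holds uniformly in $n$ and over $K$, which requires controlling not just $k$ and $\nabla k$ but the interplay between the shift in the argument ($O(t)$) and the rotation/dilation defect of $DF$ ($O(t)$ on the support), using $\|F'\|_{U,\infty},\|F''\|_{U,\infty}<\infty$ and $|F'|\ge 1$; the lower bound $|F'|\ge1$ is what keeps the upper limit $|F'(y)|^{-1}$ of the $t$-integral $\le 1$, so no large-$t$ tail appears. Everything else is the routine Taylor estimate on $k$ and $F$ together with the Itô isometry, and the constant $C$ depends only on $\|F'\|_{U,\infty}$, $\|F''\|_{U,\infty}$, $\mathrm{diam}(K)$ and $k$, hence is independent of $n$.
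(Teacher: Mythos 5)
Your argument is correct and yields the stated bound, but it proceeds by a genuinely different route from the paper's. The paper never differentiates $g_x(y,t):=k(\tfrac{x-y}{t})-k(\tfrac{F(x)-F(y)}{t|F'(y)|})$ in $x$; instead it combines the one-point estimate $|g_x|=O(t)$ (their Eq.\ \eqref{O(t)}) with the crude Lipschitz bound $|g_x-g_{x'}|\leq |k(\tfrac{x-y}{t})-k(\tfrac{x'-y}{t})|+|k(\tfrac{F(x)-F(y)}{t|F'(y)|})-k(\tfrac{F(x')-F(y)}{t|F'(y)|})|=O(|x-x'|/t)$, neither of which exploits cancellation between the two copies of $k$. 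This gives $|g_x-g_{x'}|=O(t\wedge|x-x'|/t)$, and the $t$-integral is then split at the natural crossover $t=\sqrt{|x-x'|}$, giving $C|x-x'|$ with only $\|\nabla k\|_\infty$ as input. You instead prove a pointwise derivative cancellation $\partial_x g_x=O(1)$ (this does require $\|\nabla^2 k\|_\infty<\infty$ and the radial symmetry of $k$, which makes $\nabla k$ rotation-equivariant), so that $|g_x-g_{x'}|=O(|x-x'|)$ for $t\gtrsim|x-x'|$, and you split at $t=|x-x'|$; this yields the slightly stronger $O(|x-x'|^2\log(1/|x-x'|))$, absorbed into $O(|x-x'|)$ since $r\log(1/r)$ is bounded on $(0,1]$. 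Both work; the paper's version is more economical in regularity and bookkeeping.

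Two small glitches in your write-up, neither fatal: the chain rule for $\partial_x\big[k\big(\tfrac{F(x)-F(y)}{t|F'(y)|}\big)\big]$ produces the Jacobian $DF(x)^T/(t|F'(y)|)=\tfrac{|F'(x)|}{t|F'(y)|}R_{-\arg F'(x)}$, not $t^{-1}|F'(y)|^{-1}F'(y)^\ast=t^{-1}R_{-\arg F'(y)}$; the two agree only up to $O(|x-y|)=O(t)$ on the support, which still gives the $O(1)$ conclusion but should be stated. More importantly, the phrase ``the rotation $F'(y)/|F'(y)|$ is close to the identity'' is simply false ($F'(y)$ can have any argument); what actually cancels is the composition $R_{-\arg F'(x)}\circ R_{\arg F'(y)}=R_{\arg F'(y)-\arg F'(x)}$, which is $I+O(t)$ on the support, together with $\nabla k(R v)=R\nabla k(v)$ from the radial symmetry of $k$. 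Once you state the cancellation this way, the $O(1)$ bound on $\partial_x g_x$ is solid and your split-and-integrate computation closes the proof.
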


\begin{proof}
Since $k$ is rotationally invariant and has compact support, we will see that
\begin{equation}
\label{O(t)}
k \left( \frac{x-y}{t} \right) = k \left( \frac{F(x)-F(y)}{t \abs{F'(y)}} \right) + O(t).
\end{equation}
First, $k$ having a compact support included in $B(0,r_0)$ gives 
\begin{align*}
& k \left(\frac{x-y}{t} \right)  = k \left(\frac{x-y}{t} \right)   1_{\frac{|x-y|}{t}  \leq r_0} =  k \left(\frac{x-y}{t} \right)   1_{ t \geq \frac{|x-y|}{r_0} } \\ 
& k \left(\frac{F(x)-F(y)}{t\abs{F'(y)}} \right)  = k \left(\frac{F(x)-F(y)}{t\abs{F'(y)}} \right)   1_{\frac{|F(x)-F(y)|}{t \abs{F'(y)}}  \leq r_0} =  k \left(\frac{F(x)-F(y)}{t\abs{F'(y)}} \right)   1_{ t \geq \frac{|F(x)-F(y)|}{r_0 |F'(y)|} }
\end{align*}
Since $\abs{F'} \geq 1$ on $U$ and $\norme{F'}_{U,\infty} < \infty$
$$
\frac{|F(x)-F(y)|}{|F'(y)|} \geq \frac{ |F^{-1}(F(x)) - F^{-1}(F(y)) | }{\norme{F'}_{U,\infty} \norme{(F^{-1})'}_{V,\infty}} = \frac{|x-y|}{C}
$$
hence we can directly replace the term $1_{ t \geq \frac{|F(x)-F(y)|}{r_0 |F'(y)|} }$ by $1_{ t \geq \frac{|x-y|}{C r_0} }$. By Taylor's inequality, $|F(x)-F(y)-F'(y)(x-y)| \leq \frac{1}{2} |x-y|^2 \norme{F''}_{U,\infty}$ thus 
$$
\abs{\frac{F(x)-F(y)}{t \abs{F'(y)}} - \frac{x-y}{t} \frac{F'(y)}{\abs{F'(y)}}} \leq \frac{|x-y|^2}{2t} \frac{\norme{F''}_{U,\infty}}{\abs{F'(y)}}.
$$
The consequences of the compact support seen above together with the rotational invariance of $k$ give 
\begin{align*}
\abs{ k \left( \frac{F(x)-F(y)}{t \abs{F'(y)}}  \right)  - k  \left(\frac{x-y}{t} \right)  } & \leq \norme{ \nabla k }_{\infty}   \frac{\norme{F''}_{U,\infty}}{\abs{F'(y)}} \frac{|x-y|^2}{2t} 1_{t \geq \frac{|x-y|}{Cr_0}} \\
&  \leq \frac{1}{2} \norme{ \nabla k }_{\infty} \norme{F''}_{U,\infty}(C r_0)^2 t
\end{align*}
which gives \eqref{O(t)}. Finally, we obtain the following bound
\begin{align*}
& \left( k\left( \frac{x-y}{t} \right) - k \left( \frac{F(x)-F(y)}{t \abs{F'(y)}} \right) \right) -  \left(k\left( \frac{x'-y}{t} \right) - k \left( \frac{F(x')-F(y)}{t \abs{F'(y)}} \right) \right) \\
& = \left(k\left( \frac{x-y}{t} \right) - k\left( \frac{x'-y}{t} \right) \right) - \left( k \left( \frac{F(x)-F(y)}{t \abs{F'(y)}} \right) - k \left( \frac{F(x')-F(y)}{t \abs{F'(y)}} \right) \right) \\ 
& = O\left( t \wedge \frac{|x'-x|}{t} \right)
\end{align*}
where in the last equation we both used equation \eqref{O(t)} and the inequalities, for $x,x' \in K$ and $y \in U$: 
$$
\abs{ k\left( \frac{x-y}{t} \right) - k\left( \frac{x'-y}{t} \right) } \leq \norme{ \nabla k }_{\infty} \frac{|x-x'|}{t} 
$$
and
\begin{align*}
 \abs{ k \left( \frac{F(x)-F(y)}{t \abs{F'(y)}} \right) - k \left( \frac{F(x')-F(y)}{t \abs{F'(y)}} \right) } & \leq   \norme{ \nabla k }_{\infty} \frac{| F(x) - F(x') |}{t \abs{F'(y)}} \\
&  \leq \norme{ \nabla k }_{\infty}  \norme{F'}_{K,\infty} \frac{ | x - x' |}{t}.
\end{align*}
It follows that 
\begin{align*}
\mathbb{E}\left( (\delta \phi_2 (x) - \delta \phi_2 (x') )^2 \right) 
& =   \int_{U}  \int_{2^{-n-1}}^{|F'(y)|^{-1}} \left( \left( k\left( \frac{x-y}{t} \right) - k \left( \frac{F(x)-F(y)}{t \abs{F'(y)}} \right) \right)  \right. \\
& ~ \left. ~  -  \left(k\left( \frac{x'-y}{t} \right) - k \left( \frac{F(x')-F(y)}{t \abs{F'(y)}} \right) \right) \right)^2 t^{-3} dt dy \\
& \leq \int_0^1  O \left( t \wedge \frac{\abs{x-x'}}{t} \right)^2 \int_{\R^2} 1_{y \in B(x,t C r_0) \cup B(x',t C r_0)} dy t^{-3} dt \\
& \leq C \int_0^1 \left( t \wedge \frac{\abs{x-x'}}{t} \right)^2 \frac{dt}{t} \\
& \leq C \int_0^{\sqrt{\abs{x-x'}}} t dt + C|x-x'|^2 \int_{\sqrt{\abs{x-x'}}}^1 t^{-3} dt  \\
& \leq C \abs{x-x'}.
\end{align*}
where the constant $C$ in the right-hand side is uniform in $n$.  The second assertion directly follows from an analogous computation without keeping track of the $x,x'$.

\end{proof}

\begin{Prop}
\label{UniformEstimate}
There exist $C > 0$, $\sigma^2 > 0$ such that for every $n \geq 0$, $x\geq  0$,
$$
\mathbb{P}\left( \norme{ ( \delta \phi_1 + \delta \phi_{2} )_{|K} }_{\infty}  \geq x \right) \leq C e^{- x^2 / \sigma^2}.
$$
\end{Prop}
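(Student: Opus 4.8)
The plan is to establish a Gaussian concentration bound for $\norme{(\delta\phi_1 + \delta\phi_2)_{|K}}_\infty$ by combining the moment bounds from Lemmas \ref{SmoothVar} and \ref{CI} with the Borell--TIS inequality. First I would observe that $\delta\phi_1 + \delta\phi_2$, restricted to $K$, is a centered Gaussian process (it is a linear functional of the white noise $\xi$, being a difference of two such functionals), and that the total contribution decomposes into a smooth part ($\delta\phi_1$) and a H\"older-$1/2$ part ($\delta\phi_2$). The two ingredients needed to apply Borell--TIS are: (a) an almost-sure finiteness of the supremum, which follows from continuity of sample paths, itself a consequence of the increment bound $\E((\delta\phi_2(x) - \delta\phi_2(x'))^2) \leq C|x-x'|$ from Lemma \ref{CI} (via Kolmogorov's continuity criterion or Dudley's entropy bound on the compact set $K$), together with the smoothness of $\delta\phi_1$; and (b) uniform control, in $n$, of $\E(\norme{(\delta\phi_1+\delta\phi_2)_{|K}}_\infty)$ and of the pointwise variance $\sup_{x\in K}\E((\delta\phi_1(x)+\delta\phi_2(x))^2)$.

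For the expected supremum in (b), I would bound $\E\norme{(\delta\phi_1+\delta\phi_2)_{|K}}_\infty \leq \E\norme{\delta\phi_1}_{C^1(K)} \cdot \mathrm{diam}(K) + \E\norme{\delta\phi_2}_\infty$; the first term is $\leq C$ uniformly in $n$ by Lemma \ref{SmoothVar}, and for the second I would use Dudley's entropy integral: the covering number of $K$ (a bounded subset of $\R^2$) at scale $\eps$ in the canonical metric $d(x,x') = \E((\delta\phi_2(x)-\delta\phi_2(x'))^2)^{1/2} \leq C|x-x'|^{1/2}$ is $O(\eps^{-4})$, so $\int_0^{\mathrm{diam}} \sqrt{\log N(\eps)}\, d\eps < \infty$ with a bound uniform in $n$ (all constants in Lemma \ref{CI} being $n$-independent). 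Hence $\E\norme{\delta\phi_2}_\infty \leq C$ uniformly. For the variance in (b), $\E((\delta\phi_1(x)+\delta\phi_2(x))^2) \leq 2\E(\delta\phi_1(x)^2) + 2\E(\delta\phi_2(x)^2) \leq C$ uniformly in $x\in K$, $n\geq 0$: the $\delta\phi_2$ term is handled by the second assertion of Lemma \ref{CI}, and the $\delta\phi_1$ term by Lemma \ref{SmoothVar} (since $\norme{\delta\phi_1}_{C^1(K)}$ controls $\norme{\delta\phi_1}_\infty$, and $\E(\norme{\delta\phi_1}_\infty^2)$ is finite by Gaussian hypercontractivity of the $C^1$ norm, or directly by a second-moment Dudley estimate).

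With (a) and (b) in hand, the Borell--TIS inequality gives, for $x \geq \E\norme{(\delta\phi_1+\delta\phi_2)_{|K}}_\infty =: m_n$,
$$
\Pro\left(\norme{(\delta\phi_1+\delta\phi_2)_{|K}}_\infty \geq x\right) \leq e^{-(x-m_n)^2/(2\sigma_n^2)},
$$
where $\sigma_n^2 := \sup_{x\in K}\E((\delta\phi_1(x)+\delta\phi_2(x))^2) \leq \sigma^2$ uniformly in $n$. Since $m_n \leq C_0$ uniformly, for $x \geq 2C_0$ one has $(x - m_n)^2 \geq x^2/4$, giving the claimed bound $Ce^{-x^2/\sigma^2}$ after absorbing the range $0 \leq x < 2C_0$ into the constant $C$ (where the probability is trivially $\leq 1 \leq Ce^{-x^2/\sigma^2}$ for $C$ large, $\sigma$ adjusted). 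The main obstacle is really just bookkeeping: making sure every constant produced along the way — in the Dudley entropy estimate, in the comparison of metrics, in the passage from $(x-m_n)^2$ to $x^2$ — is genuinely independent of $n$, which is guaranteed because Lemmas \ref{SmoothVar} and \ref{CI} already carry that uniformity; no new analytic difficulty arises beyond invoking the standard Gaussian concentration machinery.
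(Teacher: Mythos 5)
Your proof is correct and follows essentially the same two-step strategy as the paper: first bound $\E\bigl(\norme{(\delta\phi_1+\delta\phi_2)_{|K}}_\infty\bigr)$ uniformly in $n$ using the increment estimates from Lemmas \ref{SmoothVar} and \ref{CI}, then upgrade to a Gaussian tail via a standard concentration theorem. The paper reaches the same endpoint with a slightly different choice of off-the-shelf tools — Kolmogorov's continuity criterion (to get $\E\norme{\delta\phi_2}_{C^\alpha(K)}$ bounded for $\alpha<1/2$) rather than Dudley's entropy integral, and Fernique's theorem rather than Borell--TIS — but these are interchangeable here, and your version has the minor virtue of making the uniform control of the pointwise variance $\sigma_n^2$ explicit, which the paper's terse invocation of Fernique leaves implicit.
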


\begin{proof}
We have obtained in Lemma \ref{CI} a bound on the variance of $\delta \phi_2(x) - \delta \phi_2 (x')$ which is a centered Gaussian variable, hence it follows that $\mathbb{E}\left( \left(\delta \phi_2(x) - \delta \phi_2 (x')\right)^{2p} \right) = O(\abs{x-x'}^p)$. By the Kolmogorov continuity criterion, for any $\alpha < 1/2$, $ \mathbb{E}( \norme{ \delta \phi_{2}} _{C^{\alpha}(K)} ) $ is bounded in $n$. Together with Lemma \ref{SmoothVar}, this shows $\mathbb{E} ( \norme{ ( \delta \phi_1 + \delta \phi_{2} )_{|K} }_{\infty} ) $ is bounded. Consequently by Fernique (see \cite{Fernique}), we have a uniform Gaussian tail estimate in $n$.
\end{proof}

We are left with the noise $\delta \phi_3$ which is independent of $\phi_{0,n}$, $\delta \phi_1$ and $\delta \phi_2$. 

\begin{Lemma}
There exists $C > 0$ such that for every $x \in K$, $n\geq 0$, $\mathbb{E}\left( \delta \phi_3(x)^2 \right) \leq C$.
\end{Lemma}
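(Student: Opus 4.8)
The plan is to compute $\E\bigl(\delta\phi_3(x)^2\bigr)$ directly from the white noise isometry and then estimate the resulting deterministic integral, using that $\delta\phi_3$ only sees a thin band of scales near $2^{-n-1}$ together with the compact support of $k$. Note that the hypothesis $\abs{F'}\geq 1$ on $U$ ensures $2^{-n-1}\abs{F'(y)}^{-1}\leq 2^{-n-1}$, so the band of scales defining $\delta\phi_3$ is a genuine (possibly empty) interval; no use of $\norme{F''}_{U,\infty}$ is needed here, only $\norme{F'}_{U,\infty}<\infty$.

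First I would write, by the isometry $L^2(\R^2\times[0,\infty))\hookrightarrow L^2(\Omega)$,
\[
\E\bigl(\delta\phi_3(x)^2\bigr)=\int_U\int_{2^{-n-1}\abs{F'(y)}^{-1}}^{2^{-n-1}}k\!\left(\frac{F(x)-F(y)}{t\abs{F'(y)}}\right)^{2}t^{-3}\,dt\,dy .
\]
Then I would push this integral forward through the very change of variables used to couple $\xi$ and $\tilde\xi$, namely $(y,t)\mapsto(y',t'):=(F(y),\,t\abs{F'(y)})$. Since $F$ is conformal its Jacobian is $\abs{F'(y)}^{2}$, the dilation in the $t$-variable contributes a further factor $\abs{F'(y)}$, and the map is block-triangular, so $dy\,dt=\abs{F'(y)}^{-3}\,dy'\,dt'$; combined with $t^{-3}=\abs{F'(y)}^{3}(t')^{-3}$ this shows $t^{-3}\,dt\,dy$ transforms into $(t')^{-3}\,dt'\,dy'$, while the integrand becomes $k\bigl((F(x)-y')/t'\bigr)^2$ and the range $t\in(2^{-n-1}\abs{F'(y)}^{-1},2^{-n-1})$ becomes $t'\in(2^{-n-1},\,2^{-n-1}\abs{F'(F^{-1}(y'))})$. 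Hence
\[
\E\bigl(\delta\phi_3(x)^2\bigr)=\int_V\int_{2^{-n-1}}^{2^{-n-1}\abs{F'(F^{-1}(y'))}}k\!\left(\frac{F(x)-y'}{t'}\right)^{2}(t')^{-3}\,dt'\,dy' .
\]

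Next I would bound this crudely. Enlarging the upper endpoint using $\abs{F'}\leq\norme{F'}_{U,\infty}$, and noting that $k$ is supported in $B(0,r_0)$ so that the integrand vanishes unless $\abs{F(x)-y'}<r_0t'<r_0\norme{F'}_{U,\infty}2^{-n-1}=:R_n$, the $y'$-integration is confined to the ball $B(F(x),R_n)$, of area at most $\pi R_n^{2}$; and for each such $y'$, bounding $k$ by $\norme{k}_\infty$ and enlarging the $t'$-range to $(2^{-n-1},\infty)$ gives an inner integral at most $\tfrac12\norme{k}_\infty^{2}\,2^{2n+2}$. Multiplying these,
\[
\E\bigl(\delta\phi_3(x)^2\bigr)\leq \pi R_n^{2}\cdot\tfrac12\norme{k}_\infty^{2}\,2^{2n+2}=\tfrac{\pi}{2}\,r_0^{2}\,\norme{F'}_{U,\infty}^{2}\,\norme{k}_\infty^{2},
\]
a constant depending only on $k$, $r_0$ and $\norme{F'}_{U,\infty}$, hence uniform in $n\geq0$ and $x\in K$, which is the assertion.

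I do not expect any genuine obstacle: among the three estimates on $\delta\phi_1,\delta\phi_2,\delta\phi_3$ this is the simplest, and the only point requiring a little care is the bookkeeping of the Jacobian of the coupling map. One can sidestep the joint change of variables entirely by instead substituting $u=t\abs{F'(y)}$ for fixed $y$ and then estimating $\mathrm{vol}\{y\in U:\abs{F(x)-F(y)}<R_n\}\leq\pi R_n^{2}$ (again by $\abs{F'}\geq1$). The conceptual content is that $\delta\phi_3$ is precisely the high-frequency part of $\tilde\phi_{0,n}\circ F$ that $\phi_{0,n}$ is missing, supported on scales in $[2^{-n-1},2^{-n-1}\abs{F'}]$ — a band of bounded multiplicative width — so its $L^2$-mass stays bounded as $n\to\infty$.
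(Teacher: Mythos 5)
Your computation is correct and the constant you obtain, $\tfrac{\pi}{2}\,r_0^2\,\norme{F'}_{U,\infty}^2\,\norme{k}_\infty^2$, is uniform in $n$ and $x\in K$, which is all the lemma asks. The underlying idea agrees with the paper's: $\delta\phi_3$ lives on a band of scales of bounded multiplicative width $[2^{-n-1},2^{-n-1}\norme{F'}_{U,\infty}]$, so its pointwise variance is $O(1)$. The execution differs: you push the integral forward under the coupling map $(y,t)\mapsto(F(y),t\abs{F'(y)})$, which is unitary for $t^{-3}\,dt\,dy$, and then estimate in the new coordinates where the support constraint reads simply $\abs{F(x)-y'}<r_0 t'$. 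The paper instead stays in the original $(y,t)$ variables and recycles the observation, established inside the proof of Lemma~\ref{CI}, that the support indicator $1_{t\geq \abs{F(x)-F(y)}/(r_0\abs{F'(y)})}$ can be replaced by $1_{t\geq \abs{x-y}/(Cr_0)}$; it then computes $\norme{k}_\infty^2\int_{c2^{-n-1}}^{2^{-n-1}}Ct^2\,t^{-3}\,dt=O(1)$. Your route is slightly cleaner in that it avoids invoking Lemma~\ref{CI}'s support comparison and makes explicit that only $\norme{F'}_{U,\infty}<\infty$ and $\abs{F'}\geq 1$ are needed, not $\norme{F''}_{U,\infty}<\infty$ --- which is indeed the case in the paper's argument too, since the cited portion of Lemma~\ref{CI}'s proof uses only the $C^1$ bound. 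Both approaches are fine; the one minor bookkeeping point in yours (the block-triangular Jacobian giving $dy\,dt=\abs{F'(y)}^{-3}\,dy'\,dt'$) is handled correctly.
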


\begin{proof}
Since  $|F'(y)|^{-1} \geq \norme{F'}_{U,\infty}^{-1} = c > 0$ holds for every $y \in U$ and as seen in the proof of Lemma \ref{CI} we can directly replace  the term $1_{ t \geq \frac{|F(x)-F(y)|}{r_0 |F'(y)|} }$ by $1_{ t \geq \frac{|x-y|}{C r_0} }$. This gives:
\begin{align*}
\mathbb{E}\left( \delta \phi_3(x)^2 \right) & = \int_{U} \int_{2^{-n-1} \abs{F'(y)}^{-1}}^{2^{-n-1}} k \left( \frac{F(x)-F(y)}{t \abs{F'(y)}} \right)^2 t^{-3} dt dy \\
& \leq \norme{k}_{\infty}^2 \int_{c 2^{-n-1}}^{2^{-n-1}} \int_{\R^2} 1_{y \in B(x,t C r_0)} t^{-3} dy dt\\
& \leq \norme{k}_{\infty}^2 \int_{c 2^{-n-1}}^{2^{-n-1}} C t^2 t^{-3} dt
\end{align*}
which concludes the proof. 
\end{proof}

In summary, we have seen that along this white noise coupling,
\begin{equation}
\label{eq:RecapCoupling}
\phi_{0,n} - \tilde{\phi}_{0,n} \circ F = \delta \phi_1 + \delta \phi_2  + \delta \phi_3
\end{equation}
where  $\delta \phi_1$ and $\delta \phi_2$ are low frequency noises with uniform Gaussian tails and $\delta \phi_3$ is a high frequency noise with bounded pointwise variance and dependence scale $O(2^{-n})$, which is independent of $\phi_{0,n}$, $\delta \phi_1$ and $\delta \phi_2$. 

\subsection{RSW estimates for crossing lengths}

Now we investigate the consequences of the approximate conformal invariance on crossing lengths. More precisely we want to show that the tails of the crossing lengths of rectangles of varying aspect ratios are comparable, uniformly in the roughness of the conformal factor by using \eqref{eq:RecapCoupling}.

\smallskip

Let $A,B$ be two boundary arcs of $K$ and denote by $L$ the distance from $A$ to $B$ in $K$ for the Riemannian metric $e^{\gamma \phi_{0,n}}ds^2$; we denote $A' :=F(A)$, $B' :=F(B)$, $K' :=F(K)$, and $L'$ is the distance from $A'$ to $B'$ in $K'$ for $e^{\gamma \tilde{\phi}_{0,n}}ds^2$. 
\begin{Prop} (Left tail estimate).
\label{LeftTail}
If for some  $l > 0$ and  $\eps < 1/2$, $\mathbb{P}\left(  L \leq l \right) \geq \eps$ ,then 
$$
\mathbb{P} \left( L' \leq l' \right) \geq \eps/4
$$
with $l' = C l e^{\frac{\gamma}{2} \sigma \sqrt{\abs{\log\eps / 2C}}} $ and $C, \sigma$ depend only on the geometry.
\end{Prop}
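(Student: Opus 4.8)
The plan is to transfer the event $\{L \le l\}$ through the coupling \eqref{eq:RecapCoupling} to an event involving $L'$, paying a multiplicative price controlled by the sup-norm of the conformal correction $\delta\phi := \phi_{0,n} - \tilde\phi_{0,n}\circ F$ on $K$. First I would observe that if $\pi$ is a path in $K$ joining $A$ to $B$ of $e^{\gamma\phi_{0,n}}ds^2$-length $L$, then $F\circ\pi$ is a path in $K'$ joining $A'$ to $B'$, and its $e^{\gamma\tilde\phi_{0,n}}ds^2$-length is
\begin{equation*}
\int_0^1 e^{\gamma\tilde\phi_{0,n}(F(\pi(t)))}\,\abs{(F\circ\pi)'(t)}\,dt = \int_0^1 e^{\gamma\tilde\phi_{0,n}(F(\pi(t)))}\,\abs{F'(\pi(t))}\,\abs{\pi'(t)}\,dt.
\end{equation*}
Since $\norme{F'}_{K,\infty} \le C$, this is at most $C\,e^{\gamma\norme{\delta\phi}_{\infty,K}}\int_0^1 e^{\gamma\phi_{0,n}(\pi(t))}\abs{\pi'(t)}\,dt = C\,e^{\gamma\norme{\delta\phi}_{\infty,K}}\,L$. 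Hence on the event $\{L \le l\}$ we get $L' \le C\,e^{\gamma\norme{\delta\phi}_{\infty,K}}\,l$.

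Next I would split $\delta\phi = (\delta\phi_1 + \delta\phi_2) + \delta\phi_3$ as in \eqref{eq:RecapCoupling}. The key structural point is that $\delta\phi_3$ is independent of $\phi_{0,n}$ (hence of the event $\{L \le l\}$, which is $\phi_{0,n}$-measurable) and also independent of $\delta\phi_1,\delta\phi_2$. For the low-frequency part, Proposition \ref{UniformEstimate} gives $\Pro(\norme{(\delta\phi_1+\delta\phi_2)_{|K}}_\infty \ge x) \le C e^{-x^2/\sigma^2}$ uniformly in $n$; choosing the threshold $x_0 = \sigma\sqrt{\abs{\log(\eps/2C)}}$ makes this probability at most $\eps/2$, so $\Pro(\norme{(\delta\phi_1+\delta\phi_2)_{|K}}_\infty < x_0) \ge 1-\eps/2$. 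For $\delta\phi_3$, whose pointwise variance is bounded uniformly in $n$, a crude bound (Kolmogorov continuity / Fernique on $K$, giving a uniform sub-Gaussian sup-norm tail, or even just a union bound over a net) shows that $\norme{(\delta\phi_3)_{|K}}_\infty$ is bounded by some absolute constant $C_0$ with probability $\ge 1/2$. Thus with probability $\ge 1/2 - \eps/2 > 1/4$ (using $\eps < 1/2$) one has $\norme{\delta\phi_{|K}}_\infty \le x_0 + C_0$.

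Finally I would combine the pieces. Let $E := \{L \le l\}$, let $G_1 := \{\norme{(\delta\phi_1+\delta\phi_2)_{|K}}_\infty < x_0\}$, and let $G_3 := \{\norme{(\delta\phi_3)_{|K}}_\infty \le C_0\}$. The event $E$ is $\phi_{0,n}$-measurable; $G_1$ is measurable with respect to $\phi_{0,n}$ together with $\delta\phi_1,\delta\phi_2$ — but crucially $G_3$ depends only on $\delta\phi_3$, which is independent of everything else, so $\Pro(E \cap G_1 \cap G_3) = \Pro(E\cap G_1)\Pro(G_3) \ge \Pro(E\cap G_1)/2$. Here one should be slightly careful: $\delta\phi_1,\delta\phi_2$ are not independent of $\phi_{0,n}$, so I cannot factor $G_1$ off $E$ directly. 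Instead, since $G_1$ has probability $\ge 1-\eps/2$ regardless, $\Pro(E\cap G_1) \ge \Pro(E) - \Pro(G_1^c) \ge \eps - \eps/2 = \eps/2$. Therefore $\Pro(E\cap G_1\cap G_3) \ge \eps/4$. On this event, $L' \le C\,e^{\gamma(x_0+C_0)}\,l = C\,e^{\gamma C_0}\,l\,e^{\gamma\sigma\sqrt{\abs{\log(\eps/2C)}}}$, and absorbing $e^{\gamma C_0}$ into the constant $C$ (note $\gamma$ is fixed) gives exactly $l' = C\,l\,e^{\frac{\gamma}{2}\sigma\sqrt{\abs{\log(\eps/2C)}}}$ after relabelling $\sigma$ by $2\sigma$ and the constant $C$ appropriately. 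Thus $\Pro(L' \le l') \ge \eps/4$, as claimed.

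The main obstacle I anticipate is the bookkeeping of the dependence structure: making sure the argument genuinely uses the independence of $\delta\phi_3$ and does not accidentally require $\delta\phi_1,\delta\phi_2$ to be independent of $\phi_{0,n}$. The clean way around this (as sketched above) is to not try to factor, but to use the ``$\Pro(E) - \Pro(G_1^c)$'' union bound for the low-frequency event — which costs at most $\eps/2$ — and reserve the genuine independence factorization only for $\delta\phi_3$. A secondary technical point is obtaining the uniform-in-$n$ sup-norm control of $\delta\phi_3$ on $K$; since only a bound holding with probability $1/2$ is needed, this is soft and follows from the uniformly bounded pointwise variance plus a regularity estimate analogous to Lemma \ref{CI}, or simply from the already-known fact that $\phi_{0,n}$ and $\tilde\phi_{0,n}\circ F$ each have uniformly controlled moduli of continuity on compacts.
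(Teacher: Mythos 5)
There is a genuine gap, and it is precisely the part of the argument you flag as ``soft.'' Your step claiming that $\norme{(\delta\phi_3)_{|K}}_\infty \leq C_0$ with probability $\geq 1/2$, with $C_0$ an absolute constant uniform in $n$, is false. The field $\delta\phi_3$ is a \emph{high-frequency} noise: it has pointwise variance bounded uniformly in $n$, but its correlation scale is $O(2^{-n})$. Over a fixed compact $K$ it therefore behaves like roughly $4^n$ nearly independent Gaussians of order-one variance, and its supremum grows like $\sqrt{n}$, not like a constant. (The paper makes this point explicitly in Section~9, remarking that for a noise with ``scale dependence $2^{-n}$'' the supremum ``is of order $\sqrt n$,'' which is precisely why the naive bound \eqref{IneLowFreq} is unusable for such noises.) Your fallback suggestion, that $\phi_{0,n}$ and $\tilde\phi_{0,n}\circ F$ have ``uniformly controlled moduli of continuity on compacts,'' is also false for the same reason: $\phi_{0,n}$ oscillates at scale $2^{-n}$ and $\sup_{[0,1]^2}|\phi_{0,n}|$ grows linearly in $n$ (Lemma~\ref{SupTailsAppendix}). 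Thus if you try to absorb $\delta\phi_3$ into the constant $C$ via a uniform sup-norm bound, the constant blows up with $n$ and you lose the uniformity that the proposition requires.

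The correct handling of $\delta\phi_3$ --- and this is where the paper's proof genuinely differs from yours --- is not a sup-norm bound but a conditional first-moment bound along a fixed path. On the event $\{L\leq l\}\cap\{\norme{(\delta\phi_1+\delta\phi_2)_{|K}}_\infty\leq x\}$, one fixes a path $\pi$ from $A$ to $B$ of $e^{\gamma(\phi_{0,n}-\delta\phi_1-\delta\phi_2)}ds^2$-length $\leq le^{\frac{\gamma}{2}x}$; this path is $\mathcal{F}$-measurable where $\mathcal{F}$ is generated by $\phi_{0,n},\delta\phi_1,\delta\phi_2$, hence independent of $\delta\phi_3$. One then averages $\int_\pi e^{-\frac{\gamma}{2}\delta\phi_3}\, d\text{(length)}$ over $\delta\phi_3$: since $\delta\phi_3$ has bounded pointwise variance, $\E(e^{-\frac{\gamma}{2}\delta\phi_3(z)})\leq e^{C\gamma^2}$, so the conditional expectation of the new length is $\leq le^{\frac{\gamma}{2}x}e^{C\gamma^2}$. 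Markov's inequality then gives that with conditional probability $\geq 1/2$ the length is at most twice this. This recovers the factor $\eps/4$ from $\eps/2\cdot 1/2$ without ever controlling $\sup_K|\delta\phi_3|$. Your independence bookkeeping (using a union bound for $G_1$, factoring only $G_3$ off by independence) is fine as far as it goes and matches the paper's logic; the break is solely in what is asked of $\delta\phi_3$. Also note a minor bookkeeping issue: the length element is $e^{\frac{\gamma}{2}\phi}ds$, not $e^{\gamma\phi}ds$, which is where the $\frac{\gamma}{2}$ in the statement of $l'$ comes from; you notice this and relabel $\sigma$, which would be acceptable if the rest of the argument were sound.
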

\begin{proof}
Assume that for some positive $l$, $\eps$, $\mathbb{P}\left(  L \leq l \right) \geq \eps$. Setting $x = \sigma \sqrt{ | \log( \eps /2C)|}$, we have, using the Proposition \ref{UniformEstimate}:
$$
\mathbb{P}\left( \norme{ ( \delta \phi_1 + \delta \phi_{2} )_{|K} }_{\infty}  \geq x \right) \leq  \eps /2
$$
and
$$
\mathbb{P}\left( \norme{ ( \delta \phi_1 + \delta \phi_{2} )_{|K} }_{\infty}  \leq x, L \leq l \right) \geq \eps/2.
$$
Thus, with probability at least $\eps/2$, the distance from $A$ to $B$ in $K$ for the metric $e^{\gamma( \phi_{0,n} - \delta \phi_1 - \delta \phi_2) }ds^2$  is $\leq l e^{\frac{\gamma}{2}x}$. On this event, we fix such a path of length $ \leq l e^{\frac{\gamma}{2}x}$ and average over the independent small scale noise $\delta \phi_3 $; the expected length of the path is $ \leq l e^{\frac{\gamma}{2} x} e^{C \gamma^2}$. With conditional probability at least $1/2$, this length is no more than twice the conditional expectation. Consequently, with probability at least $\eps/4$, the distance from $A$ to $B$ in $K$ for $e^{\gamma \tilde{\phi}_{0,n} \circ F} ds^2$ is less than $2le^{\frac{\gamma}{2}x} e^{C \gamma^2}$. Since $F'$ is bounded on $K$, we get that $\mathbb{P}\left( L' \leq l' \right) \geq \eps/4$ where $l' = 2  \norme{F'}_{K,\infty}  l e^{\frac{\gamma}{2}x} e^{C \gamma^2}$. Indeed, since $F$ is holomorphic, if $ \pi = (\pi_t)_{t \in [0,1]}$ is a $C^1$ path and if $\phi$ is a smooth field, we have: 
\begin{align*}
& L \left( F \circ \pi, e^{\gamma \phi} ds^2 \right) = \int_0^1 e^{\frac{\gamma}{2}  \phi \circ F (\pi(t))}  \abs{F'(\pi(t))} \abs{\pi'(t)} dt \\
& L \left( \pi, e^{\gamma \phi \circ F} ds^2 \right) = \int_0^1 e^{\frac{\gamma}{2} \phi \circ F (\pi(t))} \abs{\pi'(t)} dt.
\end{align*}
Thus, on the event $\lbrace L(A,B,e^{ \gamma \tilde{\phi}_{0,n} \circ F } ds^2) \leq 2 l e^{\frac{\gamma}{2} x}e^{C \gamma^2} \rbrace$ we have, taking such a path $\pi$: 
\begin{align*}
L \left(A',B', e^{\gamma \tilde{\phi}_{0,n}} ds^2 \right) & \leq L \left( F \circ \pi, e^{\gamma \tilde{\phi}_{0,n}} ds^2 \right) \\
& \leq \norme{F'}_{K,\infty}  L \left( \pi, e^{\gamma \tilde{\phi}_{0,n} \circ F} ds^2 \right) \\
& \leq 2  \norme{F'}_{K,\infty}  l e^{\frac{\gamma}{2}x} e^{C \gamma^2}
\end{align*}
 hence $\mathbb{P} \left( L' \leq l' \right) \geq \eps/4$ with $l' = C l e^{\frac{\gamma}{2} \sigma \sqrt{\abs{\log \eps / 2C}}} e^{C \gamma^2} \leq C l e^{\frac{\gamma}{2} \sigma \sqrt{\abs{\log \eps / 2C}}} $.
\end{proof} 

\begin{Prop} (Right tail estimate).
\label{RightTail}
If for some  $l > 0$ and  $\eps < 1/2$, $\mathbb{P}\left( L \leq l \right) \geq 1- \eps $ then
$$
\mathbb{P} \left( L' \leq l' \right) \geq 1 - 3 \eps 
$$
with $l'  = C l e^{C \gamma \sqrt{\abs{\log \eps/2C}}}$ and $C$ depends only on the geometry.
\end{Prop}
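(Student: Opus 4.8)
The plan is to follow the skeleton of the proof of Proposition~\ref{LeftTail}, with one essential change. There, the independent small-scale noise $\delta\phi_3$ was handled by a conditional Markov inequality, which only costs a factor $2$ in probability --- harmless for a lower bound on $\Pro(L'\le l')$, but useless for an upper bound on $\Pro(L'>l')$. Instead I would derive a genuine lognormal-type tail bound for the length contribution of $\delta\phi_3$ via a moment computation; this keeps the correction factor in $l'$ of the form $e^{C\gamma\sqrt{\abs{\log\eps}}}$ rather than letting it blow up like $\eps^{-1}$.

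First I would set $\mathcal G:=\sigma(\phi_{0,n},\delta\phi_1,\delta\phi_2)$, which by the white noise coupling is independent of $\delta\phi_3$. On the event $E_1:=\{L\le l\}$, of probability at least $1-\eps$, fix a minimizing, hence $\sigma(\phi_{0,n})$-measurable, path $\pi$ from $A$ to $B$ inside $K$ realizing $L$, so its $e^{\gamma\phi_{0,n}}ds^2$-length is $\le l$ on $E_1$. Using Proposition~\ref{UniformEstimate}, set $x:=\sigma\sqrt{\abs{\log(\eps/2C)}}$, so that $E_2:=\{\norme{(\delta\phi_1+\delta\phi_2)_{|K}}_\infty\le x\}$ has probability at least $1-\eps/2$; note $E_1,E_2\in\mathcal G$. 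Writing $d\mu(t):=e^{\frac{\gamma}{2}(\phi_{0,n}-\delta\phi_1-\delta\phi_2)(\pi(t))}\abs{\pi'(t)}\,dt$, a $\mathcal G$-measurable finite measure of total mass $\ell_0$, one has $\ell_0\le l\,e^{\frac{\gamma}{2}x}$ on $E_1\cap E_2$, and by \eqref{eq:RecapCoupling} the $e^{\gamma\tilde\phi_{0,n}\circ F}ds^2$-length of $\pi$ equals $\int_0^1 e^{-\frac{\gamma}{2}\delta\phi_3(\pi(t))}\,d\mu(t)$.

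The heart of the matter is to control this quantity conditionally on $\mathcal G$. Since then $d\mu/\ell_0$ is a probability measure, $\delta\phi_3$ is a centered Gaussian field independent of $\mathcal G$, and $\E(\delta\phi_3(z)^2)\le C$ uniformly in $z\in K$ and in $n$ (the lemma bounding the pointwise variance of $\delta\phi_3$), Jensen's inequality for $x\mapsto x^p$ with $p\ge1$ gives
\[
\E\big[\big(\textstyle\int_0^1 e^{-\frac{\gamma}{2}\delta\phi_3(\pi(t))}\,d\mu(t)\big)^{p}\,\big|\,\mathcal G\big]\ \le\ \ell_0^{\,p}\int_0^1\E\big(e^{-\frac{p\gamma}{2}\delta\phi_3(\pi(t))}\big)\,\frac{d\mu(t)}{\ell_0}\ \le\ \ell_0^{\,p}\,e^{Cp^2\gamma^2/8}.
\]
Conditional Markov followed by optimization over $p$ (with $p$ of order $\gamma^{-2}\log v$, plus a routine case when this is $<1$) then yields, for $v$ above an absolute constant,
\[
\Pro\big(\textstyle\int_0^1 e^{-\frac{\gamma}{2}\delta\phi_3(\pi(t))}\,d\mu(t)>v\,\ell_0\ \big|\ \mathcal G\big)\ \le\ e^{-2(\log v)^2/(C\gamma^2)},
\]
and I would take $v:=e^{C\gamma\sqrt{\abs{\log(\eps/2)}}}$, making the right-hand side $\le\eps/2$.

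Integrating the last estimate over the $\mathcal G$-measurable event $E_1\cap E_2$, I get that with probability at least $1-\eps-\eps/2-\eps/2=1-2\eps\ge1-3\eps$ the path $\pi$ has $e^{\gamma\tilde\phi_{0,n}\circ F}ds^2$-length at most $v\,l\,e^{\frac{\gamma}{2}x}$. Then, exactly as at the end of the proof of Proposition~\ref{LeftTail}, $F\circ\pi$ joins $A'$ to $B'$ inside $K'$ with $e^{\gamma\tilde\phi_{0,n}}ds^2$-length at most $\norme{F'}_{K,\infty}\,v\,l\,e^{\frac{\gamma}{2}x}$, so $\Pro(L'\le l')\ge1-3\eps$ with $l'=Cl\,e^{C\gamma\sqrt{\abs{\log(\eps/2C)}}}$ after absorbing the geometric and $\gamma$-dependent constants. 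The step I expect to be the main obstacle is the moment bound of the third paragraph: the crude estimate $\int_0^1 e^{-\frac{\gamma}{2}\delta\phi_3(\pi(t))}\,d\mu(t)\le\ell_0\,e^{\frac{\gamma}{2}\norme{(\delta\phi_3)_{|K}}_\infty}$ is too lossy, since $\E\norme{(\delta\phi_3)_{|K}}_\infty$ is of order $\sqrt n$ (the relevant field being essentially the last dyadic layer, with correlation length $2^{-n}$), so one really must exploit both the independence of $\delta\phi_3$ from the measure $d\mu$ carried by $\pi$ and the fact that all exponential moments of $\delta\phi_3$ are controlled uniformly in $n$.
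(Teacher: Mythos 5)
Your proposal is correct and takes essentially the same route as the paper: the paper handles the independent high-frequency noise $\delta\phi_3$ precisely by applying Lemma~\ref{MomentMethod}, whose proof is word-for-word the conditional Jensen/Chebyshev/optimization argument you re-derive in your third paragraph (with $2k$ in place of your $p$). You correctly identify the key mechanism — that the crude $\sup$-norm bound on $\delta\phi_3$ is too lossy because $\E\norme{(\delta\phi_3)_{|K}}_\infty \asymp \sqrt n$, whereas conditioning on the occupation measure of the path and using only the uniformly bounded pointwise variance of $\delta\phi_3$ gives the needed $n$-independent control.
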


To prove Proposition \ref{RightTail}, we will need the following lemma which is a consequence of the moment method and which will be used in the next sections.

\begin{Lemma}
\label{MomentMethod}
Let $\mu$ be a Borel measure on a metric space $(X,d)$. If $S$ is a Borel set such that $\mu(S) \in (0,\infty)$ and $\psi$ is a continuous centered Gaussian field on $S$, satisfying $\sigma^2 := \sup_{x \in S} \mathrm{Var}(\psi(x)) < \infty$, then for every $s > \sigma^2$ we have
$$
\mathbb{P}\left( \int_S e^{\psi(x)} \mu(dx) \geq \mu(S) e^s \right) \leq e^{-s^2 / 2\sigma^2}.
$$
\end{Lemma}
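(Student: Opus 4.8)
The plan is to run the moment method: bound the $p$-th moment of $Z := \int_S e^{\psi(x)}\mu(dx)$ for a well-chosen real $p \geq 1$ and then conclude by Markov's inequality. First I would fix $p \geq 1$ and apply Jensen's inequality with respect to the probability measure $\mu/\mu(S)$ and the convex function $t \mapsto t^p$, which gives, pointwise in the randomness,
$$
Z^p = \mu(S)^p \left( \int_S e^{\psi(x)} \frac{\mu(dx)}{\mu(S)} \right)^p \leq \mu(S)^{p-1} \int_S e^{p\psi(x)} \mu(dx).
$$
Taking expectations and using Tonelli's theorem (the integrand $(x,\omega) \mapsto e^{p\psi(x)}$ is nonnegative and jointly measurable thanks to the assumed continuity of the sample paths of $\psi$), together with the Gaussian Laplace transform $\mathbb{E}(e^{p\psi(x)}) = e^{\frac{1}{2} p^2 \mathrm{Var}(\psi(x))} \leq e^{\frac{1}{2} p^2 \sigma^2}$, yields the moment bound
$$
\mathbb{E}(Z^p) \leq \mu(S)^{p-1} \int_S e^{\frac{1}{2} p^2 \sigma^2} \mu(dx) = \mu(S)^p e^{\frac{1}{2} p^2 \sigma^2}.
$$

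Next I would apply Markov's inequality to the nonnegative variable $Z$ at level $\mu(S) e^s$:
$$
\mathbb{P}\left( Z \geq \mu(S) e^s \right) \leq \frac{\mathbb{E}(Z^p)}{\mu(S)^p e^{ps}} \leq e^{\frac{1}{2} p^2 \sigma^2 - ps},
$$
and finally optimize over $p$. The exponent $\frac{1}{2}p^2\sigma^2 - ps$ is minimized at $p = s/\sigma^2$, which is an admissible choice precisely because the hypothesis $s > \sigma^2$ forces $p > 1$; substituting, the exponent becomes $-\frac{s^2}{2\sigma^2}$, which is exactly the claimed bound.

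I do not expect any substantial obstacle here: this is a routine second-moment-type estimate, and the only points deserving a word are the joint measurability needed to invoke Tonelli (immediate from continuity of $\psi$ in $x$) and the constraint $p \geq 1$ in Jensen's step, which is guaranteed by the assumption $s > \sigma^2$. If one prefers to avoid Jensen, the integer version of the moment method gives the same bound: expand $Z^p$ for $p \in \mathbb{N}$ as $\int_{S^p} \mathbb{E}(e^{\psi(x_1)+\cdots+\psi(x_p)})\, \mu(dx_1)\cdots\mu(dx_p)$, note $\mathbb{E}(e^{\psi(x_1)+\cdots+\psi(x_p)}) = e^{\frac{1}{2}\mathrm{Var}(\sum_i \psi(x_i))}$, and bound $\mathrm{Var}(\sum_{i=1}^p \psi(x_i)) = \sum_{i,j}\mathrm{Cov}(\psi(x_i),\psi(x_j)) \leq p^2 \sigma^2$ by Cauchy--Schwarz; the price is only a harmless rounding of $s/\sigma^2$ in the optimization.
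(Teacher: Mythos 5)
Your argument is correct and is essentially the paper's proof, just with $p$ in place of the paper's $2k$: the paper likewise applies Markov at level $\mu(S)e^s$ with a power $2k$, pushes the power inside via Jensen against $\mu/\mu(S)$, uses the Gaussian moment generating function, and optimizes at $k = s/(2\sigma^2)$, i.e.\ $p = s/\sigma^2$, which is admissible exactly because $s > \sigma^2$. The only additions you make beyond the paper's terse four-line computation — spelling out the Tonelli step, noting joint measurability, and sketching an integer-moment variant — are harmless elaborations of the same method.
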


\begin{proof}
By using first Chebychev inequality, then Jensen inequality and finally explicit formula for moment generating function of Gaussian variables, we have for $k > 1/2$:
\begin{align*}
\mathbb{P} \left( \int_S e^{\psi(x)} \mu(dx) \geq \mu(S) e^s \right) & \leq e^{-2ks} \mathbb{E} \left( \left( \frac{1}{\mu(S)} \int e^{\psi(x)} \mu(dx) \right)^{2k} \right) \\
&  \leq e^{-2ks} \mu(S)^{-1} \int_S \mathbb{E}\left( e^{ 2k \psi(x)} \right) \mu(dx) \\
& \leq e^{2k^2 \sigma^2 - 2ks}.
\end{align*}
By setting $k = \frac{s}{2 \sigma^2}$, we get the tail estimate for $s > \sigma^2$.
\end{proof}

We are now ready to prove Proposition \ref{RightTail}.

\begin{proof}[Proof of Proposition \ref{RightTail}]
Assume that for some positive $l, \eps$, $\mathbb{P}\left( L \leq l \right) \geq 1- \eps$.  Setting $x = \sigma \sqrt{ | \log( \eps /C)|}$ and using the estimate from Proposition \ref{UniformEstimate} we have:
$$
\mathbb{P}\left( \norme{ ( \delta \phi_1 + \delta \phi_{2} )_{|K} }_{\infty}  \geq x \right) \leq \eps
$$
and
$$
\mathbb{P}\left( \norme{ ( \delta \phi_1 + \delta \phi_{2} )_{|K} }_{\infty}  \leq x, L \leq l \right) \geq 1 -2 \eps.
$$
Consequently, with probability at least $1 - 2 \eps$, the distance from $A$ to $B$ in $K$ for the metric $e^{\gamma (\phi_{0,n} - \delta \phi_1 - \delta \phi_2)} ds^2$ is $\leq  l e^{ \frac{\gamma}{2} x}$. On this event, we fix such a path of length $\leq  l e^{ \frac{\gamma}{2} x}$ and average over the independent small scale noise $\delta \phi_3$. Let $\mu$ be the occupation measure of that path, so that $\abs{\mu} \leq l e^{\frac{\gamma}{2}x}$ and $\psi = \frac{\gamma}{2}(\delta \phi_3)$ is independent of $\mu$. Since $\sigma^2 := \sup_{[0,1]^2} \mathrm{Var ~} \psi = O(\gamma^2)$, by using Lemma \ref{MomentMethod}, we note that adding the noise $\delta \phi_3$ increases the length by a factor $\geq e^{C \gamma \sqrt{\abs{\log \eps}}}$ with probability $\leq \eps$. Consequently, with probability $\geq 1 - 3\eps$, the distance from $A$ to $B$ in $K$ for $e^{\gamma \tilde{\phi}_{0,n} \circ F}ds$ is less than $l e^{\frac{\gamma}{2}x} e^{C \gamma \sqrt{\abs{\log \eps}}}$. Using again $L(A',B', e^{\gamma \tilde{\phi}} ds^2 ) \leq \norme{F'}_{K,\infty} L(A,B,e^{\gamma \tilde{\phi} \circ F} ds^2 )$ we have $\mathbb{P} ( L' \leq l' ) \geq 1 - 3 \eps$
where $l' = \norme{F'}_{K,\infty} l e^{\frac{\gamma}{2}x} e^{C\gamma \sqrt{\abs{\log \eps}}}$.\end{proof}

To prove Theorem \ref{th:RSW}, we will need the following elementary lemma.

\begin{Lemma}
\label{LemCro}
If $a$ and $b$ are two positive real numbers with $a < b$, there exists $j = j(b/a)$ and $j$ rectangles isometric to $[0, a/2] \times [0, b/2]$  such that if $\pi$ is a left-right crossing of the rectangle $ [0,a] \times [0,b]$, at least one of the $j$ rectangles is crossed in the thin direction by a subpath of that crossing. 
\end{Lemma}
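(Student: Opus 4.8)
The statement to prove is Lemma~\ref{LemCro}: a purely geometric/combinatorial covering statement about left-right crossings of rectangles.

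\textbf{The plan.} The plan is to cover the long rectangle $[0,a]\times[0,b]$ by finitely many translated copies of $[0,a/2]\times[0,b/2]$ arranged in two overlapping horizontal rows, staggered so that any connected path going from the left side $\{0\}\times[0,b]$ to the right side $\{a\}\times[0,b]$ must, at some point, fully traverse one of these sub-rectangles in its \emph{short} ($a/2$-wide) direction. The key observation is that since $a<b$, the sub-rectangle $[0,a/2]\times[0,b/2]$ is ``tall'': its short side has length $a/2$. A left-right crossing of $[0,a]\times[0,b]$ has horizontal extent exactly $a$. If we tile the horizontal direction into, say, $\lceil b/a\rceil$ vertical strips each of width $a/2$ — actually we need to be a little careful about the ratio; the number $j$ should depend only on $b/a$ — and within the relevant strips place a bounded number of copies of the tall rectangle stacked vertically (with overlap of height $\geq b/2$ between consecutive ones so the whole column of height $b$ is covered by two staggered copies), then any crossing path, as it moves from abscissa $0$ to abscissa $a$, must at some moment be confined to one such width-$a/2$ strip while connecting its two vertical sides.

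\textbf{Key steps, in order.} First I would fix the combinatorial data: let $N := \lceil 2b/a\rceil$ or a similar explicit function of $b/a$, and consider vertical strips $S_i := [x_i, x_i + a/2]\times[0,b]$ for a suitable finite collection of left endpoints $x_i$ chosen so that consecutive strips overlap and the first has $x_i=0$, the last has $x_i + a/2 = a$. Second, within each strip, since the strip has height $b$ and our sub-rectangle has height $b/2$, I would place two vertically translated copies $R_{i,1}, R_{i,2}$ (bottom-aligned and top-aligned) whose union is the whole strip; this gives $j := 2\cdot(\text{number of strips})$ rectangles, each isometric to $[0,a/2]\times[0,b/2]$. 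Third — the heart of the argument — I would show that any left-right crossing $\pi$ of $[0,a]\times[0,b]$ crosses one of the $R_{i,m}$ in its thin direction, i.e. connects the two width-$a/2$-apart vertical sides of some $R_{i,m}$ while staying inside it. For this, observe that the $x$-coordinate along $\pi$ varies continuously from $0$ to $a$, so it passes through every intermediate value; partitioning $[0,a]$ into the overlapping width-$(a/2)$ intervals of the strips and using a connectedness/sweeping argument, one extracts a subarc of $\pi$ whose $x$-range is exactly $[x_i, x_i+a/2]$ for some $i$ and which stays in $S_i$; this subarc connects the left and right edges of $S_i$, hence (being contained in the union $R_{i,1}\cup R_{i,2}$, whose overlap region has full height $b/2$) it stays within one of $R_{i,1}$ or $R_{i,2}$ for long enough to cross it in the thin direction. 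Fourth, I would record that $j$ depends only on $b/a$, as required.

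\textbf{Main obstacle.} The main obstacle is the third step: making rigorous the claim that a subarc of $\pi$ confined to one strip actually \emph{crosses} (connects the two opposite short sides of) one of the two half-height rectangles in that strip, rather than merely meandering. One has to argue that because $\pi$ enters $S_i$ from the left edge and leaves from the right edge (or, after trimming, that some subarc has this property), and because the two rectangles $R_{i,1},R_{i,2}$ together cover $S_i$ with overlap of vertical thickness $b/2$, the connected subarc cannot ``escape'' between them and so lies in one of them along a segment joining its left and right sides. A clean way to handle this is to choose the overlap generously — e.g. take three vertically-staggered copies per strip so that any point of $S_i$ together with a $b/2$-neighborhood lies in a single copy — trading a slightly larger (but still $b/a$-dependent) constant $j$ for a transparent argument; the precise bookkeeping of endpoints $x_i$ and vertical offsets is routine once this is set up.
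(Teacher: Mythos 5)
Your plan breaks down at exactly the step you flag as the ``main obstacle,'' and the proposed repair (larger overlap, more staggered copies) does not close the gap. The problematic claim is that a path crossing a strip $S=[0,a/2]\times[0,b]$ from its left edge to its right edge must, for a suitable stack of vertical $a/2\times b/2$ rectangles inside $S$, contain a subarc that crosses one of them in its thin (horizontal) direction. This is false no matter how finely one covers the strip. Take the path from $(0,0)$ straight up to $(a/4,b)$ and straight back down to $(a/2,0)$. The only point of this path on $\{0\}\times[0,b]$ is $(0,0)$ and the only point on $\{a/2\}\times[0,b]$ is $(a/2,0)$, so a horizontal crossing of a vertical rectangle $[0,a/2]\times[y_0,y_0+b/2]$ would force $y_0\leq 0\leq y_0+b/2$; but starting from $(0,0)$ the path escapes such a rectangle through its top $\{y=y_0+b/2\}$ already at $x\le a/8$, far before reaching $x=a/2$, and symmetrically from the other endpoint. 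Hence no vertical $a/2\times b/2$ rectangle is crossed horizontally, and the ``stays within one copy long enough'' assertion fails. The obstruction is the steepness of the path, not the placement of the cover, so staggering more copies does not help; and the condition you suggest (``any point of $S_i$ together with a $b/2$-neighborhood lies in a single copy'') cannot be arranged at all when the copies have height $b/2<b$.

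The paper's proof supplies the idea your argument is missing. It passes to the subpath of $\pi$ from the left edge to the \emph{first} hit of $\{a/2\}\times[0,b]$ and splits into two cases according to the vertical extent $h$ of this subpath. If $h\leq a/2+(b-a)/4$, the subpath fits inside one of finitely many vertical $a/2\times b/2$ rectangles placed with vertical spacing $(b-a)/4$, and (running from $x=0$ to $x=a/2$) crosses it in the thin direction. If $h$ is larger, the subpath has horizontal extent only $a/2$ but vertical extent exceeding $a/2+(b-a)/4$, so it crosses one of finitely many $a/2\times a/2$ squares from bottom to top; such a square sits inside a $90^\circ$-rotated (horizontal) copy of $[0,a/2]\times[0,b/2]$ --- allowed since the lemma asks only for rectangles \emph{isometric} to $[0,a/2]\times[0,b/2]$, and these need not lie inside $[0,a]\times[0,b]$ --- yielding again a thin-direction crossing. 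The dichotomy on the vertical extent and the second, rotated family of rectangles are the essential new ingredients, and they cannot be recovered by refining the covering in your approach.
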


\begin{proof} 
To see it, cover for instance $[0,a/2] \times [0, b]$ by thin rectangles $[0,a/2] \times [0, b/2]$ from bottom to top and spaced by $(b-a)/4$, add also squares of length $a/2$ with the same spacing (see the first two parts on Figure \ref{fig:Lemma}). Then, starting with a crossing of $[0,a] \times [0,b]$, consider the subpath from the left side to the first hitting point of $\lbrace a/2 \rbrace \times [0,b]$, and denote by $h$ is height (max of $y$ - min of $y$). Consider first the case where $h \leq a/2 + (b-a)/4$ (see the last part on Figure \ref{fig:Lemma}). Since the bottom part of the path is at distance $\leq (b-a)/4$ of a side of a rectangle of size $[0,a/2] \times [0,b/2]$ the crossing is included in this rectangle of the cover. Now we treat the other case where $h > a/2 + (b-a)/4$ (see the third part on Figure \ref{fig:Lemma}). Since the bottom part is at distance $\leq (b-a)/4$ of a square which is above, this square of size $a/2$ is then crossed vertically.
\end{proof}

\begin{figure}[h!]
\centering
\includegraphics[scale=0.75]{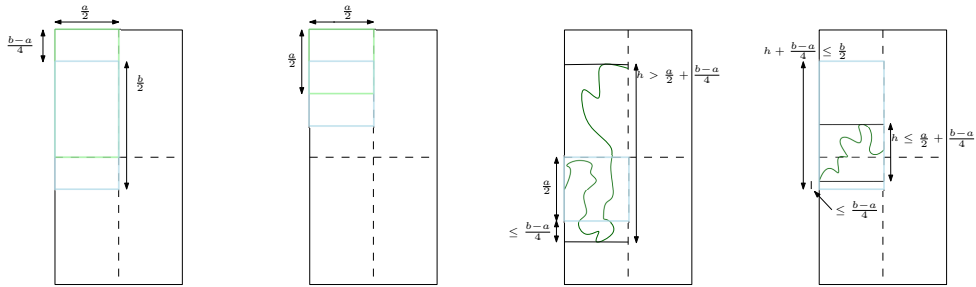}
\caption{Crossing at a smaller scale.}
\label{fig:Lemma}
\end{figure}

Now, we want to relate crossings of short rectangles with crossings of long rectangles. Our previous results say that the crossing lengths in $K$ between sides $A$ and $B$ are uniformly (in $n$) comparable to crossing lengths in $F(K)$ between sides $F(A)$ and $F(B)$. Thus, we would like to take the sides $A$ and $B$ to be those of a short rectangle and to map them to the sides of a long rectangle with a conformal map $F$ such that $F'$ and $F''$ are bounded and satisfying $\abs{F'} \geq 1$. This cannot be done directly but this is the main idea:  to produce a crossing from a short domain to a longer one. In particular, it is enough to consider ellipses and to relate crossings in ellipses with crossings in rectangles and by using the previous lemma one can begin with crossing of sides in a very small domain and then map it to a much larger domain.

\begin{proof}[Proof of Theorem \ref{th:RSW}]

The proof is divided in two steps. First we prove the inequality \eqref{eq:RSW1} associated with the left tail and then the inequality \eqref{eq:RSW2} associated to the right one.

\medskip

\textbf{Step 1.} We study first the left tail under the assumption $\mathbb{P} (L_{a,b}^{(n)} \leq l ) \geq \eps$ and we want to obtain a similar estimate for $L_{a',b'}^{(n)} $( in particular if $a/b <1<a'/b'$). We assume $a < b$, i.e. $L_{a,b}^{(n)}$ is the length of a crossing in the thin direction. 

\smallskip

First, by using Lemma \ref{LemCro}, we observe that there is an integer $j = j(b/a)$ and $j$ rectangles isometric to $[0, a/2] \times [0, b/2]$ such that on the event $L_{a,b}^{(n)} \leq l$, at least one of the $j$ rectangles is crossed in the thin direction by a subpath of that crossing. Thus, by union bound, we get $\mathbb{P} (L_{a/2,b/2}^{(n)}  \leq l ) \geq \eps/j$, and by iterating, $\mathbb{P} \left( L_{a/2^p,b/2^p}^{(n)} \leq l \right) \geq \eps / j^p$.

\smallskip

Consider now ellipses $E$, $E'$, each with two marked arcs, such that: any left-right crossing of $[0, a/2^p] \times [0, b/2^p]$ is a crossing of $E$, and any crossing of $E'$ is a left-right crossing of $[0, a'] \times [0, b']$. 

Divide the marked arcs of $E$ into $m$ subarcs of, say, equal length. With probability at least $ \eps/ (j^{p} m^2 )$, one of the crossings between pairs of subarcs has length at most $l$.

\begin{figure}[h!]
\centering
\includegraphics[scale=0.5]{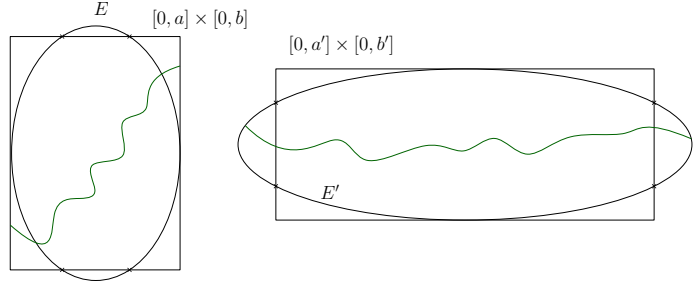}
\caption{Rectangles and ellipses}
\label{fig:ellipses}
\end{figure}

For $m$ large enough (depending on $E$, $E'$), for any pair of such subsegments (one on each side), there is a conformal equivalence $F : E \rightarrow E'$ such that the pair of subarcs is mapped to subarcs of the marked arcs of $E'$. Remark that ellipses are analytic curves (they are images of circles under the Joukowski map, see \cite{Joukowski}  Chapter 1 Exercise 15) and consequently (by Schwarz reflection) $F$ extends to a conformal equivalence $U \rightarrow V$, where $\bar{E}$ (resp. $\bar{E}'$) is a compact subset of $U$ (resp. $V$ ).

\smallskip

By choosing $p$ large enough, $|F'| \geq 1$ on $U$. By the left tail estimate Proposition \ref{LeftTail}, we obtain that there is $C > 0$ such for all $\eps, l > 0$:
$$
\mathbb{P} \left( L_{a,b}^{(n)} \leq l \right) \geq \eps \Rightarrow \mathbb{P}\left( L_{a',b'}^{(n)} \leq C l e^{\frac{\gamma}{2}\sigma \sqrt{\abs{\log \eps / (2C j^p m^2)}}} \right) \geq \eps/(4 j^p m^2)
$$
which we rewrite as:
\begin{equation}
\label{LeftRect}
\mathbb{P} \left( L_{a,b}^{(n)} \leq l \right) \geq \eps \Rightarrow  \mathbb{P}\left( L_{a',b'}^{(n)} \leq C l e^{C \gamma \sqrt{\abs{\log \eps / C}}} \right) \geq \eps/C.
\end{equation}

\bigskip

\textbf{Step 2.} For the right tail we reason similarly: let $a < b$ and take $l,\eps$ so that $ \mathbb{P} ( L_{a,b}^{(n)} \leq l ) \geq 1 - \eps$. On the event $\lbrace L_{a,b}^{(n)} \leq l \rbrace$, one of $j$ variables distributed like $L_{a/2,b/2}^{(n)}$ is $\leq l$; moreover these variables have positive association. By the the positive association property (Theorem \ref{PosAssoc}) and the square-root trick (see \cite{Ta} Proposition 4.1), we have $ \mathbb{P} ( L_{a/2,b/2}^{(n)}  \leq  l )  \geq 1 - \eps^{1/j}$ and then, by iterating, $\mathbb{P} ( L_{a/2^p,b/2^p}^{(n)} \leq l ) \geq 1 - \eps^{j^{-p}}$. 

\smallskip

On the event $\lbrace L_{a/2^p,b/2^p}^{(n)} \leq l \rbrace$, the ellipse $E$ has a crossing of length $\leq l$ between two marked arcs. Again by subdividing each of these arcs into $m$ subarcs, and applying the square-root trick we see that for at least one pair of subarcs, there is a crossing of length $\leq l$ with probability $\geq 1 - \eps^{j^{-p} m^{-2}}$. Combining with the right-tail estimate Proposition \ref{RightTail}, we get:
\begin{equation}
\label{RightRect}
\mathbb{P} \left( L_{a,b}^{(n)} \leq l \right) \geq 1 - \eps \Rightarrow \mathbb{P}\left( L_{a',b'}^{(n)} \leq C l e^{\gamma C \sqrt{\abs{\log \eps /C}}} \right) \geq 1 - 3 \eps^{1/C}
\end{equation}
which completes the proof of Theorem \ref{th:RSW}.
\end{proof} 

\section{Tail estimates for crossing lengths: proof of Theorem \ref{th:Tails}}

\subsection{Concentration: the left tail} Denote by $\tilde{L}_{1,3}^{(n)}$ (resp. $\tilde{L}_{3,1}^{(n)}$) the left-right crossing length of the rectangle $[2,3] \times [0,3]$ (resp. $[0,3] \times [2,3]$). In this subsection we investigate the consequences of the RSW estimates combined with the following inequalities (see Figure 2): 
$$
L_{1,3}^{(n)} + \tilde{L}_{1,3}^{(n)}  \leq L_{3,3}^{(n)} \leq \min \left( L_{3,1}^{(n)} , \tilde{L}_{3,1}^{(n)} \right)
$$
which implies the following: 
\begin{align*}
L_{3,3}^{(n)} \leq l \Rightarrow \left( L_{1,3}^{(n)} \leq l ~~  \mathrm{and} ~~ \tilde{L}_{1,3}^{(n)} \leq l \right) \\
L_{3,3}^{(n)} \geq l \Rightarrow \left( L_{3,1}^{(n)} \geq l ~~  \mathrm{and} ~~ \tilde{L}_{3,1}^{(n)} \geq l  \right).
\end{align*}

\begin{figure}[h!]
\centering
\includegraphics[scale=0.5]{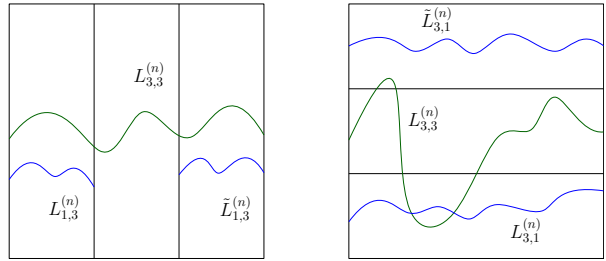}
\caption{Inequalities between lengths of geodesics associated to different rectangles}
\label{fig:Inequa}
\end{figure}

The following result is a consequence of the first inequality. It gives lognormal tail estimates on the left tail of crossing lengths renormalized by a small quantile, without any assumption on $\gamma$.
\begin{Prop}
\label{PropLower} There exists a small $p_0 >0$ such that for $p \leq p_0$ there exists $c > 0$ so that for every $s > 0$ 
$$
\mathbb{P} \left( L_{3,3}^{(n)} \leq l_{3,3}^{(n)} \left( p \right) e^{- s} \right) \leq C e^{-c s^{2}},
$$
where $c,C$ do not depend on $n$.
\end{Prop}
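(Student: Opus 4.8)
The plan is to upgrade the first of the two displayed inequalities, $L_{1,3}^{(n)}+\tilde L_{1,3}^{(n)}\le L_{3,3}^{(n)}$, into a concentration statement by observing that $L_{1,3}^{(n)}$ and $\tilde L_{1,3}^{(n)}$ are \emph{independent and identically distributed}. Indeed $L_{1,3}^{(n)}$ depends on $\phi_{0,n}$ only through its values on $[0,1]\times[0,3]$ and $\tilde L_{1,3}^{(n)}$ only through its values on $[2,3]\times[0,3]$, two rectangles at Euclidean distance $1$, which exceeds the range of dependence $2r_0$ of $\phi_{0,n}$ (recall $r_0$ is small); hence the two restrictions are independent, and they have the same law by translation invariance of $\phi_{0,n}$. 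Consequently $\Pro(L_{3,3}^{(n)}\le t)\le\Pro(L_{1,3}^{(n)}\le t,\ \tilde L_{1,3}^{(n)}\le t)=\Pro(L_{1,3}^{(n)}\le t)^2$ for every $t>0$, which, in terms of the quantile function $q\mapsto l^{(n)}_{3,3}(q)$, reads $l^{(n)}_{3,3}(q^2)\ge l^{(n)}_{1,3}(q)$.

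The second ingredient brings the right-hand side back to a quantile of $L_{3,3}^{(n)}$ at the cost of a factor of the form $Ce^{C\sqrt{|\log\cdot|}}$, using the RSW estimate of Theorem~\ref{th:RSW}. From the trivial inclusion $L_{3,3}^{(n)}\le L_{3,1}^{(n)}$ (a left--right crossing of $[0,3]\times[0,1]$ is also one of $[0,3]^2$) one gets $l^{(n)}_{3,3}(q/C)\le l^{(n)}_{3,1}(q/C)$, and then \eqref{eq:RSW1}, applied to the admissible pair $(a,b)=(1,3)$, $(a',b')=(3,1)$, yields $l^{(n)}_{3,1}(q/C)\le C\,e^{C\sqrt{|\log(q/C)|}}\,l^{(n)}_{1,3}(q)$. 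Combining these with the squaring inequality produces the recursion
\[
l^{(n)}_{3,3}(q^2)\ \ge\ l^{(n)}_{1,3}(q)\ \ge\ \frac1C\,e^{-C\sqrt{|\log(q/C)|}}\,l^{(n)}_{3,3}(q/C),\qquad 0<q<\tfrac12 .
\]

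I would then iterate. Set $q_0:=p$ and $q_{k+1}:=(Cq_k)^2$, so that applying the recursion with $q=Cq_k$ gives $l^{(n)}_{3,3}(q_{k+1})\ge \frac1C e^{-C\sqrt{|\log q_k|}}\,l^{(n)}_{3,3}(q_k)$ and hence $l^{(n)}_{3,3}(q_k)\ge l^{(n)}_{3,3}(p)\,C^{-k}\exp\!\big(-C\sum_{j<k}\sqrt{|\log q_j|}\big)$. This is precisely where $p$ must be taken small: since $|\log q_{k+1}|=2|\log q_k|-2\log C$, once $|\log p|\ge 4\log C$ (and $p$ is small enough that all $q_k$ stay below $\tfrac12$) one has $|\log q_k|=\Theta(2^k|\log p|)$ with implicit constants depending only on $C$. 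Therefore $\sum_{j<k}\sqrt{|\log q_j|}=O(2^{k/2}\sqrt{|\log p|})$, so $l^{(n)}_{3,3}(q_k)\ge l^{(n)}_{3,3}(p)\,e^{-s_k}$ with $s_k\asymp 2^{k/2}\sqrt{|\log p|}$, while $q_k\le e^{-c\,2^{k}|\log p|}\le e^{-c' s_k^2}$. Since $\Pro(L_{3,3}^{(n)}\le l^{(n)}_{3,3}(p)e^{-s_k})\le\Pro(L_{3,3}^{(n)}\le l^{(n)}_{3,3}(q_k))\le q_k$ (using that $L_{3,3}^{(n)}$ has no atoms), the desired bound holds along the sequence $(s_k)$; for a general $s$ one squeezes it between two consecutive $s_k$, whose ratio is bounded (it tends to $\sqrt2$), at the cost of adjusting $c,C$. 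All inputs—RSW, the product inequality, the independence—are uniform in $n$, so $c$ and $C$ are as well.

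The conceptual content—two disjoint sub-crossings plus RSW—is short; the step I expect to require the most care is the iteration: one must verify that the additive $\log C$ losses built into the ``$\eps/C$'' of the RSW estimate do not destroy the doubling $|\log q_k|\asymp 2^k|\log p|$, which is exactly what determines how small $p_0$ has to be, and one must keep track of the one-sided definition of the quantiles when converting between inequalities of the form $\Pro(\,\cdot\le t)\le q$ and inequalities between quantiles.
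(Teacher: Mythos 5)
Your proof is correct and follows essentially the same strategy as the paper: combine the RSW estimate with the independence squaring inequality $\Pro(L_{3,3}^{(n)}\le t)\le\Pro(L_{1,3}^{(n)}\le t)^2$, and iterate to gain a factor of $2$ in $|\log|$ of the quantile at each step while only paying a multiplicative $Ce^{C\sqrt{|\log q_k|}}$ on the threshold. The only cosmetic difference is that you phrase the iteration as a recursion for the quantile function $l^{(n)}_{3,3}(\cdot)$ and route through $L_{3,1}^{(n)}$ via the monotone bound $l_{3,3}^{(n)}\le l_{3,1}^{(n)}$, whereas the paper tracks a pair of sequences $(\eps_i,r_i^{(n)})$ of probabilities and thresholds and applies the RSW comparison directly between $(1,3)$ and $(3,3)$.
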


\begin{proof}
Our left tail estimate \eqref{LeftRect} gives:
$$
\mathbb{P}\left( L_{1,3}^{(n)} \leq l \right) \geq \eps \Rightarrow \mathbb{P}\left( L_{3,3}^{(n)} \leq l' \right) \geq \eps/C ~~ \mathrm{with} ~~ l' = Cl e^{C \gamma \sqrt{\abs{\log \eps / C}}}
$$
which can be rewritten as: 
\begin{equation}
\label{ReUse}
\mathbb{P}\left( L_{3,3}^{(n)} \leq l \right) \leq \eps \Rightarrow \mathbb{P}\left( L_{1,3}^{(n)} \leq l C^{-1} e^{-C \gamma \sqrt{\abs{\log C \eps}}} \right) \leq C \eps.
\end{equation}
Now, if $L_{3,3}^{(n)}$ is less than $l$, then both $[0,1] \times [0,3]$ and $[2,3] \times [0,3]$ have a left-right crossing of length $\leq l$ and the field in these two rectangles is independent (if $r_0$ is small enough). Consequently, 
\begin{equation}
\label{decaysquare}
\mathbb{P}\left( L_{3,3}^{(n)} \leq l \right) \leq \mathbb{P}\left( L_{1,3}^{(n)} \leq l \right)^2.
\end{equation}
These two results allow us to get the uniform tail bound. Indeed, take $\eps_0$  small, such that $C^2 \eps_0 < 1$ and set $r_0^{(n)} := l_{3,3}^{(n)}(\eps_0)$. We define by induction $\eps_{i+1} := (C \eps_i)^2$ (which gives $\eps_i = (\eps_0 C^2)^{2^i} C^{-2}$ as well as $r_{i+1}^{(n)} := r_{i}^{(n)} C^{-1} \exp (-C \gamma \sqrt{ | \log (C \eps_i) |})$.  It follows by induction that $\mathbb{P} ( L_{3,3}^{(n)} \leq r_i^{(n)} ) \leq \eps_i$ for every $i \geq 0$. Indeed, the case $i = 0$ follows by definition and then notice that the RSW estimates under the induction hypothesis implies that 
$$
\mathbb{P}\left( L_{3,3}^{(n)} \leq r_i^{(n)} \right) \leq \eps_i \Rightarrow \mathbb{P}\left( L_{1,3}^{(n)} \leq r_{i+1}^{(n)} \right) \leq C \eps_i
$$
which gives, using the inequality \eqref{decaysquare}: 
$$
\mathbb{P}\left( L_{3,3}^{(n)} \leq r_{i+1}^{(n)} \right) \leq \mathbb{P}\left( L_{1,3}^{(n)} \leq r_{i+1}^{(n)} \right)^2 \leq  (C \eps_i )^2 = \eps_{i+1}.
$$
Notice that we have the lower bound on $r_i^{(n)}$ for $i \geq 1$:
\begin{align*}
r_i^{(n)} \geq l_{3,3}^{(n)}(\eps_0) C^{-i} e^{-C \gamma \sum_{k = 0}^{i-1} \sqrt{| \log (C \eps_k)  |} } \geq  l_{3,3}^{(n)}(\eps_0) e^{-C i} e^{-C \gamma \sqrt{|\log \eps_0 C^2 |} 2^{i/2}}.
\end{align*}
Our estimate then takes the form, for $i \geq 0$: 
$$
\mathbb{P} \left( L_{3,3}^{(n)} \leq l_{3,3}^{(n)}(\eps_0) e^{-C i} e^{-\gamma C \sqrt{|\log \eps_0 C^2 |} 2^{i/2}} \right) \leq \left( \eps_0 C^2 \right)^{2^i} C^{-2}.
$$
Which can be rewritten, taking $i = \lfloor 2 \log_2 s \rfloor$, with absolute constants, for $s \geq 1$: 
$$
\mathbb{P}\left( L_{3,3}^{(n)} \leq l_{3,3}^{(n)}(\eps_0) C^{-1} e^{-C \log s} e^{-\gamma s} \right) \leq  e^{-c s^2}.
$$
Notice that dropping the dependence on $\gamma$ as we impose it is bounded from above by a large number we get Proposition \ref{PropLower}.
\end{proof}
\begin{Cor}
\label{LowerTailThin}
We have a uniform (in $n$) lognormal tail estimates for the lower bound of thin rectangles i.e. if $\eps_0$ is small enough for every $n \geq 0$, $s \geq 0$:
$$
\Pro \left( L_{1,3}^{(n)} \leq l_{1,3}^{(n)}(\eps_{0}) e^{-s} \right) \leq C e^{-c s^2},
$$
where $c,C$ are absolute constants. 
\end{Cor}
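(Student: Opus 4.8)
The plan is to obtain Corollary \ref{LowerTailThin} by transporting the lognormal left-tail bound of Proposition \ref{PropLower}, which concerns the square crossing length $L_{3,3}^{(n)}$, to the thin crossing length $L_{1,3}^{(n)}$, using the implication \eqref{ReUse} already established inside the proof of Proposition \ref{PropLower}: namely, if $\Pro(L_{3,3}^{(n)} \leq l) \leq \eps$ with $\eps < 1/2$, then $\Pro(L_{1,3}^{(n)} \leq l\, C^{-1} e^{-C\gamma\sqrt{\abs{\log C\eps}}}) \leq C\eps$.

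First I would fix $\eps_0 \leq p_0$ so that Proposition \ref{PropLower} applies with $p = \eps_0$, giving, for all $n \geq 0$ and $s > 0$, $\Pro(L_{3,3}^{(n)} \leq l_{3,3}^{(n)}(\eps_0)\, e^{-s}) \leq C e^{-c s^2}$. For $s$ large enough that $C e^{-c s^2} < 1/2$, I would apply \eqref{ReUse} with $l = l_{3,3}^{(n)}(\eps_0)\, e^{-s}$ and $\eps = C e^{-c s^2}$, obtaining
\[
\Pro\Big( L_{1,3}^{(n)} \leq C^{-1} l_{3,3}^{(n)}(\eps_0)\, e^{-s}\, e^{-C\gamma\sqrt{\abs{\log(C^2 e^{-c s^2})}}} \Big) \leq C^2 e^{-c s^2}.
\]
Using $\sqrt{\abs{\log(C^2 e^{-c s^2})}} \leq \sqrt{c}\, s + C'$ for all $s \geq 0$, together with the fact that $C\gamma$ is bounded by an absolute constant (cf. Section \ref{sec:Notation}), the length threshold on the left-hand side is bounded below by $C_1\, l_{3,3}^{(n)}(\eps_0)\, e^{-\kappa s}$, where $\kappa := 1 + C\gamma\sqrt{c}$ and $C_1 > 0$ are absolute constants.

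It then remains to pass from the square quantile $l_{3,3}^{(n)}(\eps_0)$ to the thin-rectangle quantile $l_{1,3}^{(n)}(\eps_0)$ and to repackage constants. The deterministic inequality $L_{1,3}^{(n)} \leq L_{3,3}^{(n)}$ (a consequence of $L_{1,3}^{(n)} + \tilde L_{1,3}^{(n)} \leq L_{3,3}^{(n)}$, stated at the beginning of this section: a left-right crossing of the square contains a left-right crossing of $[0,1]\times[0,3]$) yields $l_{1,3}^{(n)}(\eps_0) \leq l_{3,3}^{(n)}(\eps_0)$, so the event in the last display contains $\{ L_{1,3}^{(n)} \leq C_1\, l_{1,3}^{(n)}(\eps_0)\, e^{-\kappa s} \}$. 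Writing $C_1 e^{-\kappa s} \geq e^{-u}$ with $u := \kappa s + \abs{\log C_1}$, so that $s \geq u/(2\kappa)$ once $u$ exceeds a fixed constant, I get $\Pro(L_{1,3}^{(n)} \leq l_{1,3}^{(n)}(\eps_0)\, e^{-u}) \leq C^2 e^{-c u^2/(4\kappa^2)}$ for all such $u$; for the remaining bounded range of $u$ (which also absorbs the small-$s$ regime set aside above) the claimed bound is trivial after enlarging $C$. This gives the corollary with absolute constants $c/(4\kappa^2)$ and an enlarged $C$.

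The argument is essentially bookkeeping, so I do not anticipate a genuine obstacle; the only points requiring care are tracking how the multiplicative constants and the factor $e^{-C\gamma\sqrt{\abs{\log C\eps}}}$ get absorbed into the Gaussian tail, and the elementary but necessary replacement of $l_{3,3}^{(n)}(\eps_0)$ by $l_{1,3}^{(n)}(\eps_0)$ via $L_{1,3}^{(n)} \leq L_{3,3}^{(n)}$.
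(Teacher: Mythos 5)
Your proof is correct and takes essentially the same route as the paper: the paper's own proof is the one-liner ``The proof follows from the RSW estimate \eqref{ReUse}, the bound $l_{1,3}^{(n)}(\eps_0) \leq l_{3,3}^{(n)}(\eps_0)$ and the previous proposition,'' and you have simply spelled out the bookkeeping that this compresses.
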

\begin{proof}
The proof follows from the RSW estimate \eqref{ReUse}, the bound $l_{1,3}^{(n)}(\eps_0) \leq l_{3,3}^{(n)}(\eps_0)$ and the previous proposition.
\end{proof}
\noindent It is tempting to follow the lines of this proof using the second inequality (see also Figure \ref{fig:Inequa}) in order to derive a right tail estimate. However, this approach cannot be readily extended  because of the power $1/C$ in the RSW estimate, inequality \eqref{eq:RSW2}.

\subsection{Concentration: the right tail}

As mentioned in the previous section, we cannot generalize the method used for the left tails to the right one and the following proposition remediates to this. Before stating it, we refer the reader to the definitions of $l_n$  and $\delta_n$ in Subsection \ref{DefLengths}.

\begin{Prop}
\label{LemmeRecu} 
If $\eps$ is small enough we have the following tail estimate:\\
For $0 \leq k \leq n$, $s > 1$
$$
\Pro \left( L_{3,1}^{(k)} \geq \delta_n l_{k} e^{s \sqrt{\log s}} \right) \leq C e^{-c s^2},
$$
where $c$ and $C$ are absolute constants. 
\end{Prop}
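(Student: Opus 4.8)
The plan is to bootstrap a single crude estimate, valid at level $\delta_n l_k$, into the quasi-Gaussian tail by an induction that at each step trades the independence of far-apart strips against the quantitative RSW bound \eqref{eq:RSW2}. The point — and the reason one cannot simply transcribe the left-tail argument of Proposition \ref{PropLower}, where the exponent $1/C$ in \eqref{eq:RSW2} is fatal — is to stack \emph{enough} independent copies at each step that the power one gains strictly outweighs the loss $1/C$. For the base case, since $\delta_n \ge \bar l_k / l_k$ one has $\delta_n l_k \ge \bar l_k = l_{3,1}^{(k)}(1-\eps)$, and as $L_{3,1}^{(k)}$ has a positive density on $(0,\infty)$ this gives $\Pro(L_{3,1}^{(k)} \ge \delta_n l_k) \le \eps$; set $\rho_0 := \delta_n l_k$, $\eta_0 := \eps$.

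For the amplification step, fix a large integer $m$ (pinned down below) and suppose $\Pro(L_{3,1}^{(k)} \ge \rho) \le \eta$. Inside $[0,3]\times[0,2m-1]$ consider the $m$ horizontal strips $R_j := [0,3]\times[2j,2j+1]$, $0 \le j \le m-1$: they are pairwise at distance $1 > 2r_0$, so the fields $\phi_{0,k}|_{R_j}$, hence the left-right lengths $L^{(k)}(R_j)$, are independent, each with the law of $L_{3,1}^{(k)}$. Any left-right crossing of a single $R_j$ is a left-right crossing of $[0,3]\times[0,2m-1]$, so $L_{3,2m-1}^{(k)} \le \min_j L^{(k)}(R_j)$ and $\Pro(L_{3,2m-1}^{(k)} \ge \rho) \le \eta^m$. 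Now apply \eqref{eq:RSW2} with $(a,b)=(3,2m-1)$ (easy direction, $a/b<1$) and $(a',b')=(3,1)$ (hard direction, $a'/b'>1$) on the fixed interval $[A,B]=[1,2m-1]$: from $\rho \ge \bar l_{3,2m-1}^{(k)}(\eta^m)$ it gives $\bar l_{3,1}^{(k)}\big(3\eta^{m/C}\big) \le \rho\, C e^{C\sqrt{\abs{\log(\eta^m/C)}}}$, i.e.\ $\Pro(L_{3,1}^{(k)} \ge \rho') \le \eta'$ with $\rho' := \rho\, C e^{C\sqrt{\abs{\log(\eta^m/C)}}}$ and $\eta' := 3\eta^{m/C}$. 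I would choose $m$ large enough that $m/C>1$ (possible because the RSW constant $C=C([1,2m-1])$ of Theorem \ref{th:RSW} grows only slowly in the aspect ratio) and $\eps$ small enough that $3\eps^{m/C-1}<1$, so that $\eta \mapsto 3\eta^{m/C}$ drives $\eta_0$ to $0$.

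Iterating, with $\rho_{i+1} = \rho_i\, C e^{C\sqrt{\abs{\log(\eta_i^m/C)}}}$ and $\eta_{i+1} = 3\eta_i^{m/C}$, the two previous paragraphs give $\Pro(L_{3,1}^{(k)} \ge \rho_i) \le \eta_i$ for every $i \ge 0$. Since $-\log\eta_{i} = (m/C)(-\log\eta_{i-1}) - \log 3$ one gets $-\log\eta_i \asymp (m/C)^i$, while $\log(\rho_i/\rho_0) = \sum_{j<i}\big(\log C + C\sqrt{m\abs{\log\eta_j}+O(1)}\big) \asymp \sum_{j<i}(m/C)^{j/2} \asymp (m/C)^{i/2}$, the geometric sums being dominated by their last term because $m/C>1$. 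Given $s>1$, taking $i$ maximal with $\rho_i \le \delta_n l_k\, e^{s\sqrt{\log s}}$ forces $(m/C)^{i/2} \gtrsim s$ (up to a bounded factor from one extra step), hence $-\log\eta_i \gtrsim (m/C)^i \gtrsim s^2$ and $\Pro\big(L_{3,1}^{(k)} \ge \delta_n l_k\, e^{s\sqrt{\log s}}\big) \le \eta_i \le C e^{-c s^2}$, with $c,C$ absolute since $m$, $C$ and $\eps$ are fixed. (In fact this produces the stronger bound with $e^{s}$ replacing $e^{s\sqrt{\log s}}$; the weaker form is kept for uniformity with the other tail estimates.)

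The main obstacle is precisely the tension between stacking and RSW: to beat the exponent $1/C$ one must stack a number $m$ of copies exceeding $C$, which enlarges the aspect ratio $2m-1$ of the auxiliary rectangle and hence the RSW constant itself, so closing this loop genuinely relies on the \emph{quantitative} form of Theorem \ref{th:RSW} and on the RSW constant not growing too fast in the aspect ratio. Everything else — the density of $L_{3,1}^{(k)}$, the independence of the strips for $r_0$ small, the elementary geometric inclusions between crossings — is routine.
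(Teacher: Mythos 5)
Your bootstrap has a genuine gap, and it is exactly the tension you name in your last paragraph; I do not believe it closes. The iteration needs $m/C>1$ where $C=C([1,2m-1])$ is the RSW constant from Theorem \ref{th:RSW}, i.e.\ the constant must grow strictly sublinearly in the aspect ratio of the auxiliary rectangle. Nothing in the paper establishes such a growth bound --- Theorem \ref{th:RSW} is stated only for a fixed compact $[A,B]$ --- and a reading of its proof suggests the opposite. The constant is assembled, roughly, as $j^{p} m_{\mathrm{arc}}^{2}$, where $j=j(b/a)\ge 4$ counts the rectangles in Lemma \ref{LemCro}, $p$ is the number of halving steps needed before the conformal equivalence between ellipses satisfies $|F'|\ge 1$, and $m_{\mathrm{arc}}$ is the number of boundary subarcs. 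For $(a,b)=(3,2m-1)$, $(a',b')=(3,1)$, the expansion constraint $|F'|\ge 1$ forces $p\gtrsim \tfrac12\log_2 m$ just from an area comparison, so $j^{p}\gtrsim m^{(\log_2 j)/2}\gtrsim m$, and hence $C\gtrsim m\cdot m_{\mathrm{arc}}^{2}>m$. Thus $m/C$ does not exceed $1$ --- it decreases with $m$ --- and the map $\eta\mapsto 3\eta^{m/C}$ does not contract. The paper explicitly flags this obstruction right after Corollary \ref{LowerTailThin} (``this approach cannot be readily extended because of the power $1/C$ in the RSW estimate''); your stacking idea does not escape it because $m$ feeds back into $C$. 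A secondary warning sign is that your argument, if it worked, would deliver a strictly lognormal right tail $e^{s}$ with no use of the multiscale structure of $\phi_{0,n}$ at all, whereas the paper only achieves $e^{s\sqrt{\log s}}$ even with that structure.

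The paper's proof is of a completely different, multiscale nature, and the ingredients your proposal omits are precisely the ones that carry the load: RSW reduces to squares; a comparison with a highly supercritical $1$-dependent oriented site percolation shows $L_{k,k}^{(n)}\le 4k\bar l_n$ with probability $1-C\alpha^{k}$ (this is where $\eps$ small is used); by the scale invariance $\phi_{m,n}\overset{d}{=}\phi_{0,n-m}(2^m\cdot)$ and the Gaussian moment method (Lemma \ref{MomentMethod}) one deduces $\Pro\bigl(L_{1,1}^{(n)}\le C\bar l_{n-m}e^{\gamma s\sqrt m}\bigr)\ge 1-C\alpha^{2^m}-e^{-cs^2}$; the quantile comparisons across scales (its Step 4 and Lemma \ref{BoundsScales}) convert $\bar l_{n-m}$ into $\delta_n l_n$; and the range $s\gtrsim 2^{n/2}$ is treated separately with a crude supremum/moment bound, which is where the extra $\sqrt{\log s}$ comes from. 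Without the percolation comparison and the scale decomposition $\phi_{0,n}=\phi_{0,m-1}+\phi_{m,n}$ there is no substitute mechanism for pushing the probability down while keeping the length multiplier under control.
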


\begin{proof} We proceed according to the following steps:

\begin{enumerate}
\item Use the RSW estimates to reduce the problem to the case of squares instead of long rectangles. 

\item Use a comparison to 1-dependent oriented site percolation to prove that with probability going to one exponentially in $k$, $L_{k,k}^{(n)}$ is less than $C k \bar{l}_n$.

\item By scaling and the moment method, obtain a first tail estimate of $L_{1,1}^{(n)}$ with respect to $\bar{l}_{n-m}$: \\ For a constant $\alpha \in (0,1)$, $\mathbb{P}\left( L_{1,1}^{(n)} \geq C \bar{l}_{n-m} e^{ \gamma s \sqrt{m} } \right) \leq  C \alpha^{2^m} + e^{- \frac{ 2 s^2}{\log 2}}$ .

\item Give an upper bound of $\bar{l}_{n-m}$ in terms of $l_n$.  

\item Obtain a tail estimate when the tails are not too large. 

\item For the large tails, use a moment method and a lower bound on the quantiles. 
\end{enumerate}

\medskip

\textbf{Step 1.} First, notice by the RSW estimates \eqref{RightRect} that it is enough to prove that for $0 \leq k \leq n$, $s > 1$
$$
\Pro \left( L_{1,1}^{(k)} \geq \delta_n l_{k} e^{s \sqrt{\log s}} \right) \leq C e^{-c s^2}.
$$

\textbf{Step 2.} We will see here that taking $\eps$ small enough, there exist $C > 0$, $\alpha < 1$ such that for every $k,n \geq 0$: 
\begin{equation}
\label{PercoLin}
\mathbb{P}\left( L_{k,k}^{(n)} \leq 4 k \bar{l}_n \right) \geq 1 - C \alpha^k.
\end{equation}

We consider a graph whose sites $x$ are made by squares of size $3 \times 3$ and spaced so that two adjacent squares intersect each other along a rectangle of size $ (3,1)$ or $(1,3)$. Denote by $L_{3,1,\mathrm{right}}^{(n)}(x)$ the rectangle crossing length, in the long direction, associated to the rectangle of size $(3,1)$ on the bottom of $x$ and included in $x$. Similarly, denote by $L_{3,1,\mathrm{up}}^{(n)}(x)$  the rectangle crossing length, in the long direction, associated to the rectangle of size $ (1,3)$ on the left of $x$ and included in $x$. To each site of our graph, we assign the value $0$ if the site is closed and $1$ if the site is open.  A site $x$ is open if the event $\lbrace L_{3,1,\mathrm{up}}^{(n)}(x)  + L_{3,1,\mathrm{right}}^{(n)}(x)  \leq 2 \bar{l}_n \rbrace$ occurs (see Figure \ref{fig:DefPerco}). 
\begin{figure}[h!]
\centering
\includegraphics[scale=0.75]{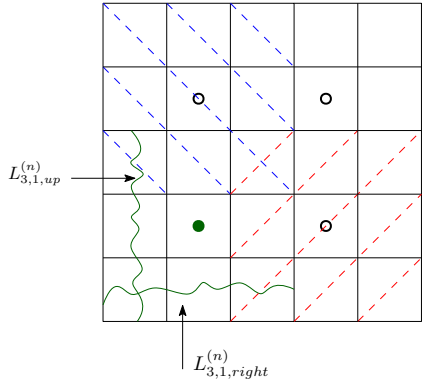}
\caption{Definition of the model. The green site $x$ is open. Three of its neighbors are drawn, with some colored dashed lines filling their cell and with white vertices at their center.}
\label{fig:DefPerco}
\end{figure}

We have the following bound on the probability that a site $x$ is open: 
$$
\Pro \left( \omega_x = 1 \right) \geq \Pro \left( L_{3,1,\mathrm{up}}^{(n)} \leq \bar{l}_n, L_{3,1,\mathrm{right}}^{(n)} \leq \bar{l}_n \right) \geq 2 \Pro \left( L_{3,1}^{(n)} \leq \bar{l}_n \right) - 1 \geq 1 - 2\eps.
$$
Therefore,  taking $\eps$ small gives a highly supercritical 1-dependent percolation model (notice that a site $x$ is independent of sites that are not directly weakly adjacent to it). Then, notice that $L_{k,k}^{(n)} $ is smaller than the weight associated to oriented paths from left to right at the percolation level that can go only up or right. Such a path contains at most $2k$ sites. Thus, if there is an open oriented percolation path from left to right, then $L_{k,k}^{(n)} \leq 4k \bar{l}_{n}$. Hence it is enough to show that the probability that there is such an open oriented path goes to $1$ exponentially in $k$. This follows from a contour argument for highly supercritical 1-dependent percolation model, see for instance \cite{DurrettOr} Section 10.
\begin{figure}[h!]
\centering
\includegraphics[scale=0.5]{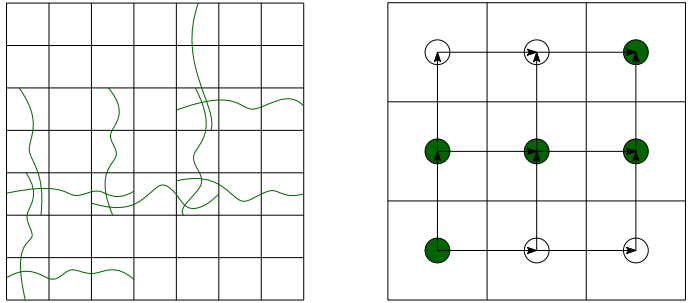}
\caption{Comparison with 1-Dependent Oriented Site Percolation. The figure on the right is the representation of the figure on the left.}
\label{fig:Perco}
\end{figure}

\medskip

\textbf{Step 3.} In order to obtain an upper bound for $L_{1,1}^{(n)}$, by scaling and the percolation bound \eqref{PercoLin} we see that there exists $\alpha \in (0,1)$ such that for $m \leq n$, we have,
$$
\mathbb{P}\left( L_{1,1}^{(m,n)} \leq C \bar{l}_{n-m}  \right) = \mathbb{P}\left( L_{2^m,2^m}^{(n-m)} \leq C 2^m \bar{l}_{n-m}  \right) \geq 1 - C \alpha^{2^m}
$$
which can be rewritten in term of $L_{1,1}^{(n)}$ as
\begin{align*}
\mathbb{P}\left( L_{1,1}^{(n)} \leq  C \bar{l}_{n-m} e^{s} \right) & \geq \mathbb{P} \left( L_{1,1}^{(n)} \leq C  \bar{l}_{n-m} e^{s}  , L_{1,1}^{(m,n)} \leq C \bar{l}_{n-m} \right) \\
& = \mathbb{P}\left( L_{1,1}^{(m,n)} \leq C \bar{l}_{n-m}  \right) - \mathbb{P} \left( L_{1,1}^{(n)} \geq C \bar{l}_{n-m} e^{s}  , L_{1,1}^{(m,n)} \leq C \bar{l}_{n-m} \right) \\
& \geq 1 - C \alpha^{2^m} - \mathbb{P}\left( L_{1,1}^{(n)} \geq e^s L_{1,1}^{(m,n)} \right).
\end{align*}
Now, using that $L_{1,1}^{(n)} \leq \int_{\pi_{m,n}} e^{\frac{\gamma}{2} \phi_{0,m-1}} e^{\frac{\gamma}{2} \phi_{m,n}} ds$ where $\pi_{m,n}$ is a geodesic for $e^{\gamma \phi_{m,n}} ds^2$ and using the bound coming from Lemma \ref{MomentMethod} we have
$$
\mathbb{P}\left( L_{1,1}^{(n)} \geq e^{\gamma \sqrt{m} s} L_{1,1}^{(m,n)} \right) \leq \E \left( \mathbb{P}\left(\int_{\pi_{m,n}} e^{\frac{\gamma}{2} \phi_{0,m-1}} e^{\frac{\gamma}{2} \phi_{m,n}} \geq e^{\gamma \sqrt{m} s}  L_{1,1}^{(m,n)}  ~ | ~ \mathcal{F}_{m,n}  \right) \right) \leq  e^{- \frac{ 2 s^2}{\log 2}}
$$
hence for every $0 \leq m \leq n$ and $s \geq 0$
\begin{equation}
\label{PreRightTail}
\mathbb{P}\left( L_{1,1}^{(n)} \leq C \bar{l}_{n-m} e^{ \gamma s \sqrt{m} } \right) \geq 1 - C \alpha^{2^m} - e^{- \frac{ 2 s^2}{\log 2}}.
\end{equation}

\medskip

\textbf{Step 4.} At this stage we want to replace $\bar{l}_{n-m}$ by $l_n$. We introduce a notation for a collection of short rectangles that we will use by setting
\begin{equation}
\label{Def:Ik}
I_k :=  \lbrace \text{horizontal, vertical rectangles of size } 2^{-k}(1,3) \text{ with corners in } [0,1] \times [0,3] \cap 2^{-k} \Z^2 \rbrace.
\end{equation}
It is clear from the definition that $\abs{I_k} \leq C 4^k$.Then, notice that a left-right crossing of $[0,1] \times [0,3]$ has to cross at least $2^k$ rectangles from $I_k$ (by definition of $I_k$, these are short crossings). For $P \in I_k$, we set
\begin{equation}
\label{Def:LengthFun}
L^{(n)}(P):= \text{length of the left-right crossing of the rectangle } P \text{ for } e^{\frac{\gamma}{2} \phi_{0,n}} ds 
\end{equation}
and we use similarly the notation $L^{(k,n)}(P)$ when the field considered is $\phi_{k,n}$. We have, almost surely,
\begin{equation}
\label{LowerBoundScale}
L_{1,3}^{(n)} \geq  2^k  \min_{P \in I_k} L^{(n)}(P)   \geq 2^k e^{ \frac{\gamma}{2} \underset{[0,1]^2}{\inf} \phi_{0,k-1}}  \min_{P \in I_k} L^{(k,n)}(P).
\end{equation} 
Hence by union bound and scaling, we have, for $s_1 > 0$ and $s_2 > 0$ to be specified
\begin{align*}
\mathbb{P} \left( L_{1,3}^{(n)} \leq  e^{-\frac{\gamma}{2} s_1} l_{n-k} e^{-s_2} \right) & \leq \mathbb{P} \left( e^{ \frac{\gamma}{2} \underset{[0,1]^2}{\inf} \phi_{0,k-1}} 2^{k} \min_{P \in I_k} L^{(k,n)}(P)   \leq  e^{-\frac{\gamma}{2}  s_1} l_{n-k} e^{-s_2}  \right) \\ 
& \leq \mathbb{P}\left( e^{\frac{\gamma}{2} \underset{[0,1]^2}{\inf} \phi_{0,k-1}} \leq e^{-\frac{\gamma}{2} s_1} \right) + \mathbb{P} \left( \min_{P \in I_k} L^{(k,n)}(P)    \leq 2^{-k} l_{n-k} e^{-s_2} \right) \\
& \leq \mathbb{P}\left( \underset{[0,1]^2}{\sup} \abs{\phi_{0,k-1}} \geq s_1 \right) +  C 4^k \mathbb{P} \left( L_{1,3}^{(n-k)}    \leq l_{n-k} e^{-s_2} \right).
\end{align*}
Using the supremum tail estimate from the appendix \eqref{TailSupSmall} with $s_1 = k \log 4 + C \sqrt{k} + C s$ and the lognormal tails from Corollary \ref{LowerTailThin} with $s_2 = C \sqrt{k \log 4 + s}$ we have
$$
\mathbb{P}  \left( L_{1,3}^{(n)} \leq l_{n-k} 2^{-\gamma k} e^{-C \sqrt{k}} e^{-Cs} e^{ - C \sqrt{s}}  \right)  \leq C e^{-s},
$$
which gives 
\begin{equation}
\label{LowerQuantilesScales}
l_n \geq 2^{-\gamma k} e^{-C \sqrt{k}} e^{-C} l_{n-k},
\end{equation}
hence $\bar{l}_{n-m} \leq l_{n-m} \delta_n  \leq l_n   \delta_n 2^{\gamma m} e^{C \sqrt{m}} C$. 

\medskip

\textbf{Step 5.} Using this bound and coming back to our estimate \eqref{PreRightTail}, for every $ m \leq n$ and $s \geq 0$
$$
\mathbb{P}\left( L_{1,1}^{(n)} \leq l_n  \delta_n 2^{\gamma m} e^{C \sqrt{m}} C  e^{\gamma s \sqrt{m}} \right) \geq 1 - C \alpha^{2^m} - e^{- \frac{ 2 s^2}{\log 2}}.
$$
We deal with the range $s \in [1,2^{n/2}]$, taking $m$ such that $s = 2^{m/2}$ i.e. $m = \lfloor 2 \log_2 s \rfloor$ we get: 
$$
\mathbb{P}\left( L_{1,1}^{(n)} \leq l_n   \delta_n e^{C \gamma \log s} e^{\gamma s \sqrt{\log s}} \right) \geq 1 - C e^{-c s^2},
$$
which gives, dropping the dependence on $\gamma$ for $s >1$:
$$
\mathbb{P}\left( L_{1,1}^{(n)} \geq l_n   \delta_n e^{ s \sqrt{\log s}} \right) \leq C e^{-c s^2}.
$$

\medskip

\textbf{Step 6.} We then treat the case $s \geq 2^{n/2}$. To do it, we use a moment method (Lemma \ref{MomentMethod}) to get a right tail estimate on $L_{1,1}^{(n)}$ together with a lower bound on its quantiles. The moment method (taking a straight line) gives: 
\begin{equation}
\label{MomentLine}
\mathbb{P} \left( L_{1,1}^{(n)} \geq e^{\gamma s} \right) \leq e^{- \frac{2 s^2}{(n+1)\log 2}}.
\end{equation}
For the lower bound on quantile, we get a  bound by a direct comparison with the supremum of the field $\mathbb{P} ( L_{1,3}^{(n)} \leq e^{- \frac{\gamma}{2} x} ) \leq \mathbb{P} (\sup_{[0,1]^2} \phi_{0,n} \geq x )$. Using the supremum tails from the appendix \eqref{TailSupSmall} i.e. taking $x = n \log 4 + C \sqrt{n} + C s $ gives $l_n \geq e^{-\frac{\gamma}{2}(n \log 4+C\sqrt{n} + C)} =: e^{- \gamma x_n}$. Since we consider the case $s \geq 2^{n/2}$, $s \geq x_n$ and $n \leq  2 \log_2 s $ and coming back to \eqref{MomentLine} leads to
$$
\mathbb{P} \left( L_{1,1}^{(n)}  \geq l_{n} e^{\gamma s}  \right) \leq \mathbb{P} \left( L_{1,1}^{(n)}  \geq e^{\gamma (s-x_n)}  \right)  \leq  e^{ - 2 \frac{ (s-x_n)^2}{(n+1) \log 2}} \leq e^{Cs} e^{-\frac{s^2}{\log s}}.
$$
Finally, combining the two inequalities ends the proof.
\end{proof}

\subsection{Quasi-lognormal tail estimates at subcriticality}

In this subsection we prove Theorem \ref{th:Tails}. The main idea is the following: the tightness of $\log L_{1,1}^{(n)} - \log \mu_n$ shows that the ratio between low and high quantiles of $L_{1,1}^{(n)}$ is bounded. Using the RSW estimates, it implies that $\delta_{\infty} < \infty$ which gives, uniformly in $n$, $\mu_n \leq C l_n$. The tails are then obtained using Corollary \ref{LowerTailThin} (with $l_n \geq \mu_n C^{-1}$)  and Proposition \ref{LemmeRecu} (with $\delta_n l_n \leq \delta_{\infty} \mu_n$).

\begin{proof}[Proof of Theorem \ref{th:Tails}]  Assuming $\gamma < \gamma_c$ gives the tightness of  $( \log L_{1,1}^{(n)} - \log \mu_n )_{n \geq 0} $. Thus, for every $\eps > 0$ there exists $C_{\eps} > 0$ such that for every $n \geq 0$, $\mathbb{P} ( L_{1,1}^{(n)} \leq \mu_n e^{-C_{\eps}} ) \leq \eps/C$ and $\mathbb{P} ( L_{1,1}^{(n)} \geq \mu_n e^{C_{\eps}} ) \leq \eps^C /3$ which can be rewritten as 
$$
\mu_n  e^{-C_{\eps} } \leq l_{1,1}^{(n)} (\eps/C) \leq \mu_n \leq \bar{l}_{1,1}^{(n)}(\eps^C /3)  \leq \mu_n e^{C_{\eps}}.
$$
Combining with the RSW estimates \eqref{RSWQuantiles}, we have
$$
\mu_n e^{-C_{\eps}} \leq l_{1,1}^{(n)} (\eps/C) e^{-C_{\eps}} \leq l_{1,3}^{(n)} (\eps) \leq l_{1,1}^{(n)}(\eps) \leq \mu_n \leq \bar{l}_{1,1}^{ (n)} (\eps) \leq \bar{l}_{3,1}^{(n)}(\eps) \leq \bar{l}_{1,1}^{(n)}(\eps^C /3) e^{C_{\eps}} \leq \mu_n e^{C_{\eps}}.
$$
In particular, $\delta_n \leq e^{C_{\eps}}$ holds for every $n \geq 0$ hence $\delta_{\infty}(\eps) = \sup_{n \geq 0} \delta_{n}(\eps) < \infty$. 

\medskip

We prove now the lower tail estimates. We have $l_n \geq \mu_n e^{-C_{\eps}}$ for every $n \geq 0$ hence using Corollary \ref{LowerTailThin} we get Theorem \ref{QuasiGaussianLow} when $(a,b) = (1,3)$. For the upper tails since $\delta_{\infty} < \infty$ and $l_n \leq \mu_n$ we can use Proposition \ref{LemmeRecu} to get Theorem \ref{QuasiGaussianUp} for the case $(a,b) = (3,1)$. The general case follows from the RSW estimates. 
\end{proof}
When $\gamma < \gamma_c$, we expect the existence of a $ \rho \in (0, 1)$ such that $l_n = \rho^{n + o(n)}$ and $\bar{l}_n = \rho^{n + o(n) }$. However, we don't need this level of precision and the following a priori bounds are enough for our analysis.
\begin{Lemma}
\label{BoundsScales}
If $0 < \eps < 1/2$ we have the following inequalities relating quantiles, for every $0 \leq k \leq n$: 
\begin{enumerate}
\item
for the the lower quantiles $l_{n-k} \leq 2^{\gamma k} e^{C \sqrt{k}} l_n$,

\item
if $\gamma < \gamma_c$, $\bar{l}_{n} \leq e^{C \sqrt{k}} \bar{l}_{n-k}$,

\item
and still under the assumption $\gamma < \gamma_c$, $e^{-C} \mu_n \leq  l_n \leq \mu_n \leq \bar{l}_n  \leq e^{C} \mu_n$.
\end{enumerate}
\end{Lemma}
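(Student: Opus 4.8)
The plan is to treat the three items separately; only (ii) requires real work.

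\textbf{Items (i) and (iii).} Item (i) is nothing but inequality \eqref{LowerQuantilesScales}, obtained in Step~4 of the proof of Proposition~\ref{LemmeRecu}: rearranging $l_n\geq 2^{-\gamma k}e^{-C\sqrt k}e^{-C}l_{n-k}$ gives $l_{n-k}\leq 2^{\gamma k}e^{C\sqrt k}e^C l_n$, and for $k\geq 1$ the spurious $e^C$ is absorbed into $e^{C\sqrt k}$ after relabelling the constant (the case $k=0$ is trivial); so I would simply quote that computation. Item (iii) is, almost verbatim, the chain of inequalities opening the proof of Theorem~\ref{th:Tails}. Under $\gamma<\gamma_c$, tightness of $(\log L_{1,1}^{(n)}-\log\mu_n)_n$ provides, for each $\eta>0$, a constant $C_\eta$ uniform in $n$ with $l_{1,1}^{(n)}(\eta)\geq\mu_n e^{-C_\eta}$ and $\bar l_{1,1}^{(n)}(\eta)\leq\mu_n e^{C_\eta}$. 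Combining this with the pointwise monotonicity $L_{1,3}^{(n)}\leq L_{1,1}^{(n)}\leq L_{3,1}^{(n)}$ (whence $l_n=l_{1,3}^{(n)}(\eps)\leq l_{1,1}^{(n)}(\eps)\leq l_{1,1}^{(n)}(1/2)=\mu_n$ and $\bar l_n=\bar l_{3,1}^{(n)}(\eps)\geq\bar l_{1,1}^{(n)}(\eps)\geq\bar l_{1,1}^{(n)}(1/2)=\mu_n$) and with the quantitative RSW estimates \eqref{eq:RSW1}--\eqref{eq:RSW2} (to trade $l_{1,1}^{(n)}$ for $l_{1,3}^{(n)}$, and $\bar l_{1,1}^{(n)}$ for $\bar l_{3,1}^{(n)}$, at the cost of a fixed power of $\eps$ and a fixed multiplicative constant) gives $e^{-C}\mu_n\leq l_n\leq\mu_n\leq\bar l_n\leq e^C\mu_n$, uniformly in $n$ since $\eps$ is fixed.

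\textbf{Item (ii): strategy.} By (iii) we have $\bar l_n\leq e^C\mu_n$ and $\bar l_{n-k}\geq e^{-C}\mu_{n-k}$, so it suffices to prove $\mu_n\leq e^{C\sqrt k}\mu_{n-k}$, and I would follow Steps~2--3 of the proof of Proposition~\ref{LemmeRecu}. Write $\phi_{0,n}=\phi_{0,k-1}+\phi_{k,n}$ and bound $L_{1,1}^{(n)}\leq\int_{\pi_{k,n}}e^{\frac{\gamma}{2}\phi_{0,k-1}}e^{\frac{\gamma}{2}\phi_{k,n}}\,ds$ along a geodesic $\pi_{k,n}$ of $e^{\gamma\phi_{k,n}}ds^2$, whose occupation measure $\mu^{\mathrm{occ}}$ has total mass $L_{1,1}^{(k,n)}$. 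The $1$-dependent oriented site percolation comparison \eqref{PercoLin}, together with the scaling $L_{1,1}^{(k,n)}\overset{(d)}{=}2^{-k}L_{2^k,2^k}^{(n-k)}$, yields $\Pro(L_{1,1}^{(k,n)}\leq C\bar l_{n-k})\geq 1-C\alpha^{2^k}$. Conditionally on $\phi_{k,n}$ the measure $\mu^{\mathrm{occ}}$ is frozen and $\phi_{0,k-1}$ is an independent centered Gaussian field of pointwise variance $\Theta(k)$, so Lemma~\ref{MomentMethod} quantifies the multiplicative inflation produced by re-inserting $\phi_{0,k-1}$; choosing the deviation parameter well and plugging in $\bar l_{n-k}\leq e^C\mu_{n-k}$ from (iii) would give $\Pro(L_{1,1}^{(n)}\leq e^{C\sqrt k}\mu_{n-k})>\tfrac12$, that is $\mu_n\leq e^{C\sqrt k}\mu_{n-k}$.

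\textbf{The main obstacle.} The last step is the delicate one. A crude use of Lemma~\ref{MomentMethod} only produces an inflation $e^u$ for $u$ exceeding the variance $\asymp\gamma^2 k$ of $\frac{\gamma}{2}\phi_{0,k-1}$, hence an inflation $e^{\Theta(\gamma^2 k)}$, which is of the required form $e^{C\sqrt k}$ only when $\gamma^2 k\lesssim 1$; the complementary, bounded range of $k$ can then be absorbed by iterating the one-scale version of \eqref{PreRightTail}, but the genuinely difficult regime of large $k$ requires the sharper fact that the inflation factor $\int_{\pi_{k,n}}e^{\frac{\gamma}{2}\phi_{0,k-1}}\,d\mu^{\mathrm{occ}}/L_{1,1}^{(k,n)}$ is, along the essentially bounded fine geodesic, a quantity of Gaussian-multiplicative-chaos type rather than a deterministic exponential of a variance: its typical size stays $O(1)$ and its upper tail is quasi-lognormal with scale $\asymp\gamma\sqrt k$, so that it exceeds $e^{C\sqrt k}$ only with probability $e^{-ck}$. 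Making this precise --- which comes down to a uniform control, again extracted from the percolation comparison, on how concentrated $\mu^{\mathrm{occ}}$ is --- is the technical crux of the lemma.
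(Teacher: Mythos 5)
Items (i) and (iii) match the paper's proof: (i) is precisely the rearrangement of \eqref{LowerQuantilesScales}, and (iii) reproduces the chain of quantile inequalities that opens the proof of Theorem~\ref{th:Tails}.

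For item (ii) you decompose $\phi_{0,n}=\phi_{0,k-1}+\phi_{k,n}$ and take a geodesic of the \emph{high}-frequency part, imitating Steps~2--3 of Proposition~\ref{LemmeRecu}; the paper instead writes $\phi_{0,n}=\phi_{0,n-k}+\phi_{n-k+1,n}$, takes $\pi_{n-k}$ a geodesic of the \emph{low}-frequency metric $e^{\gamma\phi_{0,n-k}}ds^2$, and averages over the independent high-frequency noise $\phi_{n-k+1,n}$. The paper's split is the more economical one: the occupation measure of $\pi_{n-k}$ weighted by $e^{\frac{\gamma}{2}\phi_{0,n-k}}ds$ has total mass exactly $L_{1,1}^{(n-k)}$, so a single application of Lemma~\ref{MomentMethod} already yields $\Pro\bigl(L_{1,1}^{(n)}\geq e^{\gamma\sqrt{k}s}L_{1,1}^{(n-k)}\bigr)\leq e^{-2s^2/\log 2}$, and one concludes with Theorem~\ref{th:Tails}, without passing through $L_{1,1}^{(k,n)}$, the scaling $2^{-k}L_{2^k,2^k}^{(n-k)}$, and the percolation estimate \eqref{PercoLin}.

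The obstacle you flag at the end is real and, in fact, not specific to your route: in either decomposition Lemma~\ref{MomentMethod} applies only for $u>\sigma^2\asymp\gamma^2 k$, so with $u\asymp\gamma\sqrt{k}\,s$ and $s$ a constant fixed by $\eps$ the hypothesis fails once $k\gg\gamma^{-2}$. But the repair you sketch does not hold. The normalized inflation factor $\int e^{\frac{\gamma}{2}\phi_{0,k-1}}\,d\nu$ (with $\nu$ the probability occupation measure, independent of $\phi_{0,k-1}$ and supported on a set of unit Euclidean diameter) does \emph{not} have typical size $O(1)$: since $\gamma/2$ is deep in the subcritical GMC phase, the renormalized quantity $\int e^{\frac{\gamma}{2}\phi_{0,k-1}-\frac{\gamma^2}{8}\mathrm{Var}\,\phi_{0,k-1}}\,d\nu$ has median of order $1$ uniformly in $k$, which means the unrenormalized factor has median $\asymp 2^{\gamma^2 k/8}$. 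Thus the typical inflation grows like $e^{\Theta(\gamma^2 k)}$, exactly reproducing the crude moment bound rather than improving it, and the claimed $e^{C\sqrt{k}}$ is left unestablished for large $k$ by this GMC heuristic.
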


\begin{proof}
The first point follows from the proof of Proposition \ref{LemmeRecu}, see \eqref{LowerQuantilesScales}. For the second point, using Lemma \ref{MomentMethod} gives
\begin{align*}
\mathbb{P}\left( L_{1,1}^{(n)} \geq e^{\gamma \sqrt{k} s} L_{1,1}^{(n-k)} \right) & \leq \E \left( \mathbb{P}\left(\int_{\pi_{n-k}} e^{\frac{\gamma}{2} \phi_{0,n-k}} e^{\frac{\gamma}{2} \phi_{n-k,n}} \geq e^{\gamma \sqrt{k} s}  L_{1,1}^{(n-k)}  ~ | ~ \mathcal{F}_{0,n-k}  \right) \right)  \\
& \leq  e^{- \frac{ 2 s^2}{\log 2}}
\end{align*}
hence $\Pro \left( L_{1,1}^{(n)} \geq \bar{l}_{n-k} e^{\gamma \sqrt{k} s} e^s \right) \leq e^{- \frac{ 2 s^2}{\log 2}} + \Pro \left( L_{1,1}^{(n-k)} \geq \mu_{n-k} e^s \right)$ and the result follows from Theorem \ref{th:Tails}.
The last point follows from the previous proof. 
\end{proof}

\subsection{Lower bounds on the tails of crossing lengths}

The following result, independent of the value of $\gamma$, shows that we cannot expect better than uniform lognormal tails. Its proof is essentially an application of the Cameron-Martin theorem and we see there that the lower bounds are already provided by the low frequencies of the field. 

\begin{Prop}
\label{LBounds}
There exist positive constants $c, C$ such that for every $n \geq 0$, $x > 0$: \\ $\Pro \left(L_{1,1}^{(n)} \leq  \mu_n e^{-  x} \right) \geq  c  e^{- C x^2}$ and $\Pro \left(L_{1,1}^{(n)} \geq  \mu_n e^{ x} \right) \geq   c  e^{- C x^2} $.
\end{Prop}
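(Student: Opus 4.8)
The plan is to tilt the Gaussian field $\phi_{0,n}$ by a deterministic function which equals a \emph{constant} on $[0,1]^2$ — so that it simply rescales every crossing length by a fixed factor — and to pay for this tilt via the Cameron--Martin theorem, the only real issue being to keep the cost bounded uniformly in $n$. Concretely, I would first record the shift in white-noise form: writing $\phi_{0,n}(x)=\int_{\R^2}\int_{2^{-n-1}}^{1}k(\tfrac{x-y}{t})t^{-3/2}\xi(dy,dt)$, for a deterministic $h\in L^2(\R^2\times[2^{-n-1},1])$ the field built from $\xi+h$ is $\phi_{0,n}+g$ with $g(x)=\int_{\R^2}\int_{2^{-n-1}}^{1}k(\tfrac{x-y}{t})t^{-3/2}h(y,t)\,dt\,dy$, and $\mathcal{L}(\xi+h)\ll\mathcal{L}(\xi)$ with density $\exp(\langle\xi,h\rangle-\tfrac12\norme{h}_{L^2}^2)$, where $\langle\xi,h\rangle$ is centered Gaussian of variance $\norme{h}_{L^2}^2$. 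Thus, for any Borel set $A$ of field configurations, $\Pro(\phi_{0,n}+g\in A)=\E[\mathbf 1_{\{\phi_{0,n}\in A\}}\exp(\langle\xi,h\rangle-\tfrac12\norme{h}_{L^2}^2)]$.

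The key step is to construct, for every $x>0$ and each sign, a shift $g=g_x$ with $g_x\equiv\pm\tfrac{2x}{\gamma}$ on $[0,1]^2$ and $\norme{h_x}_{L^2}\le Cx$ for a constant $C$ \emph{not depending on $n$}. The trick is to route the tilt through the top scales $t\in[1/2,1]$ only. Set $m(z):=\int_{1/2}^{1}k(z/t)\,dt$; since $k\ge0$ is smooth, radial and supported in $B(0,r_0)$, the function $m$ is continuous, nonnegative, supported in $\overline{B(0,r_0)}$, and $\beta:=\int_{\R^2}m=(\int_{1/2}^{1}t^{2}\,dt)\int_{\R^2}k\in(0,\infty)$. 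Fix once and for all a smooth, compactly supported $\rho\colon\R^2\to\R$ with $\rho\equiv1$ on the $r_0$-neighbourhood of $[0,1]^2$, and put $h_x(y,t):=\pm\tfrac{2x}{\gamma\beta}\,t^{3/2}\rho(y)\,\mathbf 1_{[1/2,1]}(t)$. Then $g_x=\pm\tfrac{2x}{\gamma\beta}(\rho*m)$, which equals $\pm\tfrac{2x}{\gamma}$ on $[0,1]^2$ because $m$ is supported in $\overline{B(0,r_0)}$ and $\rho\equiv1$ on the $r_0$-neighbourhood; moreover $\norme{h_x}_{L^2}^2=\tfrac{4x^2}{\gamma^2\beta^2}(\int_{1/2}^{1}t^{3}\,dt)\norme{\rho}_{L^2}^2=:C^2x^2$ with $C$ independent of $n$ (and $g_x$ itself does not depend on $n$ since $[1/2,1]\subset[2^{-n-1},1]$). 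Since every admissible path for $L_{1,1}^{(n)}$ lies in $[0,1]^2$, where $g_x$ is the constant $\pm\tfrac{2x}{\gamma}$, we get $L_{1,1}^{(n)}(\phi_{0,n}+g_x)=e^{\pm x}L_{1,1}^{(n)}(\phi_{0,n})$.

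Finally I would assemble the estimate. For the right tail, tilt with sign ``$-$'': $\{L_{1,1}^{(n)}(\phi_{0,n})\ge\mu_ne^{x}\}=\{L_{1,1}^{(n)}(\phi_{0,n}+g_x)\ge\mu_n\}$, so applying the Cameron--Martin identity with $A=\{L_{1,1}^{(n)}\ge\mu_n\}$ and restricting the expectation to $\{\langle\xi,h_x\rangle\ge-\norme{h_x}_{L^2}\}$,
\begin{align*}
\Pro\big(L_{1,1}^{(n)}\ge\mu_ne^{x}\big)
&=\E\Big[\mathbf 1_{\{L_{1,1}^{(n)}\ge\mu_n\}}\,e^{\langle\xi,h_x\rangle-\frac12\norme{h_x}_{L^2}^2}\Big]\\
&\ge e^{-\norme{h_x}_{L^2}-\frac12\norme{h_x}_{L^2}^2}\,\Pro\Big(\{L_{1,1}^{(n)}\ge\mu_n\}\cap\{\langle\xi,h_x\rangle\ge-\norme{h_x}_{L^2}\}\Big).
\end{align*}
Since $\mu_n$ is a median, $\Pro(L_{1,1}^{(n)}\ge\mu_n)\ge\tfrac12$, and $\langle\xi,h_x\rangle/\norme{h_x}_{L^2}$ is standard Gaussian, the last probability is $\ge\tfrac12-\Pro(Z\le-1)>0$ with $Z$ standard Gaussian; using $\norme{h_x}_{L^2}\le Cx$ this yields $\Pro(L_{1,1}^{(n)}\ge\mu_ne^{x})\ge c\,e^{-C'x^2}$ with $c,C'$ independent of $n$ (for $0<x\le1$ the right side above is already bounded below by a positive constant; for $x\ge1$ one absorbs $Cx$ into $Cx^2$). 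The bound $\Pro(L_{1,1}^{(n)}\le\mu_ne^{-x})\ge c\,e^{-C'x^2}$ is obtained identically, tilting with sign ``$+$'' and using $\{L_{1,1}^{(n)}\le\mu_n\}$, which also has probability $\ge\tfrac12$.

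The only genuinely delicate point is keeping the Cameron--Martin cost of a unit translation of $L_{1,1}^{(n)}$ bounded \emph{uniformly in $n$}; this is precisely what is bought by realizing the tilt through the scales $t\in[1/2,1]$ only, so that neither $g_x$ nor $\norme{h_x}_{L^2}$ sees the high-frequency part of $\phi_{0,n}$ — matching the heuristic that the tails are already produced by the low frequencies. I would also note in passing that the same tilting construction, carried out with $\rho$'s that are distinct constants on finitely many disjoint rectangles, gives the full-support statement for $(L^{(n)}(R_1),\dots,L^{(n)}(R_p))$, and, combined with standard absolute-continuity criteria for smooth functionals of a Gaussian field (the relevant directional derivative being $\tfrac{\gamma}{2}L^{(n)}(R)>0$ a.s.), the positivity of the densities invoked in Section \ref{DefLengths}.
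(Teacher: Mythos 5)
Your proposal is essentially the paper's proof: both tilt the white noise by a perturbation supported on the top layer of scales $t\in[1/2,1]$ so that the induced shift of $\phi_{0,n}$ is a constant on $[0,1]^2$ (and the Cameron--Martin cost is uniform in $n$), then use the median property and a one-sided Gaussian truncation to lower-bound the change of measure. The only differences are cosmetic: the paper shifts by $M\,\mathbf 1_{[1/2,1]\times([0,1]^2)^{r_0}}$ and picks up the $t^{-3/2}$ weight inside the constant $h$, while you insert a $t^{3/2}$ factor and a smooth cutoff $\rho$ to land exactly on $\pm\tfrac{2x}{\gamma}$ — the argument and estimates are the same.
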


\begin{proof}
If $x \in [0,1]^2$, for every $t \in (0,1)$, the Euclidean ball centered at $x$ with radius $t r_0$ is included in the $r_0$ neighborhood of $[0,1]^2$, denoted by $([0,1]^2)^{r_0}$. Since $k$ has compact support in $B(0,r_0)$, 
\begin{align*}
\int_{\frac{1}{2}}^1 \int_{\R^2}  k \left( \frac{x-y}{t} \right) t^{-3/2}  1_{y \in ([0,1]^2)^{r_0}} dy dt & = \int_{\frac{1}{2}}^1 \int_{B(x, t r_0)} k \left( \frac{x-y}{t} \right) t^{-3/2}  dy dt \\
& = \int_{\frac{1}{2}}^1 \int_{B(0, t r_0)} k \left( \frac{y}{t} \right) t^{-3/2}  dy dt  
\end{align*}
is independent of $x$ and is equal to some positive real number $h$.

\smallskip

Let $M$ be a real number. By the Cameron-Martin theorem (see \cite{DaPrato} Section 2), since $M 1_{[\frac{1}{2},1] \times ([0,1]^2)^{r_0}}$ is square-integrable, $\xi + M 1_{[\frac{1}{2},1] \times ([0,1]^2)^{r_0}}$ is absolutely continuous with respect to $\xi$ and its Radon-Nikod\'ym derivative is given by the Cameron-Martin formula:
$$
\frac{d \mathcal{L} \left(\xi + M 1_{[\frac{1}{2},1] \times ([0,1]^2)^{r_0}} \right)}{d \mathcal{L} \left( \xi \right)} = \exp \left( M \langle \xi, 1_{[\frac{1}{2},1] \times ([0,1]^2)^{r_0}}  \rangle - g \frac{M^2}{2}  \right)
$$
where $g := \frac{1}{2} \mathrm{Leb}(([0,1]^{2})^{r_0})$. We introduce the field $\phi_{0,n}^M$ associated to $\xi + M 1_{[\frac{1}{2},1] \times ([0,1]^2)^{r_0}}$, i.e. for $x \in \R^2$,
$$
\phi_{0,n}^M(x) := \int_{2^{-n-1}}^1 \int_{\R^2} k \left( \frac{x-y}{t} \right) t^{-3/2} \left( \xi(dy,dt) +  M 1_{[\frac{1}{2},1] \times ([0,1]^2)^{r_0}}(t,y) dy dt \right)
$$
and using the previous remark, we notice that $\phi_{0,n}^{M}$ is equal to $\phi_{0,n} + M h$ on $[0,1]^2$. Thus, using the Cameron-Martin theorem, if $I$ is an interval, we have for $n \geq 0$ and $a >0$:
\begin{align*}
\Pro \left( L_{1,1}^{(n)} \in e^{- \frac{\gamma}{2} h M} I \right) & = \Pro \left(L_{1,1} \left( \phi_{0,n}^M \right) \in  I \right)  \\
& = \E \left( 1_{L_{1,1}^{(n)}  \in I}  \exp \left( M \langle \xi, 1_{[\frac{1}{2},1] \times ([0,1]^2)^{r_0}}  \rangle - g \frac{M^2}{2}  \right) \right) \\
& \geq \left( \Pro \left( L_{1,1}^{(n)} \in I \right) + \Pro \left(  \langle \xi, 1_{[\frac{1}{2},1] \times ([0,1]^2)^{r_0}}  \rangle \in (-a,a) \right) -1 \right) e^{- a \abs{M}} e^{- \frac{g M^2}{2}}.
\end{align*}
Taking $I = (0,\mu_n]$ and $M = x > 0$ gives, with $a$ large enough but fixed, 
$$
\Pro \left(L_{1,1}^{(n)} \leq  \mu_n e^{-\frac{\gamma}{2} h x} \right) \geq  c e^{-a x} e^{-\frac{g x^2}{2}}.
$$
Similarly, taking $I = [\mu_n,\infty)$ and $M = -x < 0$ gives, with $a$ large enough but fixed, 
$$
\Pro \left(L_{1,1}^{(n)} \geq  \mu_n e^{\frac{\gamma}{2} h x} \right) \geq  c e^{-a x} e^{-\frac{g x^2}{2}}
$$
for every $x > 0$, $n \geq 0$. This completes the proof.
\end{proof}

\section{Tightness of the metric at subcriticality: proof of Theorem \ref{th:Metric}}

\subsection{Diameter estimates}

We focus on the diameter of $[0,1]^2$ for the metric $e^{\gamma \phi_{0,n}} ds^2$. Notice that there may be a gap between it and the left-right length studied in the previous sections since left-right geodesics are between points where the field $\phi_{0,n}$ is small whereas geodesics associated to diameter have their extremities at points where the field $\phi_{0,n}$ may be high. Before going into exponential tail estimates, we start with a first moment estimate.
\begin{Prop}
\label{pro:diam}
If $\gamma < \min ( \gamma_c, 1/2)$ then $\left( \log \mathrm{Diam}\left([0,1]^2, \mu_{n}^{-2} e^{\gamma \phi_{0,n}} ds^2 \right) \right)_{n \geq 0}$ is tight.
\end{Prop}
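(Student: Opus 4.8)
I would prove the two tails of $\log\mathrm{Diam}\big([0,1]^2,\mu_n^{-2}e^{\gamma\phi_{0,n}}ds^2\big)=\log\mathrm{Diam}\big([0,1]^2,e^{\gamma\phi_{0,n}}ds^2\big)-\log\mu_n$ separately. The lower tail is immediate: the diameter dominates the distance between a point of the left side and a point of the right side of $[0,1]^2$, so $\mathrm{Diam}\big([0,1]^2,e^{\gamma\phi_{0,n}}ds^2\big)\ge L_{1,1}^{(n)}$ almost surely, and by Theorem~\ref{th:Tails} (or Proposition~\ref{LBounds}) $\Pro(L_{1,1}^{(n)}\le\mu_ne^{-s})\le Ce^{-cs^2}$ uniformly in $n$. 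All the work is in the upper tail, which I would obtain by a multiscale chaining.

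\textbf{Multiscale bound for the upper tail.} Fix the center $c$ of $[0,1]^2$; since $\mathrm{Diam}\le 2\sup_x d_{0,n}^{\mathrm{unnorm}}(x,c)$, it suffices to bound $d(x,c)$ uniformly over $x$. For $x$ not too close to $\partial[0,1]^2$, surround $x$ by concentric dyadic annuli at scales $k_0\le k\le n$, $k_0$ a fixed constant: if $Q_k(x)$ is a scale-$2^{-k}$ dyadic square containing $x$, let $\ell_k(x)$ be the length for $e^{\gamma\phi_{0,n}}ds^2$ of a shortest loop in $3Q_k(x)\setminus Q_k(x)$ winding around $x$. By a standard concentric-annuli argument consecutive loops can be linked, and concatenating them, adding a final segment from $x$ to the innermost loop and a bounded-scale connection from the outermost loop to $c$, gives
$$
d(x,c)\ \le\ \sum_{k=k_0}^{n}\ell_k(x)\ +\ C\,2^{-n}e^{\frac{\gamma}{2}\sup_{[0,1]^2}\phi_{0,n}}\ +\ C\,\mu_n e^{W_0},
$$
where the last term, bounding the distance seen at the fixed scale $k_0$ by finitely many rectangle crossings of $e^{\gamma\phi_{0,n}}ds^2$, has uniform tails by Theorem~\ref{th:RSW}, Theorem~\ref{th:Tails} and Lemma~\ref{BoundsScales}. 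For $x$ near $\partial[0,1]^2$ one replaces loops by arcs joining two sides of $[0,1]^2$ around $x$, with identical estimates.

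\textbf{Estimating the annular sum.} On $3Q_k(x)$ write $\phi_{0,n}=\phi_{0,k-1}+\phi_{k,n}$; by the gradient comparison of Section~\ref{DefLengths} the coarse field is, up to a bounded multiplicative factor, constant on $3Q_k(x)$ equal to $X_{Q_k(x)}:=\phi_{0,k-1}(\mathrm{center})$, so $\ell_k(x)\le Ce^{\frac\gamma2 X_{Q_k(x)}}\Lambda_k$ with $\Lambda_k$ a shortest winding loop for $e^{\gamma\phi_{k,n}}ds^2$. By scaling invariance $\Lambda_k\overset{d}{=}2^{-k}\Lambda^{(n-k)}$ where $\Lambda^{(n-k)}$, bounded by finitely many long crossings of $(3,1)$-rectangles for $e^{\gamma\phi_{0,n-k}}ds^2$ (Theorem~\ref{th:RSW}), satisfies $\Lambda^{(n-k)}\le C\mu_{n-k}e^{W_{n-k}}$ with $W_{n-k}$ of uniform sub-Gaussian tails (Theorem~\ref{th:Tails}). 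Taking the supremum over the $O(4^k)$ dyadic squares $Q$ of scale $k$ and using the appendix estimate \eqref{TailSupSmall}, $\sup_{Q}e^{\frac\gamma2 X_Q}\le 2^{\gamma k}e^{\frac\gamma2 V_k}$ with $V_k\ge 0$ of uniform exponential tails; combining with $\mu_{n-k}\le C2^{\gamma k}e^{C\sqrt k}\mu_n$ (Lemma~\ref{BoundsScales}) yields
$$
\sup_x \ell_k(x)\ \le\ C\,\mu_n\,2^{-(1-2\gamma)k}e^{C\sqrt k}\,e^{W_{n-k}}e^{\frac\gamma2 V_k}.
$$
Since $\gamma<1/2$ the deterministic coefficients are summable in $k$, so $\sup_x\sum_{k=k_0}^n\ell_k(x)\le C\mu_n\,\Xi_n$ where $\Xi_n=\sum_k a_kY_k$ with $\sum_k a_k<\infty$ and the $Y_k$'s of uniform tails; a union bound shows $(\Xi_n)_{n\ge0}$ is tight. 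The final segment contributes $C2^{-n}e^{\frac\gamma2\sup\phi_{0,n}}\le C2^{-(1-\gamma)n}e^{V_n}$ which, since $\mu_n\ge c2^{-\gamma n}e^{-C\sqrt n}$ by Lemma~\ref{BoundsScales}, is $\le\mu_n\cdot C2^{-(1-2\gamma)n}e^{C\sqrt n}e^{V_n}$, negligible relative to $\mu_n$ for $\gamma<1/2$. Altogether $\mathrm{Diam}\big([0,1]^2,e^{\gamma\phi_{0,n}}ds^2\big)\le C\mu_n\tilde\Xi_n$ with $\tilde\Xi_n$ tight, which gives the upper tail and completes the proof.

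\textbf{Main obstacle.} The delicate point is exactly the one flagged before the statement: rectangle-crossing geodesics stay where $\phi_{0,n}$ is typically small, whereas the diameter forces endpoints where $\phi_{0,n}$ may be large, so one must take a supremum over the $\sim4^k$ dyadic squares of the coarse-field factor $e^{\frac\gamma2 X_Q}$, costing $2^{\gamma k}$ per scale. It is the competition between this growth, the spatial decay $2^{-k}$ of the annuli, and the quantile comparison $\mu_{n-k}\le C2^{\gamma k}e^{C\sqrt k}\mu_n$ that forces the hypothesis $\gamma<1/2$; one also needs the quasi-lognormal tails of Theorem~\ref{th:Tails} to be strong enough to survive the $4^k$ union bounds, which they are. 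Setting up the concentric-annuli chaining cleanly (in particular near $\partial[0,1]^2$) is the other technical, though routine, ingredient.
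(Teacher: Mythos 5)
Your proposal is correct and takes a genuinely different geometric route from the paper's. Where the paper chains through \emph{systems} of four rectangle-crossing geodesics attached to each dyadic square in $H_k\cup V_k$ (Step~1 of the paper's proof, Figure~\ref{fig:chaining}), you chain through \emph{concentric winding loops} in dyadic annuli around each point and connect them radially; this is the classical annulus-percolation variant used elsewhere in the LFPP literature. Both routes lead to exactly the same competition of exponents — a factor $2^{-k}$ from the Euclidean scale, a factor $2^{\gamma k}$ from the supremum of the coarse field $\phi_{0,k-1}$ over $\sim4^k$ squares, and a second factor $2^{\gamma k}$ from the quantile comparison $\mu_{n-k}\le C\,2^{\gamma k}e^{C\sqrt k}\mu_n$ of Lemma~\ref{BoundsScales} — so the geometric series $\sum_k 2^{-(1-2\gamma)k}$ converges iff $\gamma<1/2$, identically to the paper's $\sum_k 2^{-k}2^{2\gamma k}e^{C\sqrt{k\log k}}$. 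The paper's variant has a minor advantage: each square's system is a \emph{union} of geodesics, so two arbitrary points are joined by walking down one branch of the hierarchical tree and up another, making the boundary-adjacent case transparent; the loop variant needs a separate treatment near $\partial[0,1]^2$, which you acknowledge.

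One small imprecision to flag: when you take $\sup_x\ell_k(x)$ you implicitly take a supremum of $\Lambda_k(Q)$ over the $\sim4^k$ dyadic squares $Q$, yet you write the bound with a single $W_{n-k}$ distributed as the fluctuation of one loop. The correct object is $\max_Q W_{n-k}(Q)$, whose typical size is $\sim\sqrt{k\log k}$ under the quasi-Gaussian tails of Theorem~\ref{th:Tails} and a $4^k$-fold union bound. This is not fatal — the factor $e^{C\sqrt{k\log k}}$ is sub-exponential in $k$ and is absorbed by $2^{-(1-2\gamma)k}$, which is exactly what the paper's Lemma~\ref{AddLemma} packages — but your ``$Y_k$'s of uniform tails'' claim as stated is false: the tails degrade like $\sqrt{k\log k}$, and one should instead fold that growth into the summable coefficients $a_k$. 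Also, the parenthetical reference to Proposition~\ref{LBounds} for the lower tail is backwards — that proposition gives a lower bound on $\Pro(L_{1,1}^{(n)}\le\mu_n e^{-s})$, whereas you need the upper bound from \eqref{QuasiGaussianLow} — but your citation of Theorem~\ref{th:Tails} covers it.
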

\begin{proof} 

The proof is divided in four steps: in the first step we use a chaining argument to give an upper bound of the diameter in terms of crossing lengths of rectangles at lower scales and in term of the supremum of $\phi_{0,n}$. In the second and third steps, we bound the expected value of the term associated to the crossing lengths of rectangles and the one of term associated to the supremum. By Chebychev inequality, this gives a control of the right tail of $\log \mathrm{Diam}\left([0,1]^2, \mu_{n}^{-2} e^{\gamma \phi_{0,n}} ds^2 \right)$. In the last step,  we compare the diameter to the left-right crossing length to obtain a left tail estimate.

\medskip

\textbf{Step 1.} Let us denote by $H_k$ (resp $V_k$) the set of horizontal (resp vertical) thin rectangles of size $2^{-k-1} (2,1)$ spaced by $2^{-k-1}$ and tiling $[0,1]^2$. Each dyadic square of size $2^{-k}$ in $[0,1]^2$ is split in two thin horizontal rectangles in $H_k$ and two thin vertical rectangles in $V_k$. For each of these four rectangles, we pick a path minimizing the crossing length in the long direction. We call \textit{system} the union of these four geodesics (on Figure \ref{fig:chaining}, the purple and the green sets are systems associated to different squares). At a scale $k$, there are $4^k$ systems, each giving rise to four geodesics.

\smallskip

If $x$ and $y$ are two points in $[0,1]^2$, the geodesic distance between $x$ and $y$ is less than the length associated to any path between them. The majorizing path we use is defined as follows: if $P \in \mathcal{P}_n$ is the dyadic block at scale $n$ containing $x$, we take an Euclidean straight line (red path on Figure \ref{fig:chaining}) to join  the system of four geodesics (purple set on the Figure \ref{fig:chaining}) associated to $H_n$ and $V_n$ in the block $P$. By following successively systems associated to larger dyadic blocks, we eventually reach to the one associated to $[0,1]^2$. For instance, on Figure \ref{fig:chaining}, the path goes from scale $n$ to scale $n-1$ by using the purple and green systems. Proceeding similarly with $y$ gives a path from $x$ to $y$, constituted by $n$ systems and two Euclidean straight lines.Taking a uniform bound over these gives an upper bound which is uniform for every $x$ and $y$ in $[0,1]^2$,  hence a.s.
\begin{equation}
\label{eq:chaining}
\mathrm{Diam} \left( [0,1]^2 , e^{\gamma \phi_{0,n}} ds^2 \right) \leq 8 \sum_{k=0}^n  \underset{P \in H_k \cup V_k}{\max} L^{(n)}(P) + 2 \times 2^{-n} e^{\frac{\gamma}{2} \underset{[0,1]^2}{\sup} \phi_{0,n}}.
\end{equation}
\begin{figure}[h!]
\centering
\includegraphics[scale=0.9]{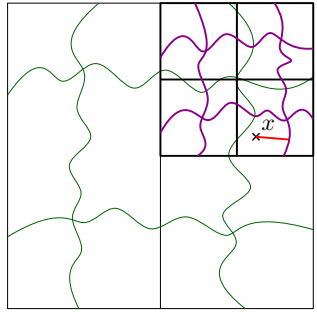}
\caption{Chaining argument}
\label{fig:chaining}
\end{figure}

\medskip

\textbf{Step 2.}  Now, we bound the expected value of the first term in \eqref{eq:chaining}.  We decouple the first scales, a.s. $\max_{P \in H_k \cup V_k} L^{(n)}(P) \leq e^{\frac{\gamma}{2} \sup_{[0,1]^2} \phi_{0,k-1}} \max_{P \in H_k \cup V_k} L^{(k,n)}(P)$ and  use independence, $\E ( \max_{P \in H_k \cup V_k} L^{(n)}(P) ) \leq \E ( e^{\frac{\gamma}{2} \sup_{[0,1]^2} \phi_{0,k-1}} ) \E( \max_{P \in H_k \cup V_k} L^{(k,n)}(P))$. Then, by using the bound on the exponential moment of the supremum of $\phi_{0,n}$ (Lemma \ref{ExpoMoment}), we get $ \E ( e^{\frac{\gamma}{2} \sup_{[0,1]^2} \phi_{0,k-1}} ) \leq 2^{\gamma k} e^{C \sqrt{k}}$. By scaling and union bound, the upper tails \eqref{QuasiGaussianUp} (since $\gamma < \gamma_c$) give the tail estimate $\Pro ( \max_{P \in H_k \cup V_k} L^{(k,n)}(P) \geq  2^{-k} \mu_{n-k} e^{s \sqrt{\log s}}) \leq C 4^k e^{-s^2}$ hence $\E( \max_{P \in H_k \cup V_k} L^{(k,n)}(P)) \leq  2^{-k} \mu_{n-k} e^{C \sqrt{k \log k}}$ by Lemma \eqref{AddLemma}. Gathering all the pieces leads to
$$
\E \left( \sum_{k=0}^n  \underset{P \in H_k \cup V_k}{\max} L^{(n)}(P)   \right)  \leq  C \sum_{k=0}^n 2^{-k} 2^{\gamma k} \mu_{n-k} e^{C \sqrt{k \log k}}.
$$
By the bound relating quantiles of different scales (Lemma \ref{BoundsScales}) we have
$$
\E \left( \sum_{k=0}^n  \underset{P \in H_k \cup V_k}{\max} L^{(n)}(P)) \right) \leq C \mu_n \sum_{k=0}^n 2^{-k} 2^{2\gamma k}  e^{C \sqrt{k \log k}}.
$$
The series converges for $\gamma < 1/2$. 

\medskip

\textbf{Step 3.} For the second term, using the exponential moment bound for the supremum (Lemma \ref{ExpoMoment}), the bound $2^{-\gamma n} e^{-C \sqrt{n}} \leq l_n$ for $\gamma < 1/2$ (by comparison with the supremum)  we find 
$$
\E \left(  2^{-n} e^{\frac{\gamma}{2} \underset{[0,1]^2}{\sup} \phi_{0,n}} \right) \leq 2^{-n} 2^{\gamma n} e^{C \sqrt{n}}  = 2^{-n} 2^{2\gamma n} e^{C \sqrt{n}}  2^{-\gamma n} e^{-C \sqrt{n}} \leq C l_n \leq C \mu_n.
$$

\medskip

\textbf{Step 4}. Since the diameter of the square $[0,1]^2$ is larger than the left-right distance, by using Theorem \ref{th:Tails} we get 
$$\Pro \left( \mathrm{Diam} ( [0,1]^2 , \mu_{n}^{-2} e^{\gamma \phi_{0,n}} ds^2 )   \leq e^{-s}  \right) \leq \Pro \left( L_{1,1}^{(n)} \leq \mu_n e^{-s} \right) \leq C e^{-cs^2}$$
which completes the proof of Proposition \ref{pro:diam}.
\end{proof} 
We now look for exponential tails, when $\gamma$ is small enough. The following proposition will be used both for the tightness of $d_{0,n}$ and to prove that $\gamma_c > 0$. We refer the reader to the definitions of $\delta_n$  and $l_n$ in Subsection \ref{DefLengths}.
\begin{Prop}
\label{SupTails} If $\eps$ is small enough, then for every $c > \frac{\gamma^2}{8(1-2 \gamma)}$ there exists $C > 0$ such that for every $n \geq 0$, $s > 0$: 
$$
\Pro \left( \mathrm{Diam}\left( [0,1]^2,  e^{\gamma \phi_{0,n} } ds^2 \right) \geq  \delta_n l_n e^{c s} \right) \leq  C e^{-s}.
$$
\end{Prop}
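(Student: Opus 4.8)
The starting point is the chaining bound \eqref{eq:chaining}, which controls $\mathrm{Diam}\left([0,1]^{2},e^{\gamma\phi_{0,n}}ds^{2}\right)$ by $8\sum_{k=0}^{n}\max_{P\in H_{k}\cup V_{k}}L^{(n)}(P)$ plus the Euclidean term $2\cdot 2^{-n}e^{\frac{\gamma}{2}\sup_{[0,1]^{2}}\phi_{0,n}}$. Rather than a scale-by-scale tail analysis, the plan is to bound a high moment of the diameter, uniformly in $n$. Writing $p_{*}:=\frac{8(1-2\gamma)}{\gamma^{2}}$ (which is positive precisely because $\gamma<1/2$), I will show that for every $p<p_{*}$ there is $C(p)<\infty$, not depending on $n$, with
\[
\norme{\mathrm{Diam}\left([0,1]^{2},e^{\gamma\phi_{0,n}}ds^{2}\right)}_{L^{p}(\Pro)}\le C(p)\,\delta_{n}l_{n}.
\]
Granting this, Markov's inequality gives $\Pro\left(\mathrm{Diam}\ge\delta_{n}l_{n}e^{cs}\right)\le C(p)^{p}e^{-pcs}$; since the hypothesis $c>\frac{\gamma^{2}}{8(1-2\gamma)}=\frac{1}{p_{*}}$ allows us to choose $p\in(\tfrac{1}{c},p_{*})$, we have $pc>1$ and hence $\Pro\left(\mathrm{Diam}\ge\delta_{n}l_{n}e^{cs}\right)\le Ce^{-s}$ for all $s>0$, with $C$ depending on $c$ but not on $n$.

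I estimate the two contributions to the moment separately. For the Euclidean term, the Gaussian concentration inequality (Borell--TIS), together with the bounds $\E\sup_{[0,1]^{2}}\phi_{0,n}\le 2n\log 2+C\sqrt{n}$ and $\sup_{x}\mathrm{Var}\,\phi_{0,n}(x)=(n+1)\log 2$ from the appendix, yields (a version with general exponent of Lemma \ref{ExpoMoment}) $\norme{e^{\frac{\gamma}{2}\sup_{[0,1]^{2}}\phi_{0,n}}}_{L^{p}(\Pro)}\le C(p)\,2^{\gamma n}e^{(p\gamma^{2}\log 2/8)\,n}e^{C\sqrt{n}}$. Combining this with $\delta_{n}\ge 1$ and the a priori bound $l_{n}\ge 2^{-\gamma n}e^{-C\sqrt{n}}$ (valid for $\gamma<1/2$; see Step 6 of the proof of Proposition \ref{LemmeRecu} and \eqref{LowerQuantilesScales}), the ratio of $2^{-n}\norme{e^{\frac{\gamma}{2}\sup\phi_{0,n}}}_{L^{p}(\Pro)}$ to $\delta_{n}l_{n}$ is at most $C(p)\,e^{-n\log 2\,(1-2\gamma)(1-p/p_{*})}e^{C\sqrt{n}}$, which is bounded uniformly in $n$ precisely when $p<p_{*}$; this is where the exponent $p_{*}$, hence the threshold $\gamma^{2}/(8(1-2\gamma))$, enters.

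For the sum, I decouple at each scale, $\max_{P\in H_{k}\cup V_{k}}L^{(n)}(P)\le e^{\frac{\gamma}{2}\sup_{[0,1]^{2}}\phi_{0,k-1}}\max_{P\in H_{k}\cup V_{k}}L^{(k,n)}(P)$, the two factors being independent since $\phi_{0,k-1}\indep\phi_{k,n}$, so the $L^{p}(\Pro)$ norm of the product factorizes. The first factor is bounded as above by $C(p)\,2^{\gamma k}e^{(p\gamma^{2}\log 2/8)k}e^{C\sqrt{k}}$. For the second, the RSW estimates (Theorem \ref{th:RSW}) together with Proposition \ref{LemmeRecu}, scaling, and a union bound over the $O(4^{k})$ rectangles of $H_{k}\cup V_{k}$ show that $\max_{P\in H_{k}\cup V_{k}}L^{(k,n)}(P)$ has quasi-lognormal tails at the scale $2^{-k}\delta_{n}l_{n-k}$; since quasi-lognormal tails have all moments finite, this gives $\norme{\max_{P\in H_{k}\cup V_{k}}L^{(k,n)}(P)}_{L^{p}(\Pro)}\le 2^{-k}\delta_{n}l_{n-k}\,M(k,p)$ with $M(k,p)$ subexponential in $k$ and finite for every $p$. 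Using the quantile comparison $l_{n-k}\le 2^{\gamma k}e^{C\sqrt{k}}l_{n}$ (first part of Lemma \ref{BoundsScales}), the $k$-th term of the sum has $L^{p}$ norm at most $\delta_{n}l_{n}\cdot 2^{-(1-2\gamma)k}e^{(p\gamma^{2}\log 2/8)k}e^{C\sqrt{k}}M(k,p)=\delta_{n}l_{n}\,e^{-k\log 2\,(1-2\gamma)(1-p/p_{*})}e^{C\sqrt{k}}M(k,p)$, which for $p<p_{*}$ is summably small in $k$ uniformly in $n$. Summing over $0\le k\le n$ and adding the Euclidean contribution gives the moment bound.

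The delicate point is obtaining this moment estimate with the \emph{sharp} exponent $p_{*}=8(1-2\gamma)/\gamma^{2}$: one must control, at the finest scales $k\approx n$, the competition between the exponential-moment growth $e^{(p\gamma^{2}\log 2/8)k}$ of the conformal factor $e^{\frac{\gamma}{2}\phi_{0,k-1}}$, the a priori gain $2^{-(1-2\gamma)k}$ (which forces $\gamma<1/2$ and comes from the lower bound on the quantiles $l_{j}$ via the supremum of the field), and the loss $2^{\gamma k}$ incurred when comparing $l_{n-k}$ with $l_{n}$. The radius of convergence in $p$ of the resulting geometric series is exactly $p_{*}$, and this is precisely why the conclusion requires $c>\gamma^{2}/(8(1-2\gamma))$ rather than any $c>0$; one should also check that the quasi-lognormal (rather than merely super-exponential) decay of the crossing-length tails is what keeps $M(k,p)$ finite for every $p$, so that the crossing lengths do not impose a further restriction on the admissible range of $p$.
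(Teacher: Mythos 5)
Your proposal is correct, and it takes a genuinely different route from the paper. The paper works directly at the level of tails: it derives (Step~1) a quasi-lognormal tail bound for each $L^{(n)}(P)$ via the moment method of Lemma~\ref{MomentMethod}, union-bounds over the $O(4^k)$ blocks (Step~2), parametrizes the resulting sum as $F(s) = \sum_k 2^{-k}2^{\lambda k}e^{Ck^{1/2+\alpha}}e^{\beta s\sqrt k}$, and extracts the constant $c_\eps \to \gamma^2/(8(1-2\gamma))$ from a Laplace-type series/integral comparison (Lemmas~\ref{F(s)First} and~\ref{BoundIntegral} of the appendix), with an $a_\eps$-perturbation used to absorb the union bound. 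You instead recast the whole computation in $L^p$: the threshold $\gamma^2/(8(1-2\gamma))$ emerges directly as $1/p_*$, where $p_* = 8(1-2\gamma)/\gamma^2$ is the critical integrability exponent of the chaining sum, and the conclusion follows from Markov. This is arguably cleaner conceptually and makes the origin of the constant more transparent, though the underlying combinatorics (chaining inequality~\eqref{eq:chaining}, decoupling of scales, union bound over $4^k$ blocks, Prop.~\ref{LemmeRecu} at scale $n-k$, quantile comparison $l_{n-k}\le 2^{\gamma k}e^{C\sqrt k}l_n$, a priori bound $l_n\ge 2^{-\gamma n}e^{-C\sqrt n}$) is identical. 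One other real difference: you decouple by the a.s.\ inequality $M_k\le e^{\frac{\gamma}{2}\sup\phi_{0,k-1}}\max_P L^{(k,n)}(P)$ and then factor the $L^p$-norm by independence, controlling the first factor via Borell--TIS; the paper instead applies Lemma~\ref{MomentMethod} to the path integral along $\pi_{k,n}$ per block, which is finer pointwise, but the $2^{\gamma k}$ you lose through $\E\sup\phi_{0,k-1}$ is exactly what the paper loses through the $2^{a_\eps\gamma k}$ factor coming from its union bound, so the final geometric rates agree.

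Two small things you should nail down. First, for $\gamma$ near $6\sqrt 2-8 \approx 0.485$ one has $p_*\le 1$, so $p\in(1/c,p_*)$ forces $p<1$; the $L^p$ triangle inequality $\norme{\sum_k M_k}_{L^p}\le\sum_k\norme{M_k}_{L^p}$ should then be replaced by the quasi-norm inequality $\norme{\sum_k M_k}_{L^p}^p\le\sum_k\norme{M_k}_{L^p}^p$, which is harmless since the terms decay geometrically. Second, the claim that the union bound over $O(4^k)$ quasi-lognormal crossing lengths gives a subexponential moment factor $M(k,p)$ deserves a line of justification: the tail bound $\Pro(\max_P L^{(k,n)}(P)\ge 2^{-k}\delta_n l_{n-k}e^{s\sqrt{\log s}})\le C4^ke^{-cs^2}$ is nontrivial only for $s\gtrsim\sqrt{k}$, so the log of the normalized max is concentrated around $t_k\sim C\sqrt{k\log k}$ with quasi-Gaussian fluctuations above it, whence $\E\bigl((\max/y)^p\bigr)\le Ce^{pt_k}$ for fixed $p$ and $k$ large (the putative saddle at $t\sim p\log p$ falls below $t_k$), giving $M(k,p)\le Ce^{C\sqrt{k\log k}}$ as you assert. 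With those points made explicit your proof is complete.
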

\begin{proof}

The proof is divided in three steps. In the two first steps, we give a tail estimate for the first term in \eqref{eq:chaining}. More precisely, in the first step, we give a tail estimate for $L^{(n)}(P)$ with $P \in H_k \cup V_k$. By union bound, we get one for $\sum_{k=0}^n \max_{P \in H_k \cup V_k} L^{(n)}(P) $ in the second step. The third step deals with the second term in \eqref{eq:chaining}.

\medskip

\textbf{Step 1.} In order to reuse directly the Proposition \ref{LemmeRecu}, note first if $P \in  H_k \cup V_k$ is fixed, we have a stochastic domination $L^{(n)}(P) \leq L_{2^{-k}(3,1)}^{(n)}$ (since any left-right crossing of $2^{-k}(3,1)$ is a crossing of $2^{-k}(2,1)$) thus we look for a tail estimate for this term. To this end, we decouple the scales by taking a geodesic $\pi_{k,n}$ for the left-right crossing of the rectangle $2^{-k}(1,3)$  for the field $\phi_{k,n}$ and we obtain
$$
L_{2^{-k}(3,1)}^{(n)} \leq \int_{\pi_{k,n}} e^{\frac{\gamma}{2} \phi_{0,k-1}} e^{\frac{\gamma}{2} \phi_{k,n}} ds.
$$
Therefore, we have the bound
\begin{multline*}
\Pro \left( L^{(n)}(P)  \geq  2^{-k} \delta_n l_{n-k}  e^{C s \sqrt{ \log s}} e^{\frac{\gamma}{2} s \sqrt{k \log 4 }}  \right) \\
 \leq \Pro \left( \int_{\pi_{k,n}} e^{\frac{\gamma}{2} \phi_{0,k-1}} e^{\frac{\gamma}{2} \phi_{k,n}} ds \geq 2^{-k} \delta_n l_{n-k}  e^{C s \sqrt{ \log s}} e^{\frac{\gamma}{2} s \sqrt{k \log 4 } }  \right).
\end{multline*}
By union bound, we have
\begin{multline*}
\Pro \left( \int_{\pi_{k,n}} e^{\frac{\gamma}{2} \phi_{0,k-1}} e^{\frac{\gamma}{2} \phi_{k,n}} ds \geq 2^{-k} \delta_n l_{n-k}  e^{C s \sqrt{ \log s}} e^{\frac{\gamma}{2} s \sqrt{k \log 4 } }  \right)  \\ \leq  \Pro \left( \int_{\pi_{k,n}} e^{\frac{\gamma}{2} \phi_{0,k-1}} e^{\frac{\gamma}{2} \phi_{k,n}} ds \geq L_{2^{-k}(3,1)}^{(k,n)}  e^{\frac{\gamma}{2} s \sqrt{k \log 4} }\right) + \Pro \left( L_{2^{-k}(3,1)}^{(k,n)} \geq 2^{-k} \delta_n l_{n-k}e^{C  s \sqrt{\log s}} \right).
\end{multline*}
Using Lemma \ref{MomentMethod} for the first term, scaling and the upper tail estimate from Proposition \ref{LemmeRecu} for the second term, we get
$$
\Pro \left( \int_{\pi_{k,n}} e^{\frac{\gamma}{2} \phi_{0,k-1}} e^{\frac{\gamma}{2} \phi_{k,n}} ds \geq L_{2^{-k}(3,1)}^{(k,n)}  e^{\frac{\gamma}{2} s \sqrt{k \log 4} }\right) + \Pro \left( L_{2^{-k}(3,1)}^{(k,n)} \geq 2^{-k} \delta_n l_{n-k}e^{C  s \sqrt{\log s}} \right)   \leq C  e^{-s^2}.
$$
Hence, we get for $P \in H_k \cup V_k$: 
\begin{equation}
\label{Step1Tail}
\Pro ( L^{(n)}(P) \geq 2^{-k} \delta_n l_{n-k}    e^{ C s \sqrt{\log s}} e^{\frac{\gamma}{2} s \sqrt{k \log 4} } )  \leq C e^{-s^2}.
\end{equation}

\medskip

\textbf{Step 2.}  In this step we want to give a tail estimate for $\sum_{k=0}^n M_{k}^{(n)}$ where $M_{k}^{(n)} := \max_{P \in H_k \cup V_k} L^{(n)}(P) $. By union bound ($| H_k \cup V_k| \leq C 4^k$) and by replacing $s$ in \eqref{Step1Tail} by $t(s) := \sqrt{k \log (4+\eps) + s^2}  $ so that the right-hand side in this inequality becomes $(4+\eps)^{-k} e^{-s^2}$, we get 
$$
\Pro \left( M_{k}^{(n)}  \geq  \delta_n 2^{-k}  l_{n-k}    e^{ C t(s) \sqrt{\log t(s)}} e^{\frac{\gamma}{2}  t(s) \sqrt{k \log 4} }  \right) \leq C \frac{4^k}{(4 + \eps)^k} e^{-s^2}
$$

\medskip

Since $\log s \leq C s^{2 \delta}$ for some small fixed $\delta > 0$,  $t(s) \sqrt{\log t(s)} \leq C t(s)^{1+\delta} $. Moreover, since we have $t(s) \leq \sqrt{k \log (4+\eps)} + s$, the convexity of the map $s \mapsto s^{1+ \delta}$ gives the bound $C t(s) \sqrt{\log t(s)}   \leq C k^{1/2 + \delta/2} + C s^{1+\delta}$. 

\medskip

Using that $\sqrt{a+b} \leq \sqrt{a} + \sqrt{b}$ for $a,b >0$, we have 
$$
t(s) \sqrt{k \log 4}  = \sqrt{k^2 \log(4+\eps) \log 4 + s^2 k \log 4} \leq  a_{\eps} k \log 4 + s \sqrt{k \log 4}
$$
by introducing $a_{\eps} := \sqrt{\log(4+\eps)/\log 4}$. Therefore, we have $e^{\frac{\gamma}{2} t(s) \sqrt{k \log 4}} \leq 2^{a_{\eps} \gamma k} e^{\frac{\gamma}{2} s \sqrt{k \log 4}}$ and by using the upper bound $l_{n-k}  \leq l_n 2^{\gamma k} e^{C \sqrt{k}}$  (Lemma \ref{BoundsScales}), we get the bound
\begin{align*}
2^{-k}  l_{n-k}    e^{ C t(s) \sqrt{\log t(s)}} e^{\frac{\gamma}{2}  t(s) \sqrt{k \log 4} } &  \leq 2^{-k}  ( l_n 2^{\gamma k} e^{C \sqrt{k}} ) ( e^{C k^{1/2 + \delta/2} + C s^{1+\delta}}) ( 2^{a_{\eps} \gamma k} e^{\frac{\gamma}{2} s \sqrt{k \log 4}}) \\
& \leq  l_{n} 2^{-k}  2^{(1+a_{\eps}) \gamma k} e^{C k^{1/2+ \delta/2}  } e^{ C s^{1+\delta}} e^{\frac{\gamma}{2} s \sqrt{k \log 4} }
\end{align*}
which leads to the following tail estimate:
$$
\Pro \left( M_k^{(n)}  \geq  \delta_n l_{n} 2^{-k}  2^{(1+a_{\eps}) \gamma k} e^{C k^{1/2+ \delta/2}  } e^{ C s^{1+\delta}} e^{\frac{\gamma}{2} s \sqrt{k \log 4} } \right) \leq C \frac{4^k}{(4 + \eps)^k} e^{-s^2}.
$$

\medskip

We now introduce $F(s) := \sum_{k=0}^{\infty} 2^{-k}  2^{\lambda k} e^{C k^{1/2+\alpha}} e^{ \beta s \sqrt{k}}$, where $\lambda := (1+a_{\eps}) \gamma$, $\alpha := \frac{\delta}{2}$ and $\beta := \frac{\gamma}{2} \sqrt{\log 4} $. We obtain by union bound, $\Pro ( \sum_{k=0}^n  M_k^{(n)} \geq  \delta_n l_n e^{C s^{1+\delta}} F(s))  \leq C \eps^{-1} e^{-s^2}$.

\medskip

We thus want an upper bound on $F(s)$. To this end, we introduce the function $f_s(t) := - t(1-\lambda) \log 2 + C t^{1/2+ \alpha} +  \beta s \sqrt{t}$. We notice that $f$ increases on $[0,t_s]$ and decreases on $[t_s, \infty]$ for some $t_s > 0$. By series/integral comparison we have: 
$$
\sum_{k = 0}^{\infty} a_k = \sum_{k = 0}^{[t_s]-1}a_k + a_{[t_s]} + a_{[t_s]+1} + \sum_{k = [t_s]+2}^{\infty} a_k \leq \int_{0}^{[t_s]} a_t dt + 2 a_{t_s} + \int_{[t_s]+1}^{\infty} a_t dt  \leq 2 a_{t_s} + \int_{0}^{\infty} a_t dt,
$$
where $a_k := \exp (f_s(k))$. 

\medskip

By introducing $c_{\eps} := \frac{\gamma^2}{8 (1 - (1 + a_{\eps}) \gamma)}$, we obtain $F(s) = \sum_{k=0}^{\infty} a_k \leq C e^{c_{\eps} s^2} e^{C s^{1+\delta}}$, see the appendix, Subsection \ref{UpperF(s)} for more details. Thus $\Pro (\sum_{k=0}^n  M_{k}^{(n)} \geq  \delta_n l_n e^{c_{\eps} s^2}   e^{C s^{1+\delta}})  \leq C e^{-s^2}$. Notice that when $\eps \to 0$, $c_{\eps} = \frac{\gamma^2}{8 (1 - (1 + a_{\eps}) \gamma)} \to \frac{\gamma^2}{8(1-2 \gamma)}$ which is less than $1$ if and only if $\gamma < 6 \sqrt{2} - 8 \approx 0.485$. 

\medskip

\textbf{Step 3.} Now, we focus on the second term in the chaining inequality \eqref{eq:chaining}. Since $ l_n \geq 2^{-\gamma n} e^{-C \sqrt{n}}$ (Lemma \ref{BoundsScales}), we have for $\gamma < 1/2$ and using the tail estimates obtained in Lemma \ref{TailSup}:
$$
\Pro \left( 2^{-n} e^{\frac{\gamma}{2} \sup_{[0,1]^2} \abs{ \phi_{0,n}}} \geq l_n e^{\frac{\gamma}{2} s} \right)   \leq \Pro \left(  e^{\frac{\gamma}{2} \sup_{[0,1]^2} \abs{ \phi_{0,n}}} \geq 2^{\gamma n} e^{C \sqrt{n}} e^{\frac{\gamma}{2} s} \right) \leq C e^{-s}
$$
which concludes the proof.
\end{proof} 
\subsection{Tightness of the metric}
We are ready to prove Theorem \ref{th:Metric} i.e. the tightness of the metric when $\gamma < \gamma_c \wedge 0.4$.
\begin{proof}[Proof of Theorem \ref{th:Metric}]

The proof is divided in two main steps. In the first one, we prove the tightness of the metric in the space of continuous functions by giving a H\"older upper bound. In the second one we prove that the pseudo-metric obtained is a metric. This is done by establishing a H\"older lower bound.

\medskip

\textbf{Step 1.} We suppose $\gamma < \gamma_c$. We start by proving that for every  $0 < h < 1 - 2 \gamma - \frac{\gamma^2}{4(1-2 \gamma)}$,  if $\eps > 0$ there exists a large $C_{\eps} > 0$ so that for every $n \geq 0$
\begin{equation}
\label{UpperHolder}
\Pro \left( \exists x,x' \in [0,1]^2 ~ : ~ d_{0,n}(x,x') \geq C_{\eps} \norme{x-x'}^{h} \right) \leq \eps.
\end{equation} 
By union bound we will estimate $\Pro (  \exists x,x'   \norme{x-x'}  <  2^{-n}  ,   d_{0,n}(x,x') \geq  e^{s} \norme{x-x'}^{h} ) $ and
$$ \sum_{k=0}^{n} \Pro \left( \exists x,x' : 2^{-k} \leq \norme{x-x'} \leq 2^{-k+1},  d_{0,n}(x,x') \geq e^{s} \norme{x-x'}^h  \right).
$$

\smallskip

We start with the term $\Pro ( \exists x,x' :  2^{-k} \leq \norme{x-x'} \leq 2^{-k+1},  d_{0,n}(x,x') \geq e^{s} \norme{x-x'}^h )$. Note that if $2^{-k-1} \leq \norme{x-x'} \leq 2^{-k}$, there exists a square $P$ of size $2^{-k+2}$ among fewer than $C 4^k$ fixed such squares such that $x,x' \in P$. Also, for two such $x$ and $x'$, by writing $h = 1 - 2 \gamma - c(\gamma) - \delta$ with $c(\gamma) > \frac{\gamma^2}{4(1-2\gamma)}$, $\delta > 0$ we have $\norme{x-x'}^h \geq 2^{-k} 2^{2 \gamma k} 2^{c(\gamma) k} 2^{\delta k}$. Hence, by union bound, this term is bounded by 
$$
C 4^k \Pro \left( \mathrm{Diam} \left( P , d_{0,n} \right) \geq 2^{-k} 2^{2 \gamma k} 2^{c(\gamma) k} 2^{\delta k} e^s \right).
$$
We separate the first $k$ scales of the fields $\phi_{0,n}$ as follows. Recall that  $\mathrm{Diam} (P,e^{\gamma \phi_{0,n}} ds^2)$ is larger than $e^{\frac{\gamma}{2} \sqrt{k} t} \mathrm{Diam}(P,e^{\gamma \phi_{k,n}} ds^2)$ with probability less than $e^{-\frac{t^2}{\log 4}}$ (by  Lemma \ref{MomentMethod}). By taking $t = \sqrt{k} \log 4 + \delta \sqrt{k} + s /\sqrt{k}$, this event has probability less than $4^{-k} e^{-c k} e^{-2s}$. On the complementary event, $ \mu_n^{-1} \mathrm{Diam}(P,e^{\gamma \phi_{0,n}} ds^2) $ is less than $\mu_n^{-1} \mathrm{Diam}(P,e^{\gamma \phi_{k,n}} ds^2) 2^{\gamma k} 2^{\frac{\gamma}{2} \delta k} e^{\frac{\gamma}{2} s}$. Under this event, by scaling the former bound becomes 
$$
C 4^k \Pro \left( \mathrm{Diam} \left( [0,1]^2 , d_{n-k} \right) \geq  \mu_{n-k}^{-1} \mu_n 2^{\gamma k} 2^{c(\gamma) k} 2^{(1-\frac{\gamma}{2}) \delta k} e^{(1-\frac{\gamma}{2})s}\right).
$$
Using Lemma \ref{BoundsScales} we get that $\mu_n \geq \mu_{n-k} 2^{- \gamma k} e^{-C \sqrt{k}} $ thus we are left with estimating 
$$
C 4^k \Pro \left( \mathrm{Diam} \left( [0,1]^2 , d_{n-k} \right) \geq  2^{c(\gamma) k} 2^{(1-\frac{\gamma}{2}) \delta k}  e^{-C \sqrt{k}} e^{(1-\frac{\gamma}{2})s}\right).
$$
We use the diameter estimates obtained in Proposition \ref{SupTails}: since $2^{c(\gamma) k} =e^{\frac{1}{2} c(\gamma) k \log 4}$ and $\frac{1}{2} c(\gamma) > \frac{\gamma^2}{8(1-2\gamma)}$, taking $\tilde{s}(k,s) = k \log 4 + \delta' k - C \sqrt{k} + c (1-\gamma/2)s$, we have by gathering all the pieces for $s$ large enough, uniformly in $n$:
$$
 \sum_{k=0}^{n} \Pro \left( \exists x,x' : 2^{-k} \leq \norme{x-x'} \leq 2^{-k+1},  d_{0,n}(x,x') \geq e^{s} \norme{x-x'}^h  \right)  \leq C e^{-cs}.
$$
Taking $s$ large enough, the right-hand side is less than $\eps$. 

\medskip

We are left with the term $ \Pro  (  \exists x,x'   \norme{x-x'}  <  2^{-n}  ,   d_{0,n}(x,x') \geq  e^{s} \norme{x-x'}^{h}  )$ i.e. with the case of small dyadic blocks where the field is approximately constant. By direct comparison with the supremum of the field i.e. $d_{0,n}(x,x') \leq \mu_n^{-1} e^{\frac{\gamma}{2} \sup_{[0,1]^2} \phi_{0,n}} \norme{x-x'}$ and since on the associated event $\norme{x-x'}^{h-1} \geq 2^{n(1-h)}$, this probability is less than the probability $\Pro ( e^{\frac{\gamma}{2} \sup_{[0,1]^2} \abs{\phi_{0,n}}} \geq e^s 2^{n(1 - h)} \mu_n )$. Recalling that one can write $h = 1 - 2 \gamma - c(\gamma)$ with $c(\gamma) > \frac{\gamma^2}{4(1-2\gamma)}$ and that we have the lower bound on the median $\mu_n \geq 2^{-\gamma n} e^{-C \sqrt{n}}$ (see the proof of Proposition \ref{LemmeRecu}, Step 6) the former probability is less than 
$$
\Pro \left( \sup_{[0,1]^2} \phi_{0,n}  \geq  n \log 4 + \frac{\gamma}{4(1-2\gamma)} n \log 4 - \frac{C}{\gamma} \sqrt{n} + s \right)
$$
which goes uniformly (in $n$) to $0$ as $s$ goes to infinity according to Lemma \ref{SupTailsAppendix}. Altogether we get the intermediate result \eqref{UpperHolder}. One can check that the interval $(0,1 - 2 \gamma - \frac{\gamma^2}{4(1-2 \gamma)})$  is nonempty if and only if $0 < \gamma < 2/5 = 0.4$.  

\medskip

Hence we obtain the tightness of $\left( d_{0,n} \right)_{n \geq 0}$ as a random element of $C([0,1]^2 \times [0,1]^2, \R^{+})$ and every subsequential limit is (by Skorohod's representation theorem) a pseudo-metric. 

\medskip

\textbf{Step 2.} Now we deal with the separation of the pseudo-metric. We prove that if $h>  1 + \gamma $ and  if $\eps > 0$ there exists a small constant $c_{\eps}$ such that for every $n \geq 0$
\begin{equation}
\label{LowerHolder}
\Pro \left( \exists x,x' \in [0,1]^2 ~ : ~ d_{0,n}(x,x') \leq c_{\eps} \norme{x-x'}^{h} \right) \leq \eps.
\end{equation}
Similarly as in the proof of \eqref{UpperHolder}, by union bound it is enough to estimate the term $\Pro (  \exists x,x'   \norme{x-x'}  <  2^{-n}  ,   d_{0,n}(x,x') \leq e^{-s} \norme{x-x'}^{h}  ) $ and  the term
$$
\sum_{k=0}^{n} \Pro \left( \exists x,x' : 2^{-k} \leq \norme{x-x'} \leq 2^{-k+1},  d_{0,n}(x,x') \leq e^{-s} \norme{x-x'}^{h}  \right).
$$

We start with $\Pro ( \exists x,x' : 2^{-k} \leq \norme{x-x'} \leq 2^{-k+1},  d_{0,n}(x,x') \leq e^{-s} \norme{x-x'}^{h}  )$. Assume there exists $ x,x' \in [0,1]^2$ such that $2^{-k} \leq \norme{x-x'} \leq 2^{-k+1}$. Note that any path from $x$ to $x'$ crosses one of the fixed $C 4^k$ rectangles of size $2^{-k-1}(1,3)$ that fill vertically and horizontally $[0,1]^2$. Hence $d_{0,n}(x,x') \geq \mu_n^{-1} \underset{C 4^k}{\min}  L_{2^{-k-1}(1,3)}^{(n)}$. By writing $h = 1 +  \gamma + \delta$ with $\delta > 0$, we can bound the term in the summation above by 
$$
\Pro \left ( e^{\frac{\gamma}{2} \inf_{[0,1]^2} \phi_{0,k-1}} \underset{C 4^k}{\min}  L_{2^{-k-1}(1,3)}^{(k,n)} \leq \mu_n 2^{-k} 2^{-\gamma k} 2^{-\delta k}  e^{-s}   \right).
$$
By separating the infimum with the term $\Pro \left(  \sup_{[0,1]^2} \phi_{0,n} \geq k \log 4 + \delta' k + s  \right) $, by scaling and using the bound $\mu_n \leq l_{n-k}  e^{C \sqrt{k}}$ from Lemma \ref{BoundsScales}, what is left is 
$$
\Pro \left ( \underset{C 4^k}{\min}  L_{(1,3)}^{(n-k)} \leq l_{n-k}   2^{-\delta'' k}  e^{-(1-\frac{\gamma}{2})s}   \right). 
$$
By union bound, the tail estimates from Corollary  \ref{LowerTailThin} and gathering all the pieces we get that the summation is less than $C e^{- cs}$ uniformly in $n$. 

\medskip

Finally, we control again the second term by comparison with the supremum of the field. On the event $\lbrace  \exists x,x'   \norme{x-x'}  <  2^{-n}  ,   d_{0,n}(x,x') \leq e^{- \frac{\gamma}{2}s} \norme{x-x'}^{h} \rbrace$, note that $\exp ( \frac{\gamma}{2} \inf_{[0,1]^2} \phi_{0,n}) \leq 2^{-n(h-1)} e^{-\frac{\gamma}{2} s} \leq 2^{-(\gamma+\delta) n} e^{-\frac{\gamma}{2} s}$.  The probability of this event is less than $\Pro ( \sup_{[0,1]^2} \phi_{0,n} \geq n \log 4 + \delta' n + s ) $ hence the result as before.
\end{proof}

\textbf{Definition of a metric on $\R^2$.}  Let us mention here that one can define a random metric associated to $\phi_{0,\infty}$ on the full two-dimensional space. We saw that $(d_{0,n}^{[0,1]^2})_{n \geq 0}$ is tight thus there exists some subsequence that converges in law to $d_{0,\infty}$. The same result remains true for $(d_{0,n}^{[-p,p]^2})_{n \geq 0}$ with $p > 0$. By a diagonal argument, there exists a subsequence $(n_k)$ such that for every $p \in \N$, $(d_{0,n_k}^{[-p,p]^2})_{ k \geq 0}$ converges in law to some $d_{0,\infty}^{[-p,p]^2}$. Then, one can define $d_{0,\infty}^{\R^2}$ as the limit of $d_{0,\infty}^{[-p,p]^2}$ when $p$ goes to $\infty$. Indeed, if we denote by $d_{0,\infty}^{[-p,p]^2}([-1,1]^2)$ the restriction of $d_{0,\infty}^{[-p,p]^2}$ to $[-1,1]^2$, we have
$$
\lim_{p_0 \to \infty} \Pro \left( \forall p \geq p_0, ~ d_{0,\infty}^{[-p,p]^2}([-1,1]^2)  = d_{0,\infty}^{[-p_0,p_0]^2}([-1,1]^2)   \right)  =1.
$$
Indeed, with high probability, there is a crossing of an annulus around $[0,1]^2$ whose length for $d_{0,n}$ is larger than the diameter of $[0,1]^2$ for $d_{0,n}$, uniformly in $n$.  Also, if we fix $x\in \R^2$ and denote by $T_{x}$ the map $\phi \mapsto \phi( \cdot -  x)$, for a field $\phi$ and $d \mapsto d(\cdot - x, \cdot - x)$ for a metric $d$, if the measure on fields is $\phi_{0,\infty}$ and the measure on metrics is $d_{0,\infty}^{\R^2}$, then the transformation $T_x$ is mixing thus ergodic in each case. This ergodic property for the Gaussian multiplicative chaos measure is a useful property to characterize $\log$-normal $\star$-scale invariant random measures. We refer the interested reader to Theorem 4  and the remark following Proposition 5 in \cite{Allez}.

\section{Weyl scaling}

\label{sec:WeylScaling}

In this section we will see that any limiting metric space is non trivial. In particular, we will show they are not deterministic and not independent of field $\phi_{0,\infty}$. 

\smallskip

The main idea of the proof is the following. Take $d_{0,\infty}$ a limiting metric whose existence comes from the previous subsection. Define for some suitable function $f$ the metric $e^{ \frac{\gamma}{2} f} \cdot d_{0,\infty}$ associated to the field $\phi_{0,\infty} + f$. Thanks to the approximation procedure together with the Cameron-Martin theorem for Gaussian measures, we will prove that the couplings $P_{\infty} := \mathcal{L}(\phi_{0,\infty},d_{0,\infty})$ and $P_{\infty}^f := \mathcal{L}(\phi_{0,\infty}+f, e^{\frac{\gamma}{2} f} \cdot d_{0,\infty})$ are mutually absolutely continuous and that the associated Radon-Nikod\'ym derivative satisfies $\frac{d P_{\infty}^f}{d P_{\infty}} = \frac{d \mathcal{L} (\phi_{0,\infty} + f)}{d \mathcal{L} \phi_{0,\infty} }$, which implies the result we look for: if $\phi_{0,\infty}$ and $d_{0,\infty}$ are independent, it implies $e^{ \frac{\gamma}{2} f} \cdot d_{0,\infty} \overset{(d)}{=} d_{0,\infty}$ which leads to a contradiction.

\smallskip

In what follows, we recall some background on metric geometry and we refer the reader to Chapter 2 in \cite{BBI} for more details. Let $(X,d)$ be a metric space and $\pi$ be a continuous map from an interval $I$ to $X$. We define the length $L_d(\pi)$ of $\pi$ with respect to the metric $d$ by setting 
$$
L_d(\pi) := \sup \sum_{i=1}^n d(\pi(t_{i-1}), \pi(t_{i}))
$$
where the supremum is taken over all $n \geq 1$, $t_0 < t_1 < \dots < t_n$ in $I$. If $L_d(\pi) < \infty$, we say that $\pi$ is \textit{rectifiable}. We also say that $\pi$ has \textit{constant speed} if there exists a constant $\lambda \geq 0$ such that $L_d(\pi_{|_{[s,t]}})  = \lambda \abs{t-s}$ holds for every $s,t \in I$. 

\smallskip

Starting with such a length functional $L = L_d$ we can define a metric space $(X, d_L)$ by setting, for every $x,y \in X$,
$$
d_L (x, y) := \inf \lbrace L(\pi) ~ | ~ \pi \text{ is rectifiable },  \pi(0) = x \text{ and } \pi(1) = y \rbrace.
$$ 
We say that a metric $d$ is \textit{intrinsic} if $d = d_{L_d}$. In this case, $(X, d)$ is called a \textit{length space}. Notice that a Riemannian manifold $(M,g)$ is a length space.  Moreover, we say that this metric is \textit{strictly intrinsic} if for any $x,y \in X$ there exists a path $\pi$ such that  $\pi(0) = x$, $\pi(1) = y$ and $d(x,y) = L_d(\pi)$. In this case the path $\pi$ is called a \textit{shortest path} between $x$ and $y$. 

\smallskip

Let $(X,d)$ be a metric space. A path ($\pi, I$) is called a \textit{geodesic} if $\pi$ has constant speed and if $L_d(\pi_{|_{[s,t]}})= d(\pi(s),\pi(t))$ for every $s,t \in I$. A path ($\pi, I$) is called a \textit{local geodesic} if for every $t \in I$, there exists an $\eps > 0$ such that $\pi_{|_{[t-\eps,t+\eps]}}$ is a geodesic. $(X,d)$ is a \textit{geodesic space} if for every $x,y \in X$, there exists a geodesic $\pi : [0,1] \to X$ with $\pi(0) = x$, $\pi(1) = y$. It is clear from the definition that every geodesic space is a length space. 

\smallskip

For a complete metric space, one can characterize the notion of intrinsic metric using midpoints (see Lemma 2.4.8 and Theorem 2.4.16 in \cite{BBI} for a reference). A point $z \in (X,d)$ is called a \textit{midpoint} between points $x$ and $y$ if $d(x,z) = d(z,y) = \frac{1}{2} d(x,y)$. The following holds: 
\begin{enumerate} 
\item
Assume that $(X,d)$ is a metric space. If $d$ is a strictly intrinsic metric, then for every points $x$ and $y$ in $X$ there exists a midpoint $z$ between them.

\item
If $(X, d)$ is a complete metric space and if for every $x,y \in X$ there exists a midpoint $z$ between $x$ and $y$, then $d$ is strictly intrinsic.
\end{enumerate}

\smallskip

Given a continuous  function $f$ and an intrinsic metric $d$, both defined on $[0,1]^2$, with $d$ homeomorphic to the Euclidean metric on the unit square, we define the metric $e^{ f} \cdot d$ by first describing its length. For a continuous path $\pi : [a,b] \to [0,1]^2$ we define
$$
L_d^f (\pi) := \limsup_{n \to \infty} \sum_{i=1}^n e^{ f(\pi(t_{i-1}^n))} d(\pi(t_{i-1}^n), \pi(t_{i}^n)),
$$
where $a= t_0^n < \dots < t_n^n = b$ and $ \lim_{n \to \infty} \max_{0 \leq i \leq n-1} (t_{i+1}^n - t_i^n) = 0$. Notice that $L_d(\pi) < \infty$ if and only if $L_d^f(\pi) < \infty$. We then define $e^{f} \cdot d := d_{L_d^f}$. Notice that if $f$ is constant since $d$ is intrinsic we have $e^{f} \cdot d = e^f d$. Notice also that if $\phi$ and $\psi$ are smooth functions, then the Riemannian metric associated to the metric tensor $e^{\phi + \psi} ds^2$ is equal to $e^{\frac{1}{2} \phi} \cdot d$ where $d$ is the metric associated to the metric tensor $e^{\psi} ds^2$. 

\smallskip

The following lemma will be useful to identify the metric associated to $\phi_{0,\infty} + f$ in terms of the one associated to $\phi_{0,\infty}$. 

\begin{Lemma}
\label{StabMetric}
Let $f$ be a continuous function on $[0,1]^2$ and $r,R : (0,\infty) \to (0,\infty)$ be continuous increasing functions with $r(0^+)=R(0^+)=0$. If a sequence of intrinsic metrics $(d_{n})_{n \geq 0}$ on $[0,1]^2$ satisfying  for every $x,y \in [0,1]^2$, $n \geq 0$ the condition
$$
r(\norme{x-y}) \leq d_n(x,y) \leq R(\norme{x-y}),
$$ 
converges uniformly to a metric $d_{\infty}$ on $[0,1]^2$, then the sequence of metrics $(e^f \cdot d_n)_{n \geq 0}$ converges simply to the metric $e^f \cdot d_{\infty}$ i.e. for every fixed $x,y \in [0,1]^2$ we have $ \lim_{n \to \infty } e^{f} \cdot d_n(x,y) = e^f \cdot d_{\infty}(x,y)$.
\end{Lemma}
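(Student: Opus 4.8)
The plan is to show convergence of the weighted lengths by a two-sided approximation argument, using the uniform modulus-of-continuity bounds $r,R$ to control the geometry of geodesics uniformly in $n$.

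First I would record the elementary observation that the hypotheses $r(\norme{x-y}) \leq d_n(x,y) \leq R(\norme{x-y})$ pass to the limit: since $d_n \to d_\infty$ uniformly, $d_\infty$ also satisfies $r(\norme{x-y}) \leq d_\infty(x,y) \leq R(\norme{x-y})$. In particular all the metrics $d_n$ and $d_\infty$ are homeomorphic to the Euclidean metric and the spaces $([0,1]^2,d_n)$ are compact length spaces, so shortest paths exist (by Hopf--Rinow for length spaces, cf. the discussion in \cite{BBI} Section 2). Moreover the upper bound $d_n(x,y)\le R(\norme{x-y})$ gives a uniform equicontinuity: a shortest path for $d_n$ between two points at Euclidean distance $\le \eps$ stays within Euclidean distance $\eta(\eps)$ of its endpoints, where $\eta(\eps)\to 0$ as $\eps\to 0$, uniformly in $n$ — one uses $r$ to convert a $d_n$-ball of small radius into a Euclidean ball of small radius, and $R$ to bound the $d_n$-length of the shortest path.

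Next, fix $x,y\in[0,1]^2$ and $\epsilon>0$; using uniform continuity of $f$ on the compact square, pick $\rho>0$ such that $\abs{f(u)-f(v)}\le \epsilon$ whenever $\norme{u-v}\le\rho$. For the upper bound $\limsup_n e^f\cdot d_n(x,y) \le e^f\cdot d_\infty(x,y)$: take a near-optimal path $\pi$ for $e^f\cdot d_\infty$, subdivide $[0,1]$ into pieces on which $\pi$ has Euclidean oscillation $\le\rho$, and on each piece compare $\sum e^{f(\pi(t_{i-1}))} d_n(\pi(t_{i-1}),\pi(t_i))$ with the corresponding sum for $d_\infty$; uniform convergence $d_n\to d_\infty$ makes the difference small once the subdivision is fixed, and the local oscillation bound on $f$ controls the discretization error by $e^\epsilon$ times the $d_n$- (resp. $d_\infty$-)length of that piece, which is itself bounded via $R$. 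For the lower bound $\liminf_n e^f\cdot d_n(x,y)\ge e^f\cdot d_\infty(x,y)$: take, for each $n$, a shortest path $\pi_n$ for $d_n$ from $x$ to $y$; by the uniform equicontinuity above and Arzelà--Ascoli, pass to a subsequence along which $\pi_n$ converges uniformly to a path $\pi_\infty$; show $L_{d_\infty}(\pi_\infty)\le\liminf_n L_{d_n}(\pi_n)$ (lower semicontinuity of length under uniform convergence of both the paths and the metrics), and similarly $L_{d_\infty}^f(\pi_\infty)\le\liminf_n L_{d_n}^f(\pi_n)$ using the local oscillation control of $f$; then $e^f\cdot d_\infty(x,y)\le L_{d_\infty}^f(\pi_\infty)\le\liminf_n L_{d_n}^f(\pi_n) = \liminf_n e^f\cdot d_n(x,y)$. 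Since the limit is the same along every subsequence, the full sequence converges.

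\textbf{The main obstacle} I anticipate is the lower-semicontinuity step for the weighted lengths: one must simultaneously handle the uniform convergence of the metrics $d_n\to d_\infty$ and the uniform convergence of the paths $\pi_n\to\pi_\infty$, while the weight $e^{f}$ is evaluated along the moving paths. The clean way is to fix a deterministic subdivision $0=t_0<\dots<t_m=1$ that is fine enough (relative to $\rho$ and the Euclidean modulus of continuity of the limit path, the latter being controlled uniformly by the equicontinuity estimate), write the weighted Riemann-type sum for $\pi_n$ along this fixed subdivision, let $n\to\infty$ with the subdivision held fixed (here uniform convergence of $d_n$ and of $\pi_n$ do all the work), and only then refine the subdivision; the local oscillation bound on $f$ guarantees that refining changes the sum by a factor $e^{\pm\epsilon}$, uniformly. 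Keeping track that these estimates are uniform in $n$ — which is exactly what the two-sided bound $r\le d_n\le R$ provides — is the heart of the matter; everything else is bookkeeping.
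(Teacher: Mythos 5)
Your overall strategy — two-sided approximation, controlling the oscillation of $f$ along shortest paths via the two-sided bound $r \le d_n \le R$, and a compactness argument to identify the limit — mirrors the paper's. The implementation differs: the paper works throughout with finite midpoint sequences $x = x_0, \dots, x_N = y$ (using the midpoint criterion to produce them and Bolzano--Weierstrass to extract a convergent subsequence of the tuple $(N_n, (x_i^n)_i)$), whereas you pass to continuous shortest paths and invoke Arzel\`a--Ascoli together with a lower-semicontinuity lemma for $(d,\pi) \mapsto L^f_d(\pi)$ under simultaneous uniform convergence of the metric and the path. The paper's discrete route avoids having to state and prove that lower-semicontinuity lemma, which is the most delicate part of your sketch; your \textbf{main obstacle} paragraph correctly identifies this and gives a plausible repair (fix a deterministic fine subdivision first, then let $n\to\infty$, then refine), which in spirit reproduces the paper's discrete argument.

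There is, however, a concrete error in your lower bound. You take $\pi_n$ to be a shortest path \emph{for $d_n$}, and then conclude with $\liminf_n L^f_{d_n}(\pi_n) = \liminf_n e^f\cdot d_n(x,y)$. This equality fails: a $d_n$-shortest path need not minimize the weighted length $L^f_{d_n}$, so in general $L^f_{d_n}(\pi_n) > e^f\cdot d_n(x,y)$, and your chain of inequalities only yields $e^f\cdot d_\infty(x,y) \le \liminf_n L^f_{d_n}(\pi_n)$, which sits on the wrong side of $\liminf_n e^f\cdot d_n(x,y)$. You must instead take $\pi_n$ to be a shortest (or $\eps$-almost-shortest) path for $e^f\cdot d_n$. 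This is a legitimate choice because $e^{-\norme{f}_\infty} d_n \le e^f\cdot d_n \le e^{\norme{f}_\infty} d_n$, so $e^f\cdot d_n$ inherits the same completeness, local compactness and two-sided Euclidean comparison (with $r,R$ replaced by $e^{\mp\norme{f}_\infty} r, e^{\pm\norme{f}_\infty} R$), and hence is a strictly intrinsic metric whose geodesics satisfy the same uniform equicontinuity; the paper makes exactly this reduction explicitly. With that replacement the rest of your argument goes through.
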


\begin{proof}
We fix $x,y \in [0,1]^2$ and we want to prove that $e^f \cdot d_n(x,y)$ converges to $e^f \cdot d_{\infty}(x,y)$. We separate the proof in three parts: first we control the oscillation of $f$ over geodesics then  the upper bound and finally the lower bound. 

By assumption, $d_n$ converges uniformly to $d_{\infty}$ hence $d_{\infty}$ is an intrinsic metric (see Exercise 2.4.19 in \cite{BBI}). Again by assumption, there exists some positive $c$ and $C$ such that for every $n$ 
$$
r(\norme{x-y}) \leq d_n(x,y) \leq R(\norme{x-y}).
$$
This condition is then satisfied by $d_{\infty}$ and since for $n \in \N \cup \lbrace  \infty \rbrace$, $ e^{-\norme{f}_{\infty}} d_n  \leq e^f \cdot d_n \leq e^{\norme{f}_{\infty}} d_n$ this condition is also satisfied by $e^f \cdot d_n$ and $e^f \cdot d_{\infty}$ by replacing $c$ by $e^{- \norme{f}_{\infty}} c$ and $C$ by $e^{\norme{f}_{\infty}} C$. This tells us that the spaces $([0,1]^2,d_n)$ and $([0,1]^2,e^{f} \cdot d_n )$ are complete and locally compact for $n \in \N \cup \lbrace \infty \rbrace$. Hence, by Theorem 2.5.23 in \cite{BBI}, these spaces are strictly intrinsic. 

Now we look at the oscillation of $f$ over small parts of shortest path associated to the metrics $e^f \cdot d_n$ and $d_n$ for all $n$'s. The first step is to understand that locally $e^{f(x)} d_n(x,y) \approx e^{f} \cdot d_n (x,y$). To this end notice the inequality 
$$
e^{-\mathrm{osc} (f,K_{x,y}^{d_n} )} e^{f(x)} d_n(x,y) \leq e^f \cdot d_n(x,y) \leq e^{\mathrm{osc}(f,K_{x,y}^{d_n})} e^{f(x)} d_n(x,y)
$$
where $\mathrm{osc}(f,K) := \sup_{x,y \in K} \abs{f(x)-f(y)}$ and where $K_{x,y}^{d_n} := \mathrm{Geo}_{d_n} ( x, y) \cup  \mathrm{Geo}_{e^f \cdot d_n} ( x, y)$. Then notice that if $x$ is close to $y$ then $K_{x,y}^{d_n} $ is small with respect to the Euclidean topology. More precisely, notice that $\mathrm{Geo}_{d_n} ( x, y) \subset B (x, r^{-1}(R(\norme{x-y})))$. Indeed, if $z \in \mathrm{Geo}_{d_n} ( x, y)$ then
$$
r(\norme{x-z}) \leq d_n(x,z) \leq d_n(x,y) \leq R(\norme{x-y}).
$$
For every $x$ and $y$ such that $d_n(x,y) < \delta$,  $\mathrm{osc} ( f , K_{x,y}^{d_n} ) \leq \omega (f, r^{-1}(\delta) )$ where $\omega(f,\delta)$ denotes the modulus of continuity of the function $f$ i.e. $\omega(f,\delta) := \sup \lbrace \abs{f(x) - f(y)} : x,y \in [0,1]^2  \text{ st } \abs{x-y} < \delta \rbrace $. Note that the bound of the oscillation is independent of $n$. 

We start with the upper bound. Since $e^f \cdot d_{\infty}$ is strictly intrinsic, take by a dichotomy procedure $x = x_0, \dots , x_N = y$ such that $e^f \cdot d_{\infty} (x,y) = \sum_{i=0}^{n-1} e^f \cdot d_{\infty}(x_i,x_{i+1})$ and $d_{\infty}(x_{i} , x_{i+1} ) < \delta$. For $n$ large enough, for every $i$, $d_n(x_i,x_{i+1}) < \delta$. Hence, by triangle inequality, for $n$ large enough
\begin{align*}
e^f \cdot d_n(x,y) & \leq \sum_{i=0}^{N-1} e^f \cdot d_n(x_i,x_{i+1}) \\
&  \leq  \sum_{i=0}^{N-1} e^{\mathrm{osc} (f,K_{x_i,x_{i+1}}^{d_n} )} e^{f(x_i)} d_n(x_i,x_{i+1})  \\
& \leq e^{\omega(f,C \delta^{1/\alpha})} \sum_{i=0}^{N-1}  e^{f(x_i)} d_n(x_i,x_{i+1}).
 \end{align*} Hence by taking the $\limsup$ and using the convergence of $d_n$ to $d_{\infty}$
\begin{align*}
\limsup_{n \to \infty}  e^f \cdot d_n(x,y) & \leq e^{\omega(f,C \delta^{1/\alpha})} \sum_{i=0}^{N-1}  e^{f(x_i)} d_{\infty}(x_i,x_{i+1}) \\
& \leq e^{\omega(f,C \delta^{1/\alpha})} \sum_{i=0}^{N-1} e^{\mathrm{osc} (f,K_{x_i,x_{i+1}}^{d_{\infty}} )}  e^{f} \cdot d_{\infty}(x_i,x_{i+1}) \\
& \leq e^{2 \omega(f,C \delta^{1/\alpha})} \sum_{i=0}^{N-1}  e^{f}  \cdot d_{\infty}(x_i,x_{i+1}) \\
& =  e^{ 2\omega(f,C \delta^{1/\alpha})}  e^{f} \cdot d_{\infty} (x,y).
\end{align*}
By the uniform continuity of $f$, we obtain the upper bound by letting $\delta$ going to $0$.

Now we deal with the lower bound. Up to extracting a subsequence we may assume that $e^f \cdot d_n (x,y)$ converges to its $\liminf$. Again, since $e^f \cdot d_n$ is strictly intrinsic, take $x_0^n = x, \dots , x_{N_n}^n = y$, such that 
$$
e^f \cdot d_n(x,y) = \sum_{i=0}^{N_n-1} e^f \cdot d_n(x_{i}^n,x_{i+1}^n)
$$ 
and $d_n(x_{i}^n,x_{i+1}^n) < \delta$. Taking the minimal number $N_{n}$ (still using the midpoints method) $N_n$ is bounded and up to taking a subsequence, we may assume that  $N_n$ converges. In particular, $N_n$ is eventually constant and equal to some $N$. We may then also assume that the $x_{i}^n$'s  also converges to some $x_{i}$'s for $0 \leq i \leq N$ and these $x_i$'s satisfy $d_{\infty}(x_{i},x_{i+1}) \leq \delta$. Then for $n$ large enough
\begin{align*}
e^{f} \cdot d_n (x,y) & \geq \sum_{i=0}^{N-1} e^{-\mathrm{osc} \left(f,K_{x_i^n,x_{i+1}^n}^{d_n} \right)} e^{f(x_i^n)} \cdot d_n (x_{i}^n, x_{i+1}^n)  \\
& \geq e^{- \omega(f,C \delta^{1/\alpha})} \sum_{i=0}^{N-1} e^{f(x_i^n)} \cdot d_n (x_{i}^n, x_{i+1}^n).
\end{align*}
Taking the limit as $n$ goes to $\infty$ we get by the uniform convergence of $d_n$ to $d_{\infty}$
$$
\abs{\sum_{i=0}^{N-1} e^{f(x_i^n)} d_n(x_i^n,x_{i+1}^n) - \sum_{i=0}^{N-1} e^{f(x_i^n)} d_{\infty}(x_i^n,x_{i+1}^n)  } \leq N e^{\norme{f}_{\infty}} \norme{d_n-d_{\infty}}_{\infty} \to 0
$$
\begin{align*}
\liminf_{n \to \infty} e^f \cdot d_n(x,y) & \geq e^{- \omega(f,C \delta^{1/\alpha})} \sum_{i=0}^{N-1} e^{f(x_i)}  d_{\infty} (x_{i}, x_{i+1}) \\
& \geq e^{- \omega(f,C \delta^{1/\alpha})} \sum_{i=0}^{N-1} e^{- \mathrm{osc} (f,K_{x_i,x_{i+1}}^{d_{\infty}} )} e^{f} \cdot  d_{\infty} (x_{i}, x_{i+1})  \\
&  \geq e^{-2 \omega(f,C \delta^{1/\alpha})} \sum_{i=0}^{N-1} e^{f} \cdot d_{\infty} (x_{i}, x_{i+1}) \\
&  \geq  e^{- 2 \omega(f,C \delta^{1/\alpha})} e^f \cdot d_{\infty}(x,y)
\end{align*}
by the triangle inequality. Letting $\delta$ going to $0$ we get the result.
\end{proof}

It is easy to see that the same result holds if instead of $f$, we assume that a sequence of continuous functions $(f_n)_{n \geq 0}$ converges uniformly to $f$ on $[0,1]^2$, then under the same assumptions $( e^{f_n} \cdot d_{n})_{n \geq 0}$ converges simply to the metric $e^f \cdot d_{0,\infty}$. This lemma is a key ingredient to prove the following corollary.

\begin{Cor}
\label{CorConvLaw}
Let $(f_n)$ be a sequence of continuous real-valued functions defined on $[0,1]^2$ and converging uniformly to a function $f$. If $\gamma < \min (\gamma_c, 0.4)$ then the following statements hold:

\begin{enumerate}
\item
\label{test}
$(d_{0,n}, e^{ \frac{\gamma}{2} f_n} \cdot d_{0,n})_{n \geq 0}$ is tight. 

\item
If $(n_k)$ is a subsequence along which $(d_{0,n_k},e^{\frac{\gamma}{2} f_{n_k}} \cdot d_{0,n_k})_{k \geq 0}$ converges in law to some $(d_{0,\infty}, d_{0,\infty}')$ then $d_{0,\infty}' = e^{ \frac{\gamma}{2} f} \cdot d_{0,\infty}$. 

\item
In particular, $(\phi_{0,n_{k}},d_{0,n_k})_{k \geq 0}$ converges in law to a coupling $P_{\infty} := \mathcal{L}(\phi_{0,\infty},d_{0,\infty})$ and $(\phi_{0,n_k}+f_{n_k},  e^{ \frac{\gamma}{2} f_{n_k} } \cdot d_{0,n_k})_{k \geq 0}$ converges in law to a coupling $P_{\infty}^f := \mathcal{L} (\phi_{0,\infty}+f, e^{ \frac{\gamma}{2} f} \cdot d_{0,\infty})$, both couplings are probability measures on the same space.
\end{enumerate}

\end{Cor}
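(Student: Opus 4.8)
The plan is to derive the three assertions from the uniform H\"older estimates \eqref{UpperHolder} and \eqref{LowerHolder} established in the proof of Theorem \ref{th:Metric} (available because $\gamma<\min(\gamma_c,0.4)$) together with the deterministic comparison Lemma \ref{StabMetric}. For the tightness, I would first observe that uniform convergence of $(f_n)$ gives $M:=\sup_n\norme{f_n}_{\infty}<\infty$ and that, since each $d_{0,n}$ is intrinsic (it is the length metric of a smooth Riemannian metric), the very definition of the Weyl scaling yields, for all $x,y\in[0,1]^2$ and all $n$,
$$
e^{-\gamma M/2}\,d_{0,n}(x,y)\ \le\ e^{\frac{\gamma}{2}f_n}\cdot d_{0,n}(x,y)\ \le\ e^{\gamma M/2}\,d_{0,n}(x,y).
$$
Hence \eqref{UpperHolder} and \eqref{LowerHolder} hold, with the constants multiplied by $e^{\pm\gamma M/2}$, also for $e^{\frac{\gamma}{2}f_n}\cdot d_{0,n}$, and exactly as in the proof of Theorem \ref{th:Metric} this yields tightness of $(e^{\frac{\gamma}{2}f_n}\cdot d_{0,n})_{n\ge0}$ in $C([0,1]^2\times[0,1]^2,\R^{+})$ and for the Gromov--Hausdorff topology, each subsequential limit being a genuine metric bi-H\"older equivalent to the Euclidean one. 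Tightness of the pair $(d_{0,n},e^{\frac{\gamma}{2}f_n}\cdot d_{0,n})_{n\ge0}$, and in fact of the triple $(\phi_{0,n},d_{0,n},e^{\frac{\gamma}{2}f_n}\cdot d_{0,n})_{n\ge0}$ (using that $\phi_{0,n}\to\phi_{0,\infty}$, hence is tight), then follows from Prokhorov's theorem on the product space; this is the first assertion.

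For the second assertion, let $(n_k)$ be a subsequence along which $(d_{0,n_k},e^{\frac{\gamma}{2}f_{n_k}}\cdot d_{0,n_k})$ converges in law to $(d_{0,\infty},d_{0,\infty}')$. I would invoke Skorohod's representation theorem to realize this convergence almost surely, so that on a common probability space $d_{0,n_k}\to d_{0,\infty}$ and $e^{\frac{\gamma}{2}f_{n_k}}\cdot d_{0,n_k}\to d_{0,\infty}'$ uniformly, almost surely. Taking the union over a sequence $\eps_j\to0$ of the exceptional events in \eqref{UpperHolder}--\eqref{LowerHolder} shows that almost surely there are (random, finite) functions $r,R$ of the power-law form $a\,t^{h}$ with $a,h>0$ --- in particular continuous, increasing and vanishing at $0^{+}$ --- such that $r(\norme{x-y})\le d_{0,n_k}(x,y)\le R(\norme{x-y})$ holds simultaneously for all $k$ and all $x,y$. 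In particular $d_{0,\infty}$, being a uniform limit of intrinsic metrics obeying such a bound, is intrinsic and homeomorphic to the Euclidean metric, so that $e^{\frac{\gamma}{2}f}\cdot d_{0,\infty}$ is well defined. Since each $d_{0,n_k}$ is intrinsic and $\frac{\gamma}{2}f_{n_k}\to\frac{\gamma}{2}f$ uniformly, the remark following Lemma \ref{StabMetric} applies and gives that $e^{\frac{\gamma}{2}f_{n_k}}\cdot d_{0,n_k}\to e^{\frac{\gamma}{2}f}\cdot d_{0,\infty}$ pointwise; comparing with the almost sure uniform convergence to $d_{0,\infty}'$ and using continuity of both limits yields $d_{0,\infty}'=e^{\frac{\gamma}{2}f}\cdot d_{0,\infty}$ almost surely, and transferring this back to laws proves the claim.

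For the last assertion, since the triple $(\phi_{0,n},d_{0,n},e^{\frac{\gamma}{2}f_n}\cdot d_{0,n})_{n\ge0}$ is tight I would pass to a further subsequence of $(n_k)$ --- still denoted $(n_k)$ --- along which it converges in law. Its first marginal is $\phi_{0,\infty}$ because $\phi_{0,n}\to\phi_{0,\infty}$ almost surely in the natural white-noise coupling, and its last two marginals are $(d_{0,\infty},e^{\frac{\gamma}{2}f}\cdot d_{0,\infty})$, since any subsequence of $(d_{0,n_k},e^{\frac{\gamma}{2}f_{n_k}}\cdot d_{0,n_k})$ still converges to that limit and the identification above applies to it. This exhibits $\lim_k(\phi_{0,n_k},d_{0,n_k})$ as a coupling $P_{\infty}=\mathcal{L}(\phi_{0,\infty},d_{0,\infty})$; then applying the continuous map $(\phi,d,d')\mapsto(\phi+f_{n_k},d')$ together with $f_{n_k}\to f$ and the continuous mapping theorem shows $(\phi_{0,n_k}+f_{n_k},e^{\frac{\gamma}{2}f_{n_k}}\cdot d_{0,n_k})$ converges in law to $P_{\infty}^f=\mathcal{L}(\phi_{0,\infty}+f,e^{\frac{\gamma}{2}f}\cdot d_{0,\infty})$, both probability measures on the same product space.

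The step I expect to be the main obstacle is the second one: Lemma \ref{StabMetric} is purely deterministic and requires uniform two-sided H\"older control and intrinsicness of \emph{every} metric in the sequence, whereas \eqref{UpperHolder}--\eqref{LowerHolder} only provide high-probability bounds; it is the passage through Skorohod's representation together with the union over $\eps_j\to0$ --- which converts these into almost sure, merely random, bounds of the power-law form required --- that makes the lemma applicable, and one also has to check that the subsequential limit $d_{0,\infty}$ is itself intrinsic and homeomorphic to the Euclidean metric, so that its Weyl scaling is well defined.
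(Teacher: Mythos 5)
Your overall scheme parallels the paper's: the a priori comparison $e^{-\gamma M/2}d_{0,n}\le e^{\frac{\gamma}{2}f_n}\cdot d_{0,n}\le e^{\gamma M/2}d_{0,n}$ for tightness, Skorohod's representation theorem to pass to almost sure convergence, the H\"older bounds \eqref{UpperHolder}--\eqref{LowerHolder} to feed Lemma~\ref{StabMetric}, and the continuous mapping theorem for assertion (iii). Assertions (i) and (iii) are fine. However, assertion (ii) has a genuine gap, precisely at the step you flagged. You need, almost surely on the Skorohod-coupled space, random functions $r,R$ so that $r(\norme{x-y})\le d_{0,n_k}(x,y)\le R(\norme{x-y})$ holds \emph{simultaneously for all $k$}. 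Your mechanism --- ``taking the union over a sequence $\eps_j\to0$ of the exceptional events'' --- does not deliver this. What \eqref{UpperHolder} gives is, for each fixed $n$ and $\eps$, a bound $\Pro\bigl(C_\beta^n>C_\eps\bigr)\le\eps$ on the H\"older constant $C_\beta^n:=\sup_{x\ne x'}d_{0,n}(x,x')/\norme{x-x'}^\beta$, i.e. tightness of the laws of $(C_\beta^n)_n$. Tightness of the marginals does not imply $\sup_k C_\beta^{n_k}<\infty$ a.s. on any coupling (an i.i.d. sequence of unbounded random variables is tight yet has infinite supremum), and Borel--Cantelli is unavailable since the bad-event bound $\eps$ is uniform in $n$, not summable over $k$. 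Nor can one argue that $C_\beta^{n_k}$ converges a.s. once $d_{0,n_k}$ does: the functional $d\mapsto\sup_{x\ne x'}d(x,x')/\norme{x-x'}^\beta$ is only lower semicontinuous under uniform convergence (the supremum can escape to the scale $\norme{x-x'}\to0$), so a.s. convergence of the metrics gives no control on the $C_\beta^{n_k}$.

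The paper closes this gap by enlarging the Skorohod representation. Fix $\alpha>1+\gamma$ and $\beta\in(0,1-2\gamma-\frac{\gamma^2}{4(1-2\gamma)})$, set $C_\alpha^n:=\sup_{x\ne x'}\norme{x-x'}^\alpha/d_{0,n}(x,x')$ and $C_\beta^n$ as above; \eqref{LowerHolder} and \eqref{UpperHolder} make $(C_\alpha^n)_n$ and $(C_\beta^n)_n$ tight, so the full tuple $\bigl(\phi_{0,n},\phi_{0,n}+f_n,d_{0,n},e^{\frac{\gamma}{2}f_n}\cdot d_{0,n},C_\alpha^n,C_\beta^n\bigr)_n$ is tight. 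One extracts a further convergent subsequence and applies Skorohod to this whole tuple. After representation the real-valued sequences $(C_\alpha^{n_k})_k$ and $(C_\beta^{n_k})_k$ converge a.s., hence are a.s. bounded in $k$; and since the deterministic relations $C_\alpha^{n_k}=\sup_{x\ne x'}\norme{x-x'}^\alpha/d_{0,n_k}(x,x')$ and $C_\beta^{n_k}=\sup_{x\ne x'}d_{0,n_k}(x,x')/\norme{x-x'}^\beta$ are preserved by the representation (the joint law is unchanged), you get exactly the uniform two-sided bound required by Lemma~\ref{StabMetric}, with $r(t)=t^\alpha/\sup_k C_\alpha^{n_k}$ and $R(t)=(\sup_k C_\beta^{n_k})\,t^\beta$. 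Including the H\"older constants themselves as extra coordinates in the joint representation is the missing idea; once it is in place the rest of your argument goes through, including the deduction of (iii) from (ii).
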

\begin{proof}

We start with the proof of (i). Since for $n \geq 0$, a.s.   $e^{-\frac{\gamma}{2} \sup_{n \geq 0} \norme{f_n}_{\infty}} d_{0,n} \leq e^{\frac{\gamma}{2} f_n} \cdot d_{0,n} \leq e^{\frac{\gamma}{2} \sup_{n \geq 0}  \norme{f_n}_{\infty}} d_{0,n}$, the argument giving the tightness of $(d_{0,n})_{n \geq 0}$ then extends to give the one of $( e^{\frac{\gamma}{2} f_n} \cdot d_{0,n} )_{n \geq 0}$, see the proof of Theorem \ref{th:Metric}.

\smallskip

We now prove (ii). We first fix $\alpha > 1 + \gamma$ and $\beta \in (0, 1- 2\gamma - \frac{\gamma^2}{4(1-2 \gamma)})$ and we then define $C_{\alpha}^n := \sup_{x,x' \in [0,1]^2} \frac{\norme{x-x'}^{\alpha}}{d_{0,n}(x,x')} $ and $C_{\beta}^n := \sup_{x,x' \in [0,1]^2} \frac{d_{0,n}(x,x')}{\norme{x-x'}^{\beta}}$. Using \eqref{LowerHolder}  and  \eqref{UpperHolder}, $(C_{\alpha}^{n})_{n \geq 0}$ and $(C_{\beta}^{n})_{n \geq 0}$ are tight. Since $(\phi_{0,n}, \phi_{0,n} + f_n, d_{0,n},e^{\frac{\gamma}{2} f_n} \cdot d_{0,n}, C_{\alpha}^n, C_{\beta}^n)_{n \geq 0}$ is tight,  up to extracting a subsequence, we can assume it converges in law. By the Skorohod representation theorem, we obtain an almost sure convergence on a same probability space and we denote by $d_{0,\infty}$ (resp $d_{0,\infty}'$) the limit of $d_{0,n}$ (resp $e^{\frac{\gamma}{2} f_n} \cdot d_{0,n}$). We can thus introduce the random constants $C_{\alpha} := \sup_{n \geq 0} C_{\alpha}^n < \infty$ and $C_{\beta} := \sup_{n \geq 0} C_{\beta}^n < \infty$. On this probability space, the following condition of Lemma \ref{StabMetric} is satisfied: a.s. for every $n \geq 0$, $x,x' \in [0,1]^2$,  
$$
\frac{\norme{x-x'}^{\alpha}}{C_{\alpha}} \leq \frac{\norme{x-x'}^{\alpha}}{C_{\alpha}^n} \leq d_{0,n} (x,x') \leq C_{\beta}^n \norme{x-x'}^{\beta} \leq C_{\beta} \norme{x-x'}^{\beta}.
$$
By using Lemma \ref{StabMetric}, we can identify the almost sure limit of $e^{\frac{\gamma}{2} f_n} \cdot d_{0,n}$: $d_{0,\infty}' = e^{\frac{\gamma}{2} f} \cdot d_{0,\infty}$. 
Finally, notice that (iii) follows from the previous proofs.
\end{proof}
The main result of this subsection is the following proposition. In order to state it, let us recall that the kernel of $\phi_{0,\infty}$ is given by $C_{0,\infty}(x,x') = \int_0^1 c(\frac{x-x'}{t}) \frac{dt}{t} = \int_0^1 k \ast k(\frac{x-x'}{t}) \frac{dt}{t}$ and let us make the following remark: the map $C_{0,\infty} : \mathcal{S}(\R^2) \to \mathcal{S}(\R^2)$ defined for $f\in \mathcal{S}(\R^2) $ by $C_{0,\infty} f : = C_{0,\infty} * f$  is a bijection. Indeed, notice that $\hat{C}_{0,\infty}(\xi) = \norme{\xi}^{-2} \int_{0}^{\norme{\xi}} u \hat{k}(u)^2 du$ (see the remark before \eqref{coupling} for a proof). In particular,  we have $\hat{C}_{0,\infty}(0) = \frac{\hat{k}(0)^2}{2} > 0$ (since $\hat{k}(0) = \int_{B(0,r_0)} k(x) dx$ with $k$ nonnegative and non-identically zero), and $\hat{C}_{0,\infty}(\xi) \sim_{\infty} \frac{1}{2 \pi \norme{\xi}^2}$. Thus, the equation $C_{0,\infty} * f = g$ admits the solution $f$ given by $f(x) = \frac{1}{(2\pi)^2} \int_{\R^2} \frac{\hat{g}(\xi)}{\hat{C}_{0,\infty}(\xi)} e^{i x \cdot \xi}$. In particular, if $f \in \mathcal{S}(\R^2)$, $C_{0,\infty}^{-1} f \in \mathcal{S}(\R^2)$ is well-defined.

\begin{Prop}
\label{WeylScaling}
For $f \in \mathcal{S}(\R^2)$, the coupling $P_{\infty}^f = \mathcal{L} (\phi_{0,\infty}+f, e^{ \frac{\gamma}{2} f} \cdot d_{0,\infty})$ is absolutely continuous with respect to  $P_{\infty} = \mathcal{L} (\phi_{0,\infty}, d_{0,\infty})$ and its Radon-Nikod\'ym derivative is given by 
$$
\frac{d P_{\infty}^f}{d P_{\infty}} = \frac{ d \mathcal{L} \left(\phi_{0,\infty} + f, e^{ \frac{\gamma}{2} f} \cdot d_{\infty} \right)}{d \mathcal{L} \left(\phi_{0,\infty}, d_{\infty} \right)} = \frac{d \mathcal{L} ( \phi_{0,\infty} + f)}{d \mathcal{L} (\phi_{0,\infty})} = \exp \left( \langle \phi_{0,\infty}, C_{0,\infty}^{-1} f \rangle - \frac{1}{2} \langle f, C_{0,\infty}^{-1} f \rangle \right)
$$
In particular, $d_{0,\infty}$ and $\phi_{0,\infty}$ are not independent.
\end{Prop}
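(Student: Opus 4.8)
The plan is to transfer the Cameron-Martin theorem for the driving white noise $\xi$ through the (measurable) map $\xi\mapsto(\phi_{0,n},d_{0,n})$ and then let $n=n_k\to\infty$ using Corollary \ref{CorConvLaw}. Set $g:=C_{0,\infty}^{-1}f\in\mathcal{S}(\R^2)$, which is well-defined by the remark preceding the proposition, and for $n\geq 0$ define the white-noise shift
\[
\rho_n(y,t):=1_{[2^{-n-1},1]}(t)\int_{\R^2}k\!\left(\frac{x-y}{t}\right)t^{-3/2}g(x)\,dx,
\]
which lies in $L^2(\R^2\times[0,1])$ since $g\in\mathcal{S}(\R^2)$. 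Adding $\rho_n$ to $\xi$ replaces $\phi_{0,n}$ by $\phi_{0,n}+f_n$, where $f_n(x):=\int_{\R^2}\int_{2^{-n-1}}^1 k\bigl(\tfrac{x-y}{t}\bigr)t^{-3/2}\rho_n(y,t)\,dy\,dt$; a direct computation gives $f_n=C_{0,n}*g$, so $f_n$ lies in the Cameron-Martin space of $\phi_{0,n}$, and since $f_n$ and $f=C_{0,\infty}*g$ differ only by the contribution of scales $t<2^{-n-1}$ one has $\norme{f_n-f}_{L^\infty([0,1]^2)}=O(4^{-n})$; in particular $f_n\to f$ uniformly on $[0,1]^2$.

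Next I would invoke the Cameron-Martin theorem for white noise, which gives $\mathcal{L}(\xi+\rho_n)\ll\mathcal{L}(\xi)$ with density $\exp(\langle\xi,\rho_n\rangle-\tfrac12\norme{\rho_n}_{L^2}^2)$. Because $d_{0,n}$ is a measurable function of $\phi_{0,n}$ restricted to $[0,1]^2$ and $f_n$ is smooth, the remark in Section \ref{sec:WeylScaling} (together with the fact that the Weyl scaling commutes with the renormalization by $\mu_n$) shows that applying the map $\xi\mapsto(\phi_{0,n},d_{0,n})$ to $\xi+\rho_n$ produces precisely $(\phi_{0,n}+f_n,\,e^{\frac{\gamma}{2}f_n}\cdot d_{0,n})$. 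Since the functional of interest is $\sigma(\phi_{0,n})$-measurable, conditioning on $\sigma(\phi_{0,n})$ and applying the transfer theorem for pushforwards of equivalent measures yields, for every bounded continuous $G$,
\[
\E\!\left[G\big(\phi_{0,n}+f_n,\,e^{\frac{\gamma}{2}f_n}\cdot d_{0,n}\big)\right]=\E\!\left[G(\phi_{0,n},d_{0,n})\,D_n\right],
\]
where $D_n:=\exp\bigl(\langle\phi_{0,n},g\rangle-\tfrac12\int f_n g\,dx\bigr)$; the point is that $D_n=\frac{d\mathcal{L}(\phi_{0,n}+f_n)}{d\mathcal{L}(\phi_{0,n})}(\phi_{0,n})$ because $f_n=C_{0,n}g$ is an admissible shift.

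It then remains to pass to the limit along $(n_k)$. One has $\langle\phi_{0,n_k},g\rangle\to\langle\phi_{0,\infty},g\rangle$ in $L^2$ and $\int f_{n_k}g\,dx\to\langle f,C_{0,\infty}^{-1}f\rangle$, so $D_{n_k}\to D_\infty:=\exp(\langle\phi_{0,\infty},C_{0,\infty}^{-1}f\rangle-\tfrac12\langle f,C_{0,\infty}^{-1}f\rangle)$ in probability; since $\E D_n=\E D_\infty=1$, Scheff\'e's lemma upgrades this to $L^1$-convergence, hence uniform integrability of $(D_{n_k})$. By Corollary \ref{CorConvLaw} the pairs $(\phi_{0,n_k},d_{0,n_k})$ converge in law to $P_\infty$ in a space on which $\phi\mapsto\langle\phi,g\rangle$ is continuous; realizing this convergence a.s. on one probability space by Skorohod's representation theorem (where also $D_{n_k}\to D_\infty$ a.s.) and invoking Vitali's convergence theorem, one can pass the limit through the right-hand side above, while the left-hand side converges to $\int G\,dP_\infty^f$ by Corollary \ref{CorConvLaw}(iii). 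This gives $\int G\,dP_\infty^f=\int G(\phi,d)\,D_\infty(\phi)\,dP_\infty(\phi,d)$ for all bounded continuous $G$, i.e. $\frac{dP_\infty^f}{dP_\infty}=D_\infty=\frac{d\mathcal{L}(\phi_{0,\infty}+f)}{d\mathcal{L}(\phi_{0,\infty})}$, which is the announced formula.

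Finally, if $\phi_{0,\infty}$ and $d_{0,\infty}$ were independent then $P_\infty$ and $P_\infty^f$ would be product measures, so the Radon--Nikod\'ym derivative would factor as $\frac{d\mathcal{L}(\phi_{0,\infty}+f)}{d\mathcal{L}(\phi_{0,\infty})}\cdot\frac{d\mathcal{L}(e^{\gamma f/2}\cdot d_{0,\infty})}{d\mathcal{L}(d_{0,\infty})}$; since it depends only on the first coordinate, the second factor would equal $1$ a.s., i.e. $e^{\frac{\gamma}{2}f}\cdot d_{0,\infty}\overset{(d)}{=}d_{0,\infty}$ for every $f\in\mathcal{S}(\R^2)$. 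Choosing $f\geq c>0$ on $[0,1]^2$ forces $e^{\frac{\gamma}{2}f}\cdot d_{0,\infty}\geq e^{\gamma c/2}d_{0,\infty}$ pointwise, while $d_{0,\infty}(x_0,y_0)>0$ a.s. for fixed $x_0\neq y_0$ by the lower H\"older bound in the proof of Theorem \ref{th:Metric}, a contradiction. I expect the main obstacle to be precisely this last limiting step: securing the joint convergence of $(\phi_{0,n_k},d_{0,n_k},D_{n_k})$ together with enough uniform integrability to carry the unbounded densities $D_{n_k}$ through the weak limit; the identity $f_n=C_{0,n}*g$ and the verification that the smooth perturbation $f_n$ acts on $d_{0,n}$ by exact Weyl scaling are routine by comparison.
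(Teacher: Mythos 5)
Your proof is correct and follows the same overall architecture as the paper's (establish the finite-$n$ identity, exploit the convergence in law from Corollary \ref{CorConvLaw}, then pass the Radon--Nikod\'ym densities through the limit), but with two worthwhile variations in execution. Where the paper establishes the finite-$n$ absolute continuity through Lemma \ref{lem:intermediate}(i) (a direct computation of characteristic functionals of $\phi_{0,n}$), you lift the Cameron--Martin theorem to the defining white noise $\xi$ and then push forward under $\xi \mapsto (\phi_{0,n}, d_{0,n})$; your computation that $\langle \xi, \rho_n \rangle = \langle \phi_{0,n}, g \rangle$ and $\norme{\rho_n}_{L^2}^2 = \langle f_n, g \rangle$ shows the density is already $\sigma(\phi_{0,n})$-measurable, so the conditioning you mention is actually automatic; this is arguably more transparent because it exhibits $(\phi_{0,n}+f_n, e^{\gamma f_n/2}\cdot d_{0,n})$ as the literal image of $\xi+\rho_n$ under the same map. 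For the limit on the right-hand side, the paper truncates the unbounded density at a level $M$, uses weak convergence for the truncated functional, and sends $M \to \infty$ via the (easy) uniform integrability estimate; you instead invoke Scheff\'e to upgrade $D_{n_k}\to D_\infty$ to $L^1$ convergence (giving uniform integrability), Skorohod to realize the joint convergence a.s., and Vitali. Both routes are sound, of comparable length, and each gives the required uniform integrability; the one technical point to keep explicit in your route is that $D_n$ is a continuous functional of $\phi_{0,n}$ on $\mathcal{S}'(\R^2)$ with its weak topology (since $g\in\mathcal{S}(\R^2)$), so the joint convergence in law of $(\phi_{0,n_k}, d_{0,n_k}, D_{n_k})$ feeding Skorohod does follow from Corollary \ref{CorConvLaw}(iii) by the continuous mapping theorem. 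Finally, you also supply a short deduction of the non-independence of $d_{0,\infty}$ and $\phi_{0,\infty}$ from the Weyl formula, which the paper states without proof; your argument (the product structure would force $e^{\gamma f/2}\cdot d_{0,\infty}\overset{(d)}{=}d_{0,\infty}$, contradicting the lower H\"older bound) is correct.
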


To prove this proposition, we will use the following lemma, whose proof is postponed to the end of the section.

\begin{Lemma} 
\label{lem:intermediate} 
Fix $g \in \mathcal{S}(\R^2)$ and  define for $n \in \N \cup \lbrace \infty \rbrace$, $f_n:= C_{0,n} * g$. The following assertions hold:
\begin{enumerate}
\item
For every $n \in \N \cup \lbrace \infty \rbrace$, $\phi_{0,n} + f_n$ is absolutely continuous with respect to $\phi_{0,n}$ and \\ $\frac{d \mathcal{L}(\phi_{0,n}+f_n)}{d \mathcal{L}(\phi_{0,n})} = \exp (\langle \phi_{0,n}, g \rangle - \frac{1}{2} \langle f_n , g\rangle)$. 

\item 
$(f_n)_{n \geq 0}$ converges uniformly on $\R^2$ and in $L^2(\R^2)$ to $C_{0,\infty} * g$. 

\item
$\left(\phi_{0,n} \right)_{n \geq 0}$ converges in law to $\phi_{0,\infty}$ with respect to the weak topology on $\mathcal{S}'(\R^2)$.
\end{enumerate}
\end{Lemma}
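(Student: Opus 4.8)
The plan is to establish the three items in turn: (i) is an application of the Cameron--Martin theorem to the underlying white noise $\xi$, (ii) is a soft $L^1$ convolution estimate, and (iii) is a second-moment computation followed by a standard tightness criterion in $\mathcal{S}'(\R^2)$.

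For (i), I would write $\phi_{0,n}=\mathcal{K}_n\xi$, where $\mathcal{K}_n$ is the bounded linear operator sending a standard white noise on $\R^2\times(0,\infty)$ to the field $x\mapsto\int_{\R^2}\int_{2^{-n-1}}^{1}k\!\left(\frac{x-y}{t}\right)t^{-3/2}\,\xi(dy,dt)$, the lower endpoint $2^{-n-1}$ being replaced by $0$ when $n=\infty$. The covariance computation of Section~\ref{Sec:DefField} says precisely that $\mathcal{K}_n\mathcal{K}_n^{*}$ is convolution by $C_{0,n}$. I would then set $h_n:=\mathcal{K}_n^{*}g$; this lies in $L^2(\R^2\times(0,\infty))$ because $\norme{h_n}_{L^2}^2=\langle \mathcal{K}_n\mathcal{K}_n^{*}g,\,g\rangle=\langle C_{0,n}*g,\,g\rangle$, which is finite for $g\in\mathcal{S}(\R^2)$ in every case, including $n=\infty$: indeed $\langle C_{0,\infty}*g,g\rangle=(2\pi)^{-2}\int_{\R^2}\widehat{C}_{0,\infty}(\xi)\,\abs{\widehat g(\xi)}^2\,d\xi<\infty$ since $\widehat{C}_{0,\infty}$ is bounded (remark preceding Proposition~\ref{WeylScaling}) and $\widehat g$ is rapidly decreasing. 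Since $\mathcal{K}_n h_n=C_{0,n}*g=f_n$, the Cameron--Martin theorem gives that the law of $\xi+h_n$ is absolutely continuous with respect to that of $\xi$, with density $\exp\!\big(\langle\xi,h_n\rangle-\tfrac12\norme{h_n}_{L^2}^2\big)$; pushing this forward by $\mathcal{K}_n$, and using that $\langle\xi,h_n\rangle=\langle\xi,\mathcal{K}_n^{*}g\rangle=\langle\phi_{0,n},g\rangle$ is a measurable function of $\phi_{0,n}$, together with $\norme{h_n}_{L^2}^2=\langle f_n,g\rangle$, yields (i).

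For (ii), since $c\geq0$ and $C_{0,\infty}(x)-C_{0,n}(x)=\int_0^{2^{-n-1}}c(x/t)\,\tfrac{dt}{t}$, Fubini gives $\norme{C_{0,\infty}-C_{0,n}}_{L^1(\R^2)}=\norme{c}_{L^1}\int_0^{2^{-n-1}}t\,dt=\tfrac12\norme{c}_{L^1}4^{-n-1}$, which tends to $0$. Writing $f_\infty-f_n=(C_{0,\infty}-C_{0,n})*g$, Young's inequality then yields $\norme{f_\infty-f_n}_\infty\leq\norme{C_{0,\infty}-C_{0,n}}_{L^1}\norme{g}_\infty$ and $\norme{f_\infty-f_n}_{L^2}\leq\norme{C_{0,\infty}-C_{0,n}}_{L^1}\norme{g}_{L^2}$, both converging to $0$. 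For (iii), for fixed $\varphi\in\mathcal{S}(\R^2)$ the variable $\langle\phi_{0,\infty}-\phi_{0,n},\varphi\rangle$ is centered Gaussian with variance $\langle(C_{0,\infty}-C_{0,n})*\varphi,\varphi\rangle\leq\norme{C_{0,\infty}-C_{0,n}}_{L^1}\norme{\varphi}_{L^2}^2\to0$, so $\langle\phi_{0,n},\varphi\rangle\to\langle\phi_{0,\infty},\varphi\rangle$ in $L^2(\Omega)$; in particular all finite-dimensional marginals converge in law, and since each such pairing is $L^2$-convergent hence tight, the standard tightness criterion for $\mathcal{S}'(\R^2)$-valued random variables (or, equivalently, the almost sure convergence of $\sum_{k}\phi_k$ in $\mathcal{S}'$) gives that $(\phi_{0,n})_{n\geq0}$ converges in law to $\phi_{0,\infty}$ for the weak topology.

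The only genuinely delicate point is the admissibility of the Cameron--Martin shift in the limit $n=\infty$, i.e. that $h_\infty=\mathcal{K}_\infty^{*}g$ really belongs to $L^2(\R^2\times(0,1])$; this is exactly where the hypothesis $g\in\mathcal{S}(\R^2)$ enters, through the boundedness of $\widehat{C}_{0,\infty}$, and it is also why one feeds in $f=C_{0,\infty}*g$ with $g$ Schwartz rather than an arbitrary Schwartz $f$ (the two being equivalent since $C_{0,\infty}$ is a bijection on $\mathcal{S}(\R^2)$). Everything else reduces to routine convolution and Gaussian estimates.
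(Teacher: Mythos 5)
Your proof is correct, and for item (i) it takes a genuinely different route from the paper. The paper verifies the density formula directly by comparing the moment generating functionals $\varphi \mapsto \E\bigl(e^{\langle \phi_{0,n}+f_n,\varphi\rangle}\bigr)$ and $\varphi \mapsto \E\bigl(e^{\langle\phi_{0,n},\varphi\rangle}\exp(\langle\phi_{0,n},g\rangle-\tfrac12\langle f_n,g\rangle)\bigr)$, which boils down to a Gaussian completion of squares; this is self-contained and avoids invoking any abstract Cameron--Martin machinery, at the cost of being a verification rather than a derivation. You instead lift the shift to the underlying white noise $\xi$, apply Cameron--Martin there with $h_n=\mathcal{K}_n^*g \in L^2$, and then push forward by $\mathcal{K}_n$ using $\langle\xi,h_n\rangle=\langle\phi_{0,n},g\rangle$ and a Doob--Dynkin step so the density factors through $\phi_{0,n}$. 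Your approach explains why the formula takes the shape it does and makes the role of $g=C_{0,\infty}^{-1}f$ (and the admissibility of the $n=\infty$ case via boundedness of $\widehat{C}_{0,\infty}$) more transparent; you are right to flag this as the only delicate point, and your Parseval argument for $\norme{h_\infty}_{L^2}<\infty$ closes it. For (ii) you use Young's inequality for both the $\infty$- and $2$-norms where the paper computes the $L^1$-norm crudely and uses Parseval for the $L^2$ estimate; both give $O(4^{-n})$ and are equally elementary. For (iii) you show each pairing converges in $L^2(\Omega)$ (hence in law) and invoke a tightness criterion; the paper shows convergence of characteristic functionals and cites a reference on generalized random fields. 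Same content, slightly different packaging.
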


\begin{proof}[Proof of Proposition \ref{WeylScaling}]
Take $f \in \mathcal{S}(\R^2)$, set $g := C_{0,\infty}^{-1} f \in \mathcal{S}(\R^2)$ and define $f_n := C_{0,n} * g$. By using Lemma \ref{lem:intermediate} assertion (i) for $n = \infty$ we have:
$$
D_{\infty}^f := \frac{d \mathcal{L} (\phi_{0,\infty} + f)}{d \mathcal{L} (\phi_{0,\infty})} = \exp \left( \langle \phi_{0,\infty}, g \rangle - \frac{1}{2} \langle f , g \rangle \right).
$$
Using again Lemma \ref{lem:intermediate} assertion (i) but for finite $n$ we have:
$$
\frac{d \mathcal{L} (\phi_{0,n} + f_n)}{d \mathcal{L} (\phi_{0,n})}  = \exp \left( \langle \phi_{0, n}, g \rangle - \frac{1}{2} \langle f_n , g \rangle \right).
$$

Now  we prove that $ \left( \phi_{0,\infty} + f, e^{ \frac{\gamma}{2} f} \cdot d_{0,\infty} \right) $ is absolutely continuous with respect to $\left( \phi_{0,\infty} , d_{0,\infty} \right) $  and that the Radon-Nikod\'ym derivative is given by $D_{\infty}^f$. By introducing the function $G$ which maps a smooth field $\phi$ to the Riemannian metric whose metric tensor is $e^{ \gamma \phi} ds^2$, we have, for every continuous and bounded functional $F$:
\begin{align*}
\E \left( F \left(\phi_{0,n} + f_n,e^{ \frac{\gamma}{2} f_n} \cdot d_{0,n} \right) \right)  & = \E \left( F(\phi_{0,n} + f_n, \mu_n^{-2} G(\phi_{0,n}+f_n) ) \right)    \\
& = \E \left( F \left(\phi_{0,n}, \mu_n^{-2} G(\phi_{0,n}) \right) \frac{d \mathcal{L} (\phi_{0,n} + f_n)}{d \mathcal{L} (\phi_{0,n})}\right)   \\
& =  \E \left( F \left(\phi_{0,n}, d_{0,n}  \right) \exp \left( \langle \phi_{0, n}, g \rangle - \frac{1}{2} \langle f_n , g \rangle \right)\right).
\end{align*}
Now we claim that the left-hand side converges to $\E ( F (\phi_{0,\infty} + f,e^{ \frac{\gamma}{2} f} \cdot d_{0,\infty}))$ and that the right-hand side converges to $\E ( F (\phi_{0,\infty}, d_{0,\infty}  ) D_{\infty}^f )$. 

\smallskip

The first claim follows from the convergence in law from Corollary \ref{CorConvLaw} since $(f_n)_{n \geq 0}$ converges uniformly on $[0,1]^2$ and in $L^2(\R^2)$ to $f$ by Lemma \ref{lem:intermediate} assertion (ii).

\smallskip

The second one comes from the convergence in law of $(\phi_{0,n},d_{0,n})_{n \geq 0}$ and from the convergence of $(f_n)_{n \geq 0}$ to $f$ in $L^2(\R^2)$ (Lemma \ref{lem:intermediate} assertion (ii)). To be precise, for $M > 0$ the map $(\phi,d) \mapsto F(\phi,d) \exp (\langle \phi, g \rangle)  \wedge M$ is continuous and bounded thus $$
\lim_{n \to \infty} \E \left( F(\phi_{0,n},d_{0,n}) \exp (\langle \phi_{0,n}, g \rangle) \wedge M \right) = \E \left( F(\phi_{0,\infty},d_{0,\infty}) \exp (\langle \phi_{0,\infty}, g \rangle) \wedge M \right).
$$
By the triangle inequality and since $F$ is bounded we have
\begin{align*}
& \abs{\E \left( F(\phi_{0,n} , d_{0,n} ) \exp( \langle \phi_{0,n} , g \rangle)  \right) - \E \left( F(\phi_{0,\infty} , d_{0,\infty} ) \exp( \langle \phi_{0,\infty} , g \rangle)  \right) } \\
& \leq \abs{ \E \left( F(\phi_{0,n},d_{0,n}) \exp (\langle \phi_{0,n}, g \rangle) \wedge M \right) - \E \left( F(\phi_{0,\infty},d_{0,\infty}) \exp (\langle \phi_{0,\infty}, g \rangle) \wedge M \right)} \\
& \hphantom{\leq} +  \abs{ \E \left( F(\phi_{0,\infty},d_{0,\infty}) \exp (\langle \phi_{0,\infty}, g \rangle) \wedge M \right) - \E \left( F(\phi_{0,\infty} , d_{0,\infty} ) \exp( \langle \phi_{0,\infty} , g \rangle)  \right) } \\
& \hphantom{\leq} +  C \E \left(  \exp(\langle \phi_{0,n} ,g \rangle) 1_{\exp(\langle \phi_{0,n} ,g \rangle) \geq M } \right).
\end{align*}
Taking the $\limsup$ when $n$ goes to infinity (the first term vanishes) and then letting $M$ goes to infinity (the second term vanishes by uniform integrability), we obtain the result since  $\limsup_{M \to \infty} \limsup_{n \to \infty} \E \left(  \exp(\langle \phi_{0,n} ,g \rangle) 1_{\exp(\langle \phi_{0,n} ,g \rangle) \geq M } \right) = 0$ (easy to check).
\end{proof}

Now, we come back to the proof of Lemma \ref{lem:intermediate}.

\begin{proof}[Proof of Lemma  \ref{lem:intermediate}] We will prove successively the assertions (i), (ii) and (iii).

\smallskip

(i). The proof follows from evaluating characteristic functionals. Define for $\phi \in \mathcal{S}(\R^2)$ the functional $F_{\varphi} : \mathcal{S}'(\R^2) \to \R^{+}$ such that $F_{\varphi}(\phi) = \exp (\langle \phi, \varphi \rangle)$. Using the Gaussian characteristic formula, we have $\E ( F_{\varphi}(\phi_{0,n} + f_n) )  = e^{\langle f_n, \varphi \rangle} \E ( e^{\langle \phi_{0,n} ,  \varphi \rangle}) =  e^{\langle f_n, \varphi \rangle} e^{\frac{1}{2} \mathrm{Var}(\langle \phi_{0,n} , \varphi \rangle)} = e^{\langle f_n, \varphi \rangle} e^{ \frac{1}{2} \langle C_{0,n} * \varphi, \varphi \rangle} $ and similarly, since $C_{0,n} * g = f_n$ and $\langle C_{0,n} * \varphi, g \rangle = \langle \varphi, C_{0,n} *  g \rangle = \langle \varphi, f_n \rangle = \langle f_n, \varphi \rangle$:
\begin{align*}
\E \left( F_{\varphi}(\phi_{0,n}) e^{\langle \phi_{0,n}, g \rangle - \frac{1}{2} \langle f_n, g   \rangle} \right) & = e^{- \frac{1}{2} \langle f_n, g \rangle} \E \left( e^{\langle \phi_{0,n} ,  \varphi + g \rangle} \right) \\
& = e^{- \frac{1}{2} \langle f_n, g \rangle} e^{\frac{1}{2} \langle C_{0,n} * (\varphi + g), \varphi + g \rangle} \\
& = e^{- \frac{1}{2} \langle f_n, g \rangle} e^{\frac{1}{2} \langle C_{0,n} * \varphi, \varphi \rangle + \langle C_{0,n} * \varphi, g \rangle  + \frac{1}{2} \langle C_{0,n} * g , g \rangle} \\
& = \E \left( F_{\varphi}(\phi_{0,n} + f_n) \right).
\end{align*}

(ii). First, we prove that $C_{0,n} * f $ converges uniformly to $C_{0,\infty} * f $ on $\R^2$. Notice that $\norme{C_{0,n} * f - C_{0,\infty} * f}_{\infty} = \norme{C_{n,\infty} * f}_{\infty}  \leq \norme{f}_{\infty} \norme{C_{n,\infty}}_{L^1(\R^2)}$. Furthermore:
$$
\norme{C_{n,\infty}}_{L^1(\R^2)} = \int_{\R^2}  \int_0^{2^{-n}} c \left( \frac{y}{t} \right) \frac{dt}{t}  dy  \leq  \norme{c}_{\infty} \int_{\R^2}  \int_0^{2^{-n}} 1_{y \in B(0,2r_0 t)} \frac{dt}{t}  dy 
\leq C 2^{-2n}.
$$
Now we prove that the convergence holds in $L^2(\R^2)$. By Parseval, we have
$$
\norme{C_{0,n} * g - C_{0,\infty} * g}_{L^2(\R^2)}^2  = \norme{ \hat{C}_{n,\infty} \hat{g}}_{L^2(\R^2)}^2.
$$
Moreover, since  $\hat{C}_{n,\infty}(\xi) = \norme{\xi}^{-2} \int_{0}^{2^{-n} \norme{\xi}} u \hat{k}(u)^2 du$ (see the remark before  \eqref{coupling}  for a proof), we have:
$$  \norme{ \hat{C}_{n,\infty} \hat{g}}_{L^2(\R^2)}^2  =  \int_{\R^2} \left( \norme{\xi}^{-2} \int_{0}^{2^{-n} \norme{\xi}} u \hat{k}(u)^2 du \right)^2 \abs{\hat{g}(\xi)}^2 d\xi  \leq C 2^{-4n} \norme{\hat{k}}_{\infty}^4 \norme{g}_{L^2(\R^2)}^2
$$
and this completes the proof of assertion (ii).

(iii). We want to prove here that $(\phi_{0,n})_{n \geq 0}$ converges in law to $\phi_{0,\infty}$ in $\mathcal{S}'(\R^2)$. To this end, take a function $f \in \mathcal{S}(\R^2)$ and notice that:
$$
\E \left( \langle \phi_{0,n} , f \rangle^2 \right) = \int_{\R^2 \times \R^2} f(x) C_{0,n}(x,y) f(y) dx dy = \frac{1}{(2\pi)^2} \int_{\R^2} \hat{C}_{0,n}(\xi) \abs{\hat{f}(\xi)}^2 d \xi.
$$
Since $\hat{C}_{0,n}(\xi) = \norme{\xi}^{-2} \int_{2^{-n} \norme{\xi}}^{\norme{\xi}} u \hat{k}(u)^2 du$ for $n \in \N \cup \lbrace \infty \rbrace$, by monotone convergence, we get that $\E ( \langle \phi_{0,n} , f \rangle^2 )$ converges to $\E ( \langle \phi_{0,\infty} , f \rangle^2 )$. Thus, we have the convergence of the characteristic functionals: $\E (e^{i \langle \phi_{0,n} , f \rangle}) = e^{- \frac{1}{2} \E \left( \langle \phi_{0,n} , f \rangle^2 \right)} \underset{n \to \infty}{ \rightarrow} e^{- \frac{1}{2} \E \left( \langle \phi_{0,\infty} , f \rangle^2 \right)}$, which is enough to obtain the convergence in law, see for instance \cite{GeneralizedLoi}.

\end{proof}

\section{Small noise regime: proof of Theorem \ref{th:BoundGamma}}

We want to prove here that $\gamma_c > 0$. To do it, we will show by induction that the ratio between large quantiles and small quantiles is uniformly bounded in $n$. Recall the notations $l_n$, $\bar{l}_n$ and $\delta_n$ from Subsection \ref{DefLengths}. Then $\delta_n \nearrow \delta_{\infty}$ when $n$ goes to $\infty$. We start by showing that when $\eps$ and $\gamma$ are small enough, but fixed, then $\delta_{\infty} < \infty$. By our tail estimates, Corollary \ref{LowerTailThin} (with $l_n \geq \mu_n \delta_{\infty}^{-1}$)  and Proposition \ref{LemmeRecu} (with $\delta_n l_n \leq \delta_{\infty} \mu_n$) this implies the tightness of $\log L_{1,1}^{(n)} - \log \mu_n$.

\begin{proof}[Proof of Theorem \ref{th:BoundGamma}]
We proceed according to the following steps:
\begin{enumerate}
\item Relate the ratio $\delta_n$  between small quantiles and high quantiles to $\mathrm{Var} \log L_{1,1}^{(n)}$. 

\item Give an upper bound on $\mathrm{Var} \log L_{1,1}^{(n)}$ using the Efron-Stein inequality. The bound obtained involves a sum indexed by blocks $P \in \mathcal{P}_k$  for $0 \leq k \leq n$. 

\item Get rid of the independent copy term which appears when using the Efron-Stein inequality and see how a small value of $\gamma$ makes the variance smaller.

\item Give an upper bound on diameter and a lower bound on the left-right distance involving the same quantities at a higher scale.

\item Use the tails estimates obtained for the higher scales and control the ratio of the upper bound over the lower bound using $\delta_{n-1}$.

\item Conclude the induction. 
\end{enumerate}

\textbf{Step 1.} To link the quantiles and the variance of a random variable $X$ notice that for $l' \geq l$ we have $2 \mathrm{Var}( X )  = \mathbb{E}( ( X'-X )^2 )  \geq \mathbb{E} ( 1_{X' \geq l'} 1_{X \leq l} (X' -X )^2 ) \geq  \Pro ( X \geq l' ) \Pro ( X \leq l ) ( l' - l )^2$ where $X'$ is an independent copy of $X$.
Together with the RSW estimates obtained in Theorem \ref{RSWQuantiles} (using \eqref{eq:RSW2} with $a' = 3$, $b'=1$, $a =1$, $b=1$ and \eqref{eq:RSW1} with $a'=1$, $b'=1$, $a=1$, $b =1$), we have, for some constant $C_{\eps}$ depending on $\eps$ but not on $n$:
\begin{equation}
\label{Goal}
\frac{\bar{l}_{3,1}^{(n)}(\eps)}{l_{1,3}^{(n)}(\eps)} \leq e^{C_\eps} \frac{\bar{l}_{1,1}^{(n)}(\eps^C/3)}{l_{1,1}^{(n)}(\eps/C)}  \leq e^{C_\eps} \exp \left( \sqrt{\frac{6C}{\eps^{C+1}} \mathrm{Var}\left( \log L_{1,1}^{(n)} \right)} \right).
\end{equation}

\textbf{Step 2.} The idea is then to bound  $\mathrm{Var} ( \log L_{1,1}^{(n)} )$ by a term involving $\delta_{n-1}$ and $\gamma$. To do it, we will use the Efron-Stein inequality, see for instance \cite{ADH} Section 3 where it is used to give an upper bound for the variance of the distance between two points in the model of first passage percolation, which is a similar problem to ours. To this end, note that the variable $L_{1,1}^{(n)}$ can be written as a function of independent fields attached to dyadic blocks: $L_{1,1}^{(n)} = F ( (\phi_{k,P} )_{0 \leq k \leq n,P \in \mathcal{P}_k})$ and only the blocks that intersect $[0,1]^2$ contribute. For $P \in \mathcal{P}_k$, we denote by $L_{1,1}^{(n),P}$ the length obtained by replacing the block field $\phi_{k,P}$ by an independent copy $\phi_{k,P}'$ and keeping all other block fields fixed. The Efron-Stein inequality gives:
\begin{equation}
\label{EfronStein}
\mathrm{Var} \log L_{1,1}^{(n)} \leq \sum_{k=0}^{n} \sum_{P \in \mathcal{P}_k} \mathbb{E} \left( \left( \log L_{1,1}^{(n),P} - \log L_{1,1}^{(n)} \right)_{+}^2 \right).
\end{equation}

\textbf{Step 3.} We then focus on the term in the summation. For $0 \leq k \leq n$, $P \in \mathcal{P}_k$, 
\begin{align*}
L_{1,1}^{(n),P}  & \leq  \int_{\pi_n} \left( e^{\frac{\gamma}{2} \left( \phi_{0,n} - \phi_{k,P} + \phi_{k,P}' \right)} - e^{\frac{\gamma}{2} \phi_{0,n}}  \right)ds  + L_{1,1}^{(n)} \\
& \leq \int_{\pi_n} e^{ \frac{\gamma}{2} \phi_{0,n}} \left( e^{\frac{\gamma}{2} \left(  - \phi_{k,P} + \phi_{k,P}' \right)} - 1 \right)_{+} 1_{\pi_{n}(s) \in P^{2r_0}} ds  + L_{1,1}^{(n)} \\ 
& \leq \gamma  \int_{\pi_n} e^{\frac{\gamma}{2}  \phi_{0,n}} e^{(1+\frac{\gamma}{2}) \left( - \phi_{k,P} + \phi_{k,P}' \right)_{+}}  1_{\pi_n (s) \in P^{2r_0}}ds  + L_{1,1}^{(n)} 
\end{align*}
where $P^{2r_0} := P + B(0,2^{-k} \cdot 2r_0)$ and where we used in the last inequality the bound 
$$
(e^{\gamma x} - 1)_{+} \leq e^{\gamma x_{+}} -1 = \sum_{k \geq 1} \frac{(\gamma \ x_{+})^{k}}{k!} \leq \gamma \ x_{+} \sum_{k \geq 1} \frac{(\gamma \ x_{+})^{k-1}}{(k-1)!} \leq \gamma e^{x_{+}} e^{\gamma x_{+}}.
$$
By setting $S_{k,P} := \sup_{P^{2r_0}} \abs{\phi_{k,P}} +\sup_{P^{2r_0}} \abs{\phi_{k,P}'}$, this gives, using $\log (1 + x) \leq x$: 
\begin{align*}
\mathbb{E} ( ( \log L_{1,1}^{(n),P} - \log L_{1,1}^{(n)} )_{+}^2 ) & \leq \gamma^2 \mathbb{E} ( ( L_{1,1}^{(n)} )^{-2} ( \int_{\pi_n}  e^{\frac{\gamma}{2} \phi_{0,n}} e^{(1+\frac{\gamma}{2}) (  - \phi_{k,P} + \phi_{k,P}' )_{+}}  1_{\pi_n (s) \in P^{2r_0}}ds  )^2 ) \\
& \leq \gamma^2   \mathbb{E} ( e^{ C S_{k,P} } ( L_{1,1}^{(n)} )^{-2} ( \int_{\pi_n}  e^{\frac{\gamma}{2} \phi_{0,n} }  1_{\pi_n (s) \in P^{2r_0}}ds  )^2 ) \\
\end{align*}
which finally gives: 
\begin{equation}
\label{EfronBound}
\mathrm{Var} \log L_{1,1}^{(n)} \leq \gamma^2 \sum_{k=0}^{n} \sum_{P \in \mathcal{P}_k} \mathbb{E} \left( \frac{e^{C S_{k,P}}}{\left(L_{1,1}^{(n)} \right)^2} \left( \int_{\pi_{n}} e^{\frac{\gamma}{2} \phi_{0,n}} 1_{\pi_n(s) \in P^{2r_0}} ds \right)^2 \right).
\end{equation}
Notice that for $k = 0$ the term in the summation corresponds to $\E (e^{ C S_{0,[0,1]^2} } )$. 

\medskip

\textbf{Step 4.} We focus now on the case where $k \in  \lbrace 1, \dots ,  n \rbrace$. Since $ \E ( e^{C S_{k,P}} )^{1/2} $ is independent of $k$ and $P$ by scaling and finite by Fernique, we have by Cauchy-Schwarz:
\begin{align*}
& \sum_{P \in \mathcal{P}_k} \mathbb{E} \left( e^{C S_{k,P}}    \left( L_{1,1}^{(n)} \right)^{-2} \left( \int_{\pi_{n}} e^{\frac{\gamma}{2} \phi_{0,n}} 1_{\pi_n(s) \in P^{2r_0}} ds \right)^2 \right) \\ 
& \leq  \sum_{P \in \mathcal{P}_k} \mathbb{E} \left( e^{ C S_{k,P}}  \right)^{1/2}   \E \left( \left( L_{1,1}^{(n)} \right)^{-4} \left( \int_{\pi_{n}} e^{\frac{\gamma}{2} \phi_{0,n}} 1_{\pi_n(s) \in P^{2r_0}} ds \right)^4 \right)^{1/2} \\
 & \leq C  \sum_{P \in \mathcal{P}_k}  \E \left( \left( L_{1,1}^{(n)} \right)^{-4} \left( \int_{\pi_{n}} e^{\frac{\gamma}{2} \phi_{0,n}} 1_{\pi_n(s) \in P^{2r_0}} ds \right)^4 \right)^{1/2}.
\end{align*}

\medskip

\textbf{Step 4. (a).} Upper bound. Notice that for $P \in \mathcal{P}_k$, $\int_{\pi_{n}} e^{\frac{\gamma}{2} \phi_{0,n}} 1_{\pi_n(s) \in P^{2r_0}} ds \leq 9 \max_{Q \sim P} \mathrm{Diam} (Q, e^{\gamma \phi_{0,n}} ds^2)$. Indeed, $P^{2 r_0}$ is included in the union of $P$ and its eight neighboring squares (see Figure \ref{fig:9Diam}). Thus, the length of the parts of $\pi_n$ included in $P^{2r_0}$ is less than the diameter of this union, which itself is less than the sum of the diameter of all these squares. 

\begin{figure}[h!]
\centering
\includegraphics[scale=0.3]{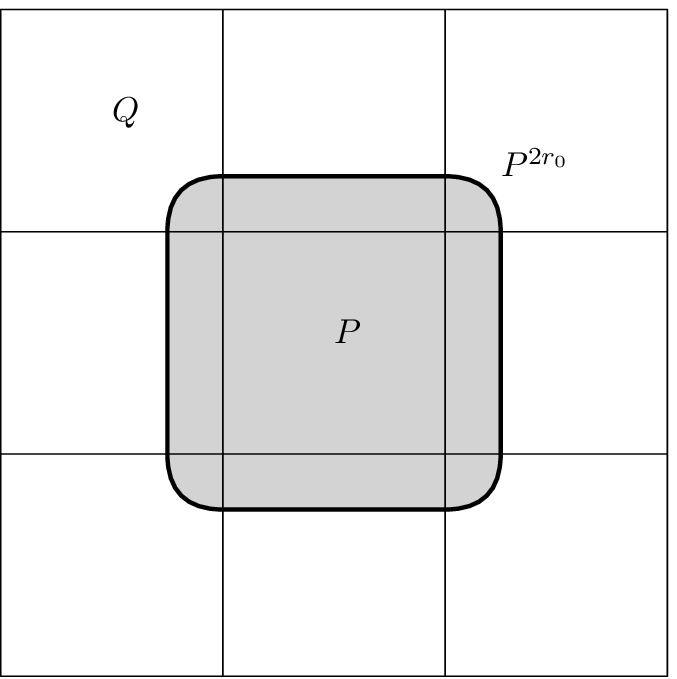}
\caption{$2r_{0}$-enlargement of $P$ with its neighbors}
\label{fig:9Diam}
\end{figure}

Let $N_{k}$ denote the number of dyadic squares of size $2^{-k}$ visited by $\pi_n$. Since the number of blocks $P^{2 r_0}$ (with $P \in \mathcal{P}_k$) visited by $\pi_n$ is less than $9 N_k$, a.s.
$$
\sum_{P \in \mathcal{P}_k}  \left( \int_{\pi_{n}} e^{\frac{\gamma}{2} \phi_{0,n}} 1_{\pi_n(s) \in P^{2r_0}} ds \right)^4  \leq C N_k \underset{ P \in \mathcal{P}_k}{\sup} \mathrm{Diam}\left(P, e^{\gamma \phi_{0,n}} ds^2 \right)^4
$$
and by decoupling the first $k-1$ scales of the field $\phi_{0,n} = \phi_{0,k-1} + \phi_{k,n}$, a.s.
\begin{equation}
\label{eq:upperbound}
\sum_{P \in \mathcal{P}_k}  \left( \int_{\pi_{n}} e^{\frac{\gamma}{2} \phi_{0,n}} 1_{\pi_n(s) \in P^{2r_0}} ds \right)^4  \leq C e^{ 2\gamma \sup_{[0,1]^2} \phi_{0,k-1}} N_k \underset{ P \in \mathcal{P}_k}{\sup} \mathrm{Diam}\left(P, e^{\gamma \phi_{k,n}} ds^2 \right)^4.
\end{equation}

\medskip

\textbf{Step 4. (b).} Lower bound. If $\tilde{N}_k$ denotes the maximal number of disjoint left-right rectangle crossings of size $2^{-k} (1,3)$ for  $\pi_n$, among such rectangles filling vertically and horizontally $[0,1]^2$, spaced by $2^{-k}$ (this set is denoted by $I_k$ and defined in \eqref{Def:Ik}), we have $\tilde{N}_k \geq c N_k$ and $\tilde{N}_k \geq c 2^k$ for a small constant $c > 0$. Indeed, if a dyadic square is visited, one of the four rectangles around it is crossed (see Figure \ref{fig:CrossNumber}). Considering a fraction of them gives the first claim. It is easy to check the second claim by noticing that $\pi_n$ crosses each rectangle of size $2^{-k} \times 1$ filling $[0,1]^2$.

\begin{figure}[h!]
\centering
\includegraphics[scale=0.5]{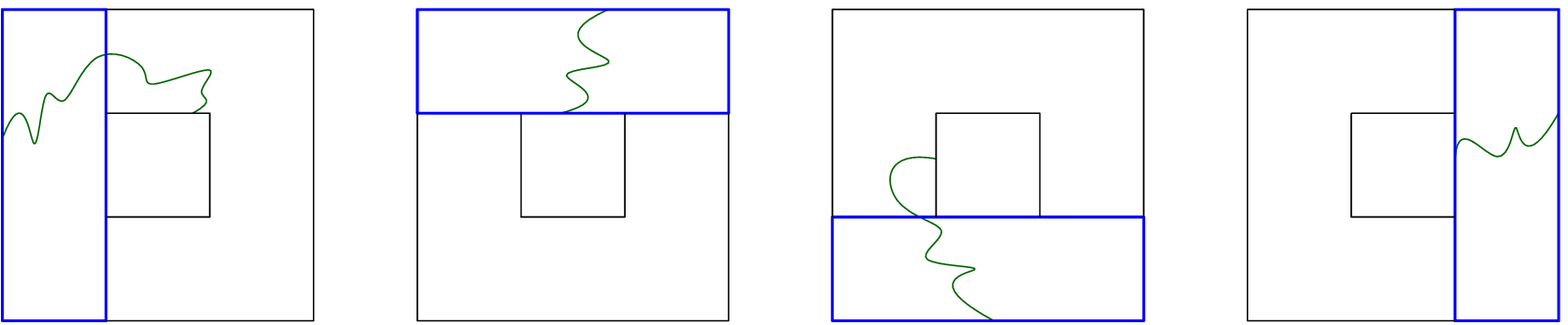}
\caption{Square visited and associated rectangle crossings}
\label{fig:CrossNumber}
\end{figure}

By decoupling the first $k-1$ scales, we get $L_{1,1}^{(n)} \geq c N_k e^{\frac{\gamma}{2} \inf_{[0,1]^2} \phi_{0,k-1}} \inf_{P \in I_k} L^{(k,n)}(P)$ as well as  $L_{1,1}^{(n)} \geq c 2^k e^{\frac{\gamma}{2} \inf_{[0,1]^2} \phi_{0,k-1}} \inf_{P \in I_k} L^{(k,n)}(P)$ hence:
\begin{equation}
\label{eq:lowerbound}
\left( L_{1,1}^{(n)} \right)^4 \geq c 2^{3 k} N_k e^{ 2\gamma \underset{[0,1]^2}{\inf} \phi_{0,k-1}} \left( \inf_{P \in I_k} L^{(k,n)}(P) \right)^4.
\end{equation}

\textbf{Step 5.} Moment estimates and inductive inequality. By concavity of the map $x \mapsto \sqrt{x}$ we have:
\begin{align*}
& \sum_{P \in \mathcal{P}_k}  \E \left( \left(L_{1,1}^{(n)} \right)^{- 4} \left( \int_{\pi_{n}} e^{\frac{\gamma}{2} \phi_{0,n}} 1_{\pi_n(s) \in P^{2r_0}} ds \right)^4 \right)^{1/2}  \\
&  \leq \abs{\mathcal{P}_k}^{1/2}   \E \left(\left(L_{1,1}^{(n)} \right)^{- 4} \sum_{P \in \mathcal{P}_k}  \left( \int_{\pi_{n}} e^{\frac{\gamma}{2} \phi_{0,n}} 1_{\pi_n(s) \in P^{2r_0}} ds \right)^4 \right)^{1/2}.
\end{align*}
Gathering, \eqref{eq:upperbound} and \eqref{eq:lowerbound}, 
\begin{align*}
& \left( L_{1,1}^{(n)} \right)^{-4} \sum_{P \in \mathcal{P}_k}  \left( \int_{\pi_{n}} e^{\frac{\gamma}{2} \phi_{0,n}} 1_{\pi_n(s) \in P^{2r_0}} ds \right)^4 \\
& \leq C 2^{-3k}  e^{ 4 \gamma \sup_{[0,1]^2} \abs{\phi_{0,k-1}}} \underset{ P \in \mathcal{P}_k}{\sup} \mathrm{Diam}\left(P, e^{\gamma \phi_{k,n}} ds^2 \right)^4 \left( \inf_{P \in I_k} L^{(k,n)}(P) \right)^{-4}.
\end{align*}
Since $\abs{\mathcal{P}_k} = 4^k$, by independence between scales,  
\begin{align*}
& \sum_{P \in \mathcal{P}_k}  \E \left( \left(L_{1,1}^{(n)} \right)^{- 4} \left( \int_{\pi_{n}} e^{\frac{\gamma}{2} \phi_{0,n}} 1_{\pi_n(s) \in P^{2r_0}} ds \right)^4 \right)^{1/2} \\
& \leq C 2^{- \frac{1}{2} k} \mathbb{E} \left( e^{4 \gamma \underset{[0,1]^2}{\sup} \abs{\phi_{0,k-1}}} \right)^{1/2} \mathbb{E}\left( \underset{ P \in \mathcal{P}_k}{\sup} \mathrm{Diam}\left(P, e^{\gamma \phi_{k,n}} ds^2 \right)^4 \left(   \inf_{P \in I_k} L^{(k,n)}(P) \right)^{-4}\right)^{1/2}.
\end{align*}
Using Lemma \ref{ExpoMoment} to control the exponential moment,  the first term is bounded by $2^{4\gamma k} e^{C \sqrt{k}}$. For the second term, notice that the product inside the expectation is between an increasing and a decreasing function of the field. Hence, by the positive association property (Theorem \ref{PosAssoc}):
\begin{align*}
& \E \left( \underset{ P \in \mathcal{P}_k}{\sup} \mathrm{Diam}  \left(P, e^{\gamma \phi_{k,n}} ds^2 \right)^4 \left(   \inf_{P \in I_k} L^{(k,n)}(P) \right)^{-4}\right)^{1/2} \\
& \leq \mathbb{E}\left( \underset{ P \in \mathcal{P}_k}{\sup} \mathrm{Diam}\left(P, e^{\gamma \phi_{k,n}} ds^2 \right)^4 \right)^{1/2} \E \left( \left( \inf_{P \in I_k} L^{(k,n)}(P) \right)^{-4}\right)^{1/2}.
\end{align*}
By scaling, the field involved is $\phi_{0,n-k}$. We use our estimates for the diameters, Proposition \ref{SupTails}, for the first term and Corollary \ref{LowerTailThin} for the second one. More precisely, by standard inequality between expected value of positive random variable and integration of tail estimates we have:
$$
\mathbb{E}\left( \underset{ P \in \mathcal{P}_k}{\sup} \mathrm{Diam}\left(P, e^{\gamma \phi_{k,n}} ds^2 \right)^4 \right)^{1/2}  \leq 2^{-2k} \delta_{n-k}^2 l_{n-k}^2 e^{c \gamma k} \leq \delta_{n-1}^2 2^{-2k}  l_{n-k}^2 e^{c \gamma k}
$$
and
$$
\E \left( \left( \inf_{P \in I_k} L^{(k,n)}(P) \right)^{-4}\right)^{1/2} \leq 2^{2k} l_{n-k}^{-2} e^{C \sqrt{k}}.
$$
Altogether, we get  for $1 \leq k \leq  n$:
\begin{equation}
\label{eq:BoundStep5}
\sum_{P \in \mathcal{P}_k} \mathbb{E} \left( \frac{e^{C S_{k,P}}}{L_{1,1}^{(n) 2}} \left( \int_{\pi_{n}} e^{\frac{\gamma}{2} \phi_{0,n}} 1_{\pi_n(s) \in P^{2r_0}} ds \right)^2 \right) \leq  \delta_{n-1}^2 2^{- \frac{1}{2} k} e^{c \gamma k} e^{C \sqrt{k}}
\end{equation}
for some constant $c > 0$. 

\medskip

\textbf{Step 6.} Combining \eqref{EfronBound} and \eqref{eq:BoundStep5} we get
\begin{equation}
\label{eq:SeriesBound}
\mathrm{Var} \log L_{1,1}^{(n)} \leq \gamma^2 \delta_{n-1}^2 \sum_{k=0}^n  2^{- \frac{1}{2} k} e^{c \gamma k} e^{C \sqrt{k}} \leq \gamma^2 \delta_{n-1}^2 \sum_{k=0}^{\infty}  2^{- \frac{1}{2} k} e^{c \gamma k} e^{C \sqrt{k}}.
\end{equation}

 Hence for $\gamma$ small enough the series in the right-hand side of \eqref{eq:SeriesBound} converges and we have the bound $\mathrm{Var} \log L_{1,1}^{(n)} \leq  \gamma^2 \left(C + C  \delta_{n-1}^2  \right)$. Coming back to \eqref{Goal}, if $\delta_{n-1} <  M$ then $\delta_{n} < e^{C_{\eps}} \exp (C \gamma \delta_{n-1}) < e^{C_{\eps}} \exp (C \gamma M) $. Hence taking $M > e^{C_{\eps}}$ and $\gamma$ small enough so that $e^{C_{\eps}} \exp (C \gamma M) < M$ shows that there exists $\gamma_0$  (which depends on $\eps$) such that if $\gamma < \gamma_0$, $\delta_{\infty} < \infty$.  Finally, we can conclude that $\gamma_c >0$  by use of Corollary \ref{LowerTailThin} and Proposition \ref{LemmeRecu}.
\end{proof}

\section{Independence of $\gamma_c$ with respect to $k$: proof of Theorem \ref{th:Indep}}

We want to prove that $\gamma_c$ is independent of $k$ i.e. if we have two bump functions $k_1$, $k_2$ then $\gamma_c(k_1) = \gamma_c(k_2)$. We will prove that if $\log L_{1,1}(\phi_{0,n}^1)  - \log \mu_n^1$ is tight then $\log L_{1,1}(\phi_{0,n}^2) - \log \mu_n^2$ is also tight, where the superscripts corresponds to the bump function $k_i$ for $i \in \lbrace 1,2 \rbrace$.  The proof presented here relies on the assumption that $\hat{k}_1$ and $\hat{k}_2$ have similar tails. 

\medskip

\textbf{Main lines of the proof.} The main idea of the proof is to couple $\phi_{0,n}^1$ and $\phi_{0,n}^2$ up to some additive noises that don't affect too much the lengths. To control the perturbation due to the noises, note that if $\delta \phi$ is a low frequency noise, the length $L_{1,1}(\phi)$ is comparable to the length $L_{1,1}(\phi + \delta \phi)$  by a uniform bound a.s.: 
\begin{equation}
\label{IneLowFreq}
e^{\inf_{[0,1]^2} \delta \phi} L_{1,1}(\phi) \leq L_{1,1}(\phi + \delta \phi)  \leq e^{\sup_{[0,1]^2} \delta \phi} L_{1,1}(\phi) 
\end{equation}
and if $\delta \phi$ is a high frequency noise with bounded pointwise variance we have a one-sided bound on high and low quantiles given by the following lemma.
\begin{Lemma}
\label{Inequality} If $\Phi$ is a continuous field and $\delta \Phi$ is an independent continuous centered Gaussian field with variance bounded by $C$ then 
\begin{enumerate}
\item
$l_{1,1}^{\Phi + \delta \Phi} (\eps) \leq \eps^{-1} e^{\frac{1}{2} C} l_{1,1}^{\Phi}(2\eps)$,

\item 
$\bar{l}_{1,1}^{\Phi + \delta \Phi}(2\eps) \leq \eps^{-1} e^{\frac{1}{2} C}  \bar{l}_{1,1}^{\Phi}(\eps)$.
\end{enumerate} 
\end{Lemma}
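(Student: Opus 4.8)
The plan is to bound $L_{1,1}^{\Phi+\delta\Phi}$ from above by using a geodesic for the unperturbed metric $e^{\Phi}ds^2$ as a competitor path, and to control the multiplicative cost of adding $\delta\Phi$ along that path by a conditional first-moment estimate, exploiting that $\delta\Phi$ is independent of that geodesic. First I would fix a left--right geodesic $\pi$ of $[0,1]^2$ for $e^{\Phi}ds^2$ (it exists by Hopf--Rinow; a measurable near-minimizer would do just as well) and let $\mu$ be the measure carried by $\pi$ with $d\mu=e^{\Phi}ds$, so that $\mu(\pi)=L_{1,1}^{\Phi}$ and, since $\pi$ is an admissible path for the perturbed metric,
\[
L_{1,1}^{\Phi+\delta\Phi}\ \le\ \int_{\pi}e^{\delta\Phi}\,d\mu .
\]
The structural point is that $\pi$ and $\mu$ are measurable functions of $\Phi$ alone, hence independent of $\delta\Phi$; therefore, conditionally on $\Phi$, the field $\delta\Phi$ is still a centered Gaussian field with pointwise variance at most $C$, so $\E\big(e^{\delta\Phi(x)}\mid\Phi\big)\le e^{C/2}$ for every $x$, and Fubini--Tonelli gives $\E\big(\int_{\pi}e^{\delta\Phi}\,d\mu\mid\Phi\big)\le e^{C/2}L_{1,1}^{\Phi}$. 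By Markov's inequality, for every $\eps>0$, conditionally on $\Phi$,
\[
\Pro\left(L_{1,1}^{\Phi+\delta\Phi}\ \ge\ \eps^{-1}e^{C/2}L_{1,1}^{\Phi}\ \mid\ \Phi\right)\ \le\ \eps .
\]
Only this first-moment bound is needed; Lemma \ref{MomentMethod} would give Gaussian tails here but is not required.

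With this conditional estimate in hand, assertion (i) follows by elementary quantile bookkeeping. Put $l:=l_{1,1}^{\Phi}(2\eps)$, so that $\Pro(L_{1,1}^{\Phi}\le l)\ge 2\eps$. On the event $\{L_{1,1}^{\Phi}\le l\}$ one has $\eps^{-1}e^{C/2}l\ge\eps^{-1}e^{C/2}L_{1,1}^{\Phi}$, hence the displayed bound gives $\Pro(L_{1,1}^{\Phi+\delta\Phi}\le\eps^{-1}e^{C/2}l\mid\Phi)\ge 1-\eps$ there; integrating,
\[
\Pro\left(L_{1,1}^{\Phi+\delta\Phi}\le\eps^{-1}e^{C/2}l\right)\ \ge\ (1-\eps)\,\Pro(L_{1,1}^{\Phi}\le l)\ \ge\ 2\eps(1-\eps)\ >\ \eps ,
\]
the last inequality because $\eps<1/2$; by definition of the quantile this is exactly $l_{1,1}^{\Phi+\delta\Phi}(\eps)\le\eps^{-1}e^{C/2}l_{1,1}^{\Phi}(2\eps)$. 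For (ii), put $\bar l:=\bar l_{1,1}^{\Phi}(\eps)$, so $\Pro(L_{1,1}^{\Phi}>\bar l)\le\eps$; splitting according to $\{L_{1,1}^{\Phi}\le\bar l\}$ and its complement and applying the conditional bound on the first event (where again $\eps^{-1}e^{C/2}\bar l\ge\eps^{-1}e^{C/2}L_{1,1}^{\Phi}$) yields
\[
\Pro\left(L_{1,1}^{\Phi+\delta\Phi}>\eps^{-1}e^{C/2}\bar l\right)\ \le\ \Pro(L_{1,1}^{\Phi}>\bar l)+\eps\,\Pro(L_{1,1}^{\Phi}\le\bar l)\ <\ 2\eps ,
\]
so $\Pro(L_{1,1}^{\Phi+\delta\Phi}\le\eps^{-1}e^{C/2}\bar l)>1-2\eps$ and therefore $\bar l_{1,1}^{\Phi+\delta\Phi}(2\eps)=l_{1,1}^{\Phi+\delta\Phi}(1-2\eps)\le\eps^{-1}e^{C/2}\bar l_{1,1}^{\Phi}(\eps)$.

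The argument is short and I do not foresee a genuine obstacle; the two points that deserve a line of care are the independence of $\delta\Phi$ from the competitor geodesic (this is exactly what makes the conditional Markov step legitimate) and the handling of the strict versus non-strict inequalities hidden in the definition $l_{1,1}^{\,\cdot}(p)=\inf\{l:\Pro(L\le l)>p\}$, which is resolved by right-continuity of distribution functions (or, if one prefers, by inserting an $\eta>0$ and letting $\eta\downarrow0$). If one wishes to bypass even the use of Hopf--Rinow, it suffices to take a $\Phi$-measurable path of $\Phi$-length at most $L_{1,1}^{\Phi}(1+\eta)$ and let $\eta\downarrow0$, which costs nothing in the final bound.
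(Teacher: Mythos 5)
Your proof is correct and follows essentially the same route as the paper: fix a left--right $\Phi$-geodesic $\pi$ as a competitor path, exploit the independence of $\delta\Phi$ from $\pi$ (and $\Phi$) to get the conditional first-moment bound $\E\bigl(\int_{\pi}e^{\Phi+\delta\Phi}\,ds\mid\Phi\bigr)\le e^{C/2}L_{1,1}^{\Phi}$, apply Markov, and then do quantile bookkeeping. The paper phrases the moment step as a Chebychev/Markov bound $\Pro\bigl(L_{1,1}^{\Phi+\delta\Phi}>e^{s}L_{1,1}^{\Phi}\bigr)\le e^{C/2-s}$ and works backward from the quantile of $\Phi+\delta\Phi$, whereas you phrase it conditionally and work forward from the quantile of $\Phi$; these are the same argument up to a change of variable $\eps=e^{C/2-s}$, and both produce identical bounds.
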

\begin{proof}
To bound from above $L_{1,1}^{\Phi + \delta \Phi}$, we take a geodesic for $\Phi$ and use a moment estimate on $\delta \Phi$.  We start with the lower tail. For $s > 0$ we have 
\begin{align*}
\Pro \left( L_{1,1}^{\Phi} \leq l_{1,1}^{\Phi + \delta \Phi}(\eps) e^{-s}  \right) & \leq \Pro \left( L_{1,1}^{\Phi + \delta \Phi} \leq e^s L_{1,1}^{\Phi}, L_{1,1}^{\Phi} \leq l_{1,1}^{\Phi + \delta \Phi}(\eps) e^{-s} \right) + \Pro \left( L_{1,1}^{\Phi + \delta \Phi} > e^s L_{1,1}^{\Phi}\right) \\
& \leq \Pro \left( L_{1,1}^{\Phi + \delta \Phi} \leq l_{1,1}^{\Phi + \delta \Phi}(\eps)  \right)  +  \Pro \left(  \int_{\pi^{\Phi}} e^{\Phi + \delta \Phi} ds > e^s L_{1,1}^{\Phi}\right) \\
& \leq \eps + e^{ \frac{1}{2} \sup \mathrm{Var}( \delta \Phi) - s}
\end{align*}
where we used Chebychev inequality and the independence between the field $\Phi$ and $\delta \Phi$ in the last inequality. Taking then $s = \frac{1}{2} \sup \mathrm{Var} (\delta \Phi) - \log \eps$ completes the proof of (i). For the upper tails taking the same $s$ gives 
\begin{align*}
\Pro \left( L_{1,1}^{\Phi + \delta \Phi} \geq \bar{l}_{1,1}^{\Phi}(\eps) e^s  \right)  & \leq \Pro \left( L_{1,1}^{\Phi + \delta \Phi} \geq \bar{l}_{1,1}^{\Phi}(\eps) e^s,   \bar{l}_{1,1}^{\Phi}(\eps) \geq L_{1,1}^{\Phi}  \right) +  \Pro \left( L_{1,1}^{\Phi} \geq  \bar{l}_{1,1}^{\Phi}(\eps) \right)  \\
& \leq \Pro \left(L_{1,1}^{\Phi + \delta \Phi}  \geq e^s L_{1,1}^{\Phi}\right)   +  \eps  \\
&  \leq 2 \eps
\end{align*}
which concludes the proof of the lemma.
\end{proof}

Note that if $\delta \phi$ is a high frequency noise, with scale dependence $2^{-n}$, say an approximation of $4^n$ i.i.d. standard Gaussian variables, its supremum is of order $\sqrt{n}$ and the inequality \eqref{IneLowFreq} is inappropriate compared to Lemma \ref{Inequality} which gives a bound of order one, but one-sided. However, for a low frequency noise $\delta \phi$, independent of $n$, the bound \eqref{IneLowFreq} gives two-sided bounds on quantiles.

\medskip

If $(X_n)$ and $(Y_n)$ denote two sequences of positive random variables, with positive density with respect to the Lebesgue measure on $(0,\infty)$, we write $X_n \lesssim Y_n$ if there exists a constant $C$ independent of $n$ such that for every $\eps > 0$ small, there exists $C_{\eps}$, independent of $n$, such that $F_{X_n}^{-1}(\eps/C) \leq  C_{\eps} F_{Y_{n}}^{-1}(\eps)$ and $F_{X_n}^{-1}(1-C \eps) \leq C_{\eps} F_{Y_n}^{-1}(1- \eps)$, where $F_{X}(x) := \Pro (X \leq x)$ for a random variable $X$. A direct corollary of  Lemma \ref{Inequality} is the following: if $(\phi_n)_{n \geq 0}$ and $(\delta \phi_n)_{n \geq 0}$ are two sequences of independent centered continuous Gaussian fields,  and that the pointwise variance of $\delta \phi_n$ is bounded, then $L_{1,1}(\phi_n + \delta \phi_n) \lesssim L_{1,1}(\phi_n)$. Similarly, a direct consequence of \eqref{IneLowFreq} is that, under the same assumptions for $(\phi_n)_{n \geq 0}$, if $\psi$ is a continuous centered Gaussian field, then $ L_{1,1} (\phi_n) \lesssim L_{1,1} (\phi_n + \psi) \lesssim L_{1,1} (\phi_n)$.

\medskip

Now that the notations and the key tools are settled, let us explain the main idea of the proof. Let us assume for now that we have the following couplings, for a fixed $k$:

\begin{enumerate}
\item 
$\left( \phi_{0,n}^1(x) + \delta_n^1(x) \right)_{x \in \R^2} \overset{(d)}{=}  \left( \phi_{0,n}^2(x) + \delta_n^2(x) \right)_{x \in \R^2}$

\item $\left( \phi_{n,n+k}^{1}(x) + \psi(x) \right)_{x \in \R^2} \overset{(d)}{=}  \left( \delta_n^1(x)  + r_n^1(x) \right)_{x \in \R^2}$

\item $\left( \phi_{n,n+k}^{2}(x) + \psi(x) \right)_{x \in \R^2} \overset{(d)}{=}  \left( \delta_n^2(x)  + r_n^2(x) \right)_{x \in \R^2}$
\end{enumerate}
where fields in the same side of an equality are independent and all fields are centered, continuous and Gaussian. Let us also assume that  $\psi$ is a fixed continuous Gaussian field, independent of $n$ and thus a low frequency noise. Notice that if such couplings hold, it is clear that the $\delta_n^i$'s and $r_n^i$'s have bounded pointwise variance since this is the case for the fields in the left-hand sides of (ii) and (iii). We then have, since $\psi$ is a low frequency noise, by using (ii) and Lemma \ref{Inequality}:
$$
L_{1,1} \left( \phi_{0,n+k}^1 \right)  \lesssim L_{1,1} \left( \phi_{0,n}^1 + \delta_n^1 + r_n^1 \right)   \lesssim  L_{1,1} \left( \phi_{0,n}^1 + \delta_n^1 \right)  \lesssim  L_{1,1} \left( \phi_{0,n}^1 \right)
$$
which gives, using (i):
\begin{equation}
\label{eq:IneApprox}
L_{1,1} \left( \phi_{0,n+k}^1 \right) \lesssim L_{1,1}(\phi_{0,n}^2 + \delta_n^2) \lesssim L_{1,1}(\phi_{0,n}^1).
\end{equation}
If we suppose that $\log L_{1,1} ( \phi_{0,n}^1 ) - \log \mu_n^1$ is tight,  then $( ( \mu_n^1)^{-1} \mu_{n+k}^1 )_{n \geq 0}$  is bounded by Lemma \ref{BoundsScales}. But then, using \eqref{eq:IneApprox}, $\log L_{1,1} ( \phi_{0,n}^2+\delta_n^2 ) - \log \mu_n^1$ is tight. Furthermore, this implies the tightness of $ \log L_{1,1} ( \phi_{0,n}^2) - \log \mu_n^1$ since
$$
  L_{1,1} \left( \phi_{0,n+k}^2 + \delta_{n+k}^2 \right)   \lesssim   L_{1,1} \left( \phi_{0,n+k}^2 \right) \lesssim L_{1,1} \left( \phi_{0,n}^2 + \delta_n^2 \right).
$$
Finally, the tightness of $ \log L_{1,1}  ( \phi_{0,n}^2 ) - \log \mu_n^2$ follows from the fact that if $X$ is random variable and $\mu(X)$ is its median, then for every $a \in \R$, $\mu(X + a) =  \mu(X) + a$. This concludes the proof up to the results we claimed on the couplings.

\medskip

All the fields in the couplings will be defined by using the following standard result:

\begin{Lemma}
\label{lem:exisfield}
If $f$ is a continuous, symmetric and nonnegative function on $\R^d$ such that $\norme{\xi} f(\xi) \in L^1(\R^d)$, then one can define a continuous stationary centered Gaussian field with covariance given by:
$$
C(x,y)  := \frac{1}{(2\pi)^2} \int_{\R^d} f(\xi) e^{i (x-y) \cdot \xi} d\xi.
$$
\end{Lemma}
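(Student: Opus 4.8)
The plan is to construct the field directly from its prescribed covariance: first realize it as an abstract centered Gaussian process via Kolmogorov's extension theorem, then produce an a.s.\ continuous modification by Kolmogorov's continuity criterion, the relevant increment bound being exactly what the hypothesis $\norme{\xi} f(\xi)\in L^1(\R^d)$ provides. Stationarity will be automatic from the fact that a Gaussian field is determined in law by its covariance and $C(x,y)$ depends only on $x-y$.

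First I would check that $f\in L^1(\R^d)$: for $\norme{\xi}\geq 1$ one has $f(\xi)\leq \norme{\xi}\, f(\xi)$, so $f$ is integrable at infinity, while on the unit ball $f$ is integrable because it is continuous, hence bounded there. Thus $f(\xi)\,d\xi$ is a finite, symmetric, nonnegative measure, and $C(z):=\frac{1}{(2\pi)^d}\int_{\R^d} f(\xi)e^{iz\cdot\xi}\,d\xi$ is a well-defined continuous function of $z$, real-valued by the symmetry of $f$. Setting $C(x,y):=C(x-y)$, the kernel is positive-definite: for any $z_1,\dots,z_n\in\R^d$ and $c_1,\dots,c_n\in\C$,
\[
\sum_{j,k=1}^n c_j \overline{c_k}\, C(z_j-z_k) = \frac{1}{(2\pi)^d}\int_{\R^d} f(\xi)\,\Big|\sum_{j=1}^n c_j e^{iz_j\cdot\xi}\Big|^2 \,d\xi \geq 0 .
\]
Hence, by the Kolmogorov extension theorem, there is a centered Gaussian field $(\phi(x))_{x\in\R^d}$ on some probability space with $\E(\phi(x)\phi(y))=C(x-y)$, and this field is stationary since its law is invariant under translations of the index.

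Next I would establish the increment estimate. For $x,y\in\R^d$,
\[
\E\big((\phi(x)-\phi(y))^2\big) = 2\big(C(0)-C(x-y)\big) = \frac{2}{(2\pi)^d}\int_{\R^d} f(\xi)\big(1-\cos((x-y)\cdot\xi)\big)\,d\xi .
\]
Using the elementary bound $1-\cos t\leq |t|$ (which follows from $1-\cos t\leq \min(2,t^2/2)\leq \sqrt{2}\,\sqrt{t^2/2}=|t|$) together with $|(x-y)\cdot\xi|\leq \norme{x-y}\,\norme{\xi}$, this gives
\[
\E\big((\phi(x)-\phi(y))^2\big) \leq \frac{2\,\norme{x-y}}{(2\pi)^d}\int_{\R^d}\norme{\xi}\, f(\xi)\,d\xi = C\,\norme{x-y},
\]
which is finite precisely because $\norme{\xi} f(\xi)\in L^1(\R^d)$. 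Since $\phi(x)-\phi(y)$ is centered Gaussian, $\E\big(|\phi(x)-\phi(y)|^{2p}\big)=(2p-1)!!\,\big(\E((\phi(x)-\phi(y))^2)\big)^p\leq C_p\,\norme{x-y}^p$ for every integer $p\geq1$.

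Finally I would conclude with Kolmogorov's continuity criterion: choosing $p$ with $p>d$, the bound $\E(|\phi(x)-\phi(y)|^{2p})\leq C_p\,\norme{x-y}^{d+(p-d)}$ yields a modification of $\phi$ that is a.s.\ locally Hölder continuous of any exponent $\alpha<(p-d)/(2p)$, in particular continuous; replacing $\phi$ by this modification keeps both the covariance and the stationarity, finishing the construction. There is no genuine obstacle in the argument — the one point to keep in view is that the increment bound is governed by $\int_{\R^d}\norme{\xi} f(\xi)\,d\xi$, so the integrability hypothesis is used exactly where expected, and no regularity of $f$ beyond continuity is needed.
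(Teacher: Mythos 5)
Your proof is correct and follows essentially the same route as the paper's: verify that $C$ is a well-defined, real, positive semidefinite kernel, invoke the existence of a centered Gaussian process with that covariance, establish the Lipschitz-type increment bound $\E\big((\phi(x)-\phi(y))^2\big)\leq C\norme{x-y}$ using $\norme{\xi}f(\xi)\in L^1$, and conclude via Kolmogorov's continuity criterion. You are in fact slightly more careful than the paper in spelling out why $f\in L^1$ (via continuity near the origin and the hypothesis at infinity) and in writing out the elementary bound $1-\cos t\leq |t|$, but the argument is the same.
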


\begin{proof} Since $f \in L^1(\R^d)$, $C$ is well-defined.
Then, since $f$ is symmetric, a change of variables gives that $C$ is real-valued and $C(x,y) = C(y,x)$. Moreover, notice that $(C(x,y))_{x,y \in \R^2}$ is positive semidefinite: for every $(a_k)_{1 \leq k \leq n}$ and $(x_k)_{1 \leq k \leq n}$ in $(\R^d)^n$ we have
\begin{align*}
\sum_{k,l=1}^n a_k C(x_k,x_l) a_l &  = \frac{1}{(2\pi)^2} \int_{\R^d} f(\xi) \left( \sum_{k=1}^n  a_k e^{i x_k \cdot \xi}  \right) \left( \sum_{l=1}^n  a_l e^{- i x_l \cdot \xi}  \right) d\xi  \\
& =  \frac{1}{(2\pi)^2} \int_{\R^d} f(\xi) \abs{ \sum_{k=1}^n  a_k e^{i x_k \cdot \xi} }^2 d\xi \geq 0.
\end{align*}
By a standard result on Gaussian processes (see \cite{Adler} Section 1), there exists a centered Gaussian process $(h(x))_{x \in \R^d}$ whose covariance is given by $\E ( h(x) h(y)) = C(x,y)$. Finally, since we have the Lipschitz bound $\E ( (h(x) - h(y) )^2) \leq 2 \norme{x-y} \int_{\R^d} f(\xi) \norme{\xi} d\xi$ and $\norme{\xi} f(\xi) \in L^1(\R^d)$, by the Kolmogorov continuity criterion there exists a modification of $h$ which is continuous. 
\end{proof}

We also recall that $C_{0,n}(x) = \int_{2^{-n}}^1 c\left( \frac{x}{t} \right) \frac{dt}{t} = \int_{2^{-n}}^1 c_t(x) \frac{dt}{t}$ with $c_t(\cdot) = c(\cdot /t)$ thus its Fourier transform satisfies $\hat{C}_{0,n}(\xi) = \int_{2^{-n}}^1 \hat{c}_t(\xi) \frac{dt}{t} = \int_{2^{-n}}^1 t \hat{c}(t \xi) dt $ and since $c = k * k $, $\hat{c} = \hat{k}^2$ and then $ \hat{C}_{0,n}(\xi) = \int_{2^{-n}}^1 t \hat{k}(t \xi)^2 dt = \norme{\xi}^{-2} \int_{2^{-n} \norme{\xi}}^{\norme{\xi}} u \hat{k}(u)^2 du$. 

\medskip

\textbf{Coupling $\phi_{0,n}^1$ and $\phi_{0,n}^2$.} First we define $\delta_n^1$ and $\delta_n^2$ such that
\begin{equation}
\label{coupling}
\left( \phi_{0,n}^1(x) + \delta_n^1(x) \right)_{x \in \R^2} \overset{(d)}{=}  \left( \phi_{0,n}^2(x) + \delta_n^2(x) \right)_{x \in \R^2} 
\end{equation}
where $\delta_n^1$ (resp $\delta_n^2$) is a noise independent of $\phi_{0,n}^1$ (resp $\phi_{0,n}^2$). The covariance kernel of $\phi_{0,n}^i$ is given by $C_{0,n}^i(x,y) = \int_{2^{-n}}^1 c_i \left(  \frac{x-y}{t} \right) \frac{dt }{t}$ where $c_i = k_i * k_i$.  We recall also that these kernels are isotropic i.e. $C_{0,n}^i(x,y) = C_{0,n}^i ( \norme{x-y})$. By Fourier inversion (of Schwartz function) we can write 
$$
C_{0,n}^i(x) = \frac{1}{(2\pi)^2} \int_{\R^2} \hat{C}_{0,n}^i(\xi) e^{i \xi \cdot x} d\xi.
$$
We define $R_n^1$ by replacing the term $ \hat{C}_{0,n}^i(\xi) $  in the integrand by $f_n^1 (\xi) := \hat{C}_{0,n}^1(\xi)  \vee \hat{C}_{0,n}^2(\xi)  - \hat{C}_{0,n}^1(\xi)  \geq 0$ and similarly $R_n^2$ associated with $f_n^2(\xi) := \hat{C}_{0,n}^2(\xi)  \vee \hat{C}_{0,n}^1(\xi)  - \hat{C}_{0,n}^2(\xi)  \geq 0$ so that $C_{0,n}^1 + R_n^1 = C_{0,n}^2 + R_n^2$. By using Lemma \ref{lem:exisfield}, the covariance kernels $R_{n}^1$ and $R_n^2$ correspond to some continuous Gaussian fields $\delta_n^1$ and $\delta_n^2$ so that \eqref{coupling} holds and for $i \in \lbrace 1,2 \rbrace$, $\phi_{0,n}^i$ is independent of  $\delta_n^i$. 

\medskip

\textbf{Coupling the remaining noise with the lower scales.} We now prove the second coupling: 
\begin{equation}
\label{eq:coupling2}
\left( \phi_{n,n+k}^{1}(x) + \psi(x) \right)_{x \in \R^2} = \left( \delta_n^1(x)  + r_n^1(x) \right)_{x \in \R^2}.
\end{equation}
The goal is to show that the Fourier transform of the kernel of $\phi_{n,n+k}^1 + \psi$ (for $\psi$ to be specified) is larger than the one of $\delta_n^1$ in order to define, in a similar way as before, the continuous Gaussian field $r_n^1$, independent of $\delta_n^1$.

\medskip

To be precise, recall first that the spectrum of $\delta_n^1$ and $\phi_{n,n+k}^1$ are given respectively by $f_n^1 (\xi) = ( \hat{C}_{0,n}^2(\xi)  - \hat{C}_{0,n}^1(\xi) )  1_{\hat{C}_{0,n}^2(\xi)  \geq \hat{C}_{0,n}^1(\xi) }$ with $\hat{C}_{0,n}^i(\xi) = \norme{\xi}^{-2} \int_{2^{-n} \norme{\xi}}^{\norme{\xi}} u \hat{k}_i(u)^2 du$ and  $ \hat{C}_{n,n+k}^1(\xi) =  \norme{\xi}^{-2} \int_{2^{-n-k} \norme{\xi}}^{2^{-n} \norme{\xi}} u \hat{k}_1(u)^2 du$. If the spectrum of $\psi$ is given by $\norme{\xi}^{-2} g(\xi)$, we look for the inequality  $ f_n^1 (\xi) \leq \hat{C}_{n,n+k}^1(\xi) + \norme{\xi}^{-2} g(\xi)$ which is equivalent to
\begin{equation}
\label{eq:IneFou}
\left( \int_{2^{-n} \norme{\xi}}^{\norme{\xi}} u\hat{k}_2(u)^2 du - \int_{2^{-n} \norme{\xi}}^{\norme{\xi}} u\hat{k}_1(u)^2 du \right)_{+} \leq  \int_{2^{-(n+k)} \norme{\xi}}^{ 2^{-n} \norme{\xi}} u\hat{k}_1(u)^2 du + g(\xi).
\end{equation}
If the left-hand side is $0$, the inequality trivially holds. Otherwise, we want to get: 
$$
\int_{2^{-n} \norme{\xi}}^{\norme{\xi}} u\hat{k}_2(u)^2 du \leq  \int_{2^{-(n+k)} \norme{\xi}}^{\norme{\xi}} u\hat{k}_1(u)^2 du + g(\xi).
$$
Our analysis of this inequality will be separated in three steps, corresponding respectively to the low frequencies $[0,c 2^n]$, the high ones $[C 2^n, \infty)$ and the remaining part of the spectrum $[c 2^n , C 2^n]$, for $c$ and $C$ to be specified.   The field $\psi$ in \eqref{eq:coupling2} is defined in the first step. An additional step is devoted to the conclusion.

\medskip

\textbf{Step 1.} We start with the low frequencies $\norme{\xi} \leq c 2^n$. Since $\hat{k}_1$ and $\hat{k}_2$ are radially symmetric with the same $L^2$ normalization, $\int_{(0,\infty)} u\hat{k}_1(u)^2 du = \int_{(0,\infty)} u\hat{k}_2(u)^2 du$ and
\begin{align*}
\left( \int_{2^{-n} \norme{\xi}}^{\norme{\xi}} u\hat{k}_2(u)^2 du - \int_{2^{-n} \norme{\xi}}^{\norme{\xi}} u\hat{k}_1(u)^2 du \right)_{+}  &  \leq  \left( \int_{0}^{2^{-n}\norme{\xi}} u\hat{k}_1(u)^2 du - \int_{0}^{2^{-n}\norme{\xi}} u\hat{k}_2(u)^2 du \right)_{+}  \\
& \hphantom{ \leq } +  \left( \int_{\norme{\xi}}^{\infty} u\hat{k}_1(u)^2 du - \int_{\norme{\xi}}^{\infty} u\hat{k}_2(u)^2 du \right)_{+}.
\end{align*}

\smallskip

We define the continuous Gaussian field $\psi$ (independent of $n$), whose covariance kernel has Fourier transform defined  by $\norme{\xi}^{-2} g(\xi) :=  \norme{\xi}^{-2} \abs{ \int_{\norme{\xi}}^{\infty} u\hat{k}_1(u)^2du - \int_{\norme{\xi}}^{\infty} u\hat{k}_2(u)^2du }$. 

\smallskip

Since we want to show that the Fourier transform of the kernel of $\phi_{n,n+k}^1 + \psi$ is larger than the one of $\delta_n^1$, we want to prove  that for $\norme{\xi} \leq c 2^n$ ($c$ to be specified, small): 
$$
\left( \int_{0}^{2^{-n}\norme{\xi}} u\hat{k}_1(u)^2 du - \int_{0}^{2^{-n}\norme{\xi}} u\hat{k}_2(u)^2 du \right)_{+}  \leq \int_{2^{-(n+k)} \norme{\xi}}^{2^{-n}\norme{\xi}} u\hat{k}_1(u)^2 du.
$$
By setting $r = 2^{-n} \norme{\xi} $, we want to prove that for $r$ small enough ($r  \leq c$), and $k$ large enough but fixed:
\begin{equation}
\label{eq:ineqsmall}
\left( \int_{0}^{r} u\hat{k}_1(u)^2 du - \int_{0}^{r} u\hat{k}_2(u)^2 du \right)_{+}  \leq \int_{2^{-k} r}^{r} u\hat{k}_1(u)^2 du.
\end{equation}
Notice that when $r$ goes to $0$, $ \int_{0}^{r} u(\hat{k}_1(u)^2 du - \int_{0}^{r} u\hat{k}_2(u)^2 du \sim \frac{1}{2} r^2 (\hat{k}_1(0)^2 -  \hat{k}_2(0)^2)$. If the left-hand side is $0$, there is nothing to prove. Thus we can restrict to the case where it is $> 0$ i.e when $\hat{k}_1(0)^2 > \hat{k}_2(0)^2$ (notice that $\hat{k}(0) = \int_{B(0,r_0)} k(u) du > 0$ since $k$ is non-negative and $\int_{B(0,r_0)} k(x)^2 dx =1$). The asymptotic of the right-hand side is given by $\int_{2^{-k} r}^{r} u\hat{k}_1(u)^2 du \sim \frac{1}{2} r^2 \hat{k}_1(0)^2 (1-2^{-2k})$. Thus as soon as $\hat{k}_1(0)^2 - \hat{k}_2(0)^2 < \hat{k}_1(0)^2   (1-2^{-2k})$, there exists $r(k)$ such that for $r \leq r(k)$, the inequality \eqref{eq:ineqsmall} is satisfied.

\medskip

\textbf{Step 2.} We now deal with the large frequencies i.e. $\norme{\xi} \geq C 2^n$. Again, we look for the inequality \eqref{eq:IneFou}. Since we added the field $\psi$ and the following inequality holds,
\begin{align*}
\left( \int_{2^{-n} \norme{\xi}}^{\norme{\xi}} u\hat{k}_2(u)^2 du - \int_{2^{-n} \norme{\xi}}^{\norme{\xi}} u\hat{k}_1(u)^2 du \right)_{+}  &  \leq  \left( \int_{2^{-n}\norme{\xi}}^{\infty} u\hat{k}_2(u)^2 du -  \int_{2^{-n}\norme{\xi}}^{\infty} u\hat{k}_1(u)^2 du  \right)_{+}  \\
& \hphantom{ \leq } +  \left( \int_{\norme{\xi}}^{\infty} u\hat{k}_1(u)^2 du - \int_{\norme{\xi}}^{\infty} u\hat{k}_2(u)^2 du \right)_{+}  
\end{align*}
we look for the inequality:
$$
 \left( \int_{2^{-n}\norme{\xi}}^{\infty} u\hat{k}_2(u)^2 du -  \int_{2^{-n}\norme{\xi}}^{\infty} u\hat{k}_1(u)^2 du  \right)_{+}  \leq  \int_{2^{-(n+k)} \norme{\xi}}^{2^{-n} \norme{\xi}} u\hat{k}_1(u)^2 du.
$$
By setting $r = 2^{-n} \norme{\xi} $, we want to prove that for $r$ large enough ($r \geq C$), and $k$ large enough but fixed: 
\begin{equation}
\label{eq:inelarge}
\int_{r}^{\infty} u \hat{k}_2(u)^2 du \leq  \int_{2^{-k} r}^{\infty} u \hat{k}_1(u)^2 du.
\end{equation}
Since $\hat{k}_1(u) = e^{-b u^{\alpha}(1+o(1))}$ and $\hat{k}_2(u) = e^{-a u^{\alpha}(1+o(1))}$, we may assume that $0 < a \leq b$ (otherwise $k=0$ would be fine). Notice that there exists some $R >0$ such that for every $r \geq R$, $\int_r^{\infty} u \hat{k}_2(u)^2 du \leq e^{-b r^{\alpha}}$ and $e^{-3 a r^{\alpha}} \leq \int_r^{\infty} u \hat{k}_2(u)^2 du$. Then, by taking $k$ large enough so that $b > 3 a 2^{-k \alpha}$, for $r \geq 2^k R$ the inequality \eqref{eq:inelarge} is satisfied.

\medskip

\textbf{Step 3.} Take $k_0$ such that $\hat{k}_1(0)^2 - \hat{k}_2(0)^2 < \hat{k}_1(0)^2   (1-2^{-2k_0})$ and $b > 3a 2^{-k_0 \alpha}$ are satisfied. Set $c := r(k_0)$ and $C := 2^{k_0} R$, keeping the notations of Step 1 and Step 2. We proved there that \eqref{eq:IneFou} holds for $\norme{\xi} \leq c 2^n$ and  $\norme{\xi} \geq C 2^n$ and this inequality still holds by taking $k$ larger, with the same $c$ and $C$. We are left with the frequencies $c 2^n \leq \norme{\xi} \leq C2^n$. First, fix $k \geq k_0$ such that $\int_{2^{-k}C}^{\infty} u\hat{k}_1(u)^2 du >  \int_c^{\infty} u\hat{k}_2(u)^2 du$ (since $\int_{2^{-k}C}^{\infty} u\hat{k}_1(u)^2 du \to  \int_0^{\infty} u\hat{k}_2(u)^2 $). Then, fix $n_0$ such that $\int_{2^{-k} C}^{2^{n_0} c} u\hat{k}_1(u)^2 du  \geq \int_c^{\infty} u\hat{k}_2(u)^2 du $. Thus, for every $n \geq n_0$, $\norme{\xi} \in [c 2^n, C 2^n]$ we have: 
$$
\int_{2^{-(n+k)} \norme{\xi}}^{\norme{\xi}} u\hat{k}_1(u)^2 du \geq \int_{2^{-k} C}^{2^n c} u\hat{k}_1(u)^2 du  \geq \int_c^{\infty} u\hat{k}_2(u)^2 du   \geq  \int_{2^{-n} \norme{\xi}}^{\norme{\xi}}u\hat{k}_2(u)^2 du.
$$

\medskip

\textbf{Step 4.} We have proved that if $k$ is large enough, but fixed, for every $n \geq n_0$ the inequality \eqref{eq:IneFou} holds for all $\xi \in \R^2$. Also, our arguments prove that the same result is true by exchanging the subscripts $1$ and $2$ in \eqref{eq:IneFou}. Therefore, we can define for $i \in \lbrace 1,2 \rbrace$, $r_n^i$ whose covariance kernel has Fourier transform given by the positive difference in the inequality  \eqref{eq:IneFou}, multiplied by $\norme{\xi}^{-2}$. In particular, we get the couplings (ii) and (iii) with the desired properties on the fields. This completes the proof of the existence of the couplings, therefore the proof of Theorem \ref{th:Indep}.

\section{Appendix}

\subsection{Tail estimates for the supremum of $\phi_{0,n}$}

We derive in the following lemma some tail estimates for the field $\phi_{0,n}$. The tail estimates are obtained by controlling a discretization of $\phi_{0,n}$ (by union bound and Gaussian tail estimates) and its gradient.

\begin{Lemma}
\label{SupTailsAppendix}
The supremum of the field $\phi_{0,n}$ satisfies the following tails estimates
\begin{equation}
\label{TailSup}
\mathbb{P}\left( \underset{[0,1]^2}{\sup} \abs{\phi_{0,n}} \geq \alpha ( n + C\sqrt{n}) \right) \leq  C 4^n e^{- \frac{\alpha^2}{\log 4} n}
\end{equation}
as well as 
\begin{equation}
\label{TailSupSmall}
\mathbb{P}\left( \underset{[0,1]^2}{\sup} \abs{\phi_{0,n}}  \geq n \log 4 + C \sqrt{n} + C s \right) \leq C e^{- s}.
\end{equation}
\end{Lemma}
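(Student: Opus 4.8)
The plan is to reduce $\sup_{[0,1]^2}\abs{\phi_{0,n}}$ to a maximum over a grid of mesh $2^{-n}$, estimate that maximum by a union bound with the Gaussian tail, and absorb the discretization error using the gradient of $\phi_{0,n}$. The quantitative input is the pointwise variance: since $c(0)=\int_{\R^2}k^2=1$, each scale contributes $C_k(0)=c(0)\int_{2^{-k-1}}^{2^{-k}}\tfrac{dt}{t}=\log 2$, so $v_n:=\mathrm{Var}(\phi_{0,n}(x))=C_{0,n}(0)=(n+1)\log 2$ for every $x$. The value $(n+1)\log 2$, rather than $n\log 2$, is exactly what makes the $C\sqrt n$ term in the threshold necessary.

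\textbf{Grid maximum.} Let $G_n:=2^{-n}\Z^2\cap[0,1]^2$, so $\abs{G_n}\le 4^{n+1}$. Each $\phi_{0,n}(y)$ being $N(0,v_n)$, the union bound gives $\Pro(\max_{y\in G_n}\abs{\phi_{0,n}(y)}\ge T)\le 2\cdot 4^{n+1}e^{-T^2/(2v_n)}$. For \eqref{TailSup}, taking $T$ of the form $\alpha n+c_0\alpha C\sqrt n$ (a suitable fraction $c_0$ of the $C\sqrt n$ budget) and expanding $T^2$, one uses $\tfrac{n^2}{n+1}\ge n-1$ and $\tfrac{n^{3/2}}{n+1}\ge\tfrac12\sqrt n$ to check that for $C$ large enough $\tfrac{T^2}{2v_n}\ge\tfrac{\alpha^2 n}{\log 4}$ — the cross term $\asymp\tfrac{\alpha^2 C\sqrt n}{\log 4}$ dominating the $-\tfrac{\alpha^2}{\log 4}$ coming from $v_n=(n+1)\log 2$ — whence $\Pro(\max_{G_n}\abs{\phi_{0,n}}\ge T)\le C4^ne^{-\alpha^2 n/\log 4}$. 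For \eqref{TailSupSmall}, taking instead $T=n\log 4+c_0 C\sqrt n+s$, the same expansion yields $\tfrac{T^2}{2v_n}\ge n\log 4-\log 4+c_0C\sqrt n+s$, hence $2\cdot 4^{n+1}e^{-T^2/(2v_n)}\le Ce^{-s}$.

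\textbf{Discretization error.} For $x\in[0,1]^2$ pick $y\in G_n$ with $\abs{x-y}\le\sqrt 2\,2^{-n}$, so $\abs{\phi_{0,n}(x)}\le\max_{G_n}\abs{\phi_{0,n}}+\sqrt 2\,2^{-n}\sup_{[0,1]^2}\abs{\nabla\phi_{0,n}}$. By the scaling property $\nabla\phi_k$ has the law of $2^k$ times a fixed finite-range smooth field, so $\nabla\phi_{0,n}=\sum_{k\le n}\nabla\phi_k$ is centered Gaussian with pointwise variance $\le C\sum_{k\le n}4^k\le C4^n$ and $\E[\sup_{[0,1]^2}\abs{\nabla\phi_k}]\le C2^k\sqrt k$ (the supremum of a fixed field over a box of side $2^k$), whence $\E[\sup_{[0,1]^2}\abs{\nabla\phi_{0,n}}]\le C2^n\sqrt n$ by summing the geometric series, and Fernique/Borell (cf.\ \cite{Fernique}) gives $\Pro(\sup_{[0,1]^2}\abs{\nabla\phi_{0,n}}\ge 2^n(C\sqrt n+\lambda))\le e^{-c\lambda^2}$. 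Thus $\sqrt 2\,2^{-n}\sup\abs{\nabla\phi_{0,n}}\le C\sqrt n+C\lambda$ off an event of probability $\le e^{-c\lambda^2}$, and combining with the grid estimate (choosing $\lambda$ a large fixed constant for \eqref{TailSup}, and $\lambda\asymp\sqrt s$ for \eqref{TailSupSmall}, and noting that both target bounds are vacuous unless $\alpha\ge\log 4$, resp.\ unless $s\ge 1$) yields \eqref{TailSup} and \eqref{TailSupSmall}. As a variant, \eqref{TailSupSmall} also follows from \eqref{TailSup} by taking $\alpha$ with $\alpha(n+C\sqrt n)=n\log 4+C\sqrt n+Cs$, since then $\alpha^2 n/\log 4=n\log 4+cs+O(1)$.

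\textbf{Main difficulty.} The delicate point is the sharp exponential constant $1/\log 4$: one must carry the exact variance $(n+1)\log 2$ so that the $\sim 4^n$-fold union bound is precisely compensated by the Gaussian factor, and verify that the $C\sqrt n$ shift in the threshold simultaneously absorbs the $\tfrac{n}{n+1}$ correction in $v_n$ and the modulus-of-continuity error. The latter is the only genuinely non-trivial (though routine) ingredient, as it needs $\E[\sup_{[0,1]^2}\abs{\nabla\phi_{0,n}}]$ to be of the correct order $2^n\sqrt n$ — obtained by summing the finite-range estimates $\E[\sup\abs{\nabla\phi_k}]\lesssim 2^k\sqrt k$ over scales, or by a chaining bound applied directly to $\nabla\phi_{0,n}$.
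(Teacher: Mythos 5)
Your overall strategy is the same as the paper's: reduce to a union bound over the $2^{-n}$-grid, where each $\phi_{0,n}(y)$ is $N(0,(n+1)\log 2)$, and absorb the discretization error with a tail bound on $2^{-n}\sup_{[0,1]^2}\abs{\nabla\phi_{0,n}}$. Your accounting of the grid maximum and the estimate $\E\bigl[\sup_{[0,1]^2}\abs{\nabla\phi_{0,n}}\bigr]\le C 2^n\sqrt n$ via the scale decomposition plus Borell--TIS are fine, and are a mild repackaging of the paper's route (which bounds each $\sup\abs{\nabla\phi_k}$ directly by Fernique plus a union bound over $4^k$ cubes and then sums; the resulting tail is the same).

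There is a genuine gap in how you dispose of the gradient event for \eqref{TailSup}. You propose ``choosing $\lambda$ a large fixed constant,'' which gives a bad event of probability $e^{-c\lambda^2}$ --- a \emph{constant}. But the target bound $C4^n e^{-\alpha^2 n/\log 4}$ decays exponentially in $n$ as soon as $\alpha>\log 4$, so a fixed-probability gradient event cannot be absorbed into it. The threshold for the gradient has to be allowed to scale with $\alpha\sqrt n$ (this is exactly what the paper does: it gives itself the budget $2^{-n}\sup\abs{\nabla\phi_{0,n}}\le\alpha\, y_n$ with $y_n=C\sqrt n$, so that the gradient tail is also $\le C4^n e^{-\alpha^2 n/\log 4}$). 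Taking $\lambda\asymp\alpha\sqrt n$ in your Borell bound would repair this; with the budget $\alpha C\sqrt n$ there is room for both the $C\sqrt n$ mean and the $C\lambda$ fluctuation, using $\alpha\ge\log 4$. Your choice $\lambda\asymp\sqrt s$ for \eqref{TailSupSmall} is the right idea --- notice the paper obtains \eqref{TailSupSmall} by plugging $\alpha=\log 4\sqrt{1+s/(n\log 4)}$ into the combined estimate, which effectively makes $\lambda\asymp\alpha\sqrt n$ as well.

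Separately, your proposed ``variant'' derivation of \eqref{TailSupSmall} from \eqref{TailSup} is not correct as stated. With $\alpha$ defined by $\alpha(n+C\sqrt n)=n\log 4+C\sqrt n+Cs$ one gets $\alpha-\log 4=\frac{C(1-\log 4)\sqrt n+Cs}{n+C\sqrt n}$, so $\alpha^2 n/\log 4 = n\log 4 + 2C(1-\log 4)\sqrt n + 2Cs + o(\sqrt n)$; since $\log 4>1$ the $\sqrt n$ term is negative, and $4^n e^{-\alpha^2 n/\log 4}$ actually grows like $e^{c'\sqrt n}$ rather than being $O(e^{-cs})$. (The fix is to note that the constants $C$ in the two displays need not be the same, and to choose the $C$ in \eqref{TailSupSmall} large compared to the one in \eqref{TailSup} so that the $\sqrt n$ coefficient is nonnegative; alternatively, solve for $\alpha$ from the exponent as the paper does, via $\alpha=\log 4\sqrt{1+s/(n\log 4)}$.)
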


\begin{proof}
First we bound a discretization of the field $\phi_{0,n}$. Since the variance of $\phi_{0,n}(x)$ is equal to $(n+1) \log 2$, by union bound and classical Gaussian tail estimates we have $\mathbb{P} ( \max_{[0,1]^2 \cap 2^{-n} \Z^2} \abs{\phi_{0,n}(x)} \geq x ) \leq 4^n e^{- \frac{ x^2 }{(n+1)\log 4}}$ hence by  introducing $x_n := \sqrt{n+1} \sqrt{n}$ we get
\begin{equation}
\label{tail}
\mathbb{P} \left( \underset{x \in [0,1]^2 \cap 2^{-n} \Z^2}{\max} \abs{\phi_{0,n}(x)} \geq \alpha x_n \right)  \leq 4^n e^{- \frac{ \alpha^2}{\log 4} n}.
\end{equation}
Now we want to bound $\sup_{[0,1]^2} \abs{\phi_{0,n}(x)}$ for which we want an equivalent of the bound \eqref{tail}. By Fernique's theorem, we have a tail estimate for the gradient of $\phi_{0}$ i.e. there exists some $C > 0$ so that for every $x > 0$, $\mathbb{P} ( \sup_{[0,1]^2} \abs{\nabla \phi_0} \geq x ) \leq C e^{-x^2/2C}$. Then, by scaling, for any dyadic cube $P \in \mathcal{P}_k$, $\mathbb{P} ( \sup_{P} \abs{\nabla \phi_k} \geq 2^k x ) \leq C e^{-x^2/2C}$ thus, by union bound $\mathbb{P} ( \sup_{[0,1]^2} \abs{\nabla \phi_k} \geq 2^k x ) \leq C 4^k e^{-x^2/2C}$. We can now work out the gradient field $\nabla \phi_{0,n}$: $\mathbb{P} ( \sup_{[0,1]^2} \abs{\nabla \phi_{0,n}} \geq 2^{n+1} x ) \leq \mathbb{P} (\sum_{k=0}^n \sup_{[0,1]^2}\abs{\nabla \phi_{k}} \geq \sum_{k=0}^n 2^{k} x ) \leq C 4^{n} e^{-x^2 / 2 C}$ hence $\mathbb{P} ( 2^{-n} \sup_{[0,1]^2}\abs{\nabla \phi_{0,n}} \geq x ) \leq C 4^n e^{-x^2/2C}$. This inequality can be rewritten by introducing  $y_n :=  C \sqrt{n}$ as: 
\begin{equation}
\label{gradient}
\mathbb{P} \left( 2^{-n} \sup_{[0,1]^2} \abs{\nabla \phi_{0,n}} \geq \alpha y_n \right) \leq C 4^n e^{-\frac{\alpha^2}{\log 4}n}.
\end{equation}
Using the discrete bound \eqref{tail} and the gradient one \eqref{gradient}, since
$$\sup_{[0,1]^2}\abs{\phi_{0,n}} \leq \max_{[0,1]^2 \cap 2^{-n} \Z^2} \abs{\phi_{0,n}}  +  2^{-n} \sup_{[0,1]^2} \abs{\nabla \phi_{0,n}},$$ 
we get the result \eqref{TailSup} by union bound. Indeed, with $z_n := x_n + y_n$. $\mathbb{P} (  \underset{[0,1]^2}{\sup} \abs{\phi_{0,n}} \geq \alpha z_n ) \leq  \mathbb{P}( X_n \geq \alpha x_n ) + \mathbb{P} ( Y_n \geq \alpha Y_n ) \leq C 4^n e^{- \frac{\alpha^2}{\log 4} n}$.
Taking $\alpha = \log 4 \sqrt{1+\frac{s}{n \log 4}} \leq \log 4+\frac{s}{n}$ gives the second part \eqref{TailSupSmall}.
\end{proof}

The following lemma is a corollary of the previous one: using the tail estimates we control exponential moments.
\begin{Lemma}
\label{ExpoMoment}
We have the following upper bounds for the exponential moments of the field $\phi_{0,n}$: for $\gamma < 2$ and $n \geq 0$, $\mathbb{E} \left( e^{ \gamma \sup_{[0,1]^2} \abs{\phi_{0,n}}} \right) \leq C 4^{ \gamma n (1 + o(1)) }$, where $o(1)$ is of the form $O(n^{-1/2})$.
\end{Lemma}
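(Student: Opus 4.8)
The plan is to deduce the bound by integrating the Gaussian tail estimate \eqref{TailSup} of Lemma \ref{SupTailsAppendix} against the layer-cake formula $\E(e^{\gamma X}) = 1 + \int_0^\infty \gamma e^{\gamma t}\,\Pro(X > t)\,dt$, where $X := \sup_{[0,1]^2}|\phi_{0,n}|$, using the trivial bound $\Pro(X>t)\leq 1$ for $t$ below the typical size of $X$ (which is $\approx n\log 4$) and the genuine tail bound above it.

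Concretely, keeping the notation $z_n := \sqrt{n(n+1)} + C\sqrt n$ from the proof of Lemma \ref{SupTailsAppendix} (so $z_n = n(1+O(n^{-1/2}))$), I would substitute $\alpha = t/z_n$ in \eqref{TailSup} to get, for every $t>0$,
$$
\Pro(X > t) \;\leq\; \min\Bigl(1,\; C\,4^n e^{-\lambda_n t^2}\Bigr), \qquad \lambda_n := \frac{n}{z_n^2\log 4}.
$$
Then I would set $t_0 := \bigl(\lambda_n^{-1}\log(C4^n)\bigr)^{1/2}$, the value at which the second bound equals $1$. An elementary expansion using $z_n = n(1+O(n^{-1/2}))$ gives $\lambda_n = (n\log 4)^{-1}(1 + O(n^{-1/2}))$ and hence $t_0 = n\log 4\,(1 + O(n^{-1/2}))$, so that $e^{\gamma t_0} = 4^{\gamma n(1+o(1))}$ with $o(1)=O(n^{-1/2})$. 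Splitting the integral at $t_0$: on $[0,t_0]$ bound $\Pro(X>t)\leq 1$, which contributes $\int_0^{t_0}\gamma e^{\gamma t}\,dt \leq e^{\gamma t_0}$; on $[t_0,\infty)$ use $C4^n e^{-\lambda_n t^2} = e^{-\lambda_n(t^2-t_0^2)} \leq e^{-2\lambda_n t_0(t-t_0)}$ to get
$$
C\,4^n\int_{t_0}^\infty \gamma e^{\gamma t - \lambda_n t^2}\,dt \;\leq\; \gamma\,e^{\gamma t_0}\int_0^\infty e^{-(2\lambda_n t_0-\gamma)u}\,du \;=\; \frac{\gamma}{2\lambda_n t_0 - \gamma}\,e^{\gamma t_0}.
$$
Adding the two pieces yields $\E(e^{\gamma X}) \leq C_\gamma\, e^{\gamma t_0} = C_\gamma\,4^{\gamma n(1+o(1))}$ for all large $n$; the finitely many small values of $n$ are absorbed into the constant, each $\E(e^{\gamma X})$ being finite by Fernique's theorem.

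The crux — and the only place $\gamma<2$ enters — is that $2\lambda_n t_0 = 2\bigl(\lambda_n\log(C4^n)\bigr)^{1/2} = 2 + O(n^{-1/2})$, so $2\lambda_n t_0 - \gamma \geq (2-\gamma)/2 > 0$ for $n$ large; this is exactly what makes the integral over $[t_0,\infty)$ converge and, more importantly, be of the same order $e^{\gamma t_0}=4^{\gamma n(1+o(1))}$ as the main term, rather than blowing up to $4^{n(1+\gamma^2/4)}$ as a cruder split (completing the square and integrating over all of $\R$) would give. The resulting constant stays bounded as long as $\gamma$ is bounded away from $2$, hence is uniform on any compact subinterval of $(0,2)$, which is all the applications use. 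The remaining verifications — the asymptotics of $z_n$, $\lambda_n$ and $t_0$ in powers of $n^{-1/2}$ — are routine and I would not spell them out.
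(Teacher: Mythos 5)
Your proof is correct and follows the same overall strategy as the paper's: integrate the Gaussian tail bound \eqref{TailSup} against the layer-cake formula, splitting at a threshold of order $n\log 4$ so that the trivial bound $\Pro(X>t)\le 1$ handles the bulk and the tail bound handles the rest, with the constraint $\gamma<2$ surfacing as positivity of the effective exponential rate $2-\gamma$ on the tail part. The only real difference is the micro-technique for the tail integral: the paper completes the square in $\gamma s_n u - \frac{n}{\log 4}u^2$ and then invokes the Mills-type estimate $\int_a^\infty e^{-bx^2}\,dx\le (2ab)^{-1}e^{-ba^2}$, whereas you linearize the quadratic exponent at the crossover point $t_0$ (using $t^2-t_0^2\ge 2t_0(t-t_0)$) and evaluate an elementary exponential integral. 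Both routes produce a prefactor of order $1/(2-\gamma)$ and the main factor $e^{\gamma t_0}=4^{\gamma n(1+O(n^{-1/2}))}$, so they are interchangeable here; your tangent-line trick is marginally more elementary, while the paper's completion of the square exposes the factor $4^{\gamma^2 r_n^2 n/4}$ explicitly before it cancels, which is a bit closer to what one would do if one wanted the sharp Gaussian constant.
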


\begin{proof}
Fix $0 < \gamma < 2$. We use the bound \eqref{TailSup} as follows. By introducing $s_n := n + C \sqrt{n}$  we have, by using the elementary bound $\E (e^{\gamma X}) \leq e^{\gamma x} + \int_{x}^{\infty} \gamma e^{\gamma t} \Pro (X \geq t) dt$ and for $\alpha$ to be specified:
$$
\mathbb{E} \left( e^{\gamma \underset{[0,1]^2}{\sup} \abs{\phi_{0,n}}} \right)  \leq  e^{\gamma \alpha s_n} +  \gamma \int_{\alpha s_n}^{\infty} e^{\gamma t} \mathbb{P} \left( \underset{[0,1]^2}{\sup} \abs{\phi_{0,n}} \geq t \right) dt.
$$
Setting $t = s_nu$, $\int_{\alpha s_n}^{\infty} e^{\gamma t} \mathbb{P} ( \sup_{[0,1]^2} \abs{\phi_{0,n}} \geq t ) dt =   s_n \int_{\alpha}^{\infty} e^{\gamma s_n u} \mathbb{P} ( \sup_{[0,1]^2} \abs{\phi_{0,n}} \geq s_n u) du$ and by using the bound \eqref{TailSup}
$$
\int_{\alpha}^{\infty} e^{\gamma s_n u} \mathbb{P} \left( \sup_{[0,1]^2} \abs{\phi_{0,n}} \geq s_n u \right) du \leq  C 4^n \int_{\alpha}^{\infty} e^{\gamma s_n u} e^{-\frac{u^2}{\log 4}n} du.
$$
By introducing $r_n := n^{-1} s_n$, by a change of variables we obtain:
$$
 \int_{\alpha}^{\infty} e^{\gamma s_n u} e^{-\frac{u^2}{\log 4}n} du  \leq   4^{ \frac{\gamma^2 r_n^2}{4} n} \int_{\alpha - \gamma r_n \frac{\log 4 }{2}}^{\infty} e^{- \frac{n}{\log 4} u^2} du.
$$
Taking $\alpha := r_n \log 4$, the integral in the right-hand side becomes
$$
\int_{\alpha - \gamma r_n \frac{\log 4 }{2}}^{\infty} e^{- \frac{n}{\log 4} u^2} du   = \int_{\left( 1 - \gamma/2 \right)r_n \log 4 }^{\infty} e^{- \frac{n}{\log 4} u^2} du  \leq \frac{4^{-n \left( 1- \frac{\gamma}{2} \right)^2 r_n^2 }}{\left( 2-\gamma \right) n r_n},
$$
by using the inequality $\int_a^{\infty} e^{-b x^2} dx \leq (2ab)^{-1} e^{-ba^2}$ valid for $a> 0$ and $b >0$. Gathering the pieces we get $\mathbb{E} ( e^{\gamma  \sup_{[0,1]^2} \abs{\phi_{0,n}}} ) \leq (1 + C \frac{\gamma}{2-\gamma} )4^{\gamma r_n^2 n}$ hence the result.
\end{proof}

We add here a Lemma which is in the same vein as the previous one.

\begin{Lemma}
\label{AddLemma}
Suppose that we have the following tail estimate on a sequence of positive random variables $(X_k)_{k \geq 0}$: for $k \geq 0$ and $s > 2$,
$$
\Pro \left( X_k \geq e^{s} \right) \leq 4^k e^{- c \frac{s^2}{\log s}}.
$$
Then, we have the following moment estimate: there exists $C >0$ depending only on $c$ such that for $k$ large,
$$
\E \left( X_k \right) \leq e^{C \sqrt{k \log k}}.
$$
\end{Lemma}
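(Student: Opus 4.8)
The plan is the standard device of bounding $\E(X_k)=\int_0^\infty \Pro(X_k\ge t)\,dt$ by splitting the integral at a well‑chosen threshold, exactly as in the proof of Lemma \ref{ExpoMoment}. First I would fix a constant $C_1=C_1(c)$ (for instance $C_1=\sqrt{2\log 4/c}$), set $s_0:=C_1\sqrt{k\log k}$, and cut the integral at $t_0:=e^{s_0}$. Below $t_0$ I bound $\Pro(X_k\ge t)\le 1$, which contributes at most $e^{s_0}$. Above $t_0$ I substitute $t=e^{s}$, $dt=e^{s}\,ds$ (legitimate since for $k$ large $s_0>2$, so the hypothesis applies throughout), obtaining
\[
\int_{t_0}^{\infty}\Pro(X_k\ge t)\,dt=\int_{s_0}^{\infty}e^{s}\,\Pro(X_k\ge e^{s})\,ds\le 4^{k}\int_{s_0}^{\infty}e^{s}\,e^{-c s^{2}/\log s}\,ds .
\]

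The only point requiring care is the calibration of $s_0$: it must be large enough that the Gaussian‑type factor $e^{-cs^{2}/\log s}$ swamps the entropy factor $4^{k}$ on the whole ray $[s_0,\infty)$, yet small enough that $e^{s_0}$ stays of the target order $e^{C\sqrt{k\log k}}$. To handle this I would record two elementary facts. The map $g(s):=cs^{2}/\log s$ is increasing for $s>e^{1/2}$ (since $g'(s)=c\,s(2\log s-1)/(\log s)^{2}$), and $\log s_0=\tfrac12\log k\,(1+o(1))$, so $g(s_0)=cs_0^{2}/\log s_0=2cC_1^{2}k\,(1+o(1))$ as $k\to\infty$. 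With $cC_1^{2}>\log 4$, for all $k$ large one gets $g(s)\ge g(s_0)\ge 2k\log 4$ whenever $s\ge s_0$, hence $4^{k}e^{-g(s)}=e^{k\log 4-g(s)}\le e^{-g(s)/2}$ on $[s_0,\infty)$. Moreover, choosing $s_1(c)$ so that $cs/(2\log s)\ge 2$ for $s\ge s_1(c)$, and taking $k$ large enough that also $s_0\ge s_1(c)$, we have $s-g(s)/2\le -s$ on $[s_0,\infty)$.

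Putting these together,
\[
\int_{s_0}^{\infty}e^{s}\,4^{k}e^{-c s^{2}/\log s}\,ds\le\int_{s_0}^{\infty}e^{\,s-g(s)/2}\,ds\le\int_{s_0}^{\infty}e^{-s}\,ds\le 1,
\]
so that $\E(X_k)\le e^{s_0}+1\le e^{C\sqrt{k\log k}}$ for $k$ large, with $C$ depending only on $c$. I expect the only genuine obstacle to be the choice of the cutoff just described; once the monotonicity of $s\mapsto cs^{2}/\log s$ is used to make a single threshold do both jobs, the remainder is a routine $\int_0^\infty\Pro(X_k\ge t)\,dt$ estimate.
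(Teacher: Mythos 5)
Your proof is correct and takes essentially the same approach as the paper's: both bound $\E(X_k)=\int_0^\infty\Pro(X_k\ge t)\,dt$ by cutting at $e^{x_k}$ with $x_k$ of order $\sqrt{k\log k}$ chosen so that the entropy factor $4^k$ is dominated, then control the tail integral after the change of variables $t=e^s$. Your treatment of the tail integral via the monotonicity of $s\mapsto cs^2/\log s$ (which absorbs $4^k e^s$ directly into $e^{-s}$) is somewhat cleaner than the paper's replacement of $\log s$ by $\log x_k$ followed by a Gaussian-tail bound, but the underlying strategy and the choice of threshold are the same.
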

\begin{proof}
Fix $x_k > 2$ to be specified. We can rewrite $\E (X_k) - e^{x_k}$ as
$$
\int_{e^{x_k}}^{\infty} \Pro \left( X_k \geq x \right) dx =  \int_{x_k}^{\infty} \Pro \left( X_k \geq e^s \right) e^s ds \leq 4^k \int_{x_k}^{\infty} e^{-c \frac{s^2}{\log s}} e^s ds \leq 4^k e^{x_k} \int_{x_k}^{\infty} e^{-c\frac{s^2}{\log x_k}} ds.
$$
By using $\int_{a}^{\infty} e^{-bx^2} dx \leq (2ab)^{-1} e^{-b a^2}$, we get $\E(X_k) \leq e^{x_k} +  4^k e^{x_k} (2 x_k \frac{c}{\log x_k})^{-1} e^{-c \frac{x_k^2}{\log x_k}}$. Taking $x_k$ such that $k \log 4 = c \frac{x_k^2}{\log x_k}$ gives $\log k \sim 2 \log x_k$ and $x_k \sim C \sqrt{k \log k}$.
\end{proof}

\subsection{Upper bound for $F(s)$} 

\label{UpperF(s)}

In this subsection, we derive two lemmas that allow us to bound the term $F(s)$ which appears in the proof of Proposition \ref{SupTails}. The first one corresponds to $a_{t_s}$, the second one to $\int_{0}^{\infty} a_t dt$.

\begin{Lemma}
\label{F(s)First}
If $a,b,c >0$ and $\alpha \in (0,1/2)$ then the function $f_s(t) := -at + b t^{1/2 + \alpha} + c s \sqrt{t}$ in increasing on $[0,t_s]$, decreasing on $[t_s,\infty]$ for some $t_s > 0$ which satisfy $a t_s^{1/2} = \frac{1}{2} cs + O(s^{2\alpha})$. In particular, we have: $\exp(f_s(t_s)) \leq e^{ \frac{c^2 s^2}{4 a} + C s^{1+ 2 \alpha} }$.
\end{Lemma}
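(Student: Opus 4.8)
The plan is to study the sign of $f_s'$ through the auxiliary function $h(t):=t^{1/2}f_s'(t)$, which on $(0,\infty)$ has the same sign as $f_s'(t)$. Since $f_s'(t)=-a+b(\tfrac12+\alpha)t^{\alpha-1/2}+\tfrac{cs}{2}t^{-1/2}$, we get
$$h(t)=-a\,t^{1/2}+b\bigl(\tfrac12+\alpha\bigr)t^{\alpha}+\tfrac{cs}{2}.$$
First I would record that $h(0^+)=\tfrac{cs}{2}>0$ and, because $\alpha<1/2$, that $h(t)\to-\infty$ as $t\to\infty$. Next, $h'(t)=t^{-1}\bigl(b\alpha(\tfrac12+\alpha)t^{\alpha}-\tfrac a2 t^{1/2}\bigr)$, and since $\alpha<1/2$ the bracket is positive for small $t$ and negative for large $t$, vanishing at the unique point $\tau$ with $\tau^{1/2-\alpha}=2b\alpha(\tfrac12+\alpha)/a$; hence $h$ increases on $(0,\tau)$ and decreases on $(\tau,\infty)$. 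Therefore $h\ge h(0^+)>0$ on $(0,\tau]$, and on $[\tau,\infty)$ the function $h$ decreases strictly from $h(\tau)>0$ to $-\infty$, so it has a single zero $t_s>\tau$, with $h>0$ on $(0,t_s)$ and $h<0$ on $(t_s,\infty)$. This gives exactly the asserted monotonicity of $f_s$.

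Second I would pin down $t_s$ from its defining relation $a\,t_s^{1/2}=b(\tfrac12+\alpha)t_s^{\alpha}+\tfrac{cs}{2}$. Dropping the nonnegative first term on the right yields $t_s^{1/2}\ge\tfrac{cs}{2a}$, so $t_s\to\infty$ as $s\to\infty$; then for $s$ large enough $b(\tfrac12+\alpha)t_s^{\alpha}\le\tfrac a2 t_s^{1/2}$ (again using $\alpha<1/2$), which forces $a\,t_s^{1/2}\le\tfrac{cs}{2}+\tfrac a2 t_s^{1/2}$, i.e. $t_s^{1/2}\le\tfrac{cs}{a}$. Plugging this crude bound back into the defining relation gives $b(\tfrac12+\alpha)t_s^{\alpha}\le b(\tfrac12+\alpha)(c/a)^{2\alpha}s^{2\alpha}$, whence $a\,t_s^{1/2}=\tfrac{cs}{2}+O(s^{2\alpha})$, which is the stated estimate.

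Finally, for the bound on $\exp(f_s(t_s))$ I would set $E:=b(\tfrac12+\alpha)t_s^{\alpha}=O(s^{2\alpha})$, so that $a\,t_s^{1/2}=\tfrac{cs}{2}+E$ and $t_s^{1/2}=\tfrac1a\bigl(\tfrac{cs}{2}+E\bigr)$, and then compute
$$-a\,t_s+cs\,t_s^{1/2}=t_s^{1/2}\bigl(cs-a\,t_s^{1/2}\bigr)=t_s^{1/2}\Bigl(\tfrac{cs}{2}-E\Bigr)=\frac1a\Bigl(\tfrac{cs}{2}+E\Bigr)\Bigl(\tfrac{cs}{2}-E\Bigr)=\frac{c^2s^2}{4a}-\frac{E^2}{a}\le\frac{c^2s^2}{4a}.$$
The key point is that the cross terms cancel. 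For the remaining term, $t_s^{1/2}\le Cs$ and $t_s^{\alpha}\le Cs^{2\alpha}$ give $b\,t_s^{1/2+\alpha}\le Cs^{1+2\alpha}$, so $f_s(t_s)\le\frac{c^2s^2}{4a}+Cs^{1+2\alpha}$ and the claim follows by exponentiating. The computation is entirely elementary one-variable calculus; the only mildly delicate step is the bootstrap in the second paragraph, where one must first establish $t_s\to\infty$ before absorbing the $b$-term, but I do not expect any genuine obstacle there.
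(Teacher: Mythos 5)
Your proof is correct and follows essentially the same route as the paper: use $f_s'(t_s)=0$ to obtain the defining relation $a t_s^{1/2}=\tfrac{cs}{2}+b(\tfrac12+\alpha)t_s^{\alpha}$, bootstrap to get $t_s^{1/2}=\Theta(s)$ and hence $a t_s^{1/2}=\tfrac{cs}{2}+O(s^{2\alpha})$, then substitute back to bound $f_s(t_s)$. Two small differences are worth noting. First, the paper's proof does not actually establish the increasing/decreasing claim or the existence and uniqueness of $t_s$; it simply starts from $f_s'(t_s)=0$, whereas your auxiliary function $h(t)=t^{1/2}f_s'(t)$, which goes from $cs/2>0$ down to $-\infty$ with a single interior maximum, supplies a clean and complete argument for that part. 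Second, in the final estimate the paper rewrites $f_s(t_s)=a t_s-2\alpha b\,t_s^{1/2+\alpha}$ and leaves the rest implicit, while you exploit the exact factorization $-at_s+cs\,t_s^{1/2}=\tfrac1a\bigl(\tfrac{cs}{2}+E\bigr)\bigl(\tfrac{cs}{2}-E\bigr)=\tfrac{c^2s^2}{4a}-\tfrac{E^2}{a}\le\tfrac{c^2s^2}{4a}$, which gives a sharper, sign-definite bound on that piece valid for all $s>0$. Both are fine; yours is slightly tidier and more self-contained.
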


\begin{proof}
First, notice that $f_s'(t) = -a + (\frac{1}{2}+\alpha) b t^{-1/2+\alpha} + \frac{1}{2} cs t^{-1/2}$. Since $f_s'(t_s) = 0$ we obtain $a =  (\frac{1}{2} + \alpha) b t_s^{-1/2 + \alpha} + \frac{1}{2} c s t_s^{-1/2} $ which we write:
\begin{equation}
\label{eq:relation}
a t_s^{1/2} =  \frac{cs}{2} + (\frac{1}{2} + \alpha) b t_s^{\alpha}.
\end{equation}
Thus $a t_s^{1/2} \geq cs/2$. In particular, $\lim_{s \to \infty} t_s = + \infty$. Using \eqref{eq:relation}, we obtain $a t_s^{1/2} \sim_{s \to \infty} \frac{1}{2}cs$. Using again \eqref{eq:relation}, we have $a t_s^{1/2} = \frac{1}{2} cs + O(s^{2\alpha})$. Using again \eqref{eq:relation} we conclude by noticing that: $f_s(t_s) = -a t_s + b t_s^{1/2 + \alpha} + c s t_s^{1/2} = a t_s - 2b \alpha t_s^{1/2+\alpha}$.
\end{proof}

\begin{Lemma}
\label{BoundIntegral}
Let $\alpha, a,b >0$ with $\alpha < 1/2$. For every $s > 0$ the following inequality holds
$$
\int_0^{\infty} e^{-t + a t^{1/2+\alpha} + b s \sqrt{t}} dt  \leq C_{\alpha,a} (2 + bs) e^{\frac{(bs)^2}{4}} e^{C_{\alpha} (bs)^{1+2\alpha}},
$$
where $C_{\alpha,a} < \infty$ just depends on $a$ and $C_{\alpha}$ just depends on $\alpha$.
\end{Lemma}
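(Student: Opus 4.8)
The plan is to reduce the integral to a Gaussian one by completing the square, and then to absorb the subquadratic term $a t^{1/2+\alpha}$ into an error factor. Write $u:=bs$, so the left-hand side equals $I(u):=\int_0^\infty e^{-t+at^{1/2+\alpha}+u\sqrt t}\,dt$. First I would substitute $v=\sqrt t$, under which $I(u)=\int_0^\infty 2v\,e^{-v^2+av^{1+2\alpha}+uv}\,dv$, and complete the square $-v^2+uv=-(v-u/2)^2+u^2/4$. This extracts the leading factor:
\[
I(u)=e^{u^2/4}\int_0^\infty 2v\,e^{-(v-u/2)^2+av^{1+2\alpha}}\,dv.
\]
It is essential here that the full coefficient of $t$ (equivalently of $v^2$) is retained, as this is exactly what pins the leading exponent to $u^2/4=(bs)^2/4$; sacrificing any of the $-t$ (e.g.\ by crudely absorbing $at^{1/2+\alpha}$ into it) would spoil the constant.

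It remains to show $J(u):=\int_0^\infty 2v\,e^{-(v-u/2)^2+av^{1+2\alpha}}\,dv\le C_{\alpha,a}(2+u)e^{C_\alpha u^{1+2\alpha}}$. For this I would center the growing term at $u/2$: from $v\le \tfrac u2+\lvert v-\tfrac u2\rvert$ and the elementary inequality $(x+y)^{1+2\alpha}\le 2^{2\alpha}(x^{1+2\alpha}+y^{1+2\alpha})$ for $x,y\ge 0$ (valid since $1\le 1+2\alpha\le 2$), one obtains $av^{1+2\alpha}\le \tfrac a2 u^{1+2\alpha}+a2^{2\alpha}\lvert v-\tfrac u2\rvert^{1+2\alpha}$. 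The first summand is constant in $v$ and yields the factor $e^{\frac a2 u^{1+2\alpha}}$. For the second, since $1+2\alpha<2$ a one-line Young-type bound gives $a2^{2\alpha}\lvert w\rvert^{1+2\alpha}\le \tfrac12 w^2+C_{\alpha,a}$ for all $w\in\R$; substituting $w=v-u/2$ (which ranges over a subset of $\R$, the integrand staying nonnegative) leaves $\int_{\R}2\bigl(\lvert w\rvert+\tfrac u2\bigr)e^{-w^2/2}\,dw=C(1+u/2)$, finite and linear in $u$. Collecting, $J(u)\le C_{\alpha,a}(1+u/2)e^{\frac a2 u^{1+2\alpha}}\le C_{\alpha,a}(2+u)e^{C_\alpha u^{1+2\alpha}}$, and multiplying by $e^{u^2/4}$ and recalling $u=bs$ gives the claim.

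The only delicate point is the step pulling out $e^{u^2/4}$: one must control the perturbation $at^{1/2+\alpha}$ without degrading the coefficient in front of $t$, and the triangle-inequality split of $v^{1+2\alpha}$ about $v=u/2$ is what achieves this, at the harmless cost of an $a$-dependent constant in the lower-order factor $e^{C_\alpha u^{1+2\alpha}}$ (which is absorbed into prefactors wherever the lemma is applied, see the proof of Proposition~\ref{SupTails}). Everything else is routine Gaussian integration and the elementary estimate $\int_a^\infty e^{-bx^2}dx\le (2ab)^{-1}e^{-ba^2}$ already used above.
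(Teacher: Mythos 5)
Your proof is correct and follows essentially the same route as the paper's: after the substitution $v=\sqrt t$ and completing the square, both arguments split the perturbation $av^{1+2\alpha}$ about $v=u/2$ using convexity of $x\mapsto x^{1+2\alpha}$; you then absorb the residual $|v-u/2|^{1+2\alpha}$ term into half the Gaussian weight via a Young-type bound, whereas the paper keeps it and simply observes that $\int_{\R}e^{-v^2+C_\alpha a|v|^{1+2\alpha}}(1+|v|)\,dv$ is finite. Note that both arguments actually produce an exponent constant proportional to $a$ (your $e^{\frac a2 u^{1+2\alpha}}$, the paper's $e^{C_\alpha a(bs)^{1+2\alpha}}$), so the clause in the lemma claiming $C_\alpha$ depends only on $\alpha$ is an imprecision of the statement itself; this is harmless since $a$ is a fixed constant wherever the lemma is applied.
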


\begin{proof} By writing $-t + bs \sqrt{t} = \frac{(bs)^2}{4} - (\sqrt{t} - \frac{bs}{2})^2 $ and  the change of variable $u = \sqrt{t}$,
$$
\int_0^{\infty} e^{-t + a t^{1/2+\alpha} + b s \sqrt{t}} dt = e^{\frac{(bs)^2}{4}}  \int_0^{\infty} e^{-(u - \frac{bs}{2})^2 + a u^{1+2\alpha}}  2 u du .
$$
Now, by the change of variables $v = u - bs/2$, we get 
$$
\int_0^{\infty} e^{-(u - \frac{bs}{2})^2 + a u^{1+2\alpha}}  2 u du  =   \int_{-\frac{bs}{2}}^{\infty} e^{-v^2 + a (v+ \frac{bs}{2})^{1+2\alpha}} (2v + bs) dv.
$$
Finally, by Jensen's inequality, $(v + \frac{bs}{2})^{1+2\alpha}  \leq C_{\alpha} (\abs{v}^{1+2\alpha} + (bs)^{1+2\alpha})$ thus
\begin{align*}
 \int_{-\frac{bs}{2}}^{\infty} e^{-v^2 + a (v+ \frac{bs}{2})^{1+2\alpha}} (2v + bs) dv &  \leq e^{C_{\alpha}  a (bs)^{1+2\alpha}} \int_{-\frac{bs}{2}}^{\infty} e^{-v^2 + C_{\alpha} a |v|^{1+2\alpha}}  (2v + bs)  dv \\
  & \leq  e^{C_{\alpha} a (bs)^{1+2\alpha}} (2 + bs) \int_{-\infty}^{\infty} e^{-v^2 + C_{\alpha} a |v|^{1+2\alpha}}  \left(1+|v|\right)dv.
\end{align*}
\end{proof}  

Now, we bound $F(s)$. Recall first that $F(s) \leq 2 a_{t_s} + \int_{0}^{\infty} a_{t} dt$ where $a_t = \exp (f_s(t))$, $f_s(t) := - t(1-\lambda) \log 2 + C t^{1/2+ \alpha} +  \beta s \sqrt{t}$, $\lambda := (1+a_{\eps}) \gamma$, $\alpha := \frac{\delta}{2}$ and $\beta := \frac{\gamma}{2} \sqrt{\log 4}$. By Lemma \ref{F(s)First}, $a_{t_s} \leq e^{\frac{\beta^2 s^2}{4 (1-\lambda) \log 2} + C s^{1+2\alpha}} = e^{\frac{\gamma^2 \log 4 s^2}{16 (1-(1+a_{\eps})\gamma) \log 2} + C s^{1+\delta}} = e^{\frac{\gamma^2 s^2}{8(1-(1+a_{\eps})\gamma)} + Cs^{1+ \delta}}$. By the change of variable $u = t (1-\lambda)\log 2$ and Lemma \ref{BoundIntegral}, we obtain the integral bound $\int_0^{\infty} a_t dt \leq C e^{\frac{\gamma^2 s^2}{8(1-(1+a_{\eps})\gamma)}}  e^{C s^{1+ \delta}}$. Altogether we get $F(s) \leq C e^{\frac{\gamma^2 s^2}{8(1-(1+a_{\eps})\gamma)}} e^{C s^{1+\delta}}$.

\bibliographystyle{spmpsci}

\bibliography{Bibli}

\end{document}